\definecolor{Gray}{gray}{0.9}
\pgfplotsset{compat=1.8}
\definecolor{rulecolor}{RGB}{0,71,171}
\definecolor{tableheadcolor}{gray}{0.92}
\colorlet{tableheadcolor}{gray!25} 
\colorlet{tablerowcolor}{gray!10} 
\DeclareFontFamily{OMX}{MnSymbolE}{}
\DeclareSymbolFont{MnLargeSymbols}{OMX}{MnSymbolE}{m}{n}
\DeclareFontShape{OMX}{MnSymbolE}{m}{n}{
    <-6>  MnSymbolE5
    <6-7>  MnSymbolE6
    <7-8>  MnSymbolE7
    <8-9>  MnSymbolE8
    <9-10> MnSymbolE9
    <10-12> MnSymbolE10
    <12->   MnSymbolE12
}{}
\DeclareFontShape{OMX}{MnSymbolE}{b}{n}{
    <-6>  MnSymbolE-Bold5
    <6-7>  MnSymbolE-Bold6
    <7-8>  MnSymbolE-Bold7
    <8-9>  MnSymbolE-Bold8
    <9-10> MnSymbolE-Bold9
    <10-12> MnSymbolE-Bold10
    <12->   MnSymbolE-Bold12
}{}
\let\llangle\@undefined
\let\rrangle\@undefined
\DeclareMathDelimiter{\llangle}{\mathopen}%
{MnLargeSymbols}{'164}{MnLargeSymbols}{'164}
\DeclareMathDelimiter{\rrangle}{\mathclose}%
{MnLargeSymbols}{'171}{MnLargeSymbols}{'171}
\newtheorem{theorem}{Theorem}[section]
\newtheorem{lemma}[theorem]{Lemma}
\newtheorem{proposition}[theorem]{Proposition}
\newtheorem{corollary}[theorem]{Corollary}
\theoremstyle{definition}
\newtheorem{definition}[theorem]{Definition}
\newtheorem{remark}[theorem]{Remark}
\newtheorem{assumption}[theorem]{Assumption}
\newcommand{\Irm}{\mathrm{I}}
\newcommand{\Lrm}{\mathrm{L}}
\newcommand{\Trm}{\mathrm{T}}
\newcommand{\Wrm}{\mathrm{W}}
\newcommand{\Acal}{\mathcal{A}}
\newcommand{\Bcal}{\mathcal B}
\newcommand{\Ccal}{\mathcal{C}}
\newcommand{\Ecal}{\mathcal{E}}
\newcommand{\Fcal}{\mathcal{F}}
\newcommand{\Gcal}{\mathcal{G}}
\newcommand{\Hcal}{\mathcal{H}}
\newcommand{\Lcal}{\mathcal{L}}
\newcommand{\Mcal}{\mathcal{M}}
\newcommand{\Rcal}{\mathcal{R}}
\newcommand{\Scal}{\mathcal{S}}
\newcommand{\Ffrak}{\mathfrak{F}}
\newcommand{\Sfrak}{\mathfrak{S}}
\newcommand{\Bscr}{\mathscr{B}}
\newcommand{\Cscr}{\mathscr{C}}
\newcommand{\Fscr}{\mathscr{F}}
\newcommand{\Wscr}{\mathscr{W}}
\newcommand{\Abf}{\mathbf{A}}
\newcommand{\Bbf}{\mathbf{B}}
\newcommand{\Dbf}{\mathbf{D}}
\newcommand{\Ebf}{\mathbf{E}}
\newcommand{\Gbf}{\mathbf{G}}
\newcommand{\Hbf}{\mathbf{H}}
\newcommand{\Ibf}{\mathbf{I}}
\newcommand{\Sbf}{\mathbf{S}}
\DeclareMathOperator{\diverg}{div}
\DeclareMathOperator{\TV}{TV}
\DeclareMathOperator{\dist}{dist}
\DeclareMathOperator{\spn}{span}
\newcommand{\set}[2]{\left\{\, #1 \  \textup{\textbf{:}}\  #2 \,\right\}}
\newcommand{\cl}[1]{\overline{#1}}
\newcommand{\dd}{\;\mathrm{d}}
\newcommand{\N}{\mathbb{N}}
\newcommand{\R}{\mathbb{R}}
\newcommand{\loc}{\mathrm{loc}}
\newcommand{\spt}{\mathrm{spt}}
\newcommand{\Sing}{\mathrm{Sing}}
\newcommand{\todown}{\downarrow}
\newcommand{\BV}{\mathrm{BV}}
\newcommand{\eps}{\epsilon}
\DeclareMathOperator{\Err}{Err}
\newcommand{\proofstep}[1]{\textit{#1}}
\renewcommand{\eps}{\varepsilon}
\newcommand{\vphi}{\varphi}
\renewcommand*\env@matrix[1][*\c@MaxMatrixCols c]{%
    \hskip -\arraycolsep
    \let\@ifnextchar\new@ifnextchar
    \array{#1}}
\newcommand{\mres}{\mathbin{\vrule height 1.6ex depth 0pt width
        0.13ex\vrule height 0.13ex depth 0pt width 1.3ex}}
\newcommand{\flatS}{\mathfrak{F}}
\newcommand{\bits}{\mathfrak{S}}
\def\vint_#1{\mathchoice%
    {\mathop{\kern 0.2em\vrule width 0.6em height 0.69678ex depth -0.58065ex
            \kern -0.8em \intop}\nolimits_{\kern -0.4em#1}}%
    {\mathop{\kern 0.1em\vrule width 0.5em height 0.69678ex depth -0.60387ex
            \kern -0.6em \intop}\nolimits_{#1}}%
    {\mathop{\kern 0.1em\vrule width 0.5em height 0.69678ex depth -0.60387ex
            \kern -0.6em \intop}\nolimits_{#1}}%
    {\mathop{\kern 0.1em\vrule width 0.5em height 0.69678ex depth -0.60387ex
            \kern -0.6em \intop}\nolimits_{#1}}}
\newcommand*{\RangeX}{%
    {%
        \mathpalette\@RangeOf{X}%
    }%
}
\newcommand*{\@RangeOf}[2]{%
    \sbox0{$\m@th#1\mathsf{#2}$}%
    \mathsf{#2}%
    \kern-\wd0 %
    \mkern2.75mu\relax
    \nonscript\mkern.25mu\relax
    \mathsf{#2}%
}
\newcommand{\aveint}[2]{\mathchoice%
    {\mathop{\kern 0.2em\vrule width 0.6em height 0.69678ex depth -0.58065ex
            \kern -0.8em \intop}\nolimits_{\kern -0.45em#1}^{#2}}%
    {\mathop{\kern 0.1em\vrule width 0.5em height 0.69678ex depth -0.60387ex
            \kern -0.6em \intop}\nolimits_{#1}^{#2}}%
    {\mathop{\kern 0.1em\vrule width 0.5em height 0.69678ex depth -0.60387ex
            \kern -0.6em \intop}\nolimits_{#1}^{#2}}%
    {\mathop{\kern 0.1em\vrule width 0.5em height 0.69678ex depth -0.60387ex
            \kern -0.6em \intop}\nolimits_{#1}^{#2}}}
\newcommand\res{\mathop{\hbox{\vrule height 7pt width .3pt depth 0pt\vrule height .3pt width 5pt depth 0pt}}\nolimits}
\let\underbrace\LaTeXunderbrace
\title[Rectifiability: singularity degree strictly larger than $1$]{The fine structure of the singular set of area-minimizing integral currents II: rectifiability of flat singular points with singularity degree larger than $1$}
\date{\today}
\author[C. De Lellis]{Camillo De Lellis}
\address{School of Mathematics, Institute for Advanced Study, 1 Einstein Dr., Princeton NJ 05840, USA}
\email{camillo.delellis@ias.edu}
\author[A. Skorobogatova]{Anna Skorobogatova}
\address{Department of Mathematics, Fine Hall, Princeton University, Washington Road, Princeton, NJ 08540, USA}
\email{as110@princeton.edu}
\begin{document}
	
\maketitle

\begin{abstract}
    We consider an area-minimizing integral current $T$ of codimension higher than $1$ in a smooth Riemannian manifold $\Sigma$. In a previous paper we have subdivided the set of interior singular points with at least one flat tangent cone according to a real parameter, which we refer to as the ``singularity degree''. 
    In this paper we show that the set of points for which the singularity degree is strictly larger than $1$ is $(m-2)$-rectifiable. In a subsequent work we prove that the remaining flat singular points form a $\mathcal{H}^{m-2}$-null set, thus concluding that the singular set of $T$ is $(m-2)$-rectifiable.
\end{abstract}

\tableofcontents

\section{Introduction}

Suppose that $T$ is an $m$-dimensional integral current in a complete smooth Riemannian manifold $\Sigma$, which for simplicity we will assume to be properly embedded in an open subset of a sufficiently large Euclidean space, without loss of generality. We assume that $T$ is area-minimizing within its integral homology class in some (relatively) open $\Omega\subset \Sigma$, i.e. 
\[
\mathbf{M} (T+\partial S) \geq \mathbf{M} (T)
\]
for any $(m+1)$-dimensional integral current $S$ supported in $\Omega$. A point $p\in \spt (T)\setminus \spt (\partial T)$ is regular if there is a neighborhood $U$ of $p$ in which the current $T$ is a smooth $m$-dimensional oriented submanifold of $\Sigma$ with constant integer multiplicity. The complement of the set of regular points in $\spt (T)\setminus \spt (\partial T)$ is called singular set and will be denoted by $\Sing (T)$.

This is the second of three papers (the others being \cites{DLSk1,DMS}) in which we prove the following theorem

\begin{theorem}\label{t:big-one}
Let $T$ be an $m$-dimensional area-minimizing current in a $C^{3,\kappa_0}$ complete Riemannian manifold of dimension $m+\bar{n}\geq m+2$, with $\kappa_0>0$. Then $\Sing (T)$ is $(m-2)$-rectifiable and there is a unique tangent cone at $\mathcal{H}^{m-2}$-a.e. $q\in \Sing (T)$. 
\end{theorem}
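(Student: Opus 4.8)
The plan is to assemble the three papers of the series into a single decomposition of $\Sing(T)$ on each piece of which $(m-2)$-rectifiability and uniqueness of the tangent cone can be verified separately, the genuinely new input being the analysis of the flat singular points of singularity degree strictly larger than $1$. As a first \emph{reduction}, recall that $\Sing(T)\setminus\Sing_f(T)$, the singular points admitting \emph{no} flat tangent cone, coincides with the top stratum $\Sing^{(m-2)}(T)$ of Almgren's stratification, because an $(m-1)$-symmetric area-minimizing cone is necessarily flat (a one-dimensional area-minimizing cone being a multiplicity-weighted line). By the quantitative stratification and rectifiable-Reifenberg machinery of Naber and Valtorta this set is $(m-2)$-rectifiable with locally finite $\mathcal{H}^{m-2}$-measure, and at $\mathcal{H}^{m-2}$-a.e.\ of its points some tangent cone is cylindrical, of the form $\mathbf{C}\times[\![\mathbb{R}^{m-2}]\!]$ with $\mathbf{C}$ a two-dimensional area-minimizing cone; since such a $\mathbf{C}$ has an isolated singularity, Simon's uniqueness theorem for cylindrical tangent cones gives a unique tangent cone there. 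It therefore remains to show that the \emph{flat singular set} $\Sing_f(T)$ is $(m-2)$-rectifiable and admits a unique (necessarily flat, hence of the form $Q[\![\pi]\!]$) tangent cone at $\mathcal{H}^{m-2}$-a.e.\ of its points.

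\emph{The degree dichotomy, from \cite{DLSk1}.} To each $q\in\Sing_f(T)$ one attaches, at all sufficiently small scales, a center manifold $\mathcal{M}$ and its normal approximation $N$ adapted to $q$, together with Almgren's frequency function $r\mapsto\mathbf{I}_q(r)$; by the first paper this function is almost monotone, uniformly bounded, and converges as $r\downarrow0$ to the \emph{singularity degree} $\mathbb{I}(q)\in[1,\infty)$, so that $\Sing_f(T)=\mathcal{F}_{>1}(T)\sqcup\mathcal{F}_{=1}(T)$ according to whether $\mathbb{I}(q)>1$ or $\mathbb{I}(q)=1$. The case $\mathbb{I}(q)=1$ is degenerate — the first blow-up of $N$ there detects no branching, since a genuinely singular degree-$1$ blow-up would have an $(m-1)$-dimensional singular set, impossible on the top stratum — and is handled in \cite{DMS}.

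\emph{Rectifiability of $\mathcal{F}_{>1}(T)$: the present paper.} Working in a small ball around a fixed point, the goal is to produce the quantitative estimates needed to apply the rectifiable-Reifenberg theorem to $\mathcal{F}_{>1}(T)$ at dimension $m-2$: (a) a spatially uniform almost-monotonicity of the frequency, with errors controlled by the ambient curvature and by the center-manifold approximation; (b) a \emph{frequency-pinching} estimate — if $\mathbf{I}_q(r)-\mathbf{I}_q(\theta r)$ is small, then the rescaling of $N$ near $q$ at scale $r$ is $L^2$-close to a homogeneous, average-free, $\mathrm{Dir}$-minimizing $Q$-valued function of degree $\mathbb{I}(q)>1$; and (c) the structural fact that the singular set of such a blow-up lies in a thin tube around an $(m-2)$-plane, so that, transporting this through (b), the portion of $\mathcal{F}_{>1}(T)$ near $q$ at scale $r$ has a Jones-type $\beta$-number bounded by the pinching at that scale. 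Summing over dyadic scales telescopes the pinching to a bound by the total frequency drop $\mathbf{I}_q(r_0)-\mathbb{I}(q)$, which is uniform; this is exactly the hypothesis of the rectifiable-Reifenberg theorem and yields that $\mathcal{F}_{>1}(T)$ is $(m-2)$-rectifiable with locally finite $\mathcal{H}^{m-2}$-measure. The same summability forces the rescalings of $N$ (and hence of $T$) around each $q\in\mathcal{F}_{>1}(T)$ to converge at a definite rate, giving a unique tangent cone there. I expect the main obstacle to be that the center manifold and normal approximation are built one point and one scale at a time, so that running the Naber--Valtorta covering — which requires comparing the frequency behaviour at two competing nearby points $q$ and $q'$ — forces one to reconcile these a priori unrelated constructions up to controlled errors; this comparison is the principal technical burden of the paper.

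\emph{Conclusion.} By \cite{DMS}, $\mathcal{F}_{=1}(T)$ is $\mathcal{H}^{m-2}$-null, hence vacuously $(m-2)$-rectifiable and irrelevant for the almost-everywhere uniqueness statement. Combining the steps, $\Sing(T)=\Sing^{(m-2)}(T)\sqcup\mathcal{F}_{>1}(T)\sqcup\mathcal{F}_{=1}(T)$ is a finite union of $(m-2)$-rectifiable sets, hence $(m-2)$-rectifiable, and a unique tangent cone exists at $\mathcal{H}^{m-2}$-a.e.\ point of each of the three pieces, which proves Theorem~\ref{t:big-one}.
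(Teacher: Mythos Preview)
Your overall decomposition and the outline of the argument for $\mathcal{F}_{>1}(T)$ are essentially the paper's approach. There is, however, one genuine gap and two points where your summary diverges from what is actually proved.

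\textbf{The gap: uniqueness of tangent cones on $\mathcal{S}^{(m-2)}(T)$.} You assert that at $\mathcal{H}^{m-2}$-a.e.\ point of $\mathcal{S}^{(m-2)}(T)$ some tangent cone is $\mathbf{C}\times\llbracket\mathbb{R}^{m-2}\rrbracket$ with $\mathbf{C}$ two-dimensional and having an isolated singularity, and then invoke ``Simon's uniqueness theorem for cylindrical tangent cones''. The paper explicitly flags that this conclusion does \emph{not} follow from the Naber--Valtorta analysis and is instead obtained in \cite{DMS} as a byproduct of the new tools developed there. The issue is that Simon's Łojasiewicz-type uniqueness results for cylindrical cones require hypotheses---typically multiplicity one on the cross-section and an integrability (or strict stability) condition on the link---which are not known to hold for general two-dimensional area-minimizing integral cones in codimension $\bar n\geq 2$. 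Such cross-sections can carry higher multiplicity and need not satisfy the spectral conditions Simon's argument uses. So this step, as written, is unjustified; you should instead defer the $\mathcal{H}^{m-2}$-a.e.\ uniqueness on $\mathcal{S}^{(m-2)}(T)$ to \cite{DMS}, as the paper does.

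\textbf{Two inaccuracies in the $\mathcal{F}_{>1}(T)$ part.} First, you claim the paper shows $\mathcal{F}_{>1}(T)$ has locally finite $\mathcal{H}^{m-2}$-measure. It does not: the set is broken into countably many pieces $\mathbf{S}_j$ (respectively $\bits_{K,J}$), each with locally finite upper Minkowski content, but the subdivision prevents concluding local finiteness for the union---the paper says this explicitly. Only rectifiability survives the countable union. Second, the uniqueness of the tangent cone at points of $\mathcal{F}_{>1}(T)$ is not deduced from the $\beta$-number summability in this paper; it is already established in \cite{DLSk1} (Theorem~\ref{thm:uniquefv}(i) here), via the excess decay that comes directly from $\Irm(T,p)>1$.

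\textbf{A structural omission.} Your sketch of the $\mathcal{F}_{>1}(T)$ argument elides the paper's main technical bifurcation: the further split into $\flatS_Q^h=\{\Irm\geq 2-\delta_2\}$, where a \emph{single} center manifold contains all nearby points of the set and the \cite{DLMSV} machinery transfers almost verbatim, and $\flatS_Q^l=\{1<\Irm<2-\delta_2\}$, where one must work with an infinite geometric sequence of center manifolds $\mathcal{M}_{x,k}$ and control the jumps of a \emph{universal} frequency function across the endpoints. Your remark that ``reconciling a priori unrelated constructions'' is the main burden is correct in spirit, but the paper's actual mechanism for this---Proposition~\ref{p:good-cubes}, the BV estimate of Proposition~\ref{prop:bv}, and the comparison Lemma~\ref{l:compare_frequency}---is specific to the low-frequency regime and is what makes that half of the argument work.
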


We refer to our first work \cite{DLSk1} for the historical context and the motivation of our study. 
Recall that, following Almgren's stratification theorem, we can divide $\Sing (T)$ into the disjoint union of 
\begin{itemize}
    \item the subset $\mathcal{S}^{(m-2)} (T)$ of points $p$ at which any tangent cone to $T$ has at most $m-2$ linearly independent directions of translation invariance;
    \item the remaining set $\Sing (T)\setminus \mathcal{S}^{(m-2)} (T)$ of those singular points at which at least one tangent cone is a flat plane (counted with some integer multiplicity $Q$). 
\end{itemize}
We introduce the notation $\flatS (T)$ for the latter set and we will call its elements {\em flat singular points}. The $(m-2)$-rectifiability of $\mathcal{S}^{(m-2)} (T)$ follows from the remarkable work of Naber and Valtorta, cf. \cites{NV_Annals,NV_varifolds}. Hence the main focus of our works is proving the rectifiability of $\flatS (T)$. Because of the constancy theorem, it is well known that every point $p\in \flatS (T)$ has positive integer density $\Theta (T, p)$. Moreover, by Allard's regularity theorem, this density must be necessarily biger than $1$. We can therefore subdivide $\flatS (T)$ as $\bigcup_{Q\geq 2} \flatS_Q (T)$ where $\flatS_Q (T) := \{p\in \flatS (T): \Theta (T, p)=Q\}$. Our first work \cite{DLSk1} introduced a further real parameter, belonging to the range $[1,\infty[$. We call it the {\em singularity degree of $T$ at $p$} and denote it by $\Irm (T, p)$. 

When $\Irm (T, p) > 1$ it follows from the analysis in \cite{DLSk1} that the tangent cone to $p$ is a unique plane and that $T$ has an order of contact with it which is at least $\min \{\Irm (T,p), 2\}$. In particular at any point $p\in \flatS (T)$ where the tangent cone might be non-unique, the value of $\Irm (T,p)$ is necessarily $1$. In this paper we will prove the following.

\begin{theorem}\label{t:main}
Let $T$ be as in Theorem \ref{t:big-one} and $Q\in \mathbb N\setminus \{1,2\}$ Then the set 
\[
\flatS_{Q,>1} (T) := \{p\in \flatS (T) : \Theta (T, p)=Q \quad \mbox{and} \quad\Irm (T,p)>1\}
\]
is $(m-2)$-rectifiable.
\end{theorem}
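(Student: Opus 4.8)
The plan is to follow the Naber--Valtorta rectifiable-Reifenberg scheme, adapted to the flat singular stratum with singularity degree bounded away from $1$. The key structural input, which by Theorem \ref{t:main}'s placement in this paper ought already to be available from \cite{DLSk1}, is that at every $p\in\flatS_{Q,>1}(T)$ the tangent cone is a \emph{unique} plane $\pi_p$, and that $T$ decays towards $\pi_p$ with a quantitative rate governed by $\Irm(T,p)>1$. First I would fix, for small $\eta>0$, the subset where the singularity degree is pinched, say $\flatS_{Q,>1+\eta}(T)$, and show the full set is a countable union of such pieces together with pieces on which a suitable almost-monotone frequency-type function (the normalized Dirichlet energy of the top-multiplicity sheet after Almgren's center manifold / multi-valued approximation from \cite{DLSk1}) is pinched in a fixed dyadic window; this reduces matters to proving rectifiability of each such piece.

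On each such piece the heart of the argument is a \emph{quantitative tangent-plane control}: if $p$ and $q$ are two points of the piece at scale $r$ with $|p-q|\sim r$, then the flat tangent planes $\pi_p$ and $\pi_q$ are close, $\angle(\pi_p,\pi_q)\lesssim (E(p,r)+E(q,r))^{1/2}$ or similar, where $E$ denotes the excess / $L^2$-flatness at the relevant scale; and, crucially, a \emph{spatial variation} estimate showing that the plane does not turn too fast as one moves along the set. The mechanism is that singularity degree $>1$ forces the first-order (affine) part of $T$'s deviation from $\pi_p$ to vanish to order $>1$, so the "linear" part of the blow-up is trivial and the competition is between a genuinely higher-order term and the spatial displacement; this is exactly the regime in which the Naber--Valtorta $L^2$-estimate for the displacement of the best-approximating plane along the set,
\[
\sum_{i} \int_{B_r} \beta_{T,2}^{\,2}(x,r_i)\, d\mu(x) \lesssim \big(\text{monotone quantity at top scale}\big)-\big(\text{at bottom scale}\big),
\]
can be run, yielding summable $\beta$-numbers and hence, via the rectifiable-Reifenberg theorem of \cite{NV_Annals} (or its current-theoretic packaging), that the piece is $(m-2)$-rectifiable and has locally finite $\mathcal{H}^{m-2}$-measure.

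Concretely the steps are: (i) recall from \cite{DLSk1} the existence of a monotone (up to errors) frequency function $\Irm(T,\cdot,r)$ and the decomposition of $\flatS_{Q,>1}(T)$ into frequency-pinched pieces; (ii) establish, on a pinched piece, the quantitative comparison between the $L^2$-best plane $\pi(x,r)$ at $(x,r)$ and the tangent plane $\pi_x$, using the center-manifold approximation and the a priori decay $\operatorname{Exc}(T,x,r)\lesssim r^{2\min\{\Irm(T,x),2\}-2+\delta}$; (iii) prove the \emph{main frequency-drop / energy-decay inequality}: along the pinched piece, the oscillation of $\pi(x,r)$ across dyadic scales is controlled by the drop of the frequency, quantitatively, which is where the hypothesis $\Irm>1$ is used to kill the harmful affine mode; (iv) feed this into the discrete Reifenberg/covering argument of Naber--Valtorta to bound $\mu:=\mathcal{H}^{m-2}\res\flatS_{Q,>1}(T)$ (or the associated packing measure) and to verify the summable-tilting hypothesis of the rectifiable-Reifenberg theorem; (v) conclude rectifiability and uniqueness of tangent cones on the piece, then sum over the countably many pieces.

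The main obstacle I expect is step (iii): proving that the spatial movement of the best-approximating plane is \emph{controlled by the decay of the frequency} rather than merely by the excess at a single scale. In the Naber--Valtorta harmonic/stationary setting this rests on a clean almost-monotonicity plus a rigidity "if the frequency is almost constant on an annulus then $T$ is almost a cone with that plane as spine"; here one must run this through Almgren's center manifold and the $Q$-valued Dirichlet-minimizer approximation of \cite{DLSk1}, so the errors from the manifold's curvature and from the approximation must be shown to be lower-order compared with both the frequency drop and the $\beta$-numbers one is trying to sum — a delicate bookkeeping that must be uniform over the (countably many) pieces. A secondary difficulty is that $\flatS_{Q,>1}(T)$ is only known a priori to be contained in the zero set of the approximating multi-valued map's "average", so one must be careful that the Reifenberg covering is performed in the right (intrinsic) geometry and that density lower bounds — needed to convert packing bounds into $\mathcal{H}^{m-2}$ bounds — genuinely hold at every point of the piece, which again is where $\Irm>1$ (equivalently, order of contact $\ge\min\{\Irm,2\}>1$) is essential.
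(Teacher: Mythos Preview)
Your overall strategy---adapt the Naber--Valtorta scheme via the \cite{DLMSV} framework, decompose into frequency-pinched pieces, control Jones $\beta$-numbers by frequency pinching, and conclude via rectifiable-Reifenberg---is indeed the paper's approach. However, you are missing the single most important structural subdivision, and without it your step (iii) will not go through uniformly.

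The paper splits $\flatS_{Q,>1}$ into $\flatS_Q^h=\{\Irm\ge 2-\delta_2\}$ and $\flatS_Q^l=\{1<\Irm<2-\delta_2\}$ and treats them \emph{separately}. The reason is that only on $\flatS_Q^h$ does the excess decay fast enough (Proposition~\ref{prop:excess-decay}) to guarantee a \emph{single} interval of flattening and hence a single center manifold $\mathcal{M}$ containing (a local piece of) the singular set, with a single normal approximation $N$ whose frequency $\Ibf(x,r)$ is almost-monotone all the way down to $r=0$. Your sketch implicitly assumes this regime throughout: you speak of ``the'' center manifold and of running the frequency-drop estimate through ``Almgren's center manifold,'' and your anticipated obstacle is that the curvature and approximation errors be lower-order. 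That bookkeeping is exactly what the paper does in Part~1, and it works.

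On $\flatS_Q^l$, however, there are \emph{infinitely many} center manifolds $\mathcal{M}_{x,k}$, one per geometric interval $]\gamma^{k+1},\gamma^k]$, and the frequency is a \emph{universal} frequency function pieced together from $\Ibf_{x,k}$, which has genuine jump discontinuities at the endpoints. The almost-monotonicity is replaced by a BV estimate (Proposition~\ref{prop:bv}) with total variation controlled by $\sum_k \boldsymbol{m}_{x,k}^{\gamma_4}$; summability of this series relies on both the upper \emph{and lower} excess bounds (Corollaries~\ref{c:excess-upper-bound} and~\ref{c:excess-lower-bound}), the latter coming from $\Irm<2-\delta_2$. Moreover, points of $\bits$ do not lie on $\mathcal{M}_{x,k}$, so one needs a projection $\mathbf{p}_{x,k}$ and a comparison lemma (Lemma~\ref{l:compare_frequency}) relating $\Ibf(w,s)$ to $\Ibf_{x,k}(\bar w,\gamma^{-k}s)$ for nearby $w$. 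None of this machinery appears in your outline; your step (iii) as written would fail on $\flatS_Q^l$ because there is no single frequency function whose drop across all dyadic scales bounds the $\beta$-oscillation.

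A minor point: for rectifiability itself the paper bypasses the full Naber--Valtorta covering and instead invokes the Azzam--Tolsa characterization \cite{AT15} (finite $\beta_2$-square function implies rectifiability), using a Frostman measure on a hypothetical unrectifiable piece. The NV covering is used only in the appendix for the Minkowski content bounds. This is a simplification you might adopt.
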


In our final paper \cite{DMS} together with Paul Minter, we will then complete the proof of the rectifiability of $\Sing (T)$ by showing that the following holds.

\begin{theorem}\label{t:null}
Let $T$ and $Q$ be as in Theorem \ref{t:main}. Then $\flatS_{Q,1} (T) := \flatS_Q (T)\setminus \flatS_{Q,>1} (T)$ is an $\mathcal{H}^{m-2}$-null set.
\end{theorem}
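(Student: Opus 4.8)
The plan is to combine the center-manifold and frequency-function machinery of \cite{DLSk1} with a Naber--Valtorta-type covering scheme, exploiting crucially the fact that $\Irm(T,p)=1$ is the \emph{critical} (infimal) value of the singularity degree. I would begin with the standard reductions. By Theorem~\ref{t:main} the set $\flatS_{Q,>1}(T)$ is already $(m-2)$-rectifiable, and by Almgren's stratification $\flatS_{Q,1}(T)\subseteq\flatS_Q(T)$ lies inside a closed set of Hausdorff dimension at most $m-2$; moreover, by \cite{DLSk1}, $\flatS_{Q,1}(T)$ contains every point of $\flatS_Q(T)$ at which the tangent cone is possibly non-unique, and at every such point the rescalings of the normal approximation $N$ on the center manifold $\Mcal$, suitably renormalized (by their $L^2$ height), have only $1$-homogeneous, average-free, Dirichlet-minimizing blow-up limits. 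Since a nontrivial limit of this type is a superposition of linear functions $\sum_i\llbracket L_i\,x\rrbracket$ with $\sum_i L_i=0$ and the $L_i$ not all equal, this says precisely that near a point $p\in\flatS_{Q,1}(T)$ the current $T$ is, at the relevant scales, $L^2$-close to a union of $Q$ affine planes that approach — but do not coincide with — a common $m$-plane; equivalently, the optimal plane ``rotates'' along dyadic scales. The goal is then to upgrade the a priori bound $\dim_{\Hcal}\flatS_{Q,1}(T)\le m-2$ to $\Hcal^{m-2}(\flatS_{Q,1}(T))=0$, working locally around a fixed $p_0\in\flatS_{Q,1}(T)$ after the usual normalization.

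The heart of the argument is a dichotomy, at each $p\in\flatS_{Q,1}(T)$, between scales at which the optimal plane varies slowly and scales at which it rotates by a definite amount. Over a long window of slowly-varying scales one should run the cone-splitting / $L^2$-best-subspace argument of Naber--Valtorta to conclude that $\Sing(T)$ is, on that window, quantitatively close to an $(m-2)$-plane; on the other hand, if the optimal planes converged as the scale decreases at $p$ — so that the tangent cone at $p$ is a unique plane — then the blow-up of $N$ at $p$ would be unique and, once absorbed by a rotation of the reference plane and a re-run of the center-manifold construction, would force the intrinsic frequency at $p$ to exceed $1$, contradicting $p\in\flatS_{Q,1}(T)$. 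Hence every point of $\flatS_{Q,1}(T)$ carries rotating scales arbitrarily far down. The quantitative inputs I would need are: (i) an almost-monotone, center-manifold-adapted frequency whose total variation across a window controls the sum over scales of the squared plane-rotation and of the ``transverse non-flatness'' of $\Sing(T)$; and (ii) a rigidity/compactness statement at the critical frequency saying that a window with frequency pinched near $1$ and with no plane-rotation is automatically one along which $T$ splits off $m-2$ invariant directions. Feeding these into the Naber--Valtorta packing machinery — in the quantitative-stratification form, where one shows that the set of points which fail to be ``$(m-2)$-conical'' at every sufficiently small scale has vanishing $(m-2)$-dimensional content — then yields $\Hcal^{m-2}(\flatS_{Q,1}(T))=0$.

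The main obstacle, and the place where the genuinely new ideas of \cite{DMS} must enter, is precisely the criticality of the value $\Irm=1$: the frequency monotonicity formula degenerates there, $\epsilon$-regularity is unavailable at flat points because the tangent cone may be non-unique, and the superlinear decay of $N$ that powers the rectifiability proof of the present paper is absent. One must therefore develop a ``fine'' blow-up analysis adapted to the secondary scale dictated by the rotation of the optimal plane, prove quantitatively that only boundedly many such rotations are compatible with $\Irm(T,p)=1$ (equivalently, that accumulating plane-rotations force a definite, non-summable amount of transverse non-flatness of $\Sing(T)$ near $p$), and only then invoke the covering argument. Everything else — the center-manifold estimates of \cite{DLSk1}, the classification of $1$-homogeneous Dirichlet-minimizing multivalued maps, and the Naber--Valtorta packing and Reifenberg technology — is, in spirit, already available, so the real work is this critical-frequency rigidity.
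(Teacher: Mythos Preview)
This theorem is not proved in the present paper: it is explicitly deferred to the companion work \cite{DMS} (see the sentence immediately preceding the statement of Theorem~\ref{t:null}). So there is no ``paper's own proof'' to compare against here, and your proposal should be read as a sketch of what that separate paper might contain.

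That said, your proposal is not a proof but a plan with a self-acknowledged gap, and the gap is the entire content of the theorem. You correctly identify that the criticality of $\Irm=1$ is the obstruction: the decay estimates and frequency-pinching arguments that drive the Naber--Valtorta machinery in this paper (Proposition~\ref{prop:excess-decay}, Proposition~\ref{prop:beta2control}, and their low-frequency analogues) all rely on $\Irm(T,p)>1$ to produce a positive exponent in the excess decay, and this is exactly what is unavailable at $\Irm=1$. But your proposed resolution --- a dichotomy between ``slowly-varying'' and ``rotating'' scales of the optimal plane, together with a claim that unique tangent plane plus frequency $1$ forces, after re-centering, frequency strictly above $1$ --- is not correct as stated. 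A unique flat tangent cone does not by itself imply $\Irm>1$; the implication in \cite{DLSk1} runs the other way. More importantly, the ``critical-frequency rigidity'' you invoke (that accumulating plane rotations force non-summable transverse non-flatness) is precisely the hard new input, and you have not supplied any mechanism for it beyond naming it. The actual argument in \cite{DMS} is expected to proceed rather differently, by approximating $T$ at frequency-$1$ points not with a single plane but with a \emph{superposition of planes} (a ``cone'' made of $Q$ sheets), and proving an excess-decay statement relative to such non-flat model objects; this is much closer in spirit to the Simon/Wickramasekera multiplicity-one regularity theory than to the Naber--Valtorta scheme you outline.
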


Notice that, while Theorem \ref{t:main} and Theorem \ref{t:null} together imply that the tangent cone is unique at $\mathcal{H}^{m-2}$-a.e. point $p\in \flatS (T)$, this does not answer the question of the uniqueness of tangent cones at $\mathcal{H}^{m-2}$-a.e. point $p\in \mathcal{S}^{(m-2)} (T)$, as claimed in Theorem \ref{t:big-one}. The latter statement \emph{does not} follow from the analysis of Naber and Valtorta in \cite{NV_varifolds}; there, the authors only conclude $(m-2)$-rectifiability of $\Scal^{(m-2)}(T)$, without being able to deduce uniqueness of tangent cones. However, we address this in \cite{DMS}, and indeed the $\Hcal^{m-2}$-a.e. uniqueness of tangent cones is a simple byproduct of the tools which we introduce therein. 

A fundamental tool to prove Theorem \ref{t:main} is the technique developed by Naber and Valtorta in \cite{NV_Annals} to tackle the rectifiability of the singular set of harmonic maps between manifolds. In \cite{DLMSV}, the first author together with Marchese, Spadaro, and Valtorta, showed that these techniques can be adapted to prove the $(m-2)$-rectifiability of the singular set of multiple-valued Dir-minimizing functions. The latter are the functions pioneered by Almgren in his big regularity paper \cite{Almgren_regularity} in order to study the ``linearization'' of the area functional for area-minimizing currents locally around flat singularities. In this work we combine the results and estimates of \cite{DLSk1} with those of \cites{DLS16centermfld,DLS16blowup}, allowing us to suitably adapt the computations and arguments contained within \cite{DLMSV}, leading to the rectifiability of $\flatS_{Q,>1}(T)$. This is more transparent when proving $(m-2)$-rectifiability for the portion of points of $\flatS_{Q,>1}(T)$ at which the singularity degree is above $2-\delta$ for a suitable small threshold $\delta>0$, locally giving rise to a single graphical approximation for $T$ which is suitably close to being a multiple-valued Dir-minimizer and is defined on a single \emph{center manifold} domain that passes through all other such nearby singularities. 

However, in general, we cannot necessarily hope for such a convenient graphical approximation for $T$, due to the presence of ever-changing graphical approximations for $T$ locally around the points $p\in\flatS_{Q,>1}(T)$ with $\Irm(T,p) < 2-\delta$, with corresponding domains that do not necessarily pass through the nearby points in $\flatS_{Q,>1}(T)$. We therefore subdivide this paper into two cases, each of which we treat separately: $\Irm(T,p) \in ]1,2-\delta[$ and $\Irm(T,p) \geq 2-\delta$.

It is worth pointing out that in our arguments, we subdivide $\flatS_{Q,>1}(T)$ into countably many pieces (depending on the scale at which $T$ is sufficiently close to an $m$-dimensional plane with multiplicity $Q$, and the decay rate towards this plane for the rescalings of $T$ around a given point, which is determined by $\Irm(T,\cdot)$. Each of these pieces further has locally finite $(m-2)$-dimensional upper Minkowski content, but the subdivision prevents us from making the same conclusion for the entirety of $\flatS_{Q,>1}(T)$ in Theorem \ref{t:main}.

\subsection{Comparison with the works of Krummel and Wickramasekera} While we were completing this and the two works \cites{DLSk1,DMS} leading to our proof of Theorem \ref{t:big-one}, we have learned that in the works \cites{KW1,KW2,KW3}, Krummel \& Wickramasekera arrived independently at a program that shows the same final result. We refer to the introduction of \cite{DLSk1} for a more general comparison between the two programs.

We expect that most of the differences in the two approaches are in fact between our article \cite{DLSk1} and the corresponding one \cite{KW1}, as well as with the present article and the forthcoming work \cite{KW3}, where Krummel \& Wickramasekera will use rather different arguments to show that $\flatS_{Q,>1} (T)$ is $(m-2)$-rectifiable. Here, we rely on the techniques introduced by Naber and Valtorta, as in the adaptation to the study of the singular set of multi-valued Dir-minimizing functions in \cite{DLMSV}, while we expect that Krummel \& Wickramasekera will rely on the techniques of Simon adapted to Dir-minimizing functions seen in their previous work \cite{KW}.

As explained in \cites{DLSk1,DMS}, we believe that a refinement of the arguments therein can deliver the stronger conclusion that the set $\flatS_{Q, \leq 1+\delta}(T)$ (defined in the obvious way) is $\mathcal{H}^{m-2}$-null. Moreover, we believe that we can then achieve local uniformity in the decay estimate for $\flatS_{Q, \geq 1+\delta}(T)$, thus removing the requirement of subdividing it further into countably many pieces as in Sections \ref{s:single-cm} and \ref{s:subdivision-low}. {This will be treated in forthcoming work of the second author and Gianmarco Caldini \cite{CS}.}

The proof of the uniform decay estimate mentioned above would require a suitable quantification of the argument in \cite{DLSk1}*{Section 8}, showing that, for every fixed $\delta>0$, there is an $\varepsilon >0$ such that, if at a certain scale $r$ around a given point $p\in \flatS_Q(T)$ the planar excess is smaller than $\varepsilon$, then the universal frequency cannot be smaller than $\mathrm{I} (T, p)-\delta$.

These considerations are obviously influenced by the insight learned from the works \cites{KW1,KW2}, as explained more in detail in \cite{DLSk1}. The $\mathcal{H}^{m-2}$-nullity and the uniform decay estimate are reached by Krummel and Wickramasekera in their works for sets which are defined in a different way, but combining the results in \cite{DLSk1} and \cites{KW1,KW2} one can suitably compare those sets with $\flatS_{Q, \leq 1+\delta}(T)$ and $\flatS_{Q,\geq 1+ \delta}(T)$ and hence transfer to them the conclusions of \cites{KW1,KW2} (at least when the ambient manifold is the Euclidean space).

With the methods of this paper we would then be able to split $\flatS_{Q, \geq 1+\delta}(T)$ into two sets which have locally finite Minkowski content (and hence finite $\mathcal{H}^{m-2}$ measure). In order to reach this local finiteness statement for the full set $\flatS_{Q, \geq 1+\delta}(T)$ one would further need to tackle the sets of low frequency points and high frequency points at the same time: {such a task will require a more substantial modification of the techniques of this paper and is the core contribution of \cite{CS}}.

\section{Preliminaries and main results}\label{ss:prelim}
In this section we recall the definition of the singularity degree and universal frequency function introduced in \cite{DLSk1} and we further subdivide $\flatS_{Q,>1}$ into the two pieces described above. The remaining parts of the paper will address the rectifiability of these two different parts of $\flatS_{Q,>1}$. 

\subsection{Intervals of flattening and center manifolds}
We follow heavily the notation and terminology of the papers \cites{DLS16centermfld,DLS16blowup} and from now on we will always make the following assumption.

\begin{assumption}\label{asm:1}
	$T$ is an $m$-dimensional integral current in $\Sigma\cap \Omega$ with $\partial T\mres \Omega = 0$, where $\Omega$ is an open set of $\mathbb R^{m+n} = \mathbb R^{m+\bar n + l}$ and $\Sigma$ is an $(m + \bar{n})$-dimensional embedded submanifold of class $C^{3,\kappa_0}$ with $\kappa_0>0$. $T$ is area-minimizing in $\Sigma\cap \Omega$ and $\bar{n} \geq 2$. $0\in \Omega$ is a flat singular point of $T$ and $Q\in \N \setminus \{0,1\}$ is the density of $T$ at $0$. 
\end{assumption}

We will henceforth let $C$ and $C_0$ denote dimensional constants depending only on $m,n,Q$. The currents $T_{x,r}$ will denote the dilations $(\iota_{x,r})_\sharp T$, where $\iota_{x,r} (y):= \frac{y-x}{r}$. Since our statements are invariant under dilations, we can also assume that 

\begin{assumption}\label{asm:2}
	$\Omega = \Bbf_{7\sqrt{m}}$ and $\Sigma \cap \Bbf_{7\sqrt{m}}(p)$ is the graph of a $C^{3,\kappa_0}$ function $\Psi_p : \Trm_p\Sigma \cap \Bbf_{7\sqrt{m}}(p) \to \Trm_p\Sigma^\perp$ for every $p \in \Sigma\cap\Bbf_{7\sqrt{m}}$. Moreover
	\[
	\boldsymbol{c}(\Sigma)\coloneqq\sup_{p \in \Sigma \cap \Bbf_{7\sqrt{m}}}\|D\Psi_p\|_{C^{2,\kappa_0}} \leq \bar{\eps},
	\]
	where $\bar\eps$ is a small positive constant which will be specified later.
\end{assumption}

This in particular gives us the following uniform control on the second fundamental form $A_\Sigma$ of $\Sigma$ in $\Bbf_{7\sqrt{m}}$:
\[
\Abf \coloneqq \|A_\Sigma\|_{C^0(\Sigma)} \leq C_0\boldsymbol{c} (\Sigma) \leq C_0 \bar\varepsilon.
\]
Following \cite{DLS16blowup}*{Section 2}, for every flat singular point $x\in \flatS_Q (T)$ we introduce disjoint intervals $]s_j(x), t_j(x)]\subset ]0,1]$, which we refer to as {\em intervals of flattening} around $x$. The union of these intervals cover the scales $r$ at which the spherical excess $\mathbf{E} (T, \Bbf_{6\sqrt{m} r}(x))$ (see \cite{DLS16centermfld}*{Definition 1.2} for the definition) is below a positive fixed threshold $\varepsilon_3^2$. Arguing as in \cite{DLS16blowup}*{Section 2} for each rescaled current $T_{x, t_j(x)}$ and rescaled ambient manifold $\Sigma_{x, t_j(x)}$ we follow the algorithm detailed in \cite{DLS16centermfld} to produce a {\em center manifold} $\mathcal{M}$ and an appropriate multi-valued map $N : \mathcal{M} \to \mathcal{A}_Q (\mathbb R^{m+n})$. The latter takes values in the normal bundle of $\mathcal{M}$ and gives an efficient approximation of the current $T_{x, t_j(x)}$ in $\Bbf_3\setminus \Bbf_{s_j(x)/t_j(x)}$. However, here we use a slightly different definition to that in \cite{DLS16centermfld}*{Assumption 1.3} for the parameter $\boldsymbol{m}_x$. This is for the purpose of consistency with \cite{DLSk1}, since we will be making use of the results therein.  Here, we let
\begin{equation}\label{eq:m_0}
	\boldsymbol{m}_{x,j} := \max \{ \mathbf{E} (T_{x, t_j(x)}, \mathbf{B}_{6\sqrt{m}}), \bar\varepsilon^2 t_j(x)^{2-2\delta_2}\}\, , 
\end{equation}
where $\delta_2>0$ is the parameter in \cite{DLS16centermfld}*{Assumption 1.8}. 
It can be readily checked that this change is of no consequence for the conclusions of \cites{DLS16centermfld,DLS16blowup}. Indeed, because of simple scaling considerations, $\boldsymbol{c} (\Sigma_{x, t_j}) \leq \boldsymbol{m}_{x,j}$, so all the estimates claimed in \cites{DLS16centermfld,DLS16blowup} are valid with our different choice of parameter $\boldsymbol{m}_{x,j}$.

\subsection{Blow-up sequences, fine blow-ups, and singularity degree} \label{ss:compactness} We next introduce the blow-up sequences of \cite{DLSk1} as follows.

\begin{definition}\label{d:blow-up-sequence}
	Let $T$ and $\Sigma$ be as in Assumption \ref{asm:1}. A {\em blow-up sequence of radii} $\{r_k\}$ at $x\in \flatS_Q (T)$ is a sequence of positive real numbers $r_k \downarrow 0$ such that $T_{x,r_k}$ converges to a flat tangent cone. 
\end{definition}

Suppose that $T$, $\Sigma$, and $x\in \flatS_Q (T)$ are as in Assumption~\ref{asm:1}. Let $]s_j,t_j]$ be the $j$-th interval of flattening for $T$ around $x$ (where we omit the dependency of $s_j$ and $t_j$ on $x$ to make our notation lighter), as defined in~\cite{DLS16blowup}*{Section~2}. Let $r_k \in ]s_{j(k)}, t_{j(k)}]$ be a sequence of scales along which
\[
\Ebf(T, \Bbf_{6\sqrt m r_k}(x), \pi_k) \longrightarrow 0,
\]
for some choice of $m$-planes $\pi_k$. Let $\Mcal_{x,j(k)}$ denote the center manifold at scale $t_{j(k)}$ around $x$, with corresponding current $T_k = T_{x,t_{j(k)}}\mres \Bbf_{6\sqrt{m}}$ (which are area-minimizing in the appropriately rescaled $\Sigma_k= \Sigma_{x, t_{j(k)}}$) and $\Mcal_{x,j(k)}$-normal approximation $N_{x,j(k)}$. We refer the reader to~\cite{DLS16blowup}*{Section~2} and~\cite{DLS16centermfld} for the defining procedure of these objects. With a slight abuse of notation we will sometimes use $\Mcal_k$ and $N_k$ for the corresponding center manifolds and normal approximations. 

Let $\frac{\bar{s}_k}{t_{j(k)}} \in \big]\frac{3r_k}{2 t_{j(k)}}, \frac{3r_k}{t_{j(k)}}\big]$ be the scale at which the reverse Sobolev inequality~\cite{DLS16blowup}*{Corollary~5.3} holds for $r = \frac{r_k}{t_{j(k)}}$. Then let $\bar{r}_k \coloneqq \frac{2\bar{s}_k}{3t_{j(k)}} \in \big]\frac{r_k}{t_{j(k)}}, \frac{2r_k}{t_{j(k)}}\big]$. We rescale further the currents $T_k$, the ambient manifolds $\Sigma_k$ and the center manifolds to
\[
\bar{T}_k \coloneqq (\iota_{0,\bar{r}_k})_\sharp T_k =  \big((\iota_{x,\bar{r}_k t_{j(k)}})_\sharp T\big)\mres \Bbf_{\frac{6\sqrt{m}}{\bar{r}_k}}, \qquad \bar{\Sigma}_k \coloneqq \iota_{0,\bar{r}_k}\Sigma_{x,j(k)}, \qquad \widebar{\Mcal}_k \coloneqq \iota_{0,\bar{r}_k}\Mcal_{x,j(k)},
\]
and let
\[
\bar{\boldsymbol{m}}_{x,j(k)} \coloneqq \max\{\Ebf(\bar{T}_k, \Bbf_{6\sqrt{m}}), \bar\varepsilon^2 (\bar{r}_k t_{j(k)})^{2-2\delta_2}\}.
\]
Define
\[
\bar{N}_k: \widebar{\Mcal}_k \to \R^{m+n}, \qquad \bar{N}_k(p) \coloneqq \frac{1}{\bar{r}_k} N_k(\bar{r}_k p),
\]
and let
\[
u_k \coloneqq \frac{\bar{N}_k \circ \textbf{e}_k}{\mathbf{h}_k}, \qquad u_k:\pi_k \supset B_3 \to \Acal_Q(\R^{m+n}),
\]
where $\textbf{e}_k$ is the exponential map at $p_k \coloneqq \frac{\boldsymbol{\Phi}_k(0)}{\bar{r}_k} \in \widebar{\Mcal}_k$ defined on $B_3 \subset \pi_k \simeq T_{p_k} \widebar{\Mcal}_k$ and $\mathbf{h}_k \coloneqq \|\bar{N}_k\|_{L^2(\Bcal_{\frac{3}{2}})}$, for the geodesic ball $\Bcal_{\frac{3}{2}}\subset \widebar{\Mcal}_k$. The reverse Sobolev inequality of \cite{DLS16blowup}*{Corollary~5.3} gives a uniform control on the $W^{1,2}$ norm of $u_k$ on $B_{\frac{3}{2}} (0, \pi_k)$.

Then, following the proof of~\cite{DLS16blowup}*{Theorem~6.2}, there exists a limiting $m$-plane $\pi_0$ and a non-trivial Dir-minimizing map $u \in \Wrm^{1,2}(B_{\frac{3}{2}}(0, \pi_0);\Acal_Q(\pi_0^\perp))$ with $\boldsymbol{\eta}\circ u = 0$ and $\|u\|_{L^2(B_{\frac{3}{2}})} = 1$, such that up to subsequences,
\begin{equation}\label{eq:compactness}
	u_k \longrightarrow u \quad \text{strongly in $\Wrm^{1,2}_\loc\cap L^2$}.
\end{equation}

{\begin{remark}\label{r:diag-blowup}
    Note that the above compactness procedure leading to a limiting Dir-minimizer $u\in \Wrm^{1,2}(B_{\frac{3}{2}}(0, \pi_0);\Acal_Q(\pi_0^\perp))$ remains valid for a varying sequence of currents $T_k$ and/or varying sequence of centers $x_k \in \Ffrak_Q(T_k)$, along scales $r_k$ around $x_k$ with $\Ebf(T_k,\Bbf_{6\sqrt{m} r_k}(x_k),\pi_k) \to 0$. This was already verified in the work \cite{Sk21} of the second author for a fixed current $T$ but varying centers $x_k$, and one may further observe that all the relevant estimates further remain independent of the current $T$.
\end{remark}}

Recall that Almgren's famous frequency function for Dir-minimizers $u: \Omega \subset \R^m \to \Acal_Q(\R^n)$ at a center point $x \in \Omega$ and scale $r > 0$ is defined by
\[
\frac{r \int_{B_r(x)} |Du|^2}{\int_{\partial B_r(x)} |u|^2}\, .
\]
We refer the reader to~\cite{DLS_MAMS}*{Chapter~3} for the basic properties of the frequency function. The monotonicity of the frequency function~\cite{DLS_MAMS}*{Theorem~3.15} for Dir-minimizers yields existence of its limit as $r\downarrow 0$. It is more convenient to work with a smoother version of the frequency function, which has more robust convergence properties. Following \cite{DLS16centermfld} we consider a compactly supported, monotone decreasing Lipschitz cut-off function $\phi: [0,\infty) \to [0,1]$. We then introduce
\begin{align*}
	D_{u} (x,r) &:= \int |Du (y)|^2 \phi \left(\frac{|y-x|}{r}\right)\, dy\, ,\\
	H_{u} (x,r) &:= -\int \frac{|u(y)|^2}{|y-x|} \phi' \left(\frac{|y-x|}{r}\right)\, dy\, , \\
	I_{u} (x,r) &:= \frac{r D_{u} (x,r)}{H_{u} (x,r)}\, .
\end{align*}
The same computations showing the monotonicity of Almgren's frequency function for Dir-minimizers apply to the latter smoothed variant (cf. for instance
\cite{DLS16centermfld}*{Section 3}; note that Almgren's frequency function corresponds, formally, to the choice $\phi = {\mathbf{1}}_{[0,1]}$). Moreover, it can be readily checked that all these smoothed frequency functions are constant when the map is radially homogeneous, and this constant is the degree of homogeneity of the function. It follows then from the arguments in \cite{DLS_MAMS}*{Section 3.5} that the limit
\[
I_{u} (x,0) = \lim_{r\downarrow 0} I_{u} (x,r)
\]
is independent of $\phi$. For the rest of the paper we will fix a convenient specific choice of $\phi$, given by
\begin{equation}\label{e:phi}
\phi (t) =
\left\{
\begin{array}{ll}
	1 \qquad &\mbox{for $0\leq t \leq \frac{1}{2}$}\\
	2-2t \quad &\mbox{for $\frac{1}{2}\leq t \leq 1$}\\
	0 &\mbox{otherwise}\, .
\end{array}
\right.
\end{equation}
When $x=0$, we will omit the dependency on $x$ for $I$ and related quantities, and will merely write $I_{u}(r)$.

\begin{definition}\label{def:freq_value}
	Any map $u$ as above is called a {\em fine blow-up} limit along the sequence $r_k$ (at $x$) and the set
	\[
	\Fcal(T,x) \coloneqq \set{ I_{u}(0)}{\text{$u$ is a fine blow-up along some $r_k \todown 0$}},
	\]
	is the \emph{set of frequency values of $T$ at $x$}.
\end{definition}

We recall the following key result from~\cite{DLSk1}:
\begin{theorem}[Uniqueness of the frequency value]\label{thm:uniquefv}
	Assume that $T$ satisfies Assumption \ref{asm:1} and that $x\in\Ffrak_Q(T)$. {Then $\Fcal(T,x)$ consists of a single element, namely $\Fcal(T,x) = \{\Irm(T,x)\}$ for some $\Irm(T,x) \geq 1$. We refer to $\Irm(T,x)$ as the singularity degree of $T$ at $x$.} Moreover:
	\begin{itemize}
		\item[(i)] if $\Irm(T,x) > 1$, then there is a unique flat tangent cone $\pi_0$ and $T_{x,r}$ converges to it polynomially fast;
		\item[(ii)] if $\Irm (T,x) > 2-\delta_2$, then there are finitely many intervals of flattening at $x$ (and in particular, one center manifold which passes through $x$).
	\end{itemize} 
\end{theorem}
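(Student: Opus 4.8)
The plan is to construct a single \emph{universal frequency function} $\mathbb{I}(T,x,\cdot)$ on the whole range of scales $]0,1]$, obtained by gluing over the consecutive intervals of flattening $]s_j(x),t_j(x)]$ the smoothed frequency functions $I_{N_{x,j}}$ of the $\Mcal_{x,j}$-normal approximations $N_{x,j}$, and then to prove three things: (a) $\mathbb{I}(T,x,\cdot)$ is almost monotone, hence admits a limit $\mathbb{I}(T,x,0)$ as the scale tends to $0$; (b) along \emph{any} blow-up sequence of radii $r_k\downarrow 0$ the resulting fine blow-up $u$ satisfies $I_u(0)=\mathbb{I}(T,x,0)$; and (c) $\mathbb{I}(T,x,0)\ge 1$. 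Since the value in (b) is then independent of the chosen radii, (a)--(c) give at once that $\Fcal(T,x)=\{\mathbb{I}(T,x,0)\}$ with $\mathbb{I}(T,x,0)\ge 1$, and we define $\Irm(T,x):=\mathbb{I}(T,x,0)$. The preliminary bookkeeping is to fix normalizations: on the $j$-th interval one rescales to $\Mcal_{x,j}$ and works with $D_{N_{x,j}}$, $H_{N_{x,j}}$, $I_{N_{x,j}}$ as defined in the excerpt; the almost-monotonicity of $I_{N_{x,j}}$ together with the reverse Sobolev inequality of \cite{DLS16blowup}*{Corollary~5.3} hold with error terms controlled by powers of $\boldsymbol{m}_{x,j}$ and of the radius, and the choice \eqref{eq:m_0} of $\boldsymbol{m}_{x,j}$, with its floor $\bar\varepsilon^2 t_j(x)^{2-2\delta_2}$, is exactly what renders those errors summable.

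The heart of the argument is step (a). Within a single interval of flattening the almost-monotonicity of $\mathbb{I}(T,x,\cdot)$ is inherited from that of $I_{N_{x,j}}$. The delicate point is the \emph{transition} from the $j$-th to the $(j{+}1)$-st interval: since $s_j(x)\approx t_{j+1}(x)$, one must compare the two distinct center manifolds $\Mcal_{x,j}$ and $\Mcal_{x,j+1}$, and their normal approximations, at comparable scales, and show that $\mathbb{I}(T,x,s_j(x))$ and $\mathbb{I}(T,x,t_{j+1}(x))$ differ by at most $C\,t_{j+1}(x)^{\gamma}$ for some $\gamma>0$; this uses the quantitative estimates of \cite{DLS16centermfld} bounding the discrepancy between a center manifold and the current it approximates (and hence between the two competing normal approximations). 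The combinatorial structure of the intervals of flattening makes these jumps summable, and together with the within-interval almost-monotonicity this produces the limit $\mathbb{I}(T,x,0)=\lim_{\rho\downarrow 0}\mathbb{I}(T,x,\rho)\in[0,\infty)$. I expect this transition analysis — pushing the comparison of two consecutive center manifolds to be sharp enough that the (a priori countably many) restarts cannot destroy almost-monotonicity — to be the main obstacle and the part carrying most of the technical weight.

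For step (b), the strong convergence $u_k\to u$ in $\Wrm^{1,2}_\loc\cap L^2$ from \eqref{eq:compactness}, together with the fact that $u$ is a non-trivial Dir-minimizer (so that $I_u(\cdot)$ is monotone and stable under this convergence), gives $I_u(\rho)=\lim_k I_{u_k}(\rho)$ for a.e.\ $\rho\in]0,\tfrac32[$; on the other hand, by the construction recalled just before Remark~\ref{r:diag-blowup} the map $u_k$ is, up to the exponential chart at $p_k$ and the normalization by $\mathbf{h}_k$, the rescaled normal approximation $\bar N_k$, so that $I_{u_k}(\rho)$ equals $\mathbb{I}(T,x,\bar r_k t_{j(k)}\rho)$ up to errors vanishing as $k\to\infty$. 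A diagonal argument — first $k\to\infty$ at fixed $\rho$, then $\rho\downarrow 0$ — identifies $I_u(0)$ with $\mathbb{I}(T,x,0)$, independently of the chosen radii. For step (c), the lower bound $\mathbb{I}(T,x,0)\ge 1$ is recovered along the way from $\boldsymbol{\eta}\circ u=0$ with $u$ non-trivial, together with the fact that the center manifold osculates $T$ at $x$ to first order, so that the fine blow-up — which records the next-order deviation of $T$ from its flat tangent — cannot have sub-linear frequency; this is the analogue in the present setting of the corresponding bound in \cite{DLS16blowup}.

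Finally, items (i) and (ii) follow from the classical principle that frequency controls decay. If $\Irm(T,x)>1$, then $\mathbb{I}(T,x,\rho)\ge 1+2\eta$ for all small $\rho$ and some $\eta>0$; inserting this into the leading-order relations $H_{N_{x,j}}(\rho)\sim\rho^{\,m-1+2\mathbb{I}}$, $D_{N_{x,j}}(\rho)\sim\rho^{\,m-2+2\mathbb{I}}$ gives that the excess $\Ebf(T,\Bbf_{6\sqrt{m}\rho}(x))$ decays polynomially in $\rho$; a telescoping/Campanato estimate over dyadic scales, chained across the intervals of flattening, then yields a unique flat tangent cone $\pi_0$ and the polynomial convergence $T_{x,r}\to\pi_0$. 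If moreover $\Irm(T,x)>2-\delta_2$, the same decay shows that $\|N_{x,j}\|$ at small scales relative to $t_j(x)$ is of order $\rho^{\,\Irm(T,x)}$, which decays strictly faster than the threshold $\sim\rho^{\,2-\delta_2}$ that determines when $\Mcal_{x,j}$ stops being a valid approximation of $T$; hence for all sufficiently large $j$ the $j$-th interval of flattening is never terminated from below, i.e.\ $s_j(x)=0$, so there are only finitely many intervals and a single center manifold passes through $x$.
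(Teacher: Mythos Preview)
This theorem is not proved in the present paper: it is explicitly recalled from the companion work \cite{DLSk1} (see the sentence immediately preceding the statement), so there is no proof here to compare against directly. That said, your outline is consistent with the strategy of \cite{DLSk1} as it can be reconstructed from the many references scattered through this paper --- in particular the universal frequency function (cf.\ Definition~\ref{def:univfreq} for the adapted version used in Part~2), the BV estimate controlling its negative variation (Proposition~\ref{prop:bv}), the jump estimate across changes of center manifold (cf.\ Lemma~\ref{l:compare_frequency} and the paragraph following Proposition~\ref{prop:bv}), and the identification of the limit with the frequency of any fine blow-up via the compactness in \eqref{eq:compactness}. You also correctly single out the transition analysis between consecutive center manifolds as the main technical obstacle, and your derivations of (i) and (ii) from the frequency lower bound match how they are invoked later (Proposition~\ref{prop:excess-decay} for (i), the argument opening Section~\ref{s:single-cm} for (ii)).

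One point deserving more care is the summability in step~(a). Your claimed jump bound $|\mathbb{I}(T,x,s_j)-\mathbb{I}(T,x,t_{j+1})|\le C\,t_{j+1}^\gamma$ together with ``the combinatorial structure of the intervals of flattening'' is not quite the right mechanism: the $t_j$ need not decay geometrically in general, so $\sum_j t_{j+1}^\gamma$ is not obviously finite, and before (i) is established you have no excess decay to fall back on. The actual argument (see \cite{DLSk1}*{Remark~6.3}, reflected here in Proposition~\ref{p:good-cubes}(b)) bounds each jump by a power of $\boldsymbol{m}_{x,j}$ and relies crucially on the presence, at the stopping scale, of a Whitney cube of controlled side-length violating the excess condition; this is what ties the $L^2$-heights of the two competing normal approximations to a common excess quantity and makes the comparison go through with a summable error. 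Your sketch would benefit from naming this ingredient explicitly rather than appealing to generic ``combinatorial structure''.
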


{\begin{remark}
    Note that in \cite{DLSk1}, when proving the validity of Theorem \ref{thm:uniquefv}, we a priori define the singularity degree of $T$ at $x\in \Ffrak_Q(T)$ as
\[
    \Irm(T,x) := \inf \Fcal(T,x)\, .
\]
(see \cite{DLSk1}*{Definition 2.8}). However, once we know that $\Fcal(T,x)$ has only one element, we a posteriori define the singularity degree to be this element.
\end{remark}}

The polynomial decay in Theorem \ref{thm:uniquefv} can be stated more precisely in the following way, which will come in useful later in this article, cf. \cite{DLSk1}*{Proposition 7.2}.

\begin{proposition}\label{prop:excess-decay}
Let $T$ be as in Theorem \ref{thm:uniquefv} and let $p\in \flatS_Q (T)$ with {$\Irm(T,p) > 1$}. For any $0 < \mu < \min \{\Irm (T,p)-1, 1-\delta_2\}$, there exists $C(m,n,Q,\mu)>0$ and $\tau_0 (T,p)>0$ such that for every $r< s < \tau_0$ we have
\begin{equation}\label{e:excessdecay-quantitative-higherI}
\mathbf{E} (T, \mathbf{B}_r(p)) \leq C \left(\frac{r}{s}\right)^{2\mu} \max \{\mathbf{E} (T, \mathbf{B}_{s}(p)), \bar\varepsilon^2 s^{2-2\delta_2}\}\, .
\end{equation}
\end{proposition}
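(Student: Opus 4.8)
The plan is to deduce the excess decay from frequency pinching in the classical way: a lower bound on the frequency at small scales forces polynomial decay of an $L^2$-height quantity, which in turn controls the excess. By translation and dilation invariance I may assume $p=0$ and place myself inside the framework of \cite{DLSk1}: the intervals of flattening $]s_j,t_j]$ at $0$, the center manifolds $\Mcal_j$ with normal approximations $N_j$, the parameters $\boldsymbol{m}_{0,j}$ as in \eqref{eq:m_0}, and the \emph{universal} Dirichlet, height and frequency functions built there by patching $D_{N_j},H_{N_j},I_{N_j}$ across consecutive intervals of flattening, which I denote $\mathbf{D}(T,0,r)$, $\mathbf{H}(T,0,r)$, $\mathbf{I}(T,0,r)=r\,\mathbf{D}(T,0,r)/\mathbf{H}(T,0,r)$. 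I fix $\mu'$ with $\mu<\mu'<\min\{\Irm(T,0)-1,\,1-\delta_2\}$ and will prove the estimate with the stronger exponent $2(1+\mu')$, then discard the surplus decay.

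First I would establish a two-sided bound $1+\mu'\le \mathbf{I}(T,0,r)\le C$ valid for all $r<\tau_0$, for a suitable $\tau_0=\tau_0(T,0)>0$: since $\Irm(T,0)>1$, Theorem \ref{thm:uniquefv} gives $\Fcal(T,0)=\{\Irm(T,0)\}$, and together with the almost-monotonicity of the universal frequency from \cite{DLSk1} this yields $\mathbf{I}(T,0,r)\to\Irm(T,0)>1+\mu'$ as $r\downarrow 0$ (the upper bound being likewise part of the analysis in \cite{DLSk1}). Next I would invoke the differential identity for $r\mapsto\log(\mathbf{H}(T,0,r)/r^{m-1})$ proved in \cite{DLSk1}, whose right-hand side is $2\mathbf{I}(T,0,r)/r$ up to an error controlled by $\bar\varepsilon^2 r^{2-2\delta_2}$ — the point being that, through $\boldsymbol{c}(\Sigma_{0,t_j})\le\boldsymbol{m}_{0,j}$ and the choice \eqref{eq:m_0}, the center-manifold errors enter with exactly this exponent — and integrate it from $r$ to $s$ to obtain, for $r<s<\tau_0$,
\[
\frac{\mathbf{H}(T,0,r)}{r^{m-1}}\le C\Big(\frac{r}{s}\Big)^{2(1+\mu')}\frac{\mathbf{H}(T,0,s)}{s^{m-1}}+C\bar\varepsilon^2 r^{2-2\delta_2}\, ,
\]
the same inequality then holding for $\mathbf{D}(T,0,\cdot)/(\cdot)^{m-2}$ because $1+\mu'\le\mathbf{I}\le C$.

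Then I would convert this into an excess decay using the center-manifold and Lipschitz-approximation estimates of \cites{DLS16centermfld,DLS16blowup} in the form recorded in \cite{DLSk1}, which give a two-sided comparison
\[
c\,\mathbf{E}(T,\mathbf{B}_r(0))-C\bar\varepsilon^2 r^{2-2\delta_2}\ \le\ \frac{\mathbf{D}(T,0,r)}{r^{m-2}}\ \le\ C\,\mathbf{E}(T,\mathbf{B}_r(0))+C\bar\varepsilon^2 r^{2-2\delta_2}
\]
for $r<\tau_0$ (up to fixed dimensional dilations of the radii, harmless for the iteration and absorbable by overlapping scales). Feeding this into the previous display and using $2(1+\mu')\ge 2\mu$ together with $2-2\delta_2>2\mu$ — both consequences of $\mu<\min\{\Irm(T,0)-1,1-\delta_2\}$ — to collapse the two error contributions $\bar\varepsilon^2 r^{2-2\delta_2}$ and $(r/s)^{2(1+\mu')}\bar\varepsilon^2 s^{2-2\delta_2}$ into $(r/s)^{2\mu}\bar\varepsilon^2 s^{2-2\delta_2}$, one arrives at
\[
\mathbf{E}(T,\mathbf{B}_r(0))\le C\Big(\frac{r}{s}\Big)^{2\mu}\max\{\mathbf{E}(T,\mathbf{B}_s(0)),\ \bar\varepsilon^2 s^{2-2\delta_2}\}\, ,
\]
which is the assertion.

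The substantive difficulties all live in the imported ingredients rather than in this assembly, and I expect the hardest point to be twofold. The first is that the universal frequency must be almost-monotone on the \emph{entire} range $]0,\tau_0[$, across the a priori infinitely many intervals of flattening with their ever-changing center manifolds, so that the limit $\Irm(T,0)$ is actually reached monotonically; this globalization, with the bookkeeping that the transition errors at each $t_j$ are summable with the correct power of the scale, is the technical heart and is exactly what makes the frequency lower bound legitimate. The second is that the center-manifold errors in the two displays above genuinely carry the exponent $r^{2-2\delta_2}$, rather than a worse exponent anchored at the coarser scale $t_j$; this is precisely why the range $\mu<1-\delta_2$ is imposed and why the floor $\bar\varepsilon^2 s^{2-2\delta_2}$ survives inside the maximum. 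One could instead argue by contradiction using the compactness of Remark \ref{r:diag-blowup} — rescalings violating the decay would subconverge to a Dir-minimizer homogeneous of degree at least $1+\mu'$, contradicting a fixed-scale Dirichlet estimate — but this variant relocates rather than removes the two difficulties above.
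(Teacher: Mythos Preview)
The paper does not give its own proof of this proposition: it is imported verbatim as \cite{DLSk1}*{Proposition~7.2}, and the present article only invokes it. Your sketch is a faithful outline of the argument carried out there---the universal frequency from \cite{DLSk1} is shown to be almost-monotone (via a BV estimate with summable jumps across intervals of flattening), hence eventually bounded below by $1+\mu'$; the differential identity for $\log(\mathbf{H}/r^{m-1})$ then yields polynomial decay of $\mathbf{H}$; and the comparison between $\mathbf{H}$ (equivalently $\mathbf{D}$, since $\mathbf{I}$ is two-sided bounded) and the spherical excess, with errors of size $\bar\varepsilon^2 r^{2-2\delta_2}$ coming from the choice \eqref{eq:m_0} of $\boldsymbol{m}_{0,j}$, converts this into \eqref{e:excessdecay-quantitative-higherI}. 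You have also correctly located the two genuine difficulties: the summability of the transition errors at the endpoints $t_j$ (this is the content of the BV estimate in \cite{DLSk1}), and the fact that the center-manifold errors scale like $r^{2-2\delta_2}$ rather than $t_j^{2-2\delta_2}$, which is exactly why the restriction $\mu<1-\delta_2$ appears and why the floor $\bar\varepsilon^2 s^{2-2\delta_2}$ survives in the maximum.
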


{\begin{remark}
    Note that the constant $C$ in Proposition \ref{prop:excess-decay} in fact depends merely on $I_0 - 1$ for a lower bound $I_0 > 1$ on $\Irm(T,p)$, in place of $\mu$.
\end{remark}}

\subsection{First subdivision}
We are now ready to subdivide Theorem \ref{t:main} into two main parts, based on the singularity degree. 

\begin{theorem}\label{thm:main-high}
Let $T$ be as in Theorem \ref{t:main} and $Q\in \mathbb{N}\setminus \{0,1\}$. Then the set
\begin{align}
\flatS_{Q}^h &:= \{p\in \flatS_Q(T) : \Irm (T, p) \geq 2-\delta_2\}
\end{align}
is $m-2$-rectifiable. 
\end{theorem}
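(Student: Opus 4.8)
The plan is to adapt the Naber--Valtorta quantitative stratification scheme, in the form developed in \cite{DLMSV} for Dir-minimizing multi-valued functions, to the current $T$ via the center manifold and its normal approximation. The starting point is Theorem \ref{thm:uniquefv}(ii): since $\Irm(T,p) \geq 2-\delta_2$ for every $p \in \flatS_Q^h$, each such point is traversed by only finitely many intervals of flattening, hence by a single center manifold on a definite scale. Consequently, after decomposing $\flatS_Q^h$ into countably many pieces according to (a) the scale below which the spherical excess is smaller than the threshold $\varepsilon_3^2$ (so that a center manifold $\Mcal$ exists on that scale and passes through the point), (b) the value of the parameter $\boldsymbol{m}_{x,j}$ up to a dyadic factor, and (c) a lower bound $\Irm(T,p) \ge 2-\delta_2$ together with, from Proposition \ref{prop:excess-decay}, a uniform decay rate $2\mu$ and a uniform threshold $\tau_0$, it suffices to prove that each piece has locally finite $(m-2)$-dimensional upper Minkowski content. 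Fix one such piece $\mathscr{K} \subset \flatS_Q^h \cap \Bbf_\rho(q_0)$, all of whose points lie on a common center manifold $\Mcal$ with a single normal approximation $N$.

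The heart of the argument is to run the Naber--Valtorta machinery using Almgren's smoothed frequency function $I_N(x,r)$ of the normal approximation $N$ composed with the exponential map, in place of the frequency function of a Dir-minimizer. First I would establish the required analytic inputs for this surrogate frequency: (1) an \emph{almost-monotonicity} estimate $\frac{d}{dr}\log \max\{I_N(x,r),c\} \ge -C r^{-1+\gamma}$ with errors controlled by the excess and the parameter $\boldsymbol{m}$, which follows from combining the frequency estimates of \cite{DLS16centermfld,DLS16blowup} with the decay of Proposition \ref{prop:excess-decay}; (2) a \emph{compactness/rigidity} statement: if $I_N(x,r)$ is nearly constant on a scale range and nearly equal to the value at $x$, then after rescaling $N$ is $W^{1,2}$-close to a homogeneous Dir-minimizer, via the fine blow-up procedure recalled in Section \ref{ss:compactness} and Remark \ref{r:diag-blowup} (crucially, the blow-up works for varying currents and varying centers); and (3) the \emph{spatial variation / ``cone-splitting''} estimate bounding $\sum_j |I_N(x_j, r) - I_N(y, r)|$ from below by a Dirichlet-type quantity measuring how far $N$ is, at scale $r$ around $y$, from being invariant along the affine span of the $x_j$'s. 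With these in hand, one defines, as in \cite{DLMSV}, the Naber--Valtorta packing measure $\mu$ on $\mathscr{K}$ and carries out the covering argument: at each dyadic scale either the frequency ``drops'' by a definite amount (which can happen only boundedly often, since the frequency is bounded above by a dimensional constant by the upper semicontinuity arguments of \cite{DLSk1} and bounded below by $2-\delta_2$) or one is in the almost-homogeneous regime, where cone-splitting forces the bad set to concentrate near an $(m-2)$-plane, yielding the packing/Minkowski bound by the discrete Reifenberg theorem of \cite{NV_Annals}.

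The step I expect to be the main obstacle is (1)--(3) above adapted to the normal approximation $N$ rather than to a genuine Dir-minimizer: the frequency function of $N$ is only \emph{approximately} monotone, the errors being governed by the center manifold's second fundamental form, by the gap between $T$ and its graphical approximation, and by the parameter $\boldsymbol{m}_{x,j}$ (which, with the modified definition \eqref{eq:m_0}, carries the $t_j^{2-2\delta_2}$ correction). Making all the error terms in the almost-monotonicity and in the spatial-variation estimate summable over dyadic scales — so that the Naber--Valtorta bookkeeping closes — requires precisely the polynomial excess decay of Proposition \ref{prop:excess-decay} with rate $2\mu$ strictly positive, which is why the restriction $\Irm > 1$ (here $\geq 2-\delta_2$) is essential and why the decomposition into pieces with a uniform rate and threshold is unavoidable. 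A secondary technical point is that the exponential map and the change of base point $p_k$ on the center manifold introduce curvature corrections in the frequency pinching estimates; these are handled exactly as in \cite{DLS16blowup} and contribute only lower-order errors, but they must be tracked carefully to verify that the fine blow-up limit is genuinely a homogeneous Dir-minimizer on an $m$-plane with $\boldsymbol\eta \circ u = 0$.

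Finally, once every piece $\mathscr{K}$ is shown to have locally finite upper Minkowski content, it in particular has locally finite $\mathcal{H}^{m-2}$ measure, and the general rectifiability criterion via Naber--Valtorta (as used in \cite{DLMSV}) upgrades this, together with the almost-everywhere uniqueness of the blow-up plane guaranteed by Theorem \ref{thm:uniquefv}(i), to $(m-2)$-rectifiability of $\mathscr{K}$. Taking the countable union over all pieces gives that $\flatS_Q^h$ is $(m-2)$-rectifiable, which is the assertion of Theorem \ref{thm:main-high}.
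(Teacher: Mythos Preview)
Your proposal is essentially correct and follows the same strategy as the paper: decompose $\flatS_Q^h$ into countably many pieces on each of which a single center manifold $\Mcal$ and normal approximation $N$ are available (Theorem \ref{t:main-quantitative}(i)--(ii)), establish almost-monotonicity and spatial-variation estimates for the frequency of $N$ with errors controlled by the excess decay of Proposition \ref{prop:excess-decay} (Lemmas \ref{lem:firstvar}, \ref{lem:firstvar2}, \ref{lem:simplify}, \ref{lem:spatialvarI}), derive quantitative cone-splitting (Lemmas \ref{lem:noQpts}, \ref{lem:smallspatialvar}) and a $\beta_2$ bound via frequency pinching (Proposition \ref{prop:beta2control}), and then run the Naber--Valtorta machinery.

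One noteworthy difference: the paper separates the rectifiability conclusion from the Minkowski bound. For rectifiability (Section \ref{s:rectifiability-h}) it argues by contradiction, producing via Theorem \ref{t:sigma-finite} a purely $(m-2)$-unrectifiable closed subset $F\subset\Sbf$ with $0<\Hcal^{m-2}(F)<\infty$, taking a Frostman measure $\mu$ on $F$, and then applying the Azzam--Tolsa criterion \cite{AT15}, which needs only the $\beta_2$-square-function estimate \eqref{eq:beta2squarefn} (immediate from Proposition \ref{prop:beta2control} plus the Frostman bound). The full Naber--Valtorta covering argument you sketch is carried out separately in Appendix \ref{s:A1} to obtain the Minkowski content estimate. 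Your route is perfectly valid and is in fact the one in \cite{DLMSV}; the paper's route is slightly more economical for the rectifiability statement alone.

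Two small corrections. First, the upper frequency bound $\Lambda$ is not a dimensional constant but depends on $T$ (cf. \eqref{e:bound-Lambda-first} and Corollary \ref{cor:freqmono}); this is harmless since $\Lambda$ is fixed once the piece $\Sbf_j$ is fixed, but the constants in the covering lemmas do inherit this dependence. Second, your description of cone-splitting as ``bounding $\sum_j |I_N(x_j,r)-I_N(y,r)|$ from below by a Dirichlet-type quantity'' is not quite the mechanism: the paper's splitting lemmas (Lemmas \ref{lem:noQpts}, \ref{lem:smallspatialvar}) are soft compactness-and-contradiction statements reducing to the rigidity of homogeneous Dir-minimizers, while the quantitative input feeding the $\beta_2$ estimate is Proposition \ref{prop:distfromhomog} (deviation from homogeneity controlled by pinching) together with Lemma \ref{lem:spatialvarI}.
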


\begin{theorem}\label{thm:main-low}
Let $T$ be as in Theorem \ref{t:main}, $Q\in \mathbb{N}\setminus \{0,1\}$. Then
\begin{align}
\flatS_{Q}^l &:= \{p\in \flatS_Q(T) : 1<\Irm (T, p) < 2-\delta_2\}\, 
\end{align}
is $m-2$-rectifiable. 
\end{theorem}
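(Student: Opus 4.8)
The plan is to adapt the Naber--Valtorta rectifiability scheme of \cite{NV_Annals}, in the form developed for multi-valued Dir-minimizers in \cite{DLMSV}, to the center-manifold approximations of $T$ along its intervals of flattening. Since $\Irm(T,p)<2-\delta_2$ for $p\in\flatS_Q^l$, there may be infinitely many intervals of flattening at $p$, so --- in contrast with Theorem \ref{thm:main-high} --- we cannot reduce to a single center manifold and must first perform a countable decomposition. For rationals $\gamma$ with $0<\gamma<1-\delta_2$ and reals $\rho>0$ set
\[
\flatS_Q^{l,\gamma,\rho}:=\bigl\{p\in\flatS_Q(T):\ 1+\gamma\le \Irm(T,p)\le 2-\delta_2\ \text{ and }\ \tau_0(T,p)\ge\rho\bigr\},
\]
with $\tau_0$ as in Proposition \ref{prop:excess-decay}; intersecting these further with the countably many sets on which the top endpoint $t_1(p)$ of the first interval of flattening at $p$ lies in a prescribed dyadic range, we realize $\flatS_Q^l$ as a countable union of pieces $\Scal$. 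As $(m-2)$-rectifiability and local finiteness of $\mathcal H^{m-2}$ are preserved under countable unions, it suffices to prove, for each such $\Scal$, a bound on the upper $(m-2)$-dimensional Minkowski content of $\Scal\cap\Bbf_r(p)$ together with $(m-2)$-rectifiability of $\Scal$; on each $\Scal$ all the estimates of \cites{DLSk1,DLS16centermfld,DLS16blowup} and the decay \eqref{e:excessdecay-quantitative-higherI} hold below a uniform scale with uniform constants.

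Next I would build a single frequency function on a common domain for a whole cluster of points of $\Scal$. Fix $p\in\Scal$ and an interval of flattening $]s_j(p),t_j(p)]$; we then have the center manifold $\Mcal_{p,j}$, its normal approximation $N_{p,j}$ of $T_{p,t_j(p)}$ on $\Bbf_3\setminus\Bbf_{s_j(p)/t_j(p)}$ and, after the reverse-Sobolev rescaling of Subsection \ref{ss:compactness}, an approximately homogeneous and approximately Dir-minimizing map $u$ whose frequency is close to $\Irm(T,p)$ by \eqref{eq:compactness} and Theorem \ref{thm:uniquefv}; by Remark \ref{r:diag-blowup} this construction is stable under varying the base point within the cluster. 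The geometric input is that every $y\in\Scal$ with $|y-p|$ comparable to a scale inside $]s_j(p),t_j(p)]$ projects onto $\Mcal_{p,j}$, where the normal approximation sees it as a point of frequency close to $\Irm(T,y)\in[1+\gamma,2-\delta_2]$. Thus on the fixed center-manifold domain we may work with the universal frequency function $\mathbf{I}(T,\cdot,\cdot)$ of \cite{DLSk1}, which is almost monotone in the radius, converges to $\Irm(T,y)\le 2-\delta_2$ as the radius tends to $0$, and --- crucially --- is uniformly bounded, above and below, on $\Scal$ (the lower bound near $1+\gamma$ is exactly what the restriction to $\flatS_Q^l$ provides). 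The gain of the power $2-2\delta_2$ in the definition \eqref{eq:m_0} of $\boldsymbol{m}_{x,j}$ and in \eqref{e:excessdecay-quantitative-higherI} --- available precisely because $\Irm>1$ --- keeps every center-manifold error term negligible against this frequency.

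The analytic core is then the ``frequency pinching versus flatness'' estimate, exactly as in \cite{DLMSV}: for $y\in\Scal$ and a scale $r$, the $L^2$-averaged distance of $u$ (equivalently of $N_{p,j}$) from the best competitor that is radially homogeneous about $y$ is controlled by the frequency pinching $\mathbf{I}(T,y,\lambda r)-\mathbf{I}(T,y,\lambda^{-1}r)$ for a fixed $\lambda>1$, up to error terms that are summable powers of the scales; as in the Dir-minimizing model, this is the deficit measuring the failure of $u$ to be homogeneous about $y$, which the monotonicity identity for $I_u$ (in its center-manifold form, cf.\ \cite{DLS16centermfld}) equates with the frequency pinching up to controllable errors. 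Running the discrete Reifenberg argument of Naber--Valtorta then gives, for the measure $\mu:=\mathcal H^{m-2}\mres\Scal$ (and for the packing measures appearing in the covering step) and for $\mu$-a.e.\ $x$,
\[
\sum_{j\ge 0}\beta_\mu^2\bigl(x,2^{-j}r\bigr)\le C,\qquad \beta_\mu^2(x,s):=\inf_{L}\ s^{-(m-2)}\!\!\int_{\Bbf_s(x)}\frac{\dist(y,L)^2}{s^2}\,d\mu(y),
\]
the infimum over affine $(m-2)$-planes $L$; the summability is precisely the telescoping of the pinching along the bounded range of values of $\mathbf I$. The discrete Reifenberg theorem of \cite{NV_Annals}, in the quantitative-stratification packaging of \cite{DLMSV}, then yields $\mu(\Bbf_r(p))\le C r^{m-2}$, finiteness of the upper Minkowski content of $\Scal\cap\Bbf_r(p)$, and $(m-2)$-rectifiability of $\Scal$. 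Summing over the countably many pieces and covering $\flatS_Q^l$ by balls completes the proof.

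The main obstacle --- and the reason for the countable subdivision --- lies in the ever-changing center manifolds. Because the intervals of flattening restart infinitely often when $\Irm<2-\delta_2$, the center manifold underlying $\mathbf I$ depends on both the scale and the base point, so the pinching-versus-flatness estimate must be shown to survive the transition between consecutive center manifolds around a fixed point and the comparison between the center manifolds of distinct nearby singular points. Establishing that the $\delta_2$-gains in \eqref{eq:m_0}--\eqref{e:excessdecay-quantitative-higherI} dominate all of these reparametrization errors uniformly in the scale --- so that the $\beta$-numbers above stay summable --- is the technical heart of the argument, and it is exactly this uniformity which at present forces fixing $\gamma$ and $\rho$ and hence decomposing $\flatS_Q^l$ into countably many pieces.
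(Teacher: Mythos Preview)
Your overall strategy matches the paper's: a countable decomposition to gain uniformity in the decay and frequency bounds, a universal frequency function feeding the Naber--Valtorta machinery, and frequency-pinching control of the Jones $\beta$-numbers. There are, however, two substantive points where the paper's argument diverges from yours, and the second is a genuine gap in your outline.

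First, the paper does \emph{not} work with the intervals of flattening $]s_j(p),t_j(p)]$ of \cite{DLS16blowup}. Instead it fixes a small constant $\gamma\in\,]0,\tfrac12]$ (depending only on $m,n,Q,K$) and, for every $x\in\bits$ and every integer $k$, builds a center manifold $\Mcal_{x,k}$ for $T_{x,\gamma^k}$; the universal frequency of Definition \ref{def:univfreq} is then pieced together over the geometric intervals $]\gamma^{k+1},\gamma^k]$, all with the same fixed ratio. This uniformity is what makes the $\beta$-number estimate (Proposition \ref{prop:beta2control-low}) and the comparison Lemma \ref{l:compare_frequency} --- relating $\Ibf(w,s)$ to $\Ibf_{z,k}(\bar w,\gamma^{-k}s)$ for $w$ near $z$ --- expressible at a common scale $\gamma^k$ independent of the base point. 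With the point-dependent intervals $]s_j(p),t_j(p)]$ you propose, two nearby singular points need not share any interval, and the covering arguments of Appendix \ref{s:A2} lose their footing.

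Second --- and this is where your outline has a real gap --- the mechanism that makes both the BV estimate on the universal frequency (Proposition \ref{prop:bv}, controlling the jumps across successive center manifolds) and the comparison Lemma \ref{l:compare_frequency} work is the \emph{lower} bound on the excess, $\Ebf(T,\Bbf_r(x))\ge r^{2\nu}$ with $\nu=1-\delta_2-2^{-K-1}$, which follows from the \emph{upper} bound $\Irm(T,x)\le 2-\delta_2-2^{-K}$ via \cite{DLSk1}*{Corollary 4.3} (recorded here as Corollary \ref{c:excess-lower-bound}). This lower bound does two things: it lets one replace $\max\{\Ebf,\bar\varepsilon^2 t^{2-2\delta_2}\}$ by $\Ebf$ in \eqref{e:rid-of-m0}, and it guarantees (Proposition \ref{p:good-cubes}(b)) that at every scale $\gamma^k$ there is a Whitney cube of side comparable to $\gamma$ in $B_\gamma(0,\pi_{x,k})$ which violates the excess stopping condition. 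It is precisely this ``stopping cube'' that lets one transfer information from $N_{x,k}$ at scale $\gamma$ to $N_{x,k+1}$ at scale $1$ (or to $N_{w,k}$ for a nearby $w$), by the argument of \cite{DLSk1}*{Section 6.2}. Your proposal invokes only the excess decay \eqref{e:excessdecay-quantitative-higherI} and the $\delta_2$-gains in \eqref{eq:m_0}, and your countable decomposition records $\tau_0$ but not the two-sided excess control of Definition \ref{d:bits}; without the lower excess bound you have no handle on the jumps of the universal frequency or on the reparametrization error between center manifolds of distinct nearby points --- the very issue you correctly flag as ``the technical heart of the argument'' but leave unresolved.
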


Note that we are now omitting the implicit dependency on $T$. A few important points will be in common in the proofs of the two theorems. However, one major difference is in how the center manifolds will be handled in the two cases. In both we will construct ``alternative'' center manifolds, following the same algorithm of \cite{DLS16centermfld}, but with a different choice of parameters. After further splitting into countably many pieces to gain uniformity in several parameters, the difference is, roughly speaking, the following. For the set $\flatS_Q^h$ we will locally find a single center manifold passing through all these points, while at each point in $\flatS_Q^l$, we will construct a sequence of center manifolds, each one defined for the interval of scales $]s_k, t_k]$ for which the ratio $\frac{s_k}{t_k}$ is a fixed constant. 

\subsection*{Acknowledgments}
C.D.L. and A.S. acknowledge the support of the National Science Foundation through the grant FRG-1854147.

\part{Rectifiability of high frequency points}\label{part:high}

\section{Reduction to a single center manifold}\label{s:single-cm}

\subsection{Choice of \texorpdfstring{$\delta_3$}{delta3}, second subdivision and new center manifold} Suppose that $T$ is as in Assumption \ref{asm:1} and suppose that $x\in\flatS^h_Q$. We start by introducing a parameter $\delta_3$ which is slightly larger than $\delta_2$ as prescribed in \cite{DLS16centermfld}*{Assumption 1.8}, while the remaining parameter $\beta_2$ will obey the same relation $\beta_2=4\delta_2$. The choice of $\delta_2$ within \cite{DLS16centermfld}*{Assumption 1.8} is engineered so that a finite number of strict inequalities involving the dimension $m$ and the parameters $\gamma_1$ and $\beta_2$ hold. These inequalities are then used to show that the estimates in 
\cite{DLS16centermfld}*{Theorem 1.17 \& Theorem 2.4} hold for the positive exponents $\kappa$ and $\gamma_2$ therein, which determine the regularity of the center manifold and the corresponding normal approximation. Decreasing $\delta_3$ to $\delta_2$ will then just make these exponents smaller. Likewise, \cite{DLS16centermfld}*{Proposition 3.4, Proposition 3.5, Proposition 3.6 \& Proposition 3.7} will hold with slightly changed values of the constants involved in the estimates. 

Now choose $\mu$ with the property that $1-\delta_3 < \mu < 1-\delta_2 \leq \Irm(T,x) -1$ and let us invoke Proposition \ref{prop:excess-decay} for this choice of $\mu$. Observe in particular that for every point $x\in \flatS_Q^h$ the decay \eqref{e:excessdecay-quantitative-higherI} holds with a constant $C$ which is now fixed for all radii $r<r_0 (x)$. In particular, for a small positive constant $\tilde\eps$, to be specified later, $\flatS_Q^h$ can be subdivided into a countable union of sets $\Sbf_j$, defined by
\begin{equation}
\mathbf{S}_j := \{p\in \flatS_Q^h : \mbox{$\Ebf(T,\Bbf_{j^{-1}}(p)) \leq \tilde\eps^2$ and \eqref{e:excessdecay-quantitative-higherI} holds in $\Bbf_r(p) \ \forall r<j^{-1}$}\}
\end{equation}
Clearly, we have $\mathbf{S}_j \subset \mathbf{S}_{j+1}$, and thus Theorem \ref{thm:main-high} is reduced to proving $(m-2)$-rectifiability for each $\Sbf_j$ with $j$ large enough. This will be accomplished in the following statement.

\begin{theorem}\label{t:main-quantitative} There exists $\tilde\eps(Q,m,n) > 0$ such that the following holds. Suppose that $T$ is as in Assumption \ref{asm:1}, let $j$ be sufficiently large and let $p\in \mathbf{S}_j$. Let $\delta_3> 0$ be as described above, set $r_0:= \frac{1}{6\sqrt{m}j}$ and define $\boldsymbol{m}_0 := \max \{\mathbf{E} (T_{p,r_0}, \Bbf_{6\sqrt{m}}), \bar\varepsilon r_0^{2-2\delta_2}\} \leq \tilde\eps^2$. Then
\begin{itemize}
\item[(i)] $T_{p, r_0}$ satisfies {\cite{DLS16blowup}*{Assumption 2.1} and there is a single interval of flattening $(0,1]$ with a corresponding} center manifold $\mathcal{M}_0$ that is constructed using the parameter $\delta_3$ in place of $\delta_2$, and $\boldsymbol{m}_0$ as defined above. 
\item[(ii)] The rescaling $\iota_{p, r_0} (\mathbf{S}_j) \cap \overline{\Bbf}_{6\sqrt{m}}$ is contained in $\mathcal{M}_0$ and therefore so is its closure $\mathbf{S}$; 
\item[(iii)] $\mathbf{S}$ is $(m-2)$-rectifiable and has the $(m-2)$-dimensional Minkowski {content bounds
\begin{align}
    \mathcal{H}^m (\Bbf_r (\Sbf)\cap \mathcal{M}_0) &\leq C r^2\, ,\label{e:Minkowski-high-CM}\\
    \mathcal{H}^{m+\bar n} (\Bbf_r(\Sbf)\cap \Sigma) &\leq C r^{\bar n + 2},\label{e:Minkowski-high}\\
    |\Bbf_r (\Sbf)|=\mathcal{H}^{m+n} (\Bbf_r (\Sbf)) &\leq C r^{n+2}\, ,\label{e:Minkowski-high-ext}
\end{align}}
for a positive constant $C=C(m,n,\bar n,T,j)$.
\end{itemize} 
\end{theorem}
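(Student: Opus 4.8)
The plan is to follow the Naber--Valtorta scheme as adapted to multivalued Dir-minimizers in \cite{DLMSV}, but carried out on a \emph{single} center manifold $\Mcal_0$ and with the frequency function $I$ of the normal approximation playing the role of Almgren's frequency for Dir-minimizers. The first task, items (i) and (ii), is to construct this center manifold and verify that all nearby points of $\Sbf_j$ lie on it. To do this, I would rescale so that $p=0$, $r_0$ becomes the unit scale, and run the center-manifold algorithm of \cite{DLS16centermfld} with the modified parameter $\delta_3$ (slightly larger than $\delta_2$) and the tilted-excess bound $\boldsymbol m_0$ from \eqref{eq:m_0}. Because $x\in\Sbf_j$ carries the \emph{uniform} excess-decay estimate \eqref{e:excessdecay-quantitative-higherI} with exponent $2\mu>2(1-\delta_3)$ and a fixed constant, the excess at scale $r$ around $x$ is controlled by $\boldsymbol m_0 r^{2-2\delta_3}$ for all $r<1$; this is precisely the stopping condition that prevents the center-manifold refinement procedure from ever stopping, so there is a single interval of flattening $(0,1]$ and a single $\Mcal_0$. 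Then one argues, exactly as in \cite{DLS16blowup}*{Section 3} and \cite{DLMSV}, that any flat singular point $q$ with density $Q$ and the same decay must be a point where the normal approximation $N$ vanishes at infinite order and where the sheets of $T$ collapse onto $\Mcal_0$; hence $q\in\Mcal_0$, giving (ii), and the closure $\Sbf\subset\Mcal_0$ as well.

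For item (iii), the heart of the matter, I would transfer the Naber--Valtorta machinery for the singular strata of Dir-minimizers to the present setting. The key object is the frequency function $r\mapsto I_{N}(q,r)$ (in the smoothed form with cutoff $\phi$ from \eqref{e:phi}), defined for $q\in\Sbf$ and $r$ up to a fixed scale; by Theorem \ref{thm:uniquefv} its limit as $r\downarrow0$ equals $\Irm(T,q)\ge 2-\delta_2$, and by the estimates of \cite{DLSk1} combined with \cite{DLS16centermfld}*{Section 3} and \cite{DLS16blowup} it is \emph{almost monotone}: $I_{N}(q,r)$ differs from a monotone function by an error that is a power of $r$ times $\boldsymbol m_0$. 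The standard ingredients then are: (a) a \emph{quantitative rigidity / cone-splitting} statement — if $I_N$ is nearly constant over a long range of scales around a point, the normal approximation is close to a homogeneous Dir-minimizer, and if this happens at $Q$-density singular points spanning an $(m-1)$-dimensional direction, then $T$ (equivalently $N$) is invariant in that direction, forcing the points into an $(m-2)$-plane; (b) the \emph{$L^2$-best-approximation estimate} bounding, at each scale, the $L^2$-distance of $\Sbf$ to the best-fitting $(m-2)$-plane by the frequency pinching $\sum_j (I_N(q,r_j)-I_N(q,r_{j+1}))$ summed over a dyadic family; and (c) the Naber--Valtorta \emph{covering/packing argument}, which iterates (b) down the scales, using that the total frequency drop is bounded (by the gap between $I_N$ at the top scale and $\Irm\ge 2-\delta_2$ at the bottom), to produce the uniform packing estimate $N(\Sbf\cap\Bbf_1, r)\lesssim r^{-(m-2)}$ for the covering number. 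Rectifiability of $\Sbf$ then follows from the rectifiable Reifenberg theorem of \cite{NV_Annals,DLMSV}, and the Minkowski bound $\Hcal^m(\Bbf_r(\Sbf)\cap\Mcal_0)\le Cr^2$ is an immediate consequence of the same packing estimate together with $\Mcal_0$ being an $m$-dimensional $C^{3,\kappa}$ graph; the bounds \eqref{e:Minkowski-high} and \eqref{e:Minkowski-high-ext} follow by thickening $\Mcal_0\subset\Sigma\subset\R^{m+n}$ and using the graphicality of $\Sigma$ over $\Mcal_0$ with controlled geometry, contributing the extra factors $r^{\bar n}$ and $r^{n-m}$ respectively.

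The steps I expect to be routine (given the cited machinery) are the center-manifold construction with modified parameters and the final Reifenberg/Minkowski bookkeeping. The \textbf{main obstacle} is establishing the two analytic estimates that feed the Naber--Valtorta iteration in a form uniform over $\Sbf$: namely the almost-monotonicity of $I_N$ with an \emph{effectively small} error, and the $L^2$-best-approximation (``pinching controls flatness'') estimate. In the Dir-minimizer case of \cite{DLMSV} these rely on the clean variational identities for $D_u$, $H_u$, $I_u$; here one must absorb the errors coming from (i) the curvature of $\Sigma$ and of $\Mcal_0$, (ii) the discrepancy between $T$ and the graph of $N$ over $\Mcal_0$ (the ``splitting-before-tilting'' and related errors of \cite{DLS16blowup}), and (iii) the fact that $N$ is only an \emph{approximate} Dir-minimizer. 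The uniform excess decay \eqref{e:excessdecay-quantitative-higherI} available on $\Sbf_j$ is exactly what makes all these errors summable at a fixed geometric rate, so the technical work is to track constants carefully and show they depend only on $m,n,\bar n, Q$ (and on $j$ through $r_0$), which is what the statement asserts; the compactness Remark \ref{r:diag-blowup}, allowing varying currents and varying centers, is the tool that lets one run the contradiction arguments underlying the quantitative rigidity step.
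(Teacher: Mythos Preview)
Your overall strategy is correct and matches the paper's architecture: parts (i)--(ii) are handled exactly as you describe, via the uniform excess decay \eqref{e:excessdecay-quantitative-higherI} forcing every candidate Whitney cube to survive the stopping test, so that $(0,1]$ is a single interval of flattening and $\Sbf\subset\mathbf{\Phi}(\mathbf{\Gamma})\subset\Mcal_0$. Your identification of the analytic core---almost-monotonicity of $\Ibf_N$ with summable error and the $\beta_2$-by-pinching estimate---is also accurate; these are precisely Lemma~\ref{lem:firstvar}, Lemma~\ref{lem:simplify}, and Proposition~\ref{prop:beta2control} in the paper.

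There is one genuine methodological difference in part (iii). You propose the full Naber--Valtorta package: covering/packing plus the rectifiable Reifenberg theorem, obtaining rectifiability and the Minkowski bound simultaneously. The paper instead \emph{decouples} the two conclusions. For rectifiability it takes a shorter route: assuming $\Sbf$ is not rectifiable, one extracts (via either classical decomposition or Theorem~\ref{t:sigma-finite}) a purely unrectifiable closed subset $F$ with $0<\Hcal^{m-2}(F)<\infty$, puts a Frostman measure $\mu$ on $F$, and then uses Proposition~\ref{prop:beta2control} directly to bound the Jones $\beta_2$-square function $\int_0^t[\beta_{2,\mu}^{m-2}(z,s)]^2\,\frac{ds}{s}$ uniformly; the Azzam--Tolsa theorem \cite{AT15} then forces $\spt\mu$ to be rectifiable, a contradiction. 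This avoids the iterative covering lemmas entirely for the rectifiability statement. The Minkowski bounds \eqref{e:Minkowski-high-CM}--\eqref{e:Minkowski-high-ext} are established separately in the appendix by the covering argument you outline (Lemma~\ref{lem:cover1} and Proposition~\ref{prop:cover2}). Your route is certainly valid and is essentially what the appendix does; the paper's route for rectifiability alone is somewhat cleaner because the Frostman bound $\mu(\Bbf_r)\le r^{m-2}$ replaces the inductive packing estimate.

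One small slip: in your cone-splitting sketch you write ``spanning an $(m-1)$-dimensional direction\ldots forcing the points into an $(m-2)$-plane.'' The correct statement (Lemma~\ref{lem:noQpts}) is that small pinching at $m-1$ points that $\rho r$-span an $(m-2)$-dimensional affine subspace $V$ forces $\Sbf\cap\Bbf_r$ into $\Bbf_{\bar\rho r}(V)$; there is no passage through an $(m-1)$-dimensional object.
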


{\begin{remark}
    The dependence on $T$ of the constant $C$ in \eqref{e:Minkowski-high-CM}-\eqref{e:Minkowski-high-ext} may be more precisely quantified as a dependence on the uniform (in radial and spatial variables) local upper bound $\Lambda$ for the frequency function $\Ibf$ relative to $\Mcal_0$ (see Section \ref{ss:freq}).
\end{remark}}

The aim of this section is to prove the conclusion (i) and (ii) in Theorem \ref{t:main-quantitative}. But indeed we will prove a stronger form of (ii), namely that $\mathbf{S}$ belongs to what is called the ``contact set'' (denoted by $\mathbf{\Phi}(\mathbf{\Gamma})$; see \cite{DLS16centermfld}) between $\mathcal{M}_0$ and $T_{p,r_0}$. Points in the latter set enjoy better properties than a generic point in $\mathcal{M}_0\cap \spt (T_{p,r_0})$. More precisely, the $\mathcal{M}$-normal approximation $N$ is ``almost'' Dir-minimizing at all scales around such points, and as a consequence a series of important integral identities are valid up to small errors. These facts play a pivotal role in the rest of the paper, dedicated to prove (iii), and therefore we will record them below in Lemma \ref{lem:firstvar}. In fact we will need a suitable refinement, namely that these identities are valid at any point $x\in \mathcal{M}_0$ and at any scale that is larger than a small scale comparable to the distance of $x$ to $\mathbf{S}$, cf. Lemma \ref{lem:firstvar2}. 

Let us now show how Proposition \ref{prop:excess-decay} yields the result of Theorem \ref{t:main-quantitative}(i).

\subsection{Proof of Theorem \texorpdfstring{\ref{t:main-quantitative}(i)}{quantitative(i)} and of  \texorpdfstring{$\mathbf{S}\subset \mathbf{\Phi}(\mathbf{\Gamma}) \subset \mathcal{M}_0$}{SsubsetM}}
 
 Let $p\in \flatS_Q^h$ and let $r_0 \coloneqq \frac{1}{6\sqrt{m}j}$. First of all observe that, by Proposition \ref{prop:excess-decay}, given $\eta>0$, if $j$ is chosen large enough (possibly larger than that corresponding to the scale where Proposition \ref{prop:excess-decay} applies), then 
 \begin{equation}\label{e:arb-small-decay-const}
    \mathbf{E} (T_{p,r_0}, \mathbf{B}_{6\sqrt{m} r}) < \eta r^{2-2\delta_2} \quad \text{for every $r\leq 1$.}
\end{equation}
In particular, {for $\eta$ sufficiently small, $T_{p,r_0}$ satisfies \cite{DLS16blowup}*{Assumption 2.1} at scale $r=1$, where we are taking $\delta_3$ in place of $\delta_2$ in the construction of the center manifolds as in \cite{DLS16centermfld}}. Let now $\pi_0$ be the reference plane which is used to apply the algorithm in \cite{DLS16centermfld}*{Section 1} and construct the center manifold $\mathcal{M}_0$. Moreover, consider any cube $L\in \mathscr{C}$ {with $\ell(L) \geq c_s \dist(0,L)$}, for the family $\mathscr{C}$ of dyadic cubes of $\pi_0$ defined in \cite{DLS16centermfld}*{Section 1} and the constant $c_s$ in \cite{DLS16blowup}*{(2.5)}. Obviously, if $\eta$ is chosen sufficiently small, $\mathbf{E} (T_{p,r_0}, \mathbf{B}_L) < C_e \boldsymbol{m}_0 \ell (L)^{2-2\delta_3}$ and in particular $L$ cannot belong to $\mathscr{W}^e$. On the other hand, by \cite{DLS16centermfld}*{Proposition 3.1}, it also cannot belong to $\mathscr{W}^h$. Thus, by \cite{DLS16blowup}*{Proposition 2.2(iii)}, $L$ cannot belong to $\mathscr{W}^n$ either. It follows therefore that the condition \cite{DLS16blowup}*{(Stop), Section 2.1} is never met, and hence $s_0=0$. The above choice of $\eta$ in turn determines how large $j$ must be. This therefore implies that for $j$ sufficiently large and $p\in\Sbf_j$, the origin must lie in $\mathbf{\Phi}(\mathbf{\Gamma})\subset\Mcal_0$ for $T_{p,r_0}$.
 
Fix now a point $q\in \mathbf{S}$ and consider its projection $x= \mathbf{p}_{\pi_0} (q)$. The very same argument implies immediately that $x$ cannot belong to any $L\in \mathscr{W}$ and it is, therefore, a subset of the set $\mathbf{\Gamma}$ of \cite{DLS16centermfld}*{Definition 1.17}. 
\qed

In fact we want to record a stronger consequence of the decay of the excess of Proposition~\ref{prop:excess-decay}, {which is in fact an immediate byproduct of the above proof of the fact that $\Sbf\subset\mathbf\Phi(\mathbf\Gamma)$. Indeed, notice that this merely relies on choosing the parameter $\eta$ in \eqref{e:arb-small-decay-const} smaller (depending on $\bar c_s$).}

\begin{corollary}\label{c:small-cubes}
Let $T$, $p$ be as in Theorem \ref{t:main-quantitative} and assume that $j$ is large enough so that the conclusion (i) of Theorem \ref{t:main-quantitative} applies. For every fixed $\bar{c}_s > 0$, the following is true, after further increasing $j$ if necessary. For every $q\in \mathbf{S}$ and for every $r\leq 1$, every cube $L$ which intersects $B_r (q, \pi_0)$ satisfies $\ell (L) \leq \bar{c}_s r$. 
\end{corollary}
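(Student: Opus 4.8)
The plan is to show Corollary \ref{c:small-cubes} as a quantitative sharpening of the argument in the preceding proof of $\mathbf{S}\subset\mathbf{\Phi}(\mathbf{\Gamma})$, where the key point is simply to choose the smallness parameter $\eta$ in \eqref{e:arb-small-decay-const} even smaller, now depending on the prescribed constant $\bar c_s$. Recall that in the Whitney decomposition $\mathscr{C}$ of $\pi_0$ used to construct $\mathcal{M}_0$ as in \cite{DLS16centermfld}*{Section 1}, the stopping conditions \cite{DLS16centermfld}*{(Stop)} together with \cite{DLS16blowup}*{(Stop), Section 2.1} force any cube $L\in\mathscr{C}$ with side-length too large relative to its distance to a contact point to be refined. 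Concretely, the refinement criteria \cite{DLS16centermfld}*{(EX)}, \cite{DLS16centermfld}*{(HT)} and \cite{DLS16blowup}*{(NN)} subdivide a cube $L$ whenever the excess $\mathbf{E}(T_{p,r_0},\mathbf{B}_L)$ exceeds $C_e\boldsymbol{m}_0\ell(L)^{2-2\delta_3}$ (or the analogous height/non-containment thresholds fail). The excess over a ball $\mathbf{B}_L$ centered at a point near $q\in\mathbf{S}$ is controlled by $\mathbf{E}(T_{p,r_0},\mathbf{B}_{6\sqrt m r})$ for $r\sim\ell(L)+\dist(L,q)$, and by \eqref{e:arb-small-decay-const} this is at most $\eta\, r^{2-2\delta_2}$.

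First I would fix $\bar c_s>0$ and let $q\in\mathbf{S}$ and $r\le 1$; suppose $L\in\mathscr{C}$ is a cube of the decomposition with $L\cap B_r(q,\pi_0)\ne\emptyset$. If $\ell(L)>\bar c_s r$ then, since $L\cap B_r(q,\pi_0)\ne\emptyset$, the ball $\mathbf{B}_L$ is contained in $\mathbf{B}_{6\sqrt m \rho}(q)$ for $\rho\le C(\ell(L)+r)\le C(1+\bar c_s^{-1})\ell(L)$, so by \eqref{e:arb-small-decay-const},
\[
\mathbf{E}(T_{p,r_0},\mathbf{B}_L)\le \eta\,\bigl(C(1+\bar c_s^{-1})\bigr)^{2-2\delta_2}\ell(L)^{2-2\delta_2}\le \eta\, C(\bar c_s)\,\ell(L)^{2-2\delta_3},
\]
the last step using $\ell(L)\le 1$ and $\delta_3>\delta_2$. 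Choosing $\eta=\eta(\bar c_s)$ small enough that $\eta\,C(\bar c_s)<C_e\boldsymbol{m}_0/\boldsymbol{m}_0 = C_e$ — more precisely, small enough that the right-hand side is below $C_e\boldsymbol{m}_0\ell(L)^{2-2\delta_3}$ using $\boldsymbol{m}_0\ge\bar\varepsilon^2 r_0^{2-2\delta_2}$ is not needed since we only need the bound to hold with $\eta$ absorbing constants — one concludes that $L$ violates the refinement criterion \cite{DLS16centermfld}*{(EX)}. As in the proof above, \cite{DLS16centermfld}*{Proposition 3.1} rules out $L\in\mathscr{W}^h$ and then \cite{DLS16blowup}*{Proposition 2.2(iii)} rules out $L\in\mathscr{W}^n$; hence $L\notin\mathscr{W}$, i.e. $L$ is subdivided, contradicting $L\in\mathscr{C}$ being a cube of the final decomposition (or, in the refinement tree language, contradicting maximality). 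Therefore $\ell(L)\le\bar c_s r$, as claimed. Finally, tracing back, the smallness of $\eta$ needed here translates (as in the proof of Theorem \ref{t:main-quantitative}(i)) into a lower bound on how large $j$ must be taken, which is why the statement allows further increasing $j$.

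The only subtlety — and the step I would be most careful about — is the bookkeeping of which ball and which cube one compares: one must make sure that a cube $L$ meeting $B_r(q,\pi_0)$ of comparatively large side-length genuinely sits inside a controlled multiple of $\mathbf{B}_{6\sqrt m \ell(L)}(q)$ so that the decay estimate \eqref{e:arb-small-decay-const} applies with the correct power of $\ell(L)$, and that the constant $C(\bar c_s)$ picked up is independent of $L$, $q$, $r$ and $p$. This is routine given the geometry of dyadic cubes in $\pi_0$ and the fact that $q$ projects into $\mathbf{\Gamma}$ (so that $0$-side cubes accumulate at $\mathbf{p}_{\pi_0}(q)$), but it is the place where a careless choice of comparison radius would break the argument. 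Everything else is a direct re-run of the previous proof with $\eta$ replaced by a smaller, $\bar c_s$-dependent value.
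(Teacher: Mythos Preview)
Your approach is correct and is exactly the one the paper takes: the Corollary is just the proof of $\mathbf{S}\subset\mathbf{\Phi}(\mathbf{\Gamma})$ re-run with the smallness parameter $\eta$ in \eqref{e:arb-small-decay-const} chosen smaller, now depending on $\bar c_s$, which in turn forces $j$ larger. Two small imprecisions to clean up: the contradiction is with $L\in\mathscr{W}$ (the stopping cubes), not $L\in\mathscr{C}$ (all dyadic cubes); and the comparison $\eta\,C(\bar c_s)<C_e\boldsymbol{m}_0$ is cleanest if you invoke Proposition~\ref{prop:excess-decay} in the form $\mathbf{E}(T_{p,r_0},\mathbf{B}_\rho(q))\le C\rho^{2\mu}\boldsymbol{m}_0$ (so that the $\boldsymbol{m}_0$'s cancel and you only need $C(\bar c_s)\,\ell(L)^{2\mu-(2-2\delta_3)}<C_e$), rather than via the $\eta$-form where you have to track a lower bound on $\boldsymbol{m}_0$.
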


\subsection{Frequency function, almost-monotonicity and frequency lower bound}\label{ss:freq} From now on we will fix $\bar{c}_s$ arbitrarily (to be determined later; cf. Lemma \ref{lem:firstvar2}) and assume that $j\in \N$ is fixed large enough such that the conclusions of Corollary \ref{c:small-cubes} hold, that $0 \in \Sbf_j$, and we will use the notation $\mathcal{M}$ and $N$ for the center manifold $\mathcal{M}_0$ and the normal approximation $N_0$ (for the current $T_{0, r_0}$, with $r_0=\frac{1}{6\sqrt{m}j}$, on the interval of flattening $]0, 1]$). In light of the above reasoning, we will henceforth work under the assumption that Theorem \ref{t:main-quantitative}(i) applies and that $\mathbf{S}\subset \boldsymbol{\Phi} (\boldsymbol{\Gamma}) \subset \mathcal{M}$.

We can now introduce the pivotal object of our analysis, the (regularized) frequency function for any given $\Mcal$-normal approximation $N$ of $T$ as in Assumption~\ref{asm:1}. Let $\phi$ be defined as above and let $d:\Mcal \times \Mcal \to \R^+$ be the geodesic distance on $\Mcal$. We will repeatedly use the following properties of $d$, which are consequences of the $C^{3,\kappa}$-estimates on the center manifold $\Mcal$ (we refer the reader to \cite{DLS16centermfld} and \cite{DLDPHM}):
\begin{enumerate}[(i)]
	\item $d(x,y) = |x-y| + O\left(\boldsymbol{m}_0^{\frac{1}{2}} |x-y|^2\right)$, 
	\item $|\nabla_y d(x,y)| = 1 + O\left(\boldsymbol{m}_0^{\frac{1}{2}}d (x,y)\right)$,
	\item $\nabla^2_y (d^2) = g + O(\boldsymbol{m}_0 d)$, where $g$ is the metric induced on $\mathcal{M}$ by the Euclidean ambient metric.
\end{enumerate}
For $x\in \Mcal$ and $r\in ]0,1]$, we then introduce the following functions:
\begin{align*}
\Dbf(x,r) &\coloneqq \int_\Mcal |DN|^2 \phi\left(\frac{d(y, x)}{r}\right) \dd y\, , \\
\Hbf(x,r) &\coloneqq - \int_\Mcal \frac{|\nabla_y d(y, x)|^2}{d(y, x)} |N|^2\phi'\left(\frac{d(y, x)}{r}\right) \dd y\, \\
\Ibf(x,r) &\coloneqq \frac{r \Dbf(x,r)}{\Hbf(x,r)}\, .
\end{align*}
Note that we will often omit the implicit dependency on $N$ of $\Ibf$ and related quantities, since we are considering one single fixed normal approximation $N$ throughout. When it is necessary to highlight such dependence, we will write $\Ibf_N$, $\Dbf_N$ and $\Hbf_N$. We refer the reader to~\cite{DLS16blowup} or~\cite{DLDPHM} for more details on the above quantities. Moreover, since in practically all the computations the derivative of $d$ is taken in the variable which is the same as the integration variable, in all such cases we will write instead $\nabla d$.

We moreover define 
\begin{align*}
	\Ebf(x,r) &\coloneqq -\frac{1}{r} \int_{\Mcal} \phi'\left(\frac{d(x, y)}{r}\right)\sum_i N_i(y)\cdot \left(DN_i(y)\nabla d(x, y)\right) \dd y\, , \\
	\Gbf(x,r) &\coloneqq -\frac{1}{r^2} \int_{\Mcal} \phi'\left(\frac{d(x, y)}{r}\right) \frac{d(x, y)}{|\nabla d(x, y)|^2} \sum_i |DN_i(y)\nabla d(x, y)|^2 \dd y\, , \\
	\mathbf{\Sigma}(x,r) &\coloneqq \int_{\Mcal} \phi\left(\frac{d(x, y)}{r}\right)|N(y)|^2\dd y\, .
\end{align*}

{\begin{remark}
We will use a different convention concerning derivatives of the maps $N_i$ to that in \cite{DLMSV}, for instance. Therein, the authors use the convention $v \cdot DN_i$ for the directional derivative of the map $N_i$ in direction $v$ (where $N_i$ is regarded as a map taking values into $\mathbb R^{m+n}$ and the derivative is understood as the vector consisting of the directional derivatives of each component of $N_i$). In this paper we will instead stick to the convention $\partial_v N_i$ or $DN_i v$.
\end{remark}}

The first key point is that the variational identities that are pivotal for the almost monotonicity of the frequency function $\mathbf{I}(x,\cdot)$ hold indeed for every $x\in \mathbf{S}$ and for every $r \in ]0,1]$.

\begin{lemma}\label{lem:firstvar}
	There exists $\gamma_4 (m,n,Q) > 0$ sufficiently small and a constant $C (m,n,Q) > 0$ such that the following holds. Suppose that the conclusions of the previous sections apply to $T_{0,r_0}$, $\Mcal$ and $N$ and that $\tilde\eps$ in Theorem \ref{t:main-quantitative} is sufficiently small. Then for any $x \in \mathbf{\Phi} (\mathbf{\Gamma})$ and any $r \in ]0, 1]$, we have the following identities
	\begin{align}
		&\partial_r \Dbf(x,r) = - \int_{\Mcal} \phi'\left(\frac{d(x, y)}{r}\right) \frac{d(x, y)}{r^2} |DN(y)|^2 \ \dd y \label{eq:firstvar1} \\
		&\partial_r \Hbf(x,r) - \frac{m-1}{r} \Hbf (x,r) = O(\boldsymbol{m}_0) \Hbf (x,r) + 2 \Ebf(x,r), \label{eq:firstvar2}\\
		&|\Dbf(x,r) - \Ebf(x,r)| \leq \sum_{j=1}^5 |\Err_j^o| \leq C\boldsymbol{m}_0^{\gamma_4}\Dbf(x,r)^{1+\gamma_4} + C\boldsymbol{m}_0\mathbf{\Sigma}(x,r), \label{eq:firstvar3}\\
		&\left|\partial_r \Dbf(x,r)  - (m-2) r^{-1} \Dbf (x,r)- 2\Gbf(x,r)\right| \leq 2 \sum_{j=1}^5 |\Err_j^i|  + C \boldsymbol{m}_0\Dbf(x,r) \\
		&\qquad \leq Cr^{-1}\boldsymbol{m}_0^{\gamma_4}\Dbf(x,r)^{1+\gamma_4} + C\boldsymbol{m}_0^{\gamma_4}\Dbf(x,r)^{\gamma_4}\partial_r \Dbf(x,r) +C\boldsymbol{m}_0 \Dbf (x,r),\notag
	\end{align}
	where $\Err_j^o$ and $\Err_j^i$ are as in~\cite{DLDPHM}*{Proposition~9.8,~Proposition~9.9}.
\end{lemma}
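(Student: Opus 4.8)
The identities in Lemma~\ref{lem:firstvar} are exactly the variational identities that in the papers \cites{DLS16blowup,DLDPHM} are derived at the center $x=0$ under the blow-up Assumption~\ref{asm:1}, but we now need them at every contact point $x\in\mathbf\Phi(\mathbf\Gamma)$ and every scale $r\in]0,1]$. The overall strategy is to reduce to those known computations by exploiting two facts established in the previous subsections: first, that $\mathbf\Phi(\mathbf\Gamma)$ consists of contact points, so the center manifold is a genuinely good approximation of $T$ (of second order) in a whole neighborhood of $x$, not just at scale $1$; and second—this is the content of Corollary~\ref{c:small-cubes}—that for $q\in\mathbf S$ and any scale $r$, all cubes of the Whitney decomposition meeting $B_r(q,\pi_0)$ have side length $\le \bar c_s r$, so that the normal approximation $N$ and the estimates on it from \cite{DLS16blowup}*{Theorem~2.4} are available at scale $r$ around $x=\mathbf p_{\pi_0}(q)$ with the sub-optimal gain encoded by the exponents $\gamma_2,\kappa$ of the center manifold built with parameter $\delta_3$. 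I would first record these two inputs precisely (in the form: for $x\in\mathbf\Phi(\mathbf\Gamma)$ and $r\le1$, the separation-of-scales estimates $\|N\|_{C^0(\mathcal B_r(x))}\le C\boldsymbol m_0^{1/(2m)}r^{1+\beta_2}$, the Dirichlet energy bound $\int_{\mathcal B_r(x)}|DN|^2\le C\boldsymbol m_0 r^{m+2-2\delta_3}$, etc., and the $C^{3,\kappa}$-estimates $(i)$--$(iii)$ on the geodesic distance $d$ already listed).

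\textbf{Derivation of the four identities.} With these inputs in place, \eqref{eq:firstvar1} is a pure calculus identity: differentiating $\Dbf(x,r)=\int|DN|^2\phi(d(y,x)/r)\,dy$ in $r$ and using $\partial_r[\phi(d/r)]=-\phi'(d/r)\,d/r^2$ gives it with no error term, exactly as in \cite{DLDPHM}*{Section~9}. For \eqref{eq:firstvar2} I would follow the outer-variation computation: test the stationarity of $T$ (area-minimizing, hence stationary) in $\Sigma$ against the vector field obtained by multiplying the normal component $N$ by the cutoff $\phi(d(\cdot,x)/r)$ and pushing it forward to $\Sigma$; the principal term produces $\partial_r\Hbf-\frac{m-1}{r}\Hbf-2\Ebf$, while the curvature of $\mathcal M$, the curvature of $\Sigma$, the error $T-\mathbf T_N$ between the current and the graph of $N$, and the Taylor errors in $(i)$--$(iii)$ all contribute terms controlled by $O(\boldsymbol m_0)\Hbf$—here one uses $\Abf\le C_0\bar\eps$ and $\boldsymbol m_0\le\tilde\eps^2$ to absorb, and crucially the fact that $x$ is a contact point, so the center manifold agrees with the current to second order near $x$ and no additional "tilting" error appears. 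The two inner-variation identities \eqref{eq:firstvar3} and the last displayed inequality are the most delicate: they are obtained by testing inner (domain) variations of $N$ against $\phi(d(\cdot,x)/r)$, comparing with the corresponding identity for a genuine Dir-minimizer, and estimating the five error terms $\Err^o_j$ and $\Err^i_j$ of \cite{DLDPHM}*{Propositions~9.8 and~9.9}. These errors are of the schematic form (second fundamental form of $\mathcal M$)$\times$(energy), (current-vs-graph error), (higher-order terms in $|N|,|DN|$), and (cutoff/metric Taylor errors); each is bounded, using the separation-of-scales estimates and a Hölder/interpolation argument exactly as in \cite{DLMSV}, by $C\boldsymbol m_0^{\gamma_4}\Dbf^{1+\gamma_4}+C\boldsymbol m_0\mathbf\Sigma$ (respectively its scale-derivative analogue with the extra $\Dbf^{\gamma_4}\partial_r\Dbf$ term), for a small $\gamma_4(m,n,Q)>0$ coming from the positive center-manifold exponents.

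\textbf{Main obstacle.} The genuinely new point—and where I expect the bulk of the work to lie—is not the algebra of the variational identities, which is essentially transcribed from \cite{DLDPHM}, but verifying that \emph{the same error estimates hold uniformly over all $x\in\mathbf\Phi(\mathbf\Gamma)$ and all $r\in]0,1]$}, rather than only at the origin at scale one. This requires: (a) that the Whitney-cube estimate of Corollary~\ref{c:small-cubes} be translated into the statement that, after the rescaling $\iota_{x,r}$, the pair (current, center manifold) near $x$ at scale $r$ still satisfies the hypotheses of \cite{DLS16blowup}*{Assumption~2.1} and hence carries the estimates of \cite{DLS16blowup}*{Theorem~2.4} with constants depending only on $m,n,Q$; (b) that the slightly degraded exponents resulting from using $\delta_3>\delta_2$ in the center-manifold construction (discussed in Section~\ref{s:single-cm}) still leave $\gamma_2,\kappa>0$ and hence a genuine $\gamma_4>0$; and (c) a careful check that the $O(\boldsymbol m_0)$ and $O(\boldsymbol m_0^{\gamma_4})$ constants absorbing the curvature of $\Sigma$, the curvature of $\mathcal M$, and the Taylor remainders in $(i)$--$(iii)$ are indeed independent of $x$ and $r$—this is where the choice of $\tilde\eps$ small (hence $\boldsymbol m_0$ small, hence $\bar\eps$ small via $\boldsymbol c(\Sigma_{x,t_j})\le\boldsymbol m_0$) enters. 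Once this uniformity is in hand, the four identities follow by the computations of \cite{DLDPHM}*{Section~9} applied verbatim, translated by $x$ and rescaled by $r$.
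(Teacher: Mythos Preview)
Your proposal is correct and matches the paper's approach: the paper likewise omits a detailed proof and simply observes that the identities are obtained by repeating the arguments of \cite{DLS16blowup}*{Proposition~3.5} and \cite{DLDPHM}*{Propositions~9.5,~9.10}, with the two key observations that (1) the constants can be optimized to depend on explicit powers of $\boldsymbol m_0$, and (2) the validity at all $x\in\mathbf\Phi(\mathbf\Gamma)$ and all scales $r\in]0,1]$ follows from the Whitney side-length control of Theorem~\ref{t:main-quantitative}(i) (equivalently Corollary~\ref{c:small-cubes}). One minor correction: \eqref{eq:firstvar2} is a direct computation on $\mathcal M$ using the metric estimates on $d$ (cf.\ \cite{DLDPHM}*{Proposition~9.5}), while it is \eqref{eq:firstvar3} that comes from the outer variation---but this is only a labeling slip and does not affect your overall plan.
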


We omit the proof of Lemma~\ref{lem:firstvar} here, since it involves a mere repetition of the arguments in the proofs of~\cite{DLS16blowup}*{Proposition~3.5} (see also~\cite{DLDPHM}*{Proposition~9.5,~Proposition~9.10}), combined with the observation that:
\begin{itemize}
\item[(1)] the constants may be optimized to depend on appropriate powers of $\boldsymbol{m}_0$, resulting in the more explicit estimates given above;
\item[(2)] the validity of the estimates on all scales and at all points $x\in \mathbf{\Phi} (\mathbf{\Gamma})$ uses the fact that, for any $q\in \mathbf{\Gamma}$, any Whitney cube $L\in \mathscr{W}$ which intersects the disk $B_r (q, \pi_0)$ has side-length no larger than $c_s r$, where $c_s$ is as in \cite{DLS16blowup}*{(2.5)}; {this property holds by Theorem \ref{t:main-quantitative}(i).}
\end{itemize}

As an immediate consequence, arguing as in \cite{DLSk1}*{Corollary 6.5} we obtain that 
\begin{equation}\label{e:almostmon}
    \frac{d}{dr} \log (1+\mathbf{I} (x,r)) \geq - C \boldsymbol{m}_0^{\gamma_4} \qquad \forall x\in \mathbf{\Phi}(\mathbf{\Gamma}), \ \forall r\in ]0,1]\,,
\end{equation}
for $\gamma_4> 0$ as in Lemma \ref{lem:firstvar} and $C(m,n,Q)>0$. In turn we can exploit the latter monotonicity to obtain the following corollary.

\begin{corollary}\label{cor:uppersemic}
Let $T_{0,r_0}$, $\Mcal$, $N$, $C$, and $\gamma_4$ be as in Lemma \ref{lem:firstvar}. Then for any $x\in \mathbf{\Phi}(\mathbf{\Gamma})$ we have the following:
\begin{itemize}
\item[(i)] $\mathbf{I} (x,0) = \lim_{r\downarrow 0} \mathbf{I} (x,r)$ exists and moreover $x\mapsto \mathbf{I} (x,0)$ is upper semicontinuous;
\item[(ii)] The unique tangent cone to $T$ at $x$ is $Q \llbracket T_x \mathcal{M}\rrbracket$;
\item[(iii)] $x$ is a flat singular point and $\mathbf{I} (x,0)\geq 2-\delta_3$.
\end{itemize}
\end{corollary}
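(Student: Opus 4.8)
The plan is to extract all three conclusions from the almost-monotonicity estimate \eqref{e:almostmon} together with the fine blow-up compactness of Section \ref{ss:compactness} and the uniqueness of the frequency value, Theorem \ref{thm:uniquefv}.

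For part (i), I would first observe that \eqref{e:almostmon} says $r\mapsto \log(1+\mathbf{I}(x,r)) + C\boldsymbol{m}_0^{\gamma_4} r$ is monotone nondecreasing on $]0,1]$; since it is also bounded below (because $\mathbf{I}\geq 0$), the limit $\mathbf{I}(x,0) = \lim_{r\downarrow 0}\mathbf{I}(x,r)$ exists. For upper semicontinuity: fix $x\in\mathbf{\Phi}(\mathbf{\Gamma})$ and a sequence $x_j\to x$ in $\mathbf{\Phi}(\mathbf{\Gamma})$. For any fixed $r\in]0,1]$, the functions $\Dbf(\cdot,r)$ and $\Hbf(\cdot,r)$ depend continuously on the center point (they are integrals against a fixed Lipschitz kernel $\phi(d(\cdot,y)/r)$ with $\Hbf$ bounded away from $0$), so $\mathbf{I}(x_j,r)\to\mathbf{I}(x,r)$. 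Applying \eqref{e:almostmon} at each $x_j$ gives $\mathbf{I}(x_j,0)\leq e^{C\boldsymbol{m}_0^{\gamma_4} r}(1+\mathbf{I}(x_j,r)) - 1$; letting $j\to\infty$ and then $r\downarrow 0$ yields $\limsup_j \mathbf{I}(x_j,0)\leq \mathbf{I}(x,0)$.

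For parts (ii) and (iii), the idea is to identify $\mathbf{I}(x,0)$ with the singularity degree $\Irm(T,x)$ via the fine blow-up. The point $x\in\mathbf{\Phi}(\mathbf{\Gamma})\subset\mathcal{M}$ corresponds, after the rescaling $\iota_{p,r_0}$, to a point of $\mathbf{S}_j\subset\flatS_Q^h$, so it is a flat singular point of $T$ (this also settles the first assertion of (iii)). Running the fine blow-up procedure of Section \ref{ss:compactness} at $x$ along a blow-up sequence $r_k\downarrow 0$ produces a nontrivial Dir-minimizer $u$, and the frequency $\mathbf{I}(x,r)$ of the normal approximation converges, as $r\to 0$ along the relevant scales, to Almgren's frequency $I_u(0)$ of the blow-up — this is the standard identification of the center-manifold frequency with the blow-up frequency, carried out as in \cite{DLSk1} (using that $u_k\to u$ strongly in $W^{1,2}_{\loc}\cap L^2$ and that $\mathbf{h}_k$ controls the relevant ratios). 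By Theorem \ref{thm:uniquefv}, $\Fcal(T,x) = \{\Irm(T,x)\}$, so $\mathbf{I}(x,0) = \Irm(T,x)$. Since $x\in\flatS_Q^h$ means precisely $\Irm(T,x)\geq 2-\delta_2 \geq 2-\delta_3$, we get $\mathbf{I}(x,0)\geq 2-\delta_3$, which is the remaining claim of (iii). Finally, $\Irm(T,x)>1$, so Theorem \ref{thm:uniquefv}(i) gives a unique flat tangent cone; by construction of the center manifold this tangent plane is $T_x\mathcal{M}$, and the multiplicity is $Q$ since $\Theta(T,x)=Q$, proving (ii).

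The main obstacle is the frequency identification step in (ii)--(iii): one must verify carefully that the limit of the regularized center-manifold frequency $\mathbf{I}(x,r)$ as $r\downarrow 0$ agrees with the frequency value $I_u(0)$ of a fine blow-up — there is a subtlety because the center manifold $\mathcal{M}_0$ used here is built with the parameter $\delta_3$ rather than $\delta_2$, and one must confirm that the comparison estimates between $N$, $\bar N_k$ and the blow-up $u$ (the reverse Sobolev inequality, the bounds on $\mathbf{h}_k$, and the control of the error terms $\Err^{o}_j,\Err^{i}_j$) survive this change of parameter. As noted after \eqref{eq:m_0} and in Section \ref{s:single-cm}, this change only affects constants and exponents, so the argument of \cite{DLSk1}*{Section 6} applies essentially verbatim; nonetheless this is where the real content lies, the rest being soft consequences of monotonicity and continuity of the integral quantities in the center variable.
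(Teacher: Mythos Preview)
Your argument for (i) is correct and matches the paper's reasoning.

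For (ii) and (iii), however, there is a genuine gap. You write that ``$x\in\mathbf{\Phi}(\mathbf{\Gamma})\subset\mathcal{M}$ corresponds, after the rescaling $\iota_{p,r_0}$, to a point of $\mathbf{S}_j\subset\flatS_Q^h$'', but this reverses the inclusion that has actually been established. What Theorem~\ref{t:main-quantitative}(ii) and the paragraph proving $\mathbf{S}\subset\mathbf{\Phi}(\mathbf{\Gamma})$ give is exactly $\mathbf{S}\subset\mathbf{\Phi}(\mathbf{\Gamma})$, \emph{not} the reverse. The corollary is stated for \emph{every} contact point $x\in\mathbf{\Phi}(\mathbf{\Gamma})$, and a priori such a point need not lie in $\mathbf{S}$: it could be a regular point of $T$, or a flat singular point with singularity degree below $2-\delta_2$. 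Consequently you cannot simply quote $\Irm(T,x)\geq 2-\delta_2$ to get the frequency bound, nor invoke Theorem~\ref{thm:uniquefv}(i) to get uniqueness of the tangent cone, since both presuppose knowing $x\in\flatS_Q^h$.

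The paper proceeds differently. For (ii), the unique flat tangent cone $Q\llbracket T_x\mathcal{M}\rrbracket$ follows directly from the contact-set structure: membership in $\mathbf{\Phi}(\mathbf{\Gamma})$ means no Whitney cube near $\mathbf{p}_{\pi_0}(x)$ is large at any scale, so the excess (and the height of $N$) decays to zero at $x$; this forces the tangent cone without appealing to Theorem~\ref{thm:uniquefv}. For (iii), the paper first rules out regularity by a unique continuation argument: if some $x\in\mathbf{\Phi}(\mathbf{\Gamma})$ were regular, then in a neighborhood $T$ would coincide with $Q\llbracket\mathcal{M}\rrbracket$, and since $\Sing(T)$ has codimension at least $2$ in $\mathcal{M}$ it cannot disconnect, forcing $T=Q\llbracket\mathcal{M}\rrbracket$ everywhere and contradicting the singularity of the origin. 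The lower bound $\mathbf{I}(x,0)\geq 2-\delta_3$ is then obtained not by identifying $\mathbf{I}(x,0)$ with $\Irm(T,x)$ and quoting $x\in\flatS_Q^h$, but by using the excess decay available at every contact point and repeating the argument of \cite{DLSk1}*{Section~9.2}.
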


\begin{proof} Points (i) and (ii) are obvious from the monotonicity of $\log (1 + \mathbf{I} (x,r))$. As for point (iii) a simple unique continuation argument using the fact that the singular set of $T$ has dimension $2$ shows that no point $x\in \mathbf{\Phi} (\mathbf{\Gamma})$ can be a regular point because otherwise in a neighborhood of it the current would just coincide with the center manifold. The estimate on $\mathbf{I} (x,0)$ is instead achieved using the excess decay at $x$ and arguing as in \cite{DLSk1}*{Section 9.2}. 
\end{proof}

We also observe that a simple compactness argument gives a uniform bound for the frequency $\mathbf{I} (x, 4)$ as $x$ varies in $\mathbf{B}_1 \cap \mathcal{M}$. In particular, given the validity of the almost-monotonicity \eqref{e:almostmon} of $r\mapsto \log (1+\mathbf{I}(x,r))$ for $x\in \mathbf{\Phi}(\mathbf{\Gamma})$, we can infer the following upper bound:
\begin{equation}\label{e:bound-Lambda-first}
\mathbf{I} (x, r) \leq \Lambda \qquad \forall x\in \mathbf{\Phi}(\mathbf{\Gamma})\, , \forall r \in ]0, 4]\, . 
\end{equation}  
On the other hand, knowing that $\lim_{r\downarrow 0} \mathbf{I} (x,r) \geq 2-\delta_3$ for all $x\in \mathbf{\Phi} (\mathbf{\Gamma})$, it suffices to choose $\boldsymbol{m}_0$ to be sufficiently small to gain a positive lower bound on $\mathbf{I} (x,r)$ at all scales. In summary, we have just established the following.

\begin{corollary}\label{cor:freqmono}
Let $T_{0,r_0}$, $\Mcal$, $N$, $C$, and $\gamma_4$ be as in Theorem \ref{t:main-quantitative} and let $\tilde\eps$ therein be sufficiently small. 
Then there is a constant $\Lambda>0$ (depending on $T$) such that 
\begin{equation}\label{e:lower-upper}
\frac{3}{2} \leq \mathbf{I} (x,r) \leq \Lambda \qquad \forall x\in \mathbf{\Phi} (\mathbf{\Gamma}), \ \forall r\in ]0,4]\, .
\end{equation}
\end{corollary}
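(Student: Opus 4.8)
The plan is to prove the two inequalities in \eqref{e:lower-upper} separately, in both cases using the almost-monotonicity \eqref{e:almostmon} to transfer information between a single reference scale and all scales $r \in \,]0,4]$.

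For the upper bound, the first step is to establish a uniform bound $\mathbf{I}(x,4) \le \Lambda_0$ for all $x \in \mathbf{\Phi}(\mathbf{\Gamma})$, with $\Lambda_0$ depending only on $T$ (through the center manifold $\mathcal{M}$ of $T_{0,r_0}$). Here I would use that $\mathbf{D}(x,4) \le \int_{\mathcal{M}} |DN|^2 \le C\boldsymbol{m}_0$ uniformly by the center manifold estimates of \cite{DLS16centermfld}, that $x \mapsto \mathbf{D}(x,4)$ and $x \mapsto \mathbf{H}(x,4)$ are continuous (being integrals against Lipschitz-in-$x$ kernels), and that $\mathbf{\Phi}(\mathbf{\Gamma})$ is compact. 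The bound at scale $4$ then reduces to showing $\mathbf{H}(x,4) > 0$ for every $x \in \mathbf{\Phi}(\mathbf{\Gamma})$: compactness upgrades this to a uniform lower bound $\mathbf{H}(x,4) \ge c_0 > 0$, whence $\mathbf{I}(x,4) = 4\,\mathbf{D}(x,4)/\mathbf{H}(x,4) \le 4C\boldsymbol{m}_0/c_0 =: \Lambda_0$. To prove $\mathbf{H}(x,4) > 0$, I would argue by contradiction: if $\mathbf{H}(x,4) = 0$, then $N$ vanishes on the annular region $\{\, 2 < d(\cdot,x) < 4\,\}$ on which $\phi'(d(\cdot,x)/4)$ is supported; since $N$ is almost Dir-minimizing at $x \in \mathbf{\Phi}(\mathbf{\Gamma})$, a unique-continuation argument forces $N \equiv 0$ on $\mathbf{B}_4(x) \cap \mathcal{M}$, in particular near the origin (note $d(0,x) < 2$ since $x \in \mathbf{B}_1$); but then $T_{0,r_0}$ would coincide with $Q\llbracket \mathcal{M}\rrbracket$ near $0$ and hence be regular there, contradicting $0 \in \mathbf{S}_j \subset \flatS_Q(T)$. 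With the bound at scale $4$ in hand, integrating \eqref{e:almostmon} from $r$ to $4$ (and using $\boldsymbol{m}_0 \le \tilde\eps^2 \le 1$) gives $\log(1+\mathbf{I}(x,r)) \le \log(1+\mathbf{I}(x,4)) + 4C\boldsymbol{m}_0^{\gamma_4} \le \log(1+\Lambda_0) + 4C$, so $\mathbf{I}(x,r) \le (1+\Lambda_0)e^{4C} - 1 =: \Lambda$.

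For the lower bound I would combine the two parts of Corollary \ref{cor:uppersemic}: part (i) gives existence of $\mathbf{I}(x,0) = \lim_{\rho \downarrow 0}\mathbf{I}(x,\rho)$, and part (iii) gives $\mathbf{I}(x,0) \ge 2-\delta_3$. Since \eqref{e:almostmon} says that $\rho \mapsto \log(1+\mathbf{I}(x,\rho)) + C\boldsymbol{m}_0^{\gamma_4}\rho$ is non-decreasing, letting $\rho \downarrow 0$ and evaluating at $r \le 4$ yields $\log(1+\mathbf{I}(x,r)) \ge \log(1+\mathbf{I}(x,0)) - 4C\boldsymbol{m}_0^{\gamma_4} \ge \log(3-\delta_3) - 4C\boldsymbol{m}_0^{\gamma_4}$, i.e.\ $\mathbf{I}(x,r) \ge (3-\delta_3)e^{-4C\boldsymbol{m}_0^{\gamma_4}} - 1$. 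As $\delta_3$ is small (so $3-\delta_3 > \tfrac52$), choosing $\tilde\eps$ in Theorem \ref{t:main-quantitative} small enough forces $\boldsymbol{m}_0 \le \tilde\eps^2$ to be small enough that $(3-\delta_3)e^{-4C\boldsymbol{m}_0^{\gamma_4}} \ge \tfrac52$, giving $\mathbf{I}(x,r) \ge \tfrac32$.

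I expect the main obstacle to be the compactness step in the upper bound — specifically the uniform positivity of $\mathbf{H}(\cdot,4)$ on $\mathbf{\Phi}(\mathbf{\Gamma})$. The delicate input is the unique-continuation claim that vanishing of $N$ on an annulus around $x$ forces $N \equiv 0$ near $x$: for genuine Dir-minimizers this follows at once from minimality (the zero map has the same boundary data and strictly less energy), but for the center-manifold normal approximation one only has an almost Dir-minimizing property, so one must invoke a unique-continuation result with error terms, or instead argue that $N \equiv 0$ on an open set makes $T$ regular there and then apply unique continuation for minimal surfaces. Everything else is a routine consequence of \eqref{e:almostmon} together with Corollary \ref{cor:uppersemic} and the center manifold estimates; one should, however, check that \eqref{e:almostmon} (and hence the first-variation identities of Lemma \ref{lem:firstvar} that it rests on) remains valid on the full range $]0,4]$ rather than merely $]0,1]$, which is the case because $\mathcal{M}$ is constructed over a ball in $\pi_0$ of radius much larger than $4$ and the arguments are insensitive to the upper endpoint of the scale interval.
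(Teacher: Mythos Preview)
Your proposal is correct and follows essentially the same approach as the paper: the paper's proof (given in the paragraph immediately preceding the corollary) likewise obtains the upper bound from a compactness argument at scale $4$ combined with the almost-monotonicity \eqref{e:almostmon}, and the lower bound from $\mathbf{I}(x,0)\ge 2-\delta_3$ (Corollary~\ref{cor:uppersemic}(iii)) together with \eqref{e:almostmon} and $\boldsymbol{m}_0$ small. You have simply unpacked the paper's terse phrase ``a simple compactness argument'' into an explicit strategy (uniform bound on $\mathbf{D}(\cdot,4)$, positivity of $\mathbf{H}(\cdot,4)$ via unique continuation, then compactness), and correctly flagged the unique-continuation step for the approximation $N$ as the only place requiring care.
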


\subsection{Almost monotonicity at points \texorpdfstring{$x$}{x} close to \texorpdfstring{$\mathbf{S}$}{S}} In general the estimates of Lemma \ref{lem:firstvar} are not valid at \emph{every} point $x\in \mathcal{M}$ and \emph{every} scale $r$. As remarked, the condition for their validity at scale $r$ is that $B_r (0, \mathbf{p}_{\pi_0} (x))$ does not intersect cubes of $\mathscr{W}$ which have side length larger than $c_s r$. We may ensure that this holds at all scales $r\in]0,1]$ for any point in $\Sbf$, but cannot hope to achieve this at all scales $r\in]0,1]$ at points that are not in the contact set $\mathbf{\Phi}(\mathbf{\Gamma})$. Unfortunately, we need to consider points outside of the contact set when taking spatial variations. However, for an arbitrary small constant $\eta > 0$, we can leverage Corollary \ref{c:small-cubes} to establish the validity of the desired estimates at any given point $x\in \Mcal\cap\Bbf_1$ and every scale $r$ larger than $\eta d (x, \mathbf{S})$ (where, as usual $d (x, \mathbf{S}) = \min \{ d(x,y): y\in \mathbf{S}\}$).

\begin{lemma}\label{lem:firstvar2}
Suppose that the conclusions of the previous sections apply to $T_{0,r_0}$, $\Mcal$ and $N$ with $r_0=\frac{1}{6\sqrt{m}j}$ and that $\tilde\varepsilon$ in Theorem \ref{t:main-quantitative} is sufficiently small. For every fixed $\eta>0$, there exists a choice of $\bar{c}_s$ in Corollary \ref{c:small-cubes} such that if $j$ is larger than the corresponding threshold therein, then all the estimates of Lemma \ref{lem:firstvar} (and hence that of \eqref{e:almostmon} and Corollary \ref{cor:freqmono}) hold for every $x\in \mathcal{M}$ and every $r\in ]\eta d (x, \mathbf{S}),1]$ by possibly adjusting the constants.
\end{lemma}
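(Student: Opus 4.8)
The plan is to reduce the statement to the \emph{cube-size condition} already isolated in the discussion preceding the lemma: the estimates of Lemma \ref{lem:firstvar} are valid at a point $x\in\mathcal{M}$ and a scale $r\in\,]0,1]$ as soon as every Whitney cube $L\in\mathscr{W}$ meeting the disk $B_r(\mathbf{p}_{\pi_0}(x),\pi_0)$ satisfies $\ell(L)\le c_s r$, where $c_s$ is the constant of \cite{DLS16blowup}*{(2.5)}. This is the only property of the decomposition entering the proofs of \cite{DLS16blowup}*{Proposition~3.5} and \cite{DLDPHM}*{Proposition~9.5,~Proposition~9.10}; cf.\ item~(2) in the remarks after Lemma \ref{lem:firstvar}. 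So the first step is to rephrase the claim as: there exist $\bar{c}_s=\bar{c}_s(\eta)$ and a threshold $j_0(\eta)$ such that, for $j\ge j_0(\eta)$, the cube-size condition holds for every $x\in\mathcal{M}$ and every $r\in\,]\eta\,d(x,\mathbf{S}),1]$.

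The second and main step is a geometric reduction to Corollary \ref{c:small-cubes}. Given $x$ and an admissible $r$, choose $q\in\mathbf{S}$ with $d(x,\mathbf{S})=d(x,q)=:\delta$ (possible since $\mathbf{S}$ is a nonempty compact subset of $\mathcal{M}$), and put $w:=\mathbf{p}_{\pi_0}(x)$, $p:=\mathbf{p}_{\pi_0}(q)$. Since $\mathbf{p}_{\pi_0}$ is $1$-Lipschitz and the geodesic distance on $\mathcal{M}$ dominates the Euclidean distance, $|w-p|\le|x-q|\le d(x,q)=\delta<r/\eta$; hence any $L\in\mathscr{W}$ meeting $B_r(w,\pi_0)$ also meets $B_{r+\delta}(p,\pi_0)$, with $r+\delta<(1+\eta^{-1})r$. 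I would then set $\bar{c}_s:=\tfrac{c_s}{2}\cdot\tfrac{\eta}{1+\eta}$ and fix $j\ge j_0(\eta)$, where $j_0(\eta)$ is the threshold produced by Corollary \ref{c:small-cubes} for this value of $\bar{c}_s$. Whenever $(1+\eta^{-1})r\le1$, Corollary \ref{c:small-cubes} applied at $q$ and radius $r+\delta$ gives $\ell(L)\le\bar{c}_s(r+\delta)<\bar{c}_s(1+\eta^{-1})r=\tfrac{c_s}{2}r<c_s r$, i.e.\ the cube-size condition, with room to spare. This already settles all scales $r\le\eta(1+\eta)^{-1}$, which in particular covers every scale actually used later, where $x$ ranges in a fixed neighbourhood of $\mathbf{S}$ so that $\delta$, and with it the admissible $r$, is small.

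For the complementary bounded range $r\in\,]\eta(1+\eta)^{-1},1]$ — non-empty for a given $x$ only when $\delta<\eta^{-1}$, i.e.\ when $d(x,\mathbf{S})$ is bounded by a fixed constant — I would argue in exactly the same way, invoking Corollary \ref{c:small-cubes} at the correspondingly enlarged but still bounded radius $r+\delta\le 1+\eta^{-1}$. The only point to check here is that Corollary \ref{c:small-cubes} remains valid at radii up to any fixed multiple of $1$; this is immediate from its proof, which only feeds on the excess-decay estimate \eqref{e:arb-small-decay-const}, and the latter is supplied by Proposition \ref{prop:excess-decay} on an arbitrarily large but fixed range of scales once $j$ is taken large enough (equivalently, once the top scale in the construction of $\mathcal{M}$ in Section \ref{s:single-cm} is enlarged by the harmless, $\eta$-dependent factor $1+\eta^{-1}$). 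With the cube-size condition thus in force at every admissible $(x,r)$, all the estimates of Lemma \ref{lem:firstvar}, and hence \eqref{e:almostmon} and the conclusions of Corollary \ref{cor:freqmono}, follow by repeating the corresponding proofs verbatim, the constants changing only through the $\eta$-dependence already built into $\bar{c}_s$.

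I do not expect a genuine obstacle: the argument is essentially a packing-type bookkeeping layered on top of Corollary \ref{c:small-cubes}. The one delicate point is the balance in the choice of $\bar{c}_s$ — the enlargement factor $1+\eta^{-1}$ produced by recentring the disk at the nearest point of $\mathbf{S}$ must be absorbed by shrinking $\bar{c}_s$, and hence by taking $j$ large, both depending on $\eta$ — together with the attendant, routine check that the enlarged radii remain within the range of scales on which Corollary \ref{c:small-cubes}, and ultimately the excess decay of Proposition \ref{prop:excess-decay}, is available.
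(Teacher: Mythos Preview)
Your proposal is correct and follows essentially the same approach as the paper: both reduce the claim to the cube-size condition by recentering the disk $B_r(\mathbf{p}_{\pi_0}(x),\pi_0)$ at the projection of the nearest point of $\mathbf{S}$, enlarging the radius by the factor $1+\eta^{-1}$, and invoking Corollary~\ref{c:small-cubes} with $\bar{c}_s$ shrunk accordingly (the paper takes $\bar{c}_s=c_s/(1+\eta^{-1})$; your extra factor of $\tfrac12$ is harmless). Your separate treatment of the range $r\in\,]\eta(1+\eta)^{-1},1]$, where the enlarged radius may exceed $1$, is a reasonable added precaution that the paper's proof leaves implicit.
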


\begin{proof}
    The proof of Lemma \ref{lem:firstvar2} is entirely analogous to that of the fact that $\Sbf\subset \mathbf{\Phi}(\mathbf{\Gamma})$, only taking $r \in ]\eta d (x, \mathbf{S}), 1]$ and observing that given $c_s >0$ as in \cite{DLS16blowup}*{(2.5)}, any cube $L\in\Cscr$ with $L\cap B_r(q,\pi_0)\neq\emptyset$ and $\ell(L) > c_s r > c_s \eta d (x, \mathbf{S})$ would in turn satisfy $L\cap B_{d (x, \mathbf{S}) + r}(\tilde{q},\pi_0)\neq \emptyset$ for $\Sbf\ni \tilde{x} {= \mathbf{\Phi}(\tilde{q})}$, contradicting the conclusion of Corollary \ref{c:small-cubes} for $\bar{c}_s = \frac{c_s}{1+\frac{1}{\eta}}$ for this point $\tilde x$. {Thus, all cubes $L\in \Cscr$ with $L\cap B_r(q,\pi_0)\neq\emptyset$ must have $\ell(L) \leq c_s r$. In light of the observations (1) \& (2) following the statement of Lemma \ref{lem:firstvar}, this concludes the proof.}
\end{proof}

\subsection{Main reduction} 
Using an obvious covering argument and up to translations and dilations, we can always substitute $T$ with $T_{0, r_0}$, so we can now summarize a set of assumptions which we will make throughout the rest of this article.

\begin{assumption}\label{asm:onecm} For some fixed (yet as small as desired) positive constants $\varepsilon_4$, $\eta$, and some fixed $I_0 = 2-\delta_3$ (yet as close to $2-\delta_2$ as desired) the following holds.
\begin{itemize}
\item[(i)] $T$ satisfies Assumption~\ref{asm:1}, Assumption \ref{asm:2} and $\Irm (T, 0) \geq I_0$.
\item[(ii)] There is one interval of flattening $]0,1]$ around $0$ with corresponding $\boldsymbol{m}_{0,0} \equiv \boldsymbol{m}_0 \leq \varepsilon_4^2$.
\item[(iii)] If $\Mcal$ is the corresponding center manifold, then $\mathbf{S} := \{x\in \flatS_Q(T) : \mathbf{I} (x,0) \geq I_0\}\cap \overline{\mathbf{B}}_1$ is a closed subset of $\mathcal{M}$. 
\item[(iv)] For every $x\in \overline{\mathbf{B}}_1 \cap \mathcal{M}$, the estimates of Lemma \ref{lem:firstvar}, Corollary \ref{cor:freqmono} and the upper bound on the frequency are valid for all radii $r \in ]\eta\, d (x, \mathbf{S}),1]$ (and hence for all radii $r \in ]0,1]$ when $x \in \Sbf$).
\end{itemize}
Note that in particular, the choice of $\eps_4$ in turn determines the final choice of $\tilde\eps$ in Theorem \ref{t:main-quantitative} and the parameter $\bar\eps$ in Assumption \ref{asm:2}.
\end{assumption}

\section{Almost monotonicity and comparability of error terms}

In this section we establish some further consequences of Lemmas \ref{lem:firstvar} and \ref{lem:firstvar2} and Corollary \ref{cor:freqmono}. The estimates of this section will greatly simplify many subsequent computations.

\begin{lemma}\label{lem:simplify}
For any fixed $\eta>0$ and $\Lambda>0$ as in \eqref{e:bound-Lambda-first}, if $\varepsilon_4$ is chosen sufficiently small, then the following holds for any $T$ as in Assumption \ref{asm:onecm}, every $x\in \Mcal\cap\Bbf_1$ and any $\rho, r \in ]\eta d (x, \mathbf{S}), 4]$. 
\begin{align}
C^{-1} \leq & \Ibf (x,r) \leq \Lambda \label{eq:simplify1} \\  
\Lambda^{-1} r \Dbf (x,r) \leq & \Hbf (x,r) \leq C r \Dbf (x,r) \label{eq:simplify2} \\
\mathbf{\Sigma} (x,r) &\leq C r^2 \Dbf (x,r) \label{eq:simplify3} \\
\Ebf (x,r) &\leq C \Dbf (x,r) \label{eq:simplify4} \\
\rho^{1-m}\Hbf(x,\rho) &= r^{1-m} \Hbf(x,r)\exp\left(-C\int_\rho^r \Ibf(x,s)\frac{\dd s}{s} - O (\boldsymbol{m}_0) (r-\rho)\right) \label{eq:simplify5} \\
\Hbf (x, r) & \leq C \Hbf (x, \textstyle{\frac{r}{4}}) \label{eq:simplify6} \\
\Hbf (x,r) & \leq C r^{m+3 - 2\delta_2} \label{eq:simplify7} \\
\Gbf (x,r) &\leq C r^{-1} \Dbf (x,r) \label{eq:simplify8} \\
|\partial_r \Dbf (x,r)| & \leq C r^{-1} \Dbf (x,r) \label{eq:simplify9} \\
|\partial_r \Hbf (x,r)| &\leq C \Dbf (x,r)\, ,  \label{eq:simplify10}
\end{align}
where the constant $C$ depends on $I_0$, $\Lambda$, and $\eta$, but not on $\varepsilon_4$. In particular:
\begin{align}
|\Dbf (x,r) - \Ebf (x,r)| & \leq C \boldsymbol{m}_0^{\gamma_4} r^{\gamma_4} \Dbf (x,r) \label{eq:simplify11} \\
|\partial_r \Dbf (x,r) - (m-2) r^{-1} \Dbf (x,r) - 2 \Gbf (x,r)| &\leq C \boldsymbol{m}_0^{\gamma_4} r^{\gamma_4-1} \Dbf (x,r) \label{eq:simplify12} \\
\partial_r \Ibf (x,r) &\geq - C \boldsymbol{m}_0^{\gamma_4} r^{\gamma_4-1}\, . \label{eq:simplify13}
\end{align}
\end{lemma}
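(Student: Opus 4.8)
The plan is to derive all the listed estimates as near-mechanical consequences of the two first‐variation/almost‐monotonicity packages already available (Lemma~\ref{lem:firstvar}, Lemma~\ref{lem:firstvar2}) together with the two-sided frequency bound of Corollary~\ref{cor:freqmono}, carrying out the argument on the enlarged range of scales $r\in\,]\eta d(x,\mathbf{S}),4]$ permitted by Lemma~\ref{lem:firstvar2}. I would first fix $\eta$ and $\Lambda$, then choose $\bar c_s$ (hence $j$) so that Lemma~\ref{lem:firstvar2} applies, and finally choose $\varepsilon_4$ small enough that every occurrence of $\boldsymbol{m}_0^{\gamma_4}$ appearing below is $\le 1$ and does not interfere with absorbing constants. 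The key point throughout is that the constant $C$ is allowed to depend on $I_0,\Lambda,\eta$ but not on $\varepsilon_4$, so the $\boldsymbol{m}_0$-dependent error terms in Lemma~\ref{lem:firstvar} can always be treated as perturbations.

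The steps, in order, are as follows. (a)~Estimate \eqref{eq:simplify1} is just \eqref{e:lower-upper} transported to all the admissible scales via Lemma~\ref{lem:firstvar2}. (b)~The two-sided comparison \eqref{eq:simplify2} between $\Hbf$ and $r\Dbf$ is the definition $\Ibf=r\Dbf/\Hbf$ combined with \eqref{eq:simplify1}. (c)~For \eqref{eq:simplify3}, use the reverse Sobolev/Poincaré-type inequality on the center manifold (as in \cite{DLS16blowup}, \cite{DLDPHM}) controlling $\mathbf{\Sigma}(x,r)=\int\phi(d/r)|N|^2$ by $r^2\Dbf(x,r)$ up to the usual $\boldsymbol{m}_0$ errors, absorbed by smallness of $\varepsilon_4$. (d)~For \eqref{eq:simplify4}, combine \eqref{eq:firstvar3} with \eqref{eq:simplify3}: $\Ebf\le\Dbf+C\boldsymbol{m}_0^{\gamma_4}\Dbf^{1+\gamma_4}+C\boldsymbol{m}_0\mathbf{\Sigma}\le C\Dbf$, using that $\Dbf$ is bounded (which itself follows from \eqref{eq:simplify7} below or directly from the a priori estimates on $N$). (e)~The identity \eqref{eq:simplify5} is obtained by integrating \eqref{eq:firstvar2}: writing $\partial_r\log(r^{1-m}\Hbf)=(1-m)/r+\partial_r\Hbf/\Hbf=O(\boldsymbol{m}_0)+2\Ebf/\Hbf$ and then substituting $\Ebf/\Hbf = \Ebf/(r\Dbf)\cdot\Ibf/r$; using \eqref{eq:simplify4} to bound $\Ebf/(r\Dbf)\le C$ turns the exponent into $-C\int_\rho^r\Ibf(x,s)\,\frac{ds}{s}-O(\boldsymbol{m}_0)(r-\rho)$ after integrating from $\rho$ to $r$. (f)~Estimate \eqref{eq:simplify6} then follows from \eqref{eq:simplify5} with $\rho=r/4$, since the integral $\int_{r/4}^r\Ibf\,\frac{ds}{s}\le\Lambda\log 4$ and $(r-r/4)=\tfrac34 r\le 3$ are bounded. (g)~For \eqref{eq:simplify7}, I would track the scaling: $\Hbf(x,r)\lesssim r\Dbf(x,r)\lesssim r^{m-1}\cdot r\cdot(\text{Dirichlet density})$, and the Dirichlet energy is controlled by $\boldsymbol{m}_0 r^{2-2\delta_2}$-type bounds coming from the center manifold estimates \cite{DLS16centermfld}*{Theorem~2.4} / \cite{DLS16blowup}; more precisely, $\Dbf(x,r)\le C\boldsymbol{m}_0 r^{m+1-2\delta_2}$ on the given range, whence $\Hbf(x,r)\le Cr^{m+3-2\delta_2}$ (here the $\boldsymbol{m}_0$ is absorbed into $C$, or one keeps it — either reading is fine since $\boldsymbol{m}_0\le\varepsilon_4^2$). (h)~Estimate \eqref{eq:simplify8} is immediate from the definition of $\Gbf$ and the pointwise bound $|DN_i\nabla d|^2\le (1+O(\boldsymbol{m}_0^{1/2}d))|DN_i|^2$ together with $d/r\le 1$ on $\operatorname{supp}\phi'(d/r)$. (i)~Estimate \eqref{eq:simplify9} is \eqref{eq:firstvar1} bounded using $d/r^2\le 1/r$ on the support of $\phi'$, plus \eqref{eq:simplify8} and \eqref{eq:simplify12}; \eqref{eq:simplify10} is \eqref{eq:firstvar2} rewritten as $\partial_r\Hbf=\frac{m-1}{r}\Hbf+O(\boldsymbol{m}_0)\Hbf+2\Ebf$ and estimated by \eqref{eq:simplify2} and \eqref{eq:simplify4}. (j)~Finally, \eqref{eq:simplify11} and \eqref{eq:simplify12} are \eqref{eq:firstvar3} and the displayed inequality after it in Lemma~\ref{lem:firstvar}, with the right-hand sides rewritten using $\Dbf^{1+\gamma_4}=\Dbf^{\gamma_4}\cdot\Dbf\le C r^{\gamma_4}\Dbf$ (from \eqref{eq:simplify9} or the explicit bound on $\Dbf$ in step~(g), which gives $\Dbf(x,r)^{\gamma_4}\le C r^{\gamma_4}$) and $\Dbf^{\gamma_4}\partial_r\Dbf\le Cr^{\gamma_4-1}\Dbf$ via \eqref{eq:simplify9}, and $\mathbf{\Sigma}\le Cr^2\Dbf\le Cr^{\gamma_4}\Dbf$ via \eqref{eq:simplify3}; then \eqref{eq:simplify13} follows by differentiating $\log(1+\Ibf)$ as in the derivation of \eqref{e:almostmon}, but now keeping the explicit $r^{\gamma_4-1}$ rate, i.e. $\partial_r\Ibf \ge -C\boldsymbol{m}_0^{\gamma_4} r^{\gamma_4-1}(1+\Ibf)\ge -C\boldsymbol{m}_0^{\gamma_4}r^{\gamma_4-1}$ after using \eqref{eq:simplify1}.

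The main obstacle — really the only non-bookkeeping point — is establishing the sharp scaling bound \eqref{eq:simplify7} (equivalently, the auxiliary bound $\Dbf(x,r)\le C\boldsymbol{m}_0 r^{m+1-2\delta_2}$): all the other estimates are elementary manipulations once this and the two-sided frequency bound are in hand, whereas \eqref{eq:simplify7} requires invoking the center-manifold a priori estimates on $N$ (the $L^2$ and Dirichlet decay of the normal approximation, \cite{DLS16centermfld}*{Theorem~2.4}, \cite{DLS16blowup}*{Section~3}) in the form where the exponent $2-2\delta_2$ is produced by the construction with parameter $\delta_3$; since here $\boldsymbol{m}_0\ge\bar\varepsilon^2 r_0^{2-2\delta_2}$ by its very definition \eqref{eq:m_0}, one needs to be slightly careful that the right-hand side genuinely scales like $r^{m+3-2\delta_2}$ rather than $r^{m+3-2\delta_3}$; this is exactly where the choice of $\delta_3$ slightly larger than $\delta_2$, and the fact that the excess decay of Proposition~\ref{prop:excess-decay} holds with exponent $2\mu>2-2\delta_3$, are used. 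Once \eqref{eq:simplify7} is secured, one also gets a clean upper bound on $\Dbf(x,r)$ itself (via \eqref{eq:simplify2}), which is what makes the absorption of all the $\Dbf^{1+\gamma_4}$ and $\Dbf^{\gamma_4}$ terms legitimate, so I would prove \eqref{eq:simplify7} early (right after \eqref{eq:simplify4}) and then feed it into the remaining steps.
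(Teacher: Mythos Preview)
Your outline is essentially correct and matches the paper's proof for all items except one, and the logical dependencies you identify (in particular the role of \eqref{eq:simplify7} in bounding $\Dbf$ so that $\Dbf^{1+\gamma_4}$ terms can be absorbed) are exactly right.

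The one substantive gap is in step~(a). You treat the lower bound in \eqref{eq:simplify1} as a formality, reading it off from \eqref{e:lower-upper} via Lemma~\ref{lem:firstvar2}. But the lower bound $\Ibf(x,r)\ge\tfrac32$ in Corollary~\ref{cor:freqmono} is proved using $\Ibf(x,0)\ge 2-\delta_3$, which is only available for $x\in\boldsymbol{\Phi}(\boldsymbol{\Gamma})$. For a general $x\in\mathcal{M}\cap\Bbf_1$ at a scale $r\in]\eta\,d(x,\Sbf),4]$, you have no information about $\Ibf(x,\cdot)$ below $\eta\,d(x,\Sbf)$, so the almost-monotonicity \eqref{e:almostmon} alone cannot produce a lower bound. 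The parenthetical in Lemma~\ref{lem:firstvar2} (and item~(iv) of Assumption~\ref{asm:onecm}) is a forward reference: the actual justification is precisely the compactness argument the paper gives in the proof of \eqref{eq:simplify1}. Concretely: if the lower bound failed, one would have a sequence $T_k$, points $x_k\in\mathcal{M}_k\cap\Bbf_1$ and scales $r_k>\eta\,d(x_k,\Sbf_{T_k})$ with $\Ibf_{N_k}(x_k,r_k)\to 0$; the constraint on $r_k$ forces the existence of $y_k\in\Sbf_{T_k}$ with $d(x_k,y_k)<r_k/\eta$, and after rescaling and passing to a fine blow-up one obtains a Dir-minimizer $u$ with $I_u(0,1)=0$ (hence $u$ constant on $B_1$) but also $u(\bar y)=Q\llbracket 0\rrbracket$ with $I_u(\bar y,0)\ge I_0>0$, a contradiction. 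This is where the smallness of $\varepsilon_4$ is genuinely used, beyond the bookkeeping role you assign it. Once this lower bound is secured, the remainder of your outline goes through as written and coincides with the paper's argument.
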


From now we will work under the assumptions that the parameters allow for the conclusions of Lemma \ref{lem:simplify} to hold.

\begin{assumption}\label{asm:onecm-2} $T$, $I_0$, $\delta_2$, $\delta_3$, $\varepsilon_4$, $\Lambda$ and $\eta$ are as in Assumption \ref{asm:onecm}. In addition the parameter $\varepsilon_4$ is small enough, compared to $I_0, \Lambda$, and $\eta$, so that the estimates of Lemma \ref{lem:simplify} are valid.
\end{assumption}

\subsection{Proof of Lemma \ref{lem:simplify}}
We begin with \eqref{eq:simplify1}. As already observed above, the upper bound follows by an obvious contradiction and compactness argument. To achieve the lower bound, we also proceed via contradiction, following a similar argument to that in \cite{Sk21}.

Indeed, suppose that the lower bound in \eqref{eq:simplify1} fails. Then, one can find a sequence of currents $T_k$ satisfying Assumption \ref{asm:onecm} with vanishing $\varepsilon_4 = \varepsilon_{4,k}\downarrow 0$ and extract a sequence of points $x_k \in \Mcal_k\cap\Bbf_1$ with corresponding normal approximations $N_k$ and scales $r_k \in ]\eta d (x, \mathbf{S}_{T_k}), 4]$ such that
\[
    \Ibf_{N_k}(x_k,r_k) \longrightarrow 0.
\]
In particular, this means that there exist points $y_k \in \Sbf_{T_k}$ with $d(y_k, x_k) < \frac{1}{\eta} r_k$. Recentering around $x_k$, rescaling by $\bar{r}_k$ as in Section \ref{ss:compactness} (see Remark \ref{r:diag-blowup}) and taking a normalized limit, we conclude that (up to subsequence) there exists a limiting Dir-minimizer $u: \pi_\infty \supset B_1 \to \Acal_Q(\pi_\infty^\perp)$ with
\begin{enumerate}[(i)]
    \item $I_u(0, 1) = 0$,\label{itm:freq0}
    \item $u(\bar{y}) = Q\llbracket 0 \rrbracket$ and $I_u(\bar{y},0) \geq I_0 = 2-\delta_3$,\label{itm:uQpt}
\end{enumerate}
where $\bar y$ is the subsequential limit of $\frac{y_k - x_k}{\bar{r}_k}$. However, \eqref{itm:freq0} implies that $D_u(0,1) = 0$, which, combined with \eqref{itm:uQpt} tells us that $u \equiv Q\llbracket 0 \rrbracket$ on $B_1$. This, however, contradicts the lower frequency bound in \eqref{itm:uQpt}.

The inequalities \eqref{eq:simplify2} are clearly just an alternative way of writing \eqref{eq:simplify1}, while {\eqref{eq:simplify3} follows immediately from \cite{DLS16blowup}*{Lemma 3.6} and the lower bound of \eqref{eq:simplify1}.} Moreover, the estimate \eqref{eq:simplify4} is merely a consequence of \eqref{eq:firstvar3} and \eqref{eq:simplify3} combined.

To obtain the equation \eqref{eq:simplify5}, we first observe that \eqref{eq:firstvar2} and \eqref{eq:firstvar3} together yield the estimate
    \begin{align*}
            \partial_r\left(\log r^{1-m}\Hbf(x,r)\right) &= \frac{\partial_r\Hbf(x,r)}{\Hbf(x,r)} - \frac{m-1}{r} \label{eq:Hmonotone}\\
            &\leq \frac{2}{r} \Ibf(x,r) + C\boldsymbol{m}_0 + \frac{2\Ebf(x,r)}{\Hbf(x,r)}.
    \end{align*}
We then apply \eqref{eq:simplify4} and integrate between scales $\rho$ and $r$ to conclude. Setting $\rho = \frac{r}{4}$ and invoking the upper frequency bound in \eqref{eq:simplify1} clearly further implies \eqref{eq:simplify6}.

To see that the $\Lrm^2$-height decay \eqref{eq:simplify7} holds, one may simply cover 
\[
\mathbf{p}_{\pi_0}\left((\Bbf_r(x)\setminus\cl\Bbf_{r/2}(x))\cap\Mcal\right)
\]
by a family of disjoint Whitney cubes $L$ with $\ell(L)\leq 2r$, and apply the estimate \cite{DLS16centermfld}*{Theorem 2.4 (2.3)} on each Whitney region $\Lcal$ for each of these cubes $L$ (see \cite{DLS16blowup}*{Remark 3.4} for the corresponding estimate on $\Dbf(x,r)$).

The inequality \eqref{eq:simplify8} follows immediately from the definition of $\Gbf$, combined with the observation that $d(x,y) \leq r$ whenever $\phi'\left(\frac{d(x, y)}{r}\right) > 0$. Similarly, the bound \eqref{eq:simplify9} follows directly from the identity \eqref{eq:firstvar1} and again the fact that $d(x,y) \leq r$.

Finally, the estimate \eqref{eq:simplify10} follows from \eqref{eq:firstvar2} and the upper bounds in \eqref{eq:simplify2} and \eqref{eq:simplify4}. The estimates \eqref{eq:simplify11}-\eqref{eq:simplify13} are a consequence of the preceding estimates \eqref{eq:simplify1}-\eqref{eq:simplify10} and the estimates in Lemma \ref{lem:firstvar}, \eqref{e:almostmon} and Corollary \ref{cor:freqmono}.
\qed

\section{Spatial variations}\label{ss:variations}

In this section we will control how much $N$ deviates from being homogeneous on average between two scales, in terms of the frequency pinching. The latter is defined as follows:

\begin{definition}\label{def:freqpinch}
    Suppose that $T$, $\Mcal$ and $N$ are as in Assumption~\ref{asm:onecm-2}. For $x \in \Bbf_{1}\cap\Mcal$ and any $\eta\, d (x, \mathbf{S}) < \rho \leq r \leq 1$, define the frequency pinching $W_\rho^r(x)$ between scales $\rho$ and $r$ by
    \[
        W_\rho^r(x) \coloneqq |\Ibf(x,r) - \Ibf(x,\rho)|.
    \]
\end{definition}

We begin with the following important proposition.

\begin{proposition}\label{prop:distfromhomog}
    Suppose that $T$, $\Mcal$, $N$ are as in Assumption~\ref{asm:onecm-2} and $\gamma_4$ is as in Lemma~\ref{lem:firstvar}. There exists $C = C(m,n,Q,\Lambda) > 0$ and $\beta = \beta(m,n,Q,\Lambda) >0$ such that the following estimate holds for every $x \in \Bbf_{1}\cap\Mcal$ and for every pair $\rho, r$ with $4\eta\, d (x, \mathbf{S}) < \rho\leq r < 1$. If we define 
\begin{align*}
\Acal_{\frac{\rho}{4}}^{2r}(x)  \coloneqq & \left(\Bbf_{2r}(x)\setminus \cl{\Bbf}_{\frac{\rho}{4}}(x)\right)\cap\Mcal
\end{align*}
then
    \begin{align*}
        &\int_{\Acal_{\frac{\rho}{4}}^{2r}(x)} \sum_i \left|DN_i(y)\frac{d(x,y)\nabla d(x,y)}{|\nabla d(x,y)|} - \Ibf(x,d (x,y)) N_i(y)|\nabla d(x,y)|\right|^2 \frac{\dd y}{d(x,y)} \\
        &\qquad\leq C \Hbf(x,2r) {\left(W_{\frac{\rho}{8}}^{4r}(x) + \boldsymbol{m}_0^{\gamma_4} r^{\gamma_4}\right)\log\left(\frac{16r}{\rho}\right)}.
    \end{align*}
\end{proposition}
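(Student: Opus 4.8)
The estimate quantifies the heuristic that a map with almost-constant frequency is almost homogeneous of that degree. The natural route is the same as in \cite{DLMSV} (adapted to the center-manifold setting via \cite{DLS16blowup} and \cite{DLDPHM}): express the left-hand side, which measures on the annulus the failure of the pointwise ODE $DN_i \cdot (d\nabla d/|\nabla d|) = \Ibf N_i |\nabla d|$ (satisfied exactly by functions homogeneous of degree $\Ibf(x,d(x,y))$ about $x$), in terms of derivatives of $\Dbf$, $\Hbf$, $\Ebf$, $\Gbf$. The first step is algebraic: for a fixed $s$, integrating the squared quantity against a cutoff supported near $\{d(x,\cdot)=s\}$ produces, after expanding the square, the combination $\Gbf(x,s) - 2\,\Ibf(x,s)\Ebf(x,s)/s + \Ibf(x,s)^2 \Hbf(x,s)/s^2$ up to errors controlled by $\boldsymbol m_0^{\gamma_4}$-type terms from Lemma~\ref{lem:firstvar} and Lemma~\ref{lem:simplify}; using \eqref{eq:firstvar1}--\eqref{eq:firstvar3} this is recognized as essentially $\tfrac1s$ times the logarithmic derivative of $\Ibf$, i.e. $\partial_s \log \Ibf(x,s)$ up to the good error terms, times $\Hbf$. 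Concretely one shows that the integrand, after integration in the radial variable, telescopes into $\Hbf(x,2r)\cdot\partial_s\big(\log(\text{something comparable to }\Ibf)\big)$.

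\textbf{Key steps, in order.} First I would localize: cover the annulus $\Acal_{\rho/4}^{2r}(x)$ dyadically by annuli $\Acal_{s/2}^{2s}(x)$ for $s$ in a geometric sequence between $\rho/4$ and $2r$, so it suffices to prove the estimate on each such dyadic piece with the factor $\log(16r/\rho)$ replaced by an absolute constant and then sum (the number of pieces is $\sim \log(16r/\rho)$, which is where that factor enters). On a single dyadic annulus, I would test the first variation identities of Lemma~\ref{lem:firstvar} with the vector field $y\mapsto \psi(d(x,y))\,\nabla d(x,y)$ for a suitable cutoff $\psi$, expand the square
\[
\left|DN_i\tfrac{d\nabla d}{|\nabla d|} - \Ibf\,N_i|\nabla d|\right|^2 = \left|DN_i\tfrac{d\nabla d}{|\nabla d|}\right|^2 - 2\Ibf\, d\, N_i\cdot(DN_i\nabla d) + \Ibf^2 |N_i|^2|\nabla d|^2,
\]
and recognize the three terms (after integrating $\tfrac{dy}{d(x,y)}$ against $-\phi'$) as $\Gbf$, $\Ebf$ and $\Hbf$-type quantities respectively, so that the integral equals (up to the errors in Lemma~\ref{lem:firstvar}, \eqref{eq:simplify11}--\eqref{eq:simplify13}) a constant times
\[
\Gbf(x,s) - 2\Ibf(x,s)\Ebf(x,s)/s + \Ibf(x,s)^2\Hbf(x,s)/s^2.
\]
Second, I would use \eqref{eq:firstvar1}, \eqref{eq:firstvar2} and \eqref{eq:firstvar3} to rewrite this combination as $\tfrac{\Hbf(x,s)}{s}\,\partial_s\Ibf(x,s)$ up to terms bounded by $C\boldsymbol m_0^{\gamma_4} s^{\gamma_4-1}\Hbf(x,s)$ (this is exactly the computation underlying the almost-monotonicity \eqref{eq:simplify13}: the derivative of the frequency \emph{is} the normalized defect from homogeneity). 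Third, integrate in $s$ from $\rho/8$ to $4r$: the main term telescopes to $\Hbf$ times $(\Ibf(x,4r)-\Ibf(x,\rho/8)) = \pm W_{\rho/8}^{4r}(x)$ after using \eqref{eq:simplify5}/\eqref{eq:simplify6} to compare $\Hbf(x,s)$ with $\Hbf(x,2r)$ across the dyadic scales (the comparison constants are controlled by $\Lambda$), and the error term integrates to $C\boldsymbol m_0^{\gamma_4}r^{\gamma_4}\Hbf(x,2r)$; assembling the dyadic pieces reinstates the $\log(16r/\rho)$ factor.

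\textbf{Main obstacle.} The genuinely delicate point is the identification of the combination $\Gbf - 2\Ibf\Ebf/s + \Ibf^2\Hbf/s^2$ with $\tfrac{\Hbf}{s}\partial_s\Ibf$ up to the stated errors — i.e. keeping precise track of \emph{all} the error terms (those coming from $\Dbf$ vs $\Ebf$, the $O(\boldsymbol m_0)$ in \eqref{eq:firstvar2}, the metric distortions $d(x,y)=|x-y|+O(\boldsymbol m_0^{1/2}|x-y|^2)$, $|\nabla d|=1+O(\boldsymbol m_0^{1/2}d)$, and the $\Err_j^{i},\Err_j^{o}$) and verifying they all collapse into $C\boldsymbol m_0^{\gamma_4}r^{\gamma_4}\Hbf(x,2r)$ rather than something larger. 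This is the step where the optimized powers of $\boldsymbol m_0$ from Lemma~\ref{lem:firstvar}(1) and the comparabilities of Lemma~\ref{lem:simplify} (especially $\mathbf\Sigma \le Cr^2\Dbf$, $\Ebf\le C\Dbf$, $\Hbf \le C r\Dbf$, and the Harnack-type \eqref{eq:simplify6}) are essential, and it requires care because a crude bound on any single error term would destroy the $\gamma_4$-gain. A secondary technical nuisance is that the identities of Lemma~\ref{lem:firstvar} hold at $x$ only for $r>\eta\,d(x,\mathbf S)$, which is why the hypothesis is stated with $4\eta\,d(x,\mathbf S)<\rho$ and why the inner dyadic scale is taken to be $\rho/8$ rather than $\rho/4$ — one must check throughout that every scale $s$ invoked stays above $\eta\,d(x,\mathbf S)$.
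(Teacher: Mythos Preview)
Your overall strategy is correct and is essentially the paper's approach: the combination $\Gbf(x,s) - 2s^{-1}\Ibf(x,s)\Ebf(x,s) + s^{-2}\Ibf(x,s)^2\Hbf(x,s)$ is indeed, up to the $\boldsymbol m_0^{\gamma_4}$ errors from Lemma~\ref{lem:firstvar} and Lemma~\ref{lem:simplify}, equal to $\tfrac{\Hbf(x,s)}{2s}\partial_s\Ibf(x,s)$, and integrating this in $s$ against $\Hbf(x,s)^{-1}$ produces the pinching. The paper runs the computation in the reverse direction (starting from $\int\partial_s\Ibf\,ds$ and unwinding to the spatial integral), but the content is the same.

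There is, however, one genuine step you have skipped. The integrand in the statement carries the frequency evaluated at the \emph{variable} radius $d(x,y)$, i.e.\ $\Ibf(x,d(x,y))$, whereas the combination $\Gbf - 2\Ibf\Ebf/s + \Ibf^2\Hbf/s^2$ you write has the frequency evaluated at the \emph{fixed} scale $s$: it corresponds to
\[
\xi(x,y,s):=\sum_i\Big|DN_i(y)\tfrac{d(x,y)\nabla d(x,y)}{|\nabla d(x,y)|} - \Ibf(x,s)\,N_i(y)|\nabla d(x,y)|\Big|^2,
\]
not to the quantity $\zeta(x,y)$ (with $\Ibf(x,d(x,y))$ in place of $\Ibf(x,s)$) that actually appears on the left-hand side of the proposition. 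When you ``recognize the three terms as $\Gbf$, $\Ebf$, $\Hbf$-type quantities'' you are implicitly treating $\Ibf$ as constant in $y$, which it is not. The paper bridges this via the triangle inequality
\[
\zeta(x,y)\le 2\,\xi(x,y,s) + C\,W_{d(x,y)}^{s}(x)\,|N(y)|^2,
\]
and then uses the almost-monotonicity \eqref{eq:simplify13} to bound $W_{d(x,y)}^{s}(x)\le W_{\rho/8}^{4r}(x)+C\boldsymbol m_0^{\gamma_4}r^{\gamma_4}$ for $y$ in the relevant annulus. The resulting $|N|^2$-error, after dividing by $\Hbf(x,s)$ and integrating $ds/s$, is what produces the factor $\log(16r/\rho)$ multiplying the pinching --- not a telescoping sum as you describe (the dyadic contributions do \emph{not} telescope, since each carries its own $\Hbf$-weight; one simply bounds each local pinching by the full $W_{\rho/8}^{4r}$ via almost-monotonicity and counts the $\sim\log(16r/\rho)$ scales). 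This $\zeta$-versus-$\xi$ passage is not hard once noticed, but it is not automatic and should be made explicit.
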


We will also require the following control on variations of the frequency in terms of frequency pinching.
\begin{lemma}\label{lem:spatialvarI}
    Suppose that $T$, $\Mcal$ and $N$ be as in Assumption~\ref{asm:onecm-2} and let $\gamma_4$ be as in Lemma \ref{lem:firstvar}. Let $x_1,x_2 \in \Bbf_1 \cap \Mcal$ with $d(x_1,x_2) \leq {\frac{r}{4}}$, where $r$ is such that $8\eta\, \max\{d (x_1, \mathbf{S}), d (x_2, \mathbf{S})\} < r \leq 1$. Then there exists $C= C(m,n,Q,\Lambda) > 0$ such that for any $z,y \in [x_1,x_2]$, we have
    \[
        |\Ibf(y,r) - \Ibf(z,r)| \leq C \left[\left(W_{\frac{r}{8}}^{4r}(x_1)\right)^{\frac{1}{2}} + \left(W_{\frac{r}{8}}^{4r}(x_2)\right)^{\frac{1}{2}} + \boldsymbol{m}_0^{\frac{\gamma_4}{2}} r^{\frac{\gamma_4}{2}}\right]\frac{d (z,y)}{r}\, .
    \]
\end{lemma}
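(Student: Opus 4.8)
The strategy is to differentiate the frequency $\Ibf(\cdot,r)$ in the spatial variable along the segment $[x_1,x_2]$ and control the resulting derivative by the right-hand side, then integrate. Write $\Ibf(y,r) = r\Dbf(y,r)/\Hbf(y,r)$; by the logarithmic decomposition,
\[
\partial_v \log \Ibf(y,r) = \frac{\partial_v \Dbf(y,r)}{\Dbf(y,r)} - \frac{\partial_v \Hbf(y,r)}{\Hbf(y,r)},
\]
where $v$ is the unit direction of $[x_1,x_2]$. The spatial derivatives $\partial_v \Dbf$ and $\partial_v \Hbf$ are computed by moving the center $x$ inside the cutoff $\phi(d(x,y)/r)$; since $\partial_{x} \phi(d(x,y)/r) = \phi'(d(x,y)/r)\,\partial_x d(x,y)/r$ and $\partial_x d(x,y) = -\nabla d(x,y) + O(\boldsymbol{m}_0^{1/2} d)$, one obtains, up to curvature errors of size $O(\boldsymbol{m}_0)$ controlled via Lemma~\ref{lem:simplify}, expressions of the form
\[
\partial_v \Dbf(y,r) = -\frac{1}{r}\int_{\Mcal} \phi'\!\left(\frac{d(y,z)}{r}\right)\langle \nabla d(y,z), v\rangle\,|DN(z)|^2\,\dd z + (\text{err}),
\]
and similarly for $\Hbf$, the main term involving $\sum_i |N_i|^2$ and the normal derivative. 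These are exactly the quantities that appear, after pairing with the radial derivative identities \eqref{eq:firstvar1}--\eqref{eq:firstvar2}, in the almost-monotonicity computation; the key algebraic point is that the combination $\partial_v \Dbf/\Dbf - \partial_v \Hbf/\Hbf$ can be rewritten, via Cauchy--Schwarz, in terms of the $L^2$-deviation of $DN$ from being $\Ibf$-homogeneous, which is precisely the quantity bounded in Proposition~\ref{prop:distfromhomog}.

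Concretely, I would proceed in the following steps. First, expand $\partial_v \Dbf$ and $\partial_v \Hbf$ as above and substitute into $\partial_v \log \Ibf$. Second, use the pointwise identity
\[
DN_i(z)\frac{d(y,z)\nabla d(y,z)}{|\nabla d(y,z)|} = \Ibf(y,d(y,z))\,N_i(z)|\nabla d(y,z)| + R_i(z)
\]
defining the remainder $R_i$, and split every term in $\partial_v \log \Ibf$ into a ``homogeneous part'' plus a part linear or quadratic in $R_i$. The homogeneous part should cancel identically (this is the standard computation showing that for an exactly homogeneous $N$ the frequency is constant in space), leaving only terms controlled by $\bigl(\int |R_i|^2/d\bigr)^{1/2}$ times $\Dbf(y,r)^{1/2}$ or $\Hbf(y,r)^{1/2}$, plus the $O(\boldsymbol{m}_0^{\gamma_4})$ curvature/approximation errors from Lemma~\ref{lem:simplify}. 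Third, apply Proposition~\ref{prop:distfromhomog} on the annulus $\Acal_{r/8}^{2r}(y)$ — valid since $r > 8\eta\max\{d(x_1,\mathbf{S}),d(x_2,\mathbf{S})\} \geq 4\eta\, d(y,\mathbf{S})$ for $y\in[x_1,x_2]$, and since $d(x_1,x_2)\leq r/4$ guarantees the annuli around $y$ and around $x_1,x_2$ are comparable, so $W^{4r}_{r/32}(y)$ is controlled by $W^{4r}_{r/8}(x_1) + W^{4r}_{r/8}(x_2)$ plus a pinching error absorbed by almost-monotonicity \eqref{e:almostmon}. This gives $\int_{\Acal} |R_i|^2/d \leq C\Hbf(y,r)\bigl(W^{4r}_{r/8}(x_1)+W^{4r}_{r/8}(x_2)+\boldsymbol{m}_0^{\gamma_4}r^{\gamma_4}\bigr)$ with the logarithmic factor bounded since $\rho$ and $r$ are comparable here. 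Fourth, use \eqref{eq:simplify2} to trade $\Hbf(y,r)$ for $r\Dbf(y,r)$ and conclude
\[
|\partial_v \log \Ibf(y,r)| \leq \frac{C}{r}\left[\bigl(W^{4r}_{r/8}(x_1)\bigr)^{1/2}+\bigl(W^{4r}_{r/8}(x_2)\bigr)^{1/2}+\boldsymbol{m}_0^{\gamma_4/2}r^{\gamma_4/2}\right].
\]
Finally, integrate along $[z,y]\subset[x_1,x_2]$, using the two-sided bound $C^{-1}\leq \Ibf(y,r)\leq\Lambda$ from \eqref{eq:simplify1} to pass from $\log\Ibf$ to $\Ibf$, which produces the stated estimate with the factor $d(z,y)/r$.

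\textbf{Main obstacle.} The delicate point is the exact cancellation of the ``homogeneous part'' in $\partial_v \Dbf/\Dbf - \partial_v \Hbf/\Hbf$: one must organize the spatial-derivative terms so that the leading contributions — which individually are of order $1/r$, not small — combine to zero, leaving only the genuinely small remainder terms. This requires being careful that the curvature errors from $\partial_x d = -\nabla d + O(\boldsymbol{m}_0^{1/2}d)$ and from properties (i)--(iii) of the geodesic distance are consistently of size $O(\boldsymbol{m}_0^{\gamma_4})$ after using \eqref{eq:simplify1}--\eqref{eq:simplify10}, and that the integration by parts needed to express $\partial_v\Hbf$ in terms of $DN$ (rather than $N$ alone) does not generate uncontrolled boundary or error contributions. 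A secondary technical point is checking the geometry of nested annuli — that for $y\in[x_1,x_2]$ with $d(x_1,x_2)\leq r/4$, the annulus $\Acal_{r/8}^{2r}(y)$ is contained in, and its measure comparable to, annuli centered at $x_1$ and $x_2$ — so that Proposition~\ref{prop:distfromhomog} and the pinching quantities transfer between the three centers with only the pinching-error and $\boldsymbol{m}_0^{\gamma_4}$ losses; this uses almost-monotonicity \eqref{e:almostmon} to bound $W^{4r}_{r/8}(y)$ by the pinchings at $x_1,x_2$ up to the allowed errors.
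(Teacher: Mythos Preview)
Your overall strategy---differentiate $\Ibf(\cdot,r)$ along $[x_1,x_2]$, extract a deviation-from-homogeneity remainder, and control it by Proposition~\ref{prop:distfromhomog}---is on the right track, and the cancellation you describe in the ``homogeneous part'' is indeed correct: the leading piece of $\partial_v\Dbf/\Dbf$ matches $\partial_v\Hbf/\Hbf$ up to errors of size $W(y)/r$. The problem is the center at which this leaves you. Carrying out your decomposition with $R_i(z)=DN_i(z)\,\eta_y(z)-\Ibf(y,d(y,z))N_i(z)|\nabla d|$ produces the bound
\[
|\partial_v\Ibf(y,r)|\leq \frac{C}{r}\Big[\big(W^{4r}_{r/8}(y)\big)^{1/2}+\boldsymbol{m}_0^{\gamma_4/2}r^{\gamma_4/2}\Big],
\]
i.e.\ pinching at the \emph{intermediate} point $y$, not at the endpoints. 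Your claim that ``almost-monotonicity \eqref{e:almostmon} bounds $W^{4r}_{r/8}(y)$ by the pinchings at $x_1,x_2$'' is where the argument breaks: \eqref{e:almostmon} controls only the \emph{radial} variation $r\mapsto\Ibf(x,r)$ at a fixed center; it says nothing about how $\Ibf(y,s)$ compares to $\Ibf(x_1,s)$ when the center moves. Relating $W(y)$ to $W(x_1)+W(x_2)$ is precisely the spatial variation estimate you are proving, so this step is circular. Nor does the nesting of annuli help: the integrand $|R_i^{(y)}|^2$ in Proposition~\ref{prop:distfromhomog} genuinely depends on the center $y$ through both $\eta_y$ and $\Ibf(y,\cdot)$, and differs from $|R_i^{(x_1)}|^2$ by order-one terms.

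The paper circumvents this by a different decomposition that introduces $x_1$ and $x_2$ from the outset. With $v$ the direction of $[x_1,x_2]$ scaled by $d(x_1,x_2)$, one has in flat space the exact identity $v=(z-x_1)-(z-x_2)=\eta_{x_1}(z)-\eta_{x_2}(z)$, hence $\partial_v N_i=\partial_{\eta_{x_1}}N_i-\partial_{\eta_{x_2}}N_i$. Writing $\partial_{\eta_{x_\ell}}N_i=\Ibf(x_\ell,d(x_\ell,z))N_i+\mathcal{E}_{\ell,i}$ for $\ell=1,2$ yields $\partial_v N_i=\mathcal{E}_{1,i}-\mathcal{E}_{2,i}+\mathcal{E}_3\,N_i$, where each $\mathcal{E}_{\ell,i}$ is directly controlled by Proposition~\ref{prop:distfromhomog} at the center $x_\ell$ (hence by $W^{4r}_{r/8}(x_\ell)$), and the scalar $\mathcal{E}_3=\Ibf(x_1,d(x_1,z))-\Ibf(x_2,d(x_2,z))$ enters $\partial_v\Ibf(x,r)$ only multiplied by $r\Ebf(x,r)-r\Dbf(x,r)$, which is $O(\boldsymbol{m}_0^{\gamma_4}r^{\gamma_4})\,r\Dbf$ by \eqref{eq:simplify11}. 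This two-endpoint decomposition of the \emph{directional derivative of $N$}---rather than of the radial derivative at the moving center $y$---is the missing idea.
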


In order to prove the latter, we will also need the following additional variation estimates and identities.

\begin{lemma}\label{lem:spatialvarDH}
    Let $T$, $\Mcal$ and $N$ be as in Assumption~\ref{asm:onecm-2} and let $x \in \Bbf_{1} \cap \Mcal$. Let $\eta\, d (x, \mathbf{S}) < r \leq 1$, and let $v$ be a vector field on $\Mcal$. We have
    \begin{align*}
        \partial_v \Dbf(x,r) &= -\frac{2}{r} \int \phi'\left(\frac{d(x,y)}{r}\right) \sum_i \partial_{\nu_x} N_i(y)\cdot \partial_v N_i(y) \dd y  + O\left(\boldsymbol{m}_0^{\gamma_4}\right)r^{\gamma_4 -1}\Dbf(x,r)\, ,\\
        \partial_v \Hbf(x,r) &= - 2 \sum_i \int_\Mcal \frac{|\nabla d(x,y)|^2}{d(x,y)}\phi'\left(\frac{d (x,y)}{r}\right) \left( \partial_v N_i(y)\cdot N_i(y) \right) \dd y\, ,
    \end{align*}
    where $\nu_x(y) := \nabla d(x,y)$.
\end{lemma}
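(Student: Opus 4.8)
The plan is to prove Lemma~\ref{lem:spatialvarDH} by directly differentiating the definitions of $\Dbf(x,r)$ and $\Hbf(x,r)$ in the center point $x$, reducing everything to a computation on the fixed manifold $\Mcal$. First I would handle the formula for $\partial_v\Hbf(x,r)$, which is the cleaner of the two. Writing
\[
\Hbf(x,r) = -\int_\Mcal \frac{|\nabla_y d(y,x)|^2}{d(y,x)}|N(y)|^2\phi'\!\left(\frac{d(y,x)}{r}\right)\dd y,
\]
the only $x$-dependence sits in the function $d(\sbullet,x)$ and its $y$-gradient. Differentiating in the direction $v$ at $x$ produces three terms: one from $\partial_v\bigl(|\nabla_y d|^2\bigr)$, one from $\partial_v\bigl(d(y,x)^{-1}\bigr)$, and one from $\partial_v\bigl(\phi'(d/r)\bigr)$. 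The key observation — the same one underpinning the analogous computations in \cite{DLS16blowup}*{Section~3} and \cite{DLDPHM}*{Section~9} — is that $d(y,x)^2$ is, up to an error of size $O(\boldsymbol{m}_0 d)$ (property (iii) of the geodesic distance recalled in Section~\ref{ss:freq}), one half of the squared geodesic distance, so $\nabla_x\bigl(\tfrac12 d^2\bigr) = -d\,\nabla_y d + O(\boldsymbol{m}_0 d^2)$ and hence $\partial_v d(y,x) = -\nabla_y d(y,x)\cdot v + O(\boldsymbol{m}_0 d)$. Substituting, the $\phi'$-derivative term combines with the $\phi$-weight to reassemble precisely the claimed expression $-2\sum_i\int_\Mcal \frac{|\nabla d|^2}{d}\phi'\bigl(\tfrac{d}{r}\bigr)(\partial_v N_i\cdot N_i)\dd y$ after an integration by parts that moves the $v$-derivative off $N$; the remaining terms are absorbed into the (unstated but implicitly tolerated) $O(\boldsymbol{m}_0)$-type errors, which here are clean enough that the statement records no error at all for $\partial_v\Hbf$.

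Second, for $\partial_v\Dbf(x,r)$ I would start from
\[
\Dbf(x,r) = \int_\Mcal |DN(y)|^2\phi\!\left(\frac{d(y,x)}{r}\right)\dd y,
\]
so differentiating in $x$ hits only $\phi(d/r)$, giving $\partial_v\Dbf(x,r) = \frac{1}{r}\int_\Mcal |DN(y)|^2\,\phi'\!\left(\tfrac{d(y,x)}{r}\right)\partial_v d(y,x)\dd y$. Using $\partial_v d(y,x) = -\nabla_y d(y,x)\cdot v + O(\boldsymbol{m}_0 d)$ turns this into $-\frac1r\int \phi'\bigl(\tfrac{d}{r}\bigr)|DN|^2(\nabla d\cdot v)\dd y$ plus an $O(\boldsymbol{m}_0)r^{\gamma_4-1}\Dbf(x,r)$ error (here one uses $d\le r$ on the support of $\phi'$ together with the $\Lrm^2$-height/energy bounds, exactly as in Lemma~\ref{lem:simplify}). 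The remaining task is to rewrite $|DN(y)|^2(\nabla d\cdot v)$ as $2\sum_i \partial_{\nu_x}N_i\cdot\partial_v N_i$ up to a controllable error: this is where the first-variation (inner-variation) identity for the Dirichlet energy of $N$ on $\Mcal$ enters, applied with the vector field obtained from $v$ by parallel-type transport. Concretely, one plugs the vector field $X(y) = \phi'(d/r)(\ldots)$ into the inner-variation formula \cite{DLDPHM}*{Proposition~9.8 or 9.9} for the almost-Dir-minimizer $N$; the resulting identity expresses $\int \phi'|DN|^2(\nabla d\cdot v)$ in terms of $\int\phi'\sum_i \partial_{\nu_x}N_i\cdot\partial_v N_i$ plus the error terms $\Err_j^i$, which by Lemma~\ref{lem:firstvar}/\eqref{eq:simplify11}--\eqref{eq:simplify12} are bounded by $C\boldsymbol{m}_0^{\gamma_4}r^{\gamma_4-1}\Dbf(x,r)$. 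Collecting these gives the stated formula.

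The main obstacle — and the only genuinely delicate point — is the bookkeeping of the curvature/metric error terms: one must verify that every place where the Euclidean distance $|x-y|$ is replaced by the geodesic distance $d(x,y)$, where $\nabla d$ is not exactly a unit vector, and where the inner-variation identity for $N$ is only approximate (because $N$ is merely an almost-Dir-minimizer rather than a genuine one, and $\Mcal$ is curved), the errors genuinely collapse into the single harmless factor $O(\boldsymbol{m}_0^{\gamma_4})r^{\gamma_4-1}\Dbf(x,r)$. This is where the validity-at-all-relevant-scales guaranteed by Lemma~\ref{lem:firstvar2} (i.e.\ $r > \eta\, d(x,\mathbf{S})$, so every Whitney cube meeting $B_r(\mathbf{p}_{\pi_0}(x))$ has side length $\le c_s r$) is essential: it is precisely the hypothesis that lets one apply the pointwise estimates of \cite{DLS16centermfld}*{Theorem~2.4} and the error bounds of \cite{DLDPHM}*{Proposition~9.8, Proposition~9.9} uniformly on the annular region entering the integrals. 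Beyond that, the computation is a direct adaptation of the (center-point) first-variation computations already carried out in \cites{DLS16blowup,DLDPHM} for the frequency function, now differentiating in the \emph{spatial} base-point variable rather than in the radius, so I would present it as such and refer to those sources for the routine parts.
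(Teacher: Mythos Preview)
Your approach is essentially the same as the paper's: for $\partial_v\Dbf$ you differentiate in $x$ to get $\int \phi'(d/r)\,\tfrac{\nabla d\cdot v}{r}\,|DN|^2$ and then invoke the inner variation identity for $N$ with a compactly supported test field, while for $\partial_v\Hbf$ you reduce to the direct computation of \cite{DLMSV}*{Proposition~3.1}. Two small corrections are worth flagging. First, the test vector field to plug into the inner variation formula \cite{DLS16blowup}*{(3.25)} is $Y(y) = \phi\bigl(\tfrac{d(x,y)}{r}\bigr)v$, with $\phi$ rather than $\phi'$: it is $\diverg_{\Mcal} Y$ that produces the factor $\phi'(d/r)\,\tfrac{\nabla d\cdot v}{r}$ matching the left-hand side, while $D_{\Mcal}Y$ produces the term $\tfrac{1}{r}\phi'(d/r)\,v\otimes\nu_x$ that yields $\sum_i\partial_{\nu_x}N_i\cdot\partial_v N_i$ on the right. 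Second, for $\partial_v\Hbf$ the integration by parts moves the $y$-derivative \emph{onto} $|N|^2$ (producing $2\sum_i N_i\cdot\partial_v N_i$), not off it; there is no derivative on $N$ to begin with, since $N$ is independent of the center $x$.
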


\subsection{Proof of Proposition \ref{prop:distfromhomog}}
    Since the center $x$ here is fixed, we will suppress the dependency on $x$ for $\Ibf$ and all related quantities, for simplicity. By the estimates {\eqref{eq:firstvar2}, \eqref{eq:simplify1}, \eqref{eq:simplify11} and \eqref{eq:simplify12}}, 
    we have
    \begin{align*}
        W_{\frac{\rho}{4}}^{4r}(x) &{\geq} \int_{\frac{\rho}{4}}^{4r} \partial_s\Ibf(s)\dd s
        = \int_{\frac{\rho}{4}}^{4r}\frac{\partial_s[s\Dbf(s)]}{\Hbf(s)} - \frac{s\Dbf(s)\partial_s\Hbf(s)}{\Hbf(s)^2} \dd s \\
        &\geq 2\int_{\frac{\rho}{4}}^{4r}\frac{s\Gbf(s) - \Ibf(s)\Ebf(s)}{\Hbf(s)} \dd s  {- C\boldsymbol{m}_0^{\gamma_4}\int_{\frac{\rho}{4}}^{4r} (s^{\gamma_4-1} + 1) \Ibf(s) \dd s} \\
        &= 2\underbrace{\int_{\frac{\rho}{4}}^{4r} \frac{s\Gbf(s) - 2\Ibf(s)\Ebf(s) + s^{-1}\Ibf(s)^2\Hbf(s)}{\Hbf(s)} \dd s}_{=: I} {-C\boldsymbol{m}_0^{\gamma_4} \left((4r)^{\gamma_4} - (\rho/4)^{\gamma_4}\right)\,.}
    \end{align*}
We can rewrite the integral $I$ as
	\begin{align*}
I &= \int_{\frac{\rho}{4}}^{4r} \frac{1}{s\Hbf(s)} \int -\phi'\left(\frac{d(x,y)}{s}\right)\frac{1}{d(x,y)}\left(\sum_j \left|DN_j \cdot \frac{d(x,y)\nabla d(x,y)}{|\nabla d(x,y)|}\right|^2\right. \\
        &\left.\qquad - 2\Ibf(s)\sum_j N_j \cdot{\left(DN_j \, d(x,y)\nabla d(x,y)\right)} + \Ibf(s)^2|N(y)|^2|\nabla d(x,y)|^2\right)\dd y \dd s \\
        &= \int_{\frac{\rho}{4}}^{4r} \frac{1}{s\Hbf(s)} \int -\phi'\left(\frac{d(x,y)}{s}\right)\frac{\xi (x,y,s)}{d(x,y)} \dd y \dd s,
    \end{align*}
where
	\begin{align*}
 \xi(x,y,s) &= \sum_j \left| DN_j \frac{d(x,y)\nabla d(x,y)}{|\nabla d(x,y)|} - \Ibf(s) N_j(y)|\nabla d(x,y)|\right|^2\, .
	\end{align*}
We thus arrive at the inequality
	\begin{align*}
		& W_{\frac{\rho}{4}}^{4r}(x) 
        \geq 2\int_{\frac{\rho}{4}}^{4r} \frac{1}{s\Hbf(s)} \int_{\Acal_{\frac{s}{2}}^{s}(x)}\frac{\xi(x,y,s)}{d(x,y)} \dd y \dd s - C\boldsymbol{m}_0^{\gamma_4}{((4r)^{\gamma_4}-(\rho/4)^{\gamma_4})}\,.\\
    \end{align*}
    Now consider
    \[
        \zeta(x,y) \coloneqq \sum_j \left| DN_j (y)\frac{d(x,y)\nabla d(x,y)}{|\nabla d(x,y)|} - \Ibf(d (x,y)) N_j(y)|\nabla d(x,y)| \right|^2.
    \]
    The triangle inequality and the Cauchy-Schwartz inequality yields
    \begin{equation}\label{e:zeta}
        \zeta(x,y) \leq 2\xi(x,y,s) + 2|\Ibf(s) - \Ibf(d(x,y))|^2|N(y)|^2 \leq 2\xi(x,y,s) + C W_{d (x,y)}^s(x) |N(y)|^2.
    \end{equation}
    We now proceed to estimate the pinching $W_{d (x,y)}^s(x)$ in terms of the pinching $W_{\frac{\rho}{8}}^{4r}(x)$. Observe that the almost-monotonicity of the frequency \eqref{eq:simplify13} tells us that for any $\eta\, d (x, \mathbf{S}) < s < t \leq 1$, we have
    \begin{equation}\label{eq:frequencymtn}
        \Ibf(s) \leq \Ibf(t) + C\boldsymbol{m}_0^{\gamma_4} t^{\gamma_4}.
    \end{equation}
    {Since for $y\in \Acal_{\frac{s}{2}}^s(x)$ and $s > \frac{\rho}{4}$ we have $d(x,y) \geq \frac{s}{2} \geq \frac{\rho}{8}$,} this yields
    \begin{equation}\label{eq:monopinching}
        W_{d (x,y)}^s(x) \leq {W_{\frac{\rho}{8}}^{4r}(x)} +C\boldsymbol{m}_0^{\gamma_4} r^{\gamma_4} - \tilde{C}\boldsymbol{m}_0^{\gamma_4}s^{\gamma_4}.
    \end{equation}
    Moreover, observe that we have
    \begin{align*}
        \int_{\Acal_{\frac{\rho}{4}}^{2r}(x)} \int_{d(x,y)}^{2d(x,y)} \frac{1}{s^2\Hbf(s)} \zeta(x,y) \dd s \dd y &\geq \frac{1}{\Hbf(2r)}\int_{\Acal_{\frac{\rho}{4}}^{2r}(x)} \zeta(x,y)\int_{d(x,y)}^{2d(x,y)} \frac{1}{s^2} \dd s \dd y \\
        &\geq \frac{1}{2\Hbf(2r)}\int_{\Acal_{\frac{\rho}{4}}^{2r}(x)} \frac{\zeta(x,y)}{d(x,y)} \dd y.
    \end{align*}
    Combining this with \eqref{e:zeta} and \eqref{eq:monopinching}, we have
    \begin{align*}
        {W^{4r}_{\frac{\rho}{4}}} &{+ \log\left(\frac{16r}{\rho}\right)}W_{\frac{\rho}{8}}^{4r}(x) \\
        &\geq C\int_{\frac{\rho}{4}}^{4r} \frac{1}{s\Hbf(s)}\int_{\Acal_{\frac{s}{2}}^s(x)} \frac{\zeta(x,y)}{d(x,y)}\dd y \dd s - C\boldsymbol{m}_0^{\gamma_4}r^{\gamma_4}\log\left(\frac{16r}{\rho}\right) - C\boldsymbol{m}_0^{\gamma_4}r^{\gamma_4} \\
        &\geq C \int_{\Acal_{\frac{\rho}{4}}^{2r}(x)} \int_{d(x,y)}^{2d(x,y)} \frac{1}{s^2\Hbf(s)} \zeta(x,y) \dd s \dd y - C\boldsymbol{m}_0^{\gamma_4}r^{\gamma_4}\log\left(\frac{16r}{\rho}\right)  - C\boldsymbol{m}_0^{\gamma_4}r^{\gamma_4} \\
        &\geq \frac{C}{\Hbf(2r)}\int_{\Acal_{\frac{\rho}{4}}^{2r}(x)} \frac{\zeta(x,y)}{d(x,y)} \dd y - C\boldsymbol{m}_0^{\gamma_4}r^{\gamma_4}\log\left(\frac{16r}{\rho}\right) - C\boldsymbol{m}_0^{\gamma_4}r^{\gamma_4}\\
        &\geq \frac{C}{\Hbf(2r)}\int_{\Acal_{\frac{\rho}{4}}^{2r}(x)} \frac{\zeta(x,y)}{d(x,y)} \dd y - C\boldsymbol{m}_0^{\gamma_4}r^{\gamma_4}\log\left(\frac{16r}{\rho}\right) - C\boldsymbol{m}_0^{\gamma_4}r^{\gamma_4}.
    \end{align*}
    {Note that the logarithmic factors arise from integrals of $s^{-1}$ from $\frac{\rho}{4}$ to $4r$. Rearranging and again exploiting the almost-monotonicity \eqref{eq:frequencymtn},} this yields the claimed estimate.
    \qed

\subsection{Proof of Lemma~\ref{lem:spatialvarDH}}
        Observe that we have
        \begin{equation}\label{eq:spatialvarD}
            \partial_v \Dbf(x,r) = \int \phi'\left(\frac{d(x,y)}{r}\right) \frac{\nabla d(x,y)}{r}\cdot v(y) |DN(y)|^2\dd y.
        \end{equation}
        Consider the vector field $X_i(p) = Y(\mathbf{p}(p))$ where
        \begin{equation}\label{eq:vf}
            Y(y) \coloneqq \phi\left(\frac{d(x,y)}{r}\right) v.
        \end{equation}
        Then
        \begin{equation}\label{eq:vfdiv}
            \diverg_\Mcal Y = \phi'\left(\frac{d(x,y)}{r}\right)\frac{\nabla d(x,y)}{r}\cdot v - \langle Y, H_\Mcal \rangle,
        \end{equation}
        and
        \begin{equation}\label{eq:vfgrad}
            D_\Mcal Y = \frac{1}{r}\phi'\left(\frac{d(x,y)}{r}\right)v(y) \otimes \nu_x(y) +  - \phi\left(\frac{d(x,y)}{r}\right)\sum_j \Abf_\Mcal(e_j, v)\otimes e_j,
        \end{equation}
        where 
        $\{e_j\}$ form an orthonormal frame of $\Mcal$, with $e_1 = v$. Thus, testing~\cite{DLS16blowup}*{(3.25)} with this vector field and using the decay in Proposition~\ref{prop:excess-decay} yields
        \begin{align}
            & \partial_v \Dbf(x,r) = \int_\Mcal |DN|^2 \diverg_\Mcal Y \dd y + O(\boldsymbol{m}_0^{\frac{1}{2}})r^{\frac{1-\delta_2}{2}} \Dbf(x,r)\label{eq:firstvarD} \\
            = &\frac{2}{r} \int \phi'\left(\frac{d(x,y)}{r}\right) \sum_i \langle \partial_{\nu_x} N_i(y), \partial_v N_i(y) \rangle \dd y + O(\boldsymbol{m}_0^{\frac{1}{2}})r^{\frac{1-\delta_2}{2}} \Dbf(x,r) + \sum_{j=1}^5 \widetilde{\Err}^i_j,\notag
        \end{align}
        where $\widetilde{\Err}^i_j$ are the inner variational errors in~\cite{DLS16blowup}*{(3.19),~(3.26),~(3.27),~(3.28)}, but for our new choice of vector field $Y$. We estimate them analogously to~\cite{DLS16blowup}*{Section~4}, again using the excess decay of Proposition~\ref{prop:excess-decay} to get improved scaling, combined with an analogous estimate to \eqref{eq:simplify12}, to obtain
        \begin{align}
            \sum_{j=1}^5 |\widetilde{\Err}_j^i| &\leq C\boldsymbol{m}_0^{\gamma_4}r^{-1}\Dbf(x,r)^{1+\gamma_4} + C\boldsymbol{m}_0^{\gamma_4}\Dbf(x,r)^{\gamma_4}\partial_r\Dbf(x,r)\label{eq:errorest} \\
            &\leq C \boldsymbol{m}_0^{\gamma_4} r^{\gamma_4-1} \Dbf (x,r) \notag
        \end{align}
        Note that in order to get these estimates we require $r > \eta\, d (x, \mathbf{S})$, to ensure that $B_r(q,\pi_0)$ does not intersect any cube $L\in\Wscr$ with $\ell(L) > c_s r$ (cf. Lemma \ref{lem:firstvar2}). The identity for $\partial_v\Hbf(x,r)$ is merely a computation, identical to that in the proof of~\cite{DLMSV}*{Proposition~3.1}.
        \qed

\subsection{Proof of Lemma~\ref{lem:spatialvarI}}
        Let $z,y$ be as in the statement of the lemma and let $x$ lie in the line segment $[z,y]$. For a given vector field $v$, the chain rule yields
        \[
            \partial_v \Ibf(x,r) = \frac{r\partial_v\Dbf(x,r)}{\Hbf(x,r)} - \frac{\Ibf(x,r)\partial_v \Hbf(x,r)}{\Hbf(x,r)}.
        \]
        We may now proceed as in the proof of~\cite{DLMSV}*{Theorem~4.2}. Nevertheless, we repeat the argument here for clarity. Let $\mu_x$ be the measure with density
        \[
            \dd\mu_x(y) = - \frac{|\nabla d(x,y)|}{d(x,y)} \phi'\left(\frac{d(x,y)}{r}\right)\dd y,
        \]
        and let
        \[
            v(y) = d(x_1, x_2)\frac{\nabla d(x_1, y)}{|\nabla d(x_1,y)|} \qquad \eta_x(y) \coloneqq d(x,y)\frac{\nabla d(x,y)}{|\nabla d(x,y)|} = \frac{d(x,y)}{|\nabla d(x,y)|}\nu_x(y).
        \]
        Then by Lemma \ref{lem:spatialvarDH}, for every $r\in ]\eta d(x,\Sbf), 1]$ we have
        \begin{align*}
            \partial_v\Ibf(x,r) &= \frac{2}{\Hbf(x,r)} \int_{\Mcal} \sum_i \langle \partial_{\eta_x} N_i, \partial_v N_i \rangle \dd \mu_x + C\boldsymbol{m}_0^{\gamma_4}r^{\gamma_4}\Ibf(x,r) \\
            &\qquad - 2\frac{\Ibf(x,r)}{\Hbf(x,r)}\int_\Mcal |\nabla d| \sum_i \langle \partial_v N_i, N_i \rangle \dd\mu_x.
        \end{align*}
        {Since $d$ is the geodesic distance on $\Mcal$, we have $d(x_1,x_2) = d(x_1,y) + d(x_2,y)$ and $\nabla d(x_1,y) = -\nabla d(x_2,y)$,} and thus we can now write
        \begin{align*}
            \partial_v N_i(y) &= DN_i(y) d(x_1, y)\frac{\nabla d(x_1, y)}{|\nabla d(x_1,y)|} - DN_i(y) d(x_2, y)\frac{\nabla d(x_2, y)}{|\nabla d(x_2,y)|} \\
            &= \partial_{\eta_{x_1}} N_i(y) - \partial_{\eta_{x_2}} N_i(y) \\
            &= \underbrace{\left(\partial_{\eta_{x_1}} N_i(y) - \Ibf(x_1, d (x_1, y))N_i(y)\right)}_{\eqqcolon \Ecal_{1,i}} - \underbrace{\left(\partial_{\eta_{x_2}} N_i(y) - \Ibf(x_2, d (x_2, y))N_i(y)\right)}_{\eqqcolon \Ecal_{2,i}} \\
            &\qquad + \underbrace{\Ibf(x_1,d (x_1, y)) - \Ibf(x_2,d (x_2, y))}_{\eqqcolon \Ecal_3}N_i(y).
        \end{align*}
        Thus, we have
        \begin{align*}
            \partial_v \Ibf(x,r) &= \frac{2}{\Hbf(x,r)} \int_{\Mcal} \sum_i \langle \partial_{\eta_x} N_i, \Ecal_{1,i} - \Ecal_{2,i} \rangle \dd \mu_x - 2\frac{\Ibf(x,r)}{\Hbf(x,r)}\int_\Mcal |\nabla d| \sum_i \langle N_i, \Ecal_{1,i} - \Ecal_{2,i} \rangle \dd\mu_x \\
            &\qquad + \frac{2}{\Hbf(x,r)} \int_\Mcal \Ecal_3\sum_i \langle \partial_{\eta_x} N_i, N_i \rangle - 2\frac{\Ibf(x,r)}{\Hbf(x,r)} \int_\Mcal |\nabla d| \Ecal_3\sum_i |N_i|^2 \dd\mu_x \\
            &\qquad + C\boldsymbol{m}_0^{\gamma_4}r^{\gamma_4}\Ibf(x,r) \\
            &= \frac{2}{\Hbf(x,r)} \int_{\Mcal} \sum_i \langle \partial_{\eta_x} N_i, \Ecal_{1,i} - \Ecal_{2,i} \rangle \dd \mu_x - 2\frac{\Ibf(x,r)}{\Hbf(x,r)}\int_\Mcal |\nabla d| \sum_i \langle N_i, \Ecal_{1,i} - \Ecal_{2,i} \rangle \dd\mu_x \\
            &\qquad + \frac{2\Ecal_3}{\Hbf(x,r)} \left(\int_\Mcal \sum_i \langle \partial_{\eta_x} N_i, N_i \rangle \dd\mu_x - r\Dbf(x,r)\right) \\
            &\qquad + C\boldsymbol{m}_0^{\gamma_4}r^{\gamma_4}\Ibf(x,r)\, .
        \end{align*}
        
        With the aim to establish control in terms of frequency pinching at the endpoints $x_1$ and $x_2$, we can now rewrite $\Ecal_3$ as
        \begin{align*}
            \Ecal_3 &= \left(\Ibf(x_1, d (x_1,y)) -\Ibf(x_1,r)\right) + \left(\Ibf(x_1,r) - \Ibf(x_2,r)\right) + \left(\Ibf(x_2,r) - \Ibf(x_2, d(x_2, y))\right) \\
            &{\leq}  W^{d (x_1, y)}_r(x_1) + W_{d (x_2, y)}^r(x_2) + \Ibf(x_1,r) - \Ibf(x_2,r)\, .
        \end{align*}
        This, combined with the Cauchy-Schwartz inequality, {the estimate \eqref{eq:simplify11}}, an analogous almost monotonicity estimate to \eqref{eq:monopinching} and the uniform upper bound on the frequency \eqref{eq:simplify1} tells us that
        \begin{align*}
            |\partial_v \Ibf(x,r)| &\leq C\left[\int \sum_i\left(|\Ecal_{1,i}|^2 + |\Ecal_{2,i}|^2\right) \dd \mu_x\right]^{\frac{1}{2}}\left(\frac{1}{\Hbf(x,r)} \left[\int \sum_i |\partial_{\eta_x} N_i|^2 \dd\mu_x\right]^{\frac{1}{2}} +\frac{\Ibf(x,r)}{\Hbf(x,r)^{\frac{1}{2}}}\right) \\
            &\qquad + C\boldsymbol{m}_0^{\gamma_4}{r^{1+\gamma_4}}\frac{|\Ibf(x_1,r) - \Ibf(x_2,r)|}{\Hbf(x,r)}{\Dbf(x,r)} \\
            &\qquad + C\boldsymbol{m}_0^{\gamma_4}{r^{1+\gamma_4}}\frac{W^{d (x_1, y)}_r (x_1) + W_{d (x_2, y)}^r(x_2)}{\Hbf(x,r)}{\Dbf(x,r)} \\
            &\qquad + C\boldsymbol{m}_0^{\gamma_4}r^{\gamma_4}\Ibf(x,r).
        \end{align*}
        Now invoking  Proposition~\ref{prop:distfromhomog}, for $\ell = 1,2$ we have
        \begin{align*}
            \int \sum_i|\Ecal_{\ell,i}|^2 \dd \mu_x &= - \int \sum_i|\Ecal_{\ell,i}|^2(y) \frac{|\nabla d(x,y)|}{d(x,y)} \phi'\left(\frac{d(x,y)}{r}\right) \dd y \\
            &\leq C\Hbf(x_\ell,2r) (W_{\frac{r}{8}}^{4r}(x_\ell)+\boldsymbol{m}_0^{\gamma_4} r^{\gamma_4}).
        \end{align*}
        
        Thus, when combined with the upper bounds in \eqref{eq:simplify1} and \eqref{eq:simplify3}, we conclude that
        \begin{align*}
        |\partial_v \Ibf(x,r)| &\leq C\left[(W_{\frac{r}{8}}^{4r}(x_1)+\boldsymbol{m}_0^{\gamma_4} r^{\gamma_4})^{\frac{1}{2}} + (W_{\frac{r}{8}}^{4r}(x_2)+\boldsymbol{m}_0^{\gamma_4} r^{\gamma_4})^{\frac{1}{2}}\right] {+ C\boldsymbol{m}_0^{\gamma_4}r^{\gamma_4}} \\
            &\leq C \left[W_{\frac{r}{8}}^{4r}(x_1)^{\frac{1}{2}} + W_{\frac{r}{8}}^{4r}(x_2)^{\frac{1}{2}} + \boldsymbol{m}_0^{\frac{\gamma_4}{2}} r^{\frac{\gamma_4}{2}}\right] + C\boldsymbol{m}_0^{\gamma_4}r^{\gamma_4}.
        \end{align*}
        Integrating this inequality over the geodesic segment $[z,y]\subset \Mcal$ and using the estimates in Lemma \ref{lem:simplify}, the result follows.
        \qed

\section{Quantitative splitting}\label{ss:splitting}

In what follows we will need to consider affine subspaces spanned by families of vectors. For this reason it will be useful to introduce the following notation. Given an ordered set of points $X= \{x_0, x_1, \ldots, x_k\}$ we will denote by $V (X)$ the affine subspace spanned by $\{x_1-x_0, x_2-x_0, \ldots, x_k -x_0\}$ and centered at $x_0$, namely
\begin{equation}\label{e:def-V(X)}
V (X) = x_0 + \spn (\{(x_1-x_0), (x_2-x_0), \ldots, (x_k-x_0)\})\, .
\end{equation}

We will now show that approximate homogeneity implies the existence of an approximate spine in given directions. We begin with the following definition.

\begin{definition}\label{def:LI}
    We say that a set $X = \{x_0, x_1,\dots,x_k\} \subset \Bbf_r(x)$ is $\rho r$-linearly independent if 
    \[
        d(x_i, V (\{x_0, \ldots, x_{i-1}\})) \geq \rho r \qquad \mbox{for all $i=1, \ldots , k$}
        \]
    We say that a set $F \subset \Bbf_r(x)$ $\rho r$-spans a $k$-dimensional affine subspace $V$ if there is a $\rho r$-linearly independent set of points $ X= \{x_i\}_{i=0}^k \subset F$ such that $V= V (X)$.
\end{definition}

The following lemma gives a quantitative notion of the existence of an approximate spine in $\mathbf{S}$, provided that $N$ is (quantitatively) almost-homogeneous about an $(m-2)$-dimensional linear space. 

\begin{lemma}\label{lem:noQpts}
    Suppose that $T$, $\Mcal$, $N$ are as in Assumption~\ref{asm:onecm-2}, let $x\in\Sbf$ and let $\rho,\tilde\rho,\bar\rho \in ]0,1]$ be given. There exists $\eps = \eps_{\ref{lem:noQpts}}(m,n,Q,\Lambda, \rho, \tilde\rho,\bar\rho) \in ]0,\eps_4^2]$ such that the following holds. Suppose that for some $r>0$,
    \[
    	\max\{\Ebf(T, \Bbf_{2r}(x)), \bar\eps (2r)^{2-2\delta_2}\} \leq \eps.
    \]
    Let $X = \{x_i\}_{i=0}^{m-2} \subset \Bbf_r(x) \cap \Sbf$ be a $\rho r$-linearly independent set of points with
    \[
        W_{\tilde\rho r}^{2r}(x_i) < \eps \qquad \text{for each $i$}.
    \]
    Then $\Sbf \cap (\Bbf_r \setminus \Bbf_{\bar\rho r}(V (X))) = \emptyset$.
\end{lemma}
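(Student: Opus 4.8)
The strategy is the usual Naber--Valtorta ``cone-splitting'' argument, adapted to the center manifold $\Mcal$ and the multivalued normal approximation $N$. The guiding principle is: if $N$ is quantitatively almost-homogeneous (almost radially invariant) about a point $x_i\in\Sbf$ — which is what $W_{\tilde\rho r}^{2r}(x_i)<\eps$ encodes via Proposition~\ref{prop:distfromhomog} — and also about the other points $x_0,\dots,x_{m-2}$, then $N$ must be almost invariant along the whole affine span $V(X)$; but a nontrivial $(m-1)$-homogeneous-and-invariant-along-an-$(m-2)$-plane Dir-minimizer would force its singular set to be too large, and more to the point, any \emph{new} point of $\Sbf$ outside a neighbourhood of $V(X)$ would produce, after blow-up, a limiting Dir-minimizer which is translation-invariant along an $m$-dimensional space, hence identically $Q\llbracket 0\rrbracket$, contradicting the lower frequency bound $\Ibf\ge \tfrac32$ from Corollary~\ref{cor:freqmono}.

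\textbf{Key steps.} First I would argue by contradiction and compactness: suppose the statement fails, so there is a sequence of currents $T_k$ satisfying Assumption~\ref{asm:onecm-2} with $\eps_k\downarrow0$, points $x^{(k)}\in\Sbf_{T_k}$, scales $r_k$, $\rho r_k$-linearly independent sets $X_k=\{x_i^{(k)}\}_{i=0}^{m-2}\subset\Bbf_{r_k}(x^{(k)})\cap\Sbf_{T_k}$ with $W_{\tilde\rho r_k}^{2r_k}(x_i^{(k)})<\eps_k$ for each $i$, yet a point $y_k\in\Sbf_{T_k}\cap(\Bbf_{r_k}\setminus\Bbf_{\bar\rho r_k}(V(X_k)))$. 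Recentering at $x^{(k)}$ and rescaling by $\bar r_k\sim r_k$ as in Section~\ref{ss:compactness} (using Remark~\ref{r:diag-blowup}, which allows varying currents and varying centers), extract a limiting Dir-minimizer $u\colon B_{3/2}(0,\pi_\infty)\to\Acal_Q(\pi_\infty^\perp)$ with $\boldsymbol\eta\circ u=0$, $\|u\|_{L^2}=1$, together with limit points $\xi_i=\lim \tfrac{x_i^{(k)}-x^{(k)}}{\bar r_k}$ (for $i=0,\dots,m-2$) and $\xi_*=\lim\tfrac{y_k-x^{(k)}}{\bar r_k}$. Second, I would record the properties inherited in the limit: each $\xi_i$ (and $\xi_*$) lies in the set where $u$ vanishes as $Q\llbracket0\rrbracket$ and $I_u(\xi_i,0)\ge I_0>1$ (this uses upper semicontinuity of the frequency, Corollary~\ref{cor:uppersemic}(i), and the passage to the limit of the $\Sbf$ condition as in Remark~\ref{r:diag-blowup}); moreover $W_{\tilde\rho r_k}^{2r_k}(x_i^{(k)})\to0$ forces, through Proposition~\ref{prop:distfromhomog}, that $u$ is radially homogeneous about each $\xi_i$ on the corresponding annulus, hence (by analyticity/unique continuation of Dir-minimizers, as in \cite{DLS_MAMS}) homogeneous about each $\xi_i$ globally. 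Third, the standard algebraic lemma for Dir-minimizers — homogeneity about two distinct points $\xi_0\ne\xi_1$ implies translation invariance along the line $\xi_1-\xi_0$ — iterated along the $\rho r$-linearly independent set $X_k$ (whose limit directions $\xi_1-\xi_0,\dots,\xi_{m-2}-\xi_0$ remain linearly independent, being $\ge\rho$ apart in the splitting sense), yields that $u$ is invariant along the $(m-2)$-plane $V=V(\{\xi_0,\dots,\xi_{m-2}\})$ and still homogeneous, with vertex $\xi_0\in V$. Fourth, since $\xi_*\notin \Bbf_{\bar\rho}(V)$ but $u$ is also homogeneous about $\xi_*$ (same pinching argument applied to $y_k$, using that $y_k\in\Sbf_{T_k}$ and the almost-monotonicity gives a frequency pinching bound at $y_k$ as well — this point needs care, see below), the cone-splitting lemma applied to the vertex $\xi_*$ together with the invariance directions in $V$ produces invariance along an $m$-dimensional subspace; combined with $\boldsymbol\eta\circ u=0$ this gives $u\equiv Q\llbracket0\rrbracket$, contradicting $\|u\|_{L^2}=1$.

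\textbf{Main obstacle.} The subtle point is the fourth step: the hypothesis only gives frequency pinching $W_{\tilde\rho r}^{2r}(x_i)<\eps$ at the $m-1$ points of $X$, \emph{not} at the extra point $y_k$. So I cannot directly conclude that $u$ is homogeneous about $\xi_*$. The correct resolution — and the heart of the lemma — is that one does not need homogeneity about $y_k$: it suffices that $y_k\in\Sbf_{T_k}$, so $I_u(\xi_*,0)\ge I_0>1$ and $u(\xi_*)=Q\llbracket0\rrbracket$, i.e. $\xi_*$ is a genuine singular point of the blow-up with frequency $>1$. But a Dir-minimizer that is invariant along an $(m-2)$-plane $V$ and homogeneous with vertex in $V$ is, after quotienting by $V$, a $2$-dimensional Dir-minimizing cone; its singular set is therefore either $V$ itself (a single point in the quotient, the vertex) or empty away from $V$ unless the cone is flat. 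Since $\xi_*$ is a singular point at distance $\ge\bar\rho$ from $V$, and a genuinely singular $2$-dimensional Dir-minimizing cone has only its vertex as singular point with frequency $>1$, we get a contradiction with the structure — equivalently, the only way to have a singular point off $V$ is a further translation invariance, landing us back at $u\equiv Q\llbracket 0\rrbracket$. I would make this precise by invoking the classification of the singular set of $2$-dimensional Dir-minimizers (it is discrete, and at each point the frequency is determined), which is exactly the input from \cite{DLS_MAMS} that powers the analogous Lemma in \cite{DLMSV}. Quantitatively, the whole argument is an $\eps$--$\delta$ compactness statement, so no explicit dependence of $\eps_{\ref{lem:noQpts}}$ need be tracked; the parameters $\rho,\tilde\rho,\bar\rho$ enter only through the uniform lower bound on the splitting of $X_k$ and the uniform separation of $y_k$ from $V(X_k)$, both of which survive the limit.
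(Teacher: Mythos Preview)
Your proposal is correct and follows essentially the same contradiction-and-compactness strategy as the paper: blow up along the sequence, pass to a limiting Dir-minimizer $u$ with $\boldsymbol\eta\circ u=0$, use the vanishing pinching to get homogeneity of $u$ about each limit $\xi_i$, iterate cone-splitting to obtain invariance along the $(m-2)$-plane $V(X_\infty)$, and then derive a contradiction from the extra limit point $\xi_*\notin V(X_\infty)$ with $u(\xi_*)=Q\llbracket 0\rrbracket$.

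The only noteworthy difference is in how you close the argument. You invoke the structure of the singular set of a $2$-dimensional Dir-minimizing cone and the lower frequency bound $I_u(\xi_*,0)\ge I_0>1$; the paper takes a shorter route. Once $u$ is $\alpha$-homogeneous about $x_0$ and invariant along $V(X_\infty)$, the single piece of information $u(\xi_*)=Q\llbracket 0\rrbracket$ with $\xi_*\notin V(X_\infty)$ already forces $u\equiv Q\llbracket 0\rrbracket$ on the $(m-1)$-dimensional affine space $x_0+\spn\{x_1-x_0,\dots,x_{m-2}-x_0,\xi_*-x_0\}$ (by homogeneity along the ray through $\xi_*$, then invariance along $V$), which directly contradicts the dimension estimate $\dim_{\Hcal}(\Delta_Q(u))\le m-2$ from \cite{DLS_MAMS}*{Proposition~3.22}. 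No appeal to the frequency at $\xi_*$ or to the classification of $2$-dimensional singular sets is needed. Also, note that you do not need Proposition~\ref{prop:distfromhomog} to pass the homogeneity to the limit: the vanishing of $W_{\tilde\rho}^{2}(x_i^{(k)})$ simply gives $I_u(\xi_i,\tilde\rho)=I_u(\xi_i,2)$ by strong $W^{1,2}_{\loc}$ convergence, and then the monotonicity of Almgren's frequency for Dir-minimizers yields homogeneity on the annulus directly.
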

\begin{proof}
    We argue by contradiction. Without loss of generality, assume $x=0$. Suppose that the statement is false. Then there exists sequences $\eps_k \todown 0$, $r_k \todown 0$ and corresponding sequences of center manifolds $\Mcal_k$ and normalized normal approximations $\bar N_k$ with $\Hbf_{\bar N_k}(0,1) = 1$ for $T_{0,r_k}$. Moreover, for $\Sbf_k \coloneqq \Sbf(T_{0,r_k})$, there is a sequence of $(m-1)$-tuples of points $X_k \coloneqq \{x_{k,0}, x_{k,1},\dots, x_{k,m-2}\} \subset \Bbf_{1} \cap \Sbf_k$ such that
    \begin{enumerate}[(i)]
        \item $X_k$ is $\rho$-linearly independent for some $\rho \in ]0,1]$;
        \item\label{itm:pinch} $W_{\tilde\rho}^{2}(\bar N_k, x_{k,i}) \leq \eps_k \to 0$ as $k \to \infty$ for some $\tilde{\rho} \in ]0,1]$;
        \item there exists a point $y_k \in \Sbf_k \cap \Bbf_1 \setminus \Bbf_{\bar\rho}(V (X_k)))$.
    \end{enumerate}

    We can thus use the compactness argument from Section~\ref{ss:compactness} (see Remark \ref{r:diag-blowup}) to conclude that
    \begin{enumerate}[(1)]
        \item $\Mcal_k \longrightarrow \pi_\infty$ in $C^{3,\kappa}$;
        \item $\bar N_k \circ \mathbf{e}_k \longrightarrow u$ in $L^2$ and in $W^{1,2}_\loc$, where $u$ is a Dir-minimizer with $\boldsymbol{\eta} \circ u \equiv 0$;
        \item $X_k$ converges pointwise to  $X_\infty = \{x_0,\dots,x_{m-2}\}\subset \pi_\infty$;
        \item $y_{k}$ converge pointwise to $y \in \pi_\infty \cap \bar{\mathbf{B}}_1\setminus \mathbf{B}_{\bar\rho}(V (X_\infty))$ with $u(y) = Q\llbracket 0 \rrbracket$.
    \end{enumerate}
    Denote by $\Delta_Q (u)$ the set of points $y\in \pi_\infty$ such that $u (y) = Q \llbracket \boldsymbol{\eta} \circ u (y)\rrbracket = Q\llbracket 0 \rrbracket$.
    Due to the dichotomy~\cite{DLS_MAMS}*{Proposition~3.22}, we know that \begin{equation}\label{e:dimension_estimate}
    \dim_{\Hcal}(\Delta_Q (u)\cap \bar{B}_1) \leq m-2\, .
    \end{equation}
    Indeed $H_u(0,1) = 1$ and $\boldsymbol{\eta} \circ u \equiv 0$, so $u$ cannot be identically equal to $Q \llbracket \boldsymbol{\eta}\circ u \rrbracket$. Moreover, $H_u(y, \tau) > 0$ for every $\tau \in (0,1)$ and every $y \in B_1$, since otherwise we would contradict the dimension estimate \eqref{e:dimension_estimate}. This, in combination with~\eqref{itm:pinch} tells us that
    \[
        I_u(x_i, \tilde{\rho}) = I_u(x_i,2) \qquad \text{for $i = 0,\dots,m-2$}.
    \]
    The monotonicity of the (regularized) frequency for Dir-minimizers then tells us that $u$ is $\alpha_i$-homogeneous about the center $x_i$ in the annulus $B_2(x_i)\setminus B_{\tilde\rho}(x_i) \subset \pi_\infty$, for some $\alpha_i > 0$. We can then extend $u$ to an $\alpha_i$-homogeneous function about $x_i$ on $\pi_\infty$; call it $v_i$. Observe that for any $z \neq x_i$, there is a neighbourhood $U_z \subset \pi_\infty$ of $z$ on which $v_i$ is a Dir-minimizer (by using a scaling argument and the fact that $v_i$ agrees with a Dir-minimizer on $B_2(x_i)\setminus B_{\tilde\rho}(x_i) \subset \pi_\infty$). 
    
    This allows us to apply the unique continuation result~\cite{DLMSV}*{Lemma~6.9} to conclude that $u=v_i$ on $B_{64}\setminus \{x_i\}$, and hence $u = v_i$ on $B_{64}$. By iteratively applying~\cite{DLMSV}*{Lemma~6.8}, we may thus conclude that $\alpha_i = \alpha$ for each $i= 0,\dots,m-2$, and that $u \equiv Q\llbracket 0 \rrbracket$ on the $(m-2)$-dimensional plane $V(X_\infty) = x_0 + \spn \{(x_{m-2} - x_0), \dots, (x_1 - x_0)\}$. In other words, $u$ is an $\alpha$-homogeneous function in two variables about the $(m-2)$-dimensional plane $V(X_\infty)$.
    
    Since $y \notin V(X_\infty)$ and $u(y) = Q\llbracket 0 \rrbracket$ but $u$ is $\alpha$-homogeneous, this implies that $u \equiv Q\llbracket 0 \rrbracket$ on $x_0 + \spn\{x_{m-2} - x_0, \dots, x_1 - x_0, y-x_0\}$. This however contradicts the dimension estimate on $\Delta_Q (u)$, thus allowing us to conclude.
\end{proof}

The following lemma tells us that it suffices to have approximate homogeneity on a linearly independent set of points, in order to conclude approximate homogeneity in the entire affine subspace spanned by these points.

\begin{lemma}\label{lem:smallspatialvar}
    Suppose that $T$, $\Mcal$ and $N$ are as in Assumption~\ref{asm:onecm-2}, let $x\in\Sbf$ and let $\rho, \tilde\rho, \bar\rho \in ]0,1]$ be given. Then for any given $\delta > 0$, there exists $\eps = \eps_{\ref{lem:smallspatialvar}} > 0$, dependent on $m,n,Q,\Lambda, \rho, \tilde\rho,\bar\rho,\delta$, such that the following holds. Suppose that for some $r>0$,
    \[
    	\max\{\Ebf(T, \Bbf_{2r}(x)), \bar\eps (2r)^{2-2\delta_2}\} \leq \eps.
    \]
    Let $X = \{x_i\}_{i=0}^{m-2} \subset \Bbf_r(x) \cap \Sbf$ be a $\rho r$-linearly independent set of points with
    \[
        W_{\tilde\rho r}^{2r} (x_i) < \eps \qquad \text{for every $i$}.
    \]
    Then for every $y_1, y_2 \in \Bbf_r(x) \cap \Bbf_{\varepsilon r} (V (X)) \cap \mathbf{S}$ and for every $r_1, r_2 \in [\bar\rho r, r]$ we have
    \[
        |\Ibf(y_1,r_1) - \Ibf(y_2,r_2)| \leq \delta\, .
    \]
\end{lemma}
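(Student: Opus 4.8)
The plan is to argue by contradiction and compactness, in close parallel with the proof of Lemma~\ref{lem:noQpts}: the two statements are complementary, the first ruling out $Q$-points off an approximate spine, the second showing that near such a spine the frequency is essentially locked across all centers and all scales in $[\bar\rho r,r]$.

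Suppose the conclusion fails for some $\delta>0$. Since the claimed $\eps$ may depend only on $m,n,Q,\Lambda,\rho,\tilde\rho,\bar\rho,\delta$, we obtain currents $T_k$ as in Assumption~\ref{asm:onecm-2} with a common frequency bound $\Lambda$, scales $r_k$, $\rho r_k$-linearly independent sets $X_k=\{x_{k,i}\}_{i=0}^{m-2}\subset\Bbf_{r_k}(x_k)\cap\Sbf(T_k)$, points $y_{1,k},y_{2,k}\in\Bbf_{r_k}(x_k)\cap\Bbf_{\eps_k r_k}(V(X_k))\cap\Sbf(T_k)$ and radii $r_{1,k},r_{2,k}\in[\bar\rho r_k,r_k]$, with $\eps_k\downarrow 0$, $\max\{\Ebf(T_k,\Bbf_{2r_k}(x_k)),\bar\eps(2r_k)^{2-2\delta_2}\}\le\eps_k$, $W^{2r_k}_{\tilde\rho r_k}(x_{k,i})<\eps_k$ for each $i$, and $|\Ibf(y_{1,k},r_{1,k})-\Ibf(y_{2,k},r_{2,k})|>\delta$. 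After the usual rescaling by $\iota_{x_k,r_k}$ (so that $x_k=0$, the relevant scales become $O(1)$, $\Ebf\to 0$ at scale $2$, and, by scale invariance of $\Ibf$, the frequency differences are unchanged), we apply the compactness of Section~\ref{ss:compactness} in the generality of Remark~\ref{r:diag-blowup}. Up to subsequences we get $\Mcal_k\to\pi_\infty$ in $C^{3,\kappa}$, normalized normal approximations $\bar N_k$ with $\bar N_k\circ\mathbf{e}_k\to u$ strongly in $W^{1,2}_\loc\cap L^2$ for a non-trivial Dir-minimizer $u$ with $\boldsymbol\eta\circ u\equiv0$, $X_k\to X_\infty=\{x_i\}_{i=0}^{m-2}$ still $\rho$-linearly independent, $y_{j,k}\to y_j\in\cl B_1\cap V(X_\infty)$ (using $\eps_k\downarrow 0$), and, after one further subsequence, $r_{j,k}/r_k\to r_j\in[\bar\rho,1]$.

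The analysis of the limit is then verbatim the one in Lemma~\ref{lem:noQpts}: the dichotomy \cite{DLS_MAMS}*{Proposition~3.22} gives $\dim_\Hcal(\Delta_Q(u)\cap\cl B_1)\le m-2$, hence $H_u(y,\tau)>0$ for every $y\in B_1$ and $\tau\in(0,1)$; the pinching bounds pass to the limit as $I_u(x_i,\tilde\rho)=I_u(x_i,2)$, so $u$ is $\alpha_i$-homogeneous about $x_i$ on $B_2(x_i)\setminus B_{\tilde\rho}(x_i)$; extending to homogeneous maps and invoking the unique continuation statement \cite{DLMSV}*{Lemma~6.9} together with the iteration \cite{DLMSV}*{Lemma~6.8} forces $\alpha_i\equiv\alpha$ and $u$ to be an $\alpha$-homogeneous function of two variables, invariant under translations along $V(X_\infty)$. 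In particular $I_u(z,\cdot)\equiv\alpha$ for every $z\in V(X_\infty)$ and every radius $\le1$, so $I_u(y_1,r_1)=\alpha=I_u(y_2,r_2)$.

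Finally, since each $y_{j,k}$ lies in $\Sbf(T_k)$, the identities of Lemma~\ref{lem:firstvar} and the almost-monotonicity of Corollary~\ref{cor:freqmono} hold at $y_{j,k}$ at every radius in $]0,1]$; combining this with the strong $W^{1,2}\cap L^2$ convergence of the normal approximations and the positivity $H_u(y_j,\tau)>0$, one obtains convergence of the regularized frequencies, $\Ibf(y_{j,k},r_{j,k})\to I_u(y_j,r_j)$, exactly as for the convergence of the frequency along a fine blow-up used in Section~\ref{ss:compactness} and \cite{DLSk1}. Hence $|\Ibf(y_{1,k},r_{1,k})-\Ibf(y_{2,k},r_{2,k})|\to|I_u(y_1,r_1)-I_u(y_2,r_2)|=0$, contradicting the standing hypothesis, and the lemma follows. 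I expect the main obstacle to be precisely this last step, namely passing the regularized frequency to the limit at the \emph{moving} centers $y_{j,k}$: one must verify that the error terms separating $\Ibf$ from the genuine Almgren frequency of $u$ (controlled by powers of $\boldsymbol{m}_0$ through Lemma~\ref{lem:simplify}) vanish, and, crucially, that there is no degeneration of the denominators $\Hbf(y_{j,k},\cdot)$, which is exactly what $H_u(y_j,\tau)>0$ rules out. A purely quantitative alternative, chaining Lemma~\ref{lem:spatialvarI} along intermediate points, seems less convenient, since it would in addition require frequency-pinching control at those intermediate points, itself only accessible via a compactness argument.
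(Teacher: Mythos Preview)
Your proposal is correct and follows essentially the same route as the paper: a contradiction/compactness argument reducing to a Dir-minimizer $u$ that, via the homogeneity and unique-continuation reasoning of Lemma~\ref{lem:noQpts}, is invariant along $V(X_\infty)$ with constant frequency $\alpha$, and then passing the frequencies $\Ibf(y_{j,k},r_{j,k})$ to the limit to reach a contradiction. The paper's proof is slightly terser on the last step---it simply asserts $\Ibf_k(y_{k,i},r_{k,i})\to I_u(y_i,r_i)$ using $y_{k,i}\in\Sbf_k$ and $r_{k,i}\in[\bar\rho,1]$---whereas you correctly isolate this as the delicate point and supply the justification (strong $W^{1,2}\cap L^2$ convergence together with $H_u(y_j,\tau)>0$ to prevent degeneration of the denominator).
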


\begin{proof} 
    We again argue by contradiction. Without loss of generality, assume $x=0$. Suppose that the statement is false. Then there exists sequences $\eps_k \todown 0$, $r_k \todown 0$ and corresponding sequences of center manifolds $\Mcal_k$ and normalized normal approximations $\bar N_k$ with $\Hbf_{\bar N_k}(0,1) = 1$ for $T_{0,r_k}$ and a sequence of $(m-1)$-tuples of points $X_k \coloneqq \{x_{k,0}, x_{k,1},\dots, x_{k,m-2}\} \subset \Bbf_1\cap\Sbf_k$ (where $\Sbf_k \coloneqq \Sbf(T_{0,r_k})$) such that
    \begin{enumerate}[(i)]
        \item $X_k$ is $\rho$-linearly independent for some $\rho > 0$;
        \item $W_{\tilde\rho}^{2}(\bar N_k, x_{k,i}) \leq \eps_k \to 0$ as $k \to \infty$ for some $\tilde{\rho} > 0$;
        \item\label{itm:freqgap} there are points $y_{k,1}, \ y_{k,2} \in\Bbf_1\cap \Bbf_{\varepsilon_k} (V (X_k)) \cap \Sbf_k$ and corresponding scales $r_{k,i} \in [\bar\rho, 1]$ such that
        \[
            |\Ibf_k(y_{k,1},r_{k,1}) - \Ibf_k(y_{k,2},r_{k,2})| \geq \delta > 0,
        \]
        where $\Ibf_k \coloneqq \Ibf_{\bar N_k}$.
    \end{enumerate}
    We can thus use the compactness argument from Section~\ref{ss:compactness} to conclude that
    \begin{enumerate}[(1)]
        \item $\Mcal_k \longrightarrow \pi_\infty$ in $C^{3,\kappa}$;
        \item $\bar N_k \circ \mathbf{e}_k \longrightarrow u$ in $\Lrm^2$ and in $W^{1,2}_\loc$, where $u$ is a Dir-minimizer with $\boldsymbol{\eta} \circ u \equiv 0$;
        \item $X_k$ converges pointwise to  $X_\infty = \{x_0,\dots,x_{m-2}\}$;
        \item $y_{k,i}$ converge pointwise to $y_i$ and the respective scales $r_{k,i}$ converge to scales $r_i \in [\bar\rho,1]$ for $i=1,2$.
    \end{enumerate}
Arguing as in the proof of Lemma~\ref{lem:noQpts}, we can deduce that $u \equiv Q\llbracket 0 \rrbracket$ on $V(X_\infty) = x_0 + \spn\{x_{m-2}-x_0,\dots,x_1 - x_0\}$, with $I_u(y,\tau) \equiv \alpha > 0$ for any $y \in V(X_\infty)$ and any $\tau > 0$. However, since $y_{k,1}, \ y_{k,2} \in \Sbf_k$ and $r_{k,i}\in [\bar\rho,1]$, we also have $\Ibf_k(y_{k,i}, r_{k,i}) \to I_u(y_i,r_i)$ for $i=1,2$, so~\eqref{itm:freqgap} contradicts the homogeneity of $u$.
\end{proof}

\section{Flatness control}\label{s:flatness}

In this section we proceed to estimate the ``mean flatness'' in a ball for a measure $\mu$ supported in $\mathbf{S}$, in terms of a $(m-2)$-dimensional $\mu$-weighted average of the frequency pinching (plus a suitable lower order term). We introduce therefore the so called Jones' $\beta_2$ coefficients, which will give us a suitable tool to measure the mean flatness of $\mu$ at a given scale around a given point.

    \begin{definition}\label{def:beta2}
        Given a Radon measure $\mu$ in $\R^{m+n}$, and $k \in \{0,1,\dots,m+n-1\}$, we define the $k$-dimensional Jones' $\beta_2$ coefficient of $\mu$ as
        \[
            \beta_{2,\mu}^k(x,r) \coloneqq \inf_{\text{affine $k$-planes $L$}} \left[r^{-k} \int_{\Bbf_r(x)} \left(\frac{\dist(y,L)}{r}\right)^2 \dd\mu(y)\right]^{1/2}.
        \]
    \end{definition}

The pivotal estimate of this section is the following proposition.
    
    \begin{proposition}\label{prop:beta2control}
        There exist positive thresholds $\eta=\eta (m)$, $\eps=\eps (\Lambda, m,n,Q, \eta)$, $\alpha_0 = \alpha_0(\Lambda,m,n,Q)$ and $C=C(\Lambda,m,n,Q)$ such that the following holds. Suppose that $T$, $\Mcal$ and $N$ satisfy Assumption~\ref{asm:onecm-2} with parameters $\varepsilon_4$ and $\eta$ smaller than the above thresholds. Suppose that $\mu$ is a finite non-negative Radon measure with $\spt (\mu) \subset \Sbf$. Then for all $r \in ]0 ,1]$ and every $x_0 \in \Bbf_{r/8}\cap \mathbf{S}$ we have
        \[
            [\beta_{2,\mu}^{m-2}(x_0, r/8)]^2 \leq \frac{C}{r^{m-2}} \int_{\Bbf_{r/8}(x_0)} W^{4r}_{r/8}(x)\dd\mu(x) + C \boldsymbol{m}_0^{\alpha_0} r^{-(m-2-\alpha_0)}\mu(\Bbf_{r/8}(x_0)).
        \]
    \end{proposition}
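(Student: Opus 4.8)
Fix $x_0\in\mathbf{B}_{r/8}\cap\mathbf{S}$ and assume $\Theta:=\mu(\mathbf{B}_{r/8}(x_0))>0$ (otherwise there is nothing to prove). Write $\mu_0:=\mu\mres\mathbf{B}_{r/8}(x_0)$ and let $\bar x:=\Theta^{-1}\int y\,\di\mu_0(y)$ be its barycenter. For any linear $(m-2)$-plane $\vec V$, Jensen's inequality applied to $\mathbf{p}_{\vec V^\perp}(y-\bar x)=\Theta^{-1}\int \mathbf{p}_{\vec V^\perp}(y-z)\,\di\mu_0(z)$ gives $\int\dist(y,\bar x+\vec V)^2\,\di\mu_0(y)\le \Theta^{-1}\iint |\mathbf{p}_{\vec V^\perp}(y-z)|^2\,\di\mu_0(y)\di\mu_0(z)$, and since $[\beta_{2,\mu}^{m-2}(x_0,r/8)]^2\le (r/8)^{-m}\int\dist(y,\bar x+\vec V)^2\,\di\mu_0(y)$ by definition, the proposition is reduced to producing a linear $(m-2)$-plane $\vec V$ for which, for $\mu_0\otimes\mu_0$-a.e.\ pair $(y,z)$,
\[
|\mathbf{p}_{\vec V^\perp}(y-z)|^2\le Cr^2\big(W_{r/8}^{4r}(y)+W_{r/8}^{4r}(z)\big)+C\boldsymbol{m}_0^{\alpha_0}r^{2+\alpha_0}\,.
\]
Indeed, integrating this against $\Theta^{-2}\di\mu_0\,\di\mu_0$ and using $\iint (W(y)+W(z))\,\di\mu_0\di\mu_0=2\Theta\int_{\mathbf{B}_{r/8}(x_0)}W\,\di\mu$ yields exactly the claimed estimate with $\alpha_0$ in place of the exponent there.

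\textbf{Step 2: the pair estimate from the distance from homogeneity.} The plane will be chosen as (any $(m-2)$-plane containing) the ``almost-invariance space'' $\Sigma=\Sigma_{N,x_0,2r}$ of $N$, namely the span of the eigenvectors with small eigenvalue of the symmetric positive semidefinite form $\mathcal{Q}(v,v'):=\int_{\mathcal{A}_{r/2}^{r}(x_0)}\sum_i\langle\partial_vN_i,\partial_{v'}N_i\rangle\,\di w$ on $\R^{m+n}$. For $y,z\in\mathbf{S}\cap\mathbf{B}_{r/8}(x_0)$ and $w$ in the annulus $\mathcal{A}_{r/2}^r(x_0)$ (so that $d(y,w),d(z,w)\in]\,r/4,2r\,[$), properties (i)--(iii) of the geodesic distance give the pointwise decomposition
\[
DN_i(w)(y-z)=R_i(z,w)-R_i(y,w)+\big(\Ibf(y,d(y,w))-\Ibf(z,d(z,w))\big)N_i(w)|\nabla d|+O(\boldsymbol{m}_0^{1/2})\big(r|DN_i(w)|+r\Lambda|N_i(w)|\big)\,,
\]
where $R_i(x,w):=DN_i(w)\tfrac{d(x,w)\nabla d(x,w)}{|\nabla d(x,w)|}-\Ibf(x,d(x,w))N_i(w)|\nabla d(x,w)|$ is the radial homogeneity defect. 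Squaring, summing in $i$, and integrating over $\mathcal{A}_{r/2}^r(x_0)$, one bounds: the $R$-terms via Proposition~\ref{prop:distfromhomog} applied at the centers $y$ and $z$ (with parameters $\rho=r$ there, legitimate since $d(y,\mathbf{S})=d(z,\mathbf{S})=0$, and arranged so that $\mathcal{A}_{r/2}^r(x_0)\subset\mathcal{A}_{r/4}^{2r}(y)\cap\mathcal{A}_{r/4}^{2r}(z)$, which produces exactly the window $W_{r/8}^{4r}$ and a bounded logarithmic factor), obtaining $\int \sum_i|R_i(y,w)|^2\,\di w\lesssim r^{-1}\Hbf(y,2r)(W_{r/8}^{4r}(y)+\boldsymbol{m}_0^{\gamma_4}r^{\gamma_4})$ and similarly at $z$; the frequency-difference term via Lemma~\ref{lem:spatialvarI} (recall $d(y,z)\le r/4$) together with the almost-monotonicity \eqref{eq:frequencymtn}, giving $|\Ibf(y,d(y,w))-\Ibf(z,d(z,w))|^2\lesssim (W_{r/8}^{4r}(y)+W_{r/8}^{4r}(z)+\boldsymbol{m}_0^{\gamma_4}r^{\gamma_4})\tfrac{d(y,z)^2}{r^2}$; and the curvature errors trivially. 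Using the comparabilities of Lemma~\ref{lem:simplify} (so that $\Hbf(y,2r)\sim\Hbf(z,2r)\sim r\Dbf(x_0,2r)$ and $\int|N|^2\lesssim r^2\Dbf(x_0,2r)$ on these annuli, cf.\ \eqref{eq:simplify2}--\eqref{eq:simplify3}) this gives
\[
\mathcal{Q}(y-z,y-z)=\int_{\mathcal{A}_{r/2}^{r}(x_0)}\sum_i|DN_i(w)(y-z)|^2\,\di w\le Cr^2\,\Dbf(x_0,2r)\big(W_{r/8}^{4r}(y)+W_{r/8}^{4r}(z)+\boldsymbol{m}_0^{\gamma_4}r^{\gamma_4}\big)\,.
\]

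\textbf{Step 3: non-degeneracy transverse to $\Sigma$, and conclusion.} It remains to show that $\Sigma$ has dimension at most $m-2$ and that $\mathcal{Q}(v,v)\ge\varepsilon_0\,\Dbf(x_0,2r)\,|\mathbf{p}_{\Sigma^\perp}v|^2$ for all $v$, with $\varepsilon_0=\varepsilon_0(m,n,Q,\Lambda)>0$; once this is known, take $\vec V$ to be any $(m-2)$-plane containing $\Sigma$ (possible since $\dim\Sigma\le m-2$), so that $|\mathbf{p}_{\vec V^\perp}(y-z)|\le|\mathbf{p}_{\Sigma^\perp}(y-z)|$, and combining with the display of Step~2 gives the pair estimate with $\alpha_0=\gamma_4$, finishing the proof. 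Both the dimension bound on $\Sigma$ and the uniform transverse non-degeneracy are established by the compactness/contradiction scheme already used in Lemmas~\ref{lem:noQpts} and~\ref{lem:smallspatialvar} (see Remark~\ref{r:diag-blowup}): were either to fail along a sequence, one extracts blow-ups $\bar N_k\to u$ with $u$ a nontrivial Dir-minimizer, $\boldsymbol{\eta}\circ u\equiv0$, $\|u\|_{L^2}=1$, and $I_u\ge I_0$ at the limiting centers (by \eqref{e:lower-upper}); the normalized limit form $\mathcal{Q}_u$ would then have an eigenvalue $0$ in a direction outside an $(m-2)$-plane, i.e.\ $u$ would be translation-invariant along an $(m-1)$-dimensional space $\Sigma_u$. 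Since $u$ is homogeneous of positive degree at the limit points of $\mathbf{S}_k$ and $\boldsymbol{\eta}\circ u\equiv0$, those points lie in $\Delta_Q(u)$, and invariance along $\Sigma_u$ forces $\Delta_Q(u)$ to contain a translate of $\Sigma_u$; hence $\dim_{\Hcal}\Delta_Q(u)\ge m-1$, contradicting the dichotomy \cite{DLS_MAMS}*{Proposition~3.22}.

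\textbf{Main obstacle.} Everything in Steps~1--2 is either quoted (Proposition~\ref{prop:distfromhomog}, Lemma~\ref{lem:spatialvarI}, Lemma~\ref{lem:simplify}) or routine bookkeeping of the $O(\boldsymbol{m}_0^{1/2})$ center-manifold errors and of the annular scale-matching; the genuine difficulty is Step~3, i.e.\ extracting from the qualitative dimension estimate on $\Delta_Q(u)$ a \emph{uniform} (in $x_0$, $r$, and $T$) spectral gap for the finite-scale form $\mathcal{Q}_{N,x_0,2r}$, and correctly identifying its small-eigenvalue space with the invariance space of the blow-up limit. This is where the frequency lower bound of Corollary~\ref{cor:freqmono} and the full strength of the almost-monotonicity package enter.
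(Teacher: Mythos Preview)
Your approach is correct and takes a genuinely different (in fact, dual) algebraic route from the paper.

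\emph{What the paper does.} The paper diagonalizes the \emph{measure covariance} form $b_{x_0,r}(v,w)=\int((x-\bar x)\cdot v)((x-\bar x)\cdot w)\,\di\mu$, obtaining eigenvalues $\lambda_1\ge\cdots\ge\lambda_m$ and $\beta^2\lesssim r^{-m}\lambda_{m-1}$. The eigenvector identity $-\lambda_jv_j=\int((x-\bar x)\cdot v_j)\mathbf{p}_{x_0}(z-x)\,\di\mu$ is then applied to $DN_i(z)\cdot\boldsymbol{\ell}_z(\cdot)$, giving $\lambda_j|\partial_{v_j}N(z)|^2\lesssim\int|\partial_{w(x,z)}N(z)-\alpha N(z)|^2\,\di\mu(x)$. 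Summing and integrating over the annulus, the right side is handled by Proposition~\ref{prop:distfromhomog} and Lemma~\ref{lem:spatialvarI}; on the left one needs the lower bound $\int_{\mathcal{A}}\sum_{j=1}^{m-1}|\partial_{v_j}N|^2\ge c\Hbf/r$, proved by compactness together with Lemma~\ref{lem:invariant}.

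\emph{What you do.} You bypass the measure covariance by Jensen (Step~1) and instead diagonalize the \emph{gradient} form $\mathcal{Q}$; your spectral gap ``$\dim\Sigma\le m-2$'' is exactly the statement that the second-largest eigenvalue of $\mathcal{Q}$ is $\ge\varepsilon_0\Dbf$, and its proof by contradiction is precisely the paper's compactness for \eqref{eq:lowerbd}: in the limit $m-1$ eigenvalues vanish, so $u$ depends on one variable on the annulus, hence (via Lemma~\ref{lem:invariant}, which you should cite explicitly---translation invariance is only obtained on the annulus a priori) on the ball, and the $Q$-point coming from $x_0\in\mathbf{S}$ gives an $(m-1)$-dimensional $\Delta_Q(u)$.

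\emph{Two small corrections.} In Step~2 you overclaim the $d(y,z)^2/r^2$ factor on the frequency-difference term: Lemma~\ref{lem:spatialvarI} controls $|\Ibf(y,r)-\Ibf(z,r)|$ at the same scale with that factor, but the radial pieces $|\Ibf(y,d(y,w))-\Ibf(y,r)|$, $|\Ibf(z,r)-\Ibf(z,d(z,w))|$ are only $\lesssim W_{r/8}^{4r}+\boldsymbol{m}_0^{\gamma_4}r^{\gamma_4}$ by almost-monotonicity, without the factor. This is harmless---after multiplying by $\int_{\mathcal{A}}|N|^2\lesssim r^2\Dbf$ the displayed bound on $\mathcal{Q}(y-z,y-z)$ still holds. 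Second, your $\mathcal{Q}$ is a form on $T_{x_0}\mathcal{M}$ (or $\pi_0$), not on $\R^{m+n}$; the points $y,z\in\mathbf{S}\subset\mathcal{M}$ and the approximation $y-z\approx\boldsymbol{\ell}_w^{-1}\big(d(z,w)\tfrac{\nabla d(z,w)}{|\nabla d|}-d(y,w)\tfrac{\nabla d(y,w)}{|\nabla d|}\big)$ from properties (i)--(iii) handle this, but you should say so.
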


    In order to prove this proposition, we will require the following lemma, the proof of which is omitted here and can be found in \cite{DLMSV}.
    \begin{lemma}[\cite{DLMSV},~Lemma~5.4]\label{lem:invariant}
        Let $\Omega \subset \R^m$ be a connected open set and suppose that $u: \Omega \to \Acal_Q(\R^n)$ is a Dir-minimizer. Assume there is a ball $B \subset \Omega$ and a system of coordinates $x_1,\dots,x_m$ such that $u\big|_B$ is a function of $x_1$ only. Then $u$ is a function of only $x_1$ on all of $\Omega$.
    \end{lemma}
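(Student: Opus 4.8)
The plan is to promote the local structure theory of $Q$-valued Dir-minimizers to the statement that $u$ is, globally on $\Omega$, a superposition of affine maps each of which depends on $x_1$ only. I would first record the facts I need from the regularity theory in \cite{DLS_MAMS}: $u$ is (locally H\"older) continuous; its singular set $\Sigma_u$ (the points near which $u$ is \emph{not} a finite superposition of single-valued harmonic functions) is relatively closed in $\Omega$ with $\dim_{\Hcal}\Sigma_u\le m-2$; and around every point of $\Omega\setminus\Sigma_u$ one has a local decomposition $u=\sum_{i=1}^Q\llbracket v_i\rrbracket$ with the $v_i$ harmonic, hence real-analytic. The case $m=1$ is vacuous, so assume $m\ge 2$; then $\Omega\setminus\Sigma_u$ is open, dense and connected. \emph{Step 1 (the sheets over $B$ are affine in $x_1$):} choose a ball $B'\subset B\setminus\Sigma_u$ with $u=\sum_i\llbracket v_i\rrbracket$ on $B'$, $v_i$ harmonic. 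For each fixed value $c$ of $x_1$, the slice $B'\cap\{x_1=c\}$ is connected and the ordered tuple $(v_1(c,\cdot),\dots,v_Q(c,\cdot))$ takes values in the finite set of orderings of the fixed point $w(c)\in\Acal_Q(\R^n)$; being continuous it is constant on the slice, so each $v_i$ depends only on $x_1$, and harmonicity forces $\partial_{x_1}^2 v_i=0$, i.e.\ $v_i$ is affine in $x_1$.

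\emph{Step 2 (propagating the affine structure).} I would let $\tilde A$ be the set of points of $\Omega\setminus\Sigma_u$ admitting a ball contained in $\Omega\setminus\Sigma_u$ on which $u$ is a superposition of $Q$ affine maps $\R^m\to\R^n$. It is open by definition and nonempty by Step 1. For relative closedness, fix $y$ in its closure, take a ball $B_y\subset\Omega\setminus\Sigma_u$ with $u=\sum_i\llbracket v_i\rrbracket$ on $B_y$ ($v_i$ harmonic), and a point $y'\in\tilde A\cap B_y$; on the open dense "generic" subset of a small ball about $y'$ where both the (finitely many distinct) harmonic sheets and the affine sheets attain $Q$ distinct values, the two multisets of sheets must coincide, so each $v_i$ agrees with an affine map on an open set and hence, by unique continuation for harmonic functions on the connected set $B_y$, is affine on all of $B_y$. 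Thus $y\in\tilde A$, and by connectedness $\tilde A=\Omega\setminus\Sigma_u$. Since an affine sheet is the restriction of a unique affine map of $\R^m$ and the generic matching above pins the multiset of sheets down locally, the unordered $Q$-tuple of affine maps representing $u$ near a point is locally constant on $\Omega\setminus\Sigma_u$, hence constant: there are fixed affine maps $\ell_1,\dots,\ell_Q\colon\R^m\to\R^n$ with $u=\sum_{i=1}^Q\llbracket\ell_i\rrbracket$ on $\Omega\setminus\Sigma_u$, and by continuity of $u$ together with density of $\Omega\setminus\Sigma_u$ this identity extends to all of $\Omega$.

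\emph{Step 3 (linear algebra).} Write $\ell_i(x)=L_i x+c_i$ with $L_i\in\R^{n\times m}$. Over $B$ the multiset $\{\ell_1(x),\dots,\ell_Q(x)\}$ depends on $x_1$ only, hence so does the polynomial $\sum_i|\ell_i(x)|^2$, and in particular its degree-two part $x^{\mathsf T}\big(\sum_i L_i^{\mathsf T}L_i\big)x$ is a function of $x_1$ alone, which forces $\sum_i L_i^{\mathsf T}L_i=t\,(e_1\otimes e_1)$ for some $t\ge 0$. As a sum of positive semidefinite matrices equal to a rank-one one, each summand $L_i^{\mathsf T}L_i$ is itself a multiple of $e_1\otimes e_1$, whence $|L_i e_j|^2=e_j^{\mathsf T}L_i^{\mathsf T}L_i e_j=0$ for $j=2,\dots,m$, i.e.\ $L_i e_j=0$. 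Therefore each $\ell_i$, and hence $u=\sum_i\llbracket\ell_i\rrbracket$, is a function of $x_1$ only on all of $\Omega$.

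The step I expect to be the main obstacle is Step 2: one must propagate "affine" across crossing points of the sheets and past $\Sigma_u$, which rests on the (standard but somewhat delicate) uniqueness of continuous sheet decompositions on the generic locus, together with classical unique continuation, and a careful handling of repeated sheets and multiplicities — for which I would lean on the selection results of \cite{DLS_MAMS}. As an alternative route that sidesteps the decomposition, one could instead apply a unique continuation principle for Dir-minimizers to the translates $u(\,\cdot+h e_j)$, $j\ge 2$, which coincide with $u$ on $B$, and then let $h\to 0$ along a chain of balls covering the connected set $\Omega$ to deduce that $u$ is invariant under all translations orthogonal to $e_1$.
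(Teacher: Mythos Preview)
The paper does not supply its own proof of this lemma: it explicitly states ``the proof of which is omitted here and can be found in \cite{DLMSV}''. So there is nothing in the present paper to compare against line by line.

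Your argument is essentially correct. The only soft spots are the ones you flag yourself in Step~2 (matching two continuous sheet decompositions of the same $Q$-valued function on a ball when multiplicities occur); these are handled by the standard device of passing to the open dense set where all sheets are pairwise distinct, or by comparing the elementary symmetric functions of the sheets, and then invoking real-analytic unique continuation for each harmonic $v_i$. One minor redundancy: Step~3 is not needed. At the end of Step~2 you have shown that the unordered $Q$-tuple of affine maps representing $u$ is \emph{globally constant} on $\Omega$, and by Step~1 the affine maps appearing over $B'$ already depend only on $x_1$; since the global tuple coincides with that one, you are done.

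As for how this compares with the source: the alternative you sketch at the end --- apply unique continuation for Dir-minimizers (which is precisely \cite{DLMSV}*{Lemma~6.9}, invoked elsewhere in this paper) to $u$ and its translates $u(\cdot+he_j)$, $j\ge 2$ --- is shorter and sidesteps the sheet-matching bookkeeping entirely, and is the more natural approach given the tools available in \cite{DLMSV}. Your main route trades one black box (unique continuation for $Q$-valued minimizers) for more elementary ones (classical unique continuation for harmonic functions plus the regularity/selection theory of \cite{DLS_MAMS}); both are valid, but the translate-plus-unique-continuation argument is the cleaner of the two.
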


    \subsection{Proof of Proposition~\ref{prop:beta2control}}
        We may assume that $\mu(\Bbf_{r/8}(x_0)) > 0$, else the claimed inequality is trivial. Since it will be convenient for us to restrict ourselves to $(m-2)$-dimensional affine subspaces $L$ of $T_{x_0}\Mcal$ in Definition \ref{def:beta2}, we begin with some basic linear algebra to simplify the mean flatness that we would like to control. Let
            \begin{equation}\label{eq:barycenter}
                \bar{x}_{x_0,r} \coloneqq \frac{1}{\mu(\Bbf_{r/8}(x_0))} \int_{\Bbf_{r/8}(x_0)} x \dd \mu(x)
            \end{equation}
            denote the barycenter of $\mu$ in $\Bbf_{r/8}(x_0)\cap T_{x_0}\Mcal$, where $\mathbf{p}_{x_0}$ is the orthogonal projection of $\mathbb R^{m+n}$ onto $T_{x_0} \mathcal{M}$. Following the approach of~\cite{NV_varifolds} and~\cite{DLMSV}*{Section~5}, we may consider the symmetric bilinear form $b_{x_0,r}: T_{x_0}\Mcal \times T_{x_0}\Mcal \to \R$ defined by
            \begin{align}
                b_{x_0,r}(v,w) &\coloneqq \int_{\Bbf_{r/8}(x_0)} \left((\mathbf{p}_{x_0}(x)-\mathbf{p}_{x_0}(\bar{x}_{x_0,r}))\cdot v\right)\left((\mathbf{p}_{x_0}(x)-\mathbf{p}_{x_0}(\bar{x}_{x_0,r}))\cdot w\right)\dd\mu(x) \label{eq:bilinear}\\
                &= \int_{\Bbf_{r/8}(x_0)} \left((x-\bar{x}_{x_0,r})\cdot v\right)\left((x-\bar{x}_{x_0,r})\cdot w\right)\dd\mu(x), \notag
            \end{align}
            and we diagonalize $b_{x_0,r}$. This yields an orthonormal basis $\{v_i\}_{i=1}^m$ of eigenvectors and a corresponding family of eigenvalues $0 \leq \lambda_m \leq \dots \leq \lambda_1$ for the linear map
            \begin{align}
                T(v) &\coloneqq \int_{\Bbf_{r/8}(x_0)} \left((\mathbf{p}_{x_0}(x)-\mathbf{p}_{x_0}(\bar{x}_{x_0,r}))\cdot v\right) (\mathbf{p}_{x_0} (x) - \mathbf{p}_{x_0}(\bar x_{x_0,r}))\, \dd\mu(x) \label{eq:linear} \\
                &=\int_{\Bbf_{r/8}(x_0)} \left((x-\bar{x}_{x_0,r})\cdot v\right) \mathbf{p}_{x_0} (x) \dd\mu(x), \notag
            \end{align}
            which diagonalize $b_{x_0,r}$. Namely, $T (v_i) = \lambda_i v_i$ and $b_{x_0,r}(v_i,v_i) = \lambda_i$.
            
            This yields the characterization
            \begin{equation}\label{e:restricted_beta}
\inf_{\substack{\text{affine $(m-2)$-planes} \\ L\subset T_{x_0} \mathcal{M}}} \left(\frac{r}{8}\right)^{-(m-2)} \int_{\Bbf_{r/8} (x_0)} \left(\frac{\dist(y,L)}{r}\right)^2 \dd\mu(y) = \left(r/8\right)^{-m}(\lambda_{m-1} + \lambda_m)\, .
            \end{equation}
We therefore conclude that
            \begin{equation}\label{e:beta-lambda}
                [\beta_{2,\mu}^{m-2}(x_0,r/8)]^2 \leq \left(r/8\right)^{-m}(\lambda_{m-1} + \lambda_m) \leq 2 \left(r/8\right)^{-m} \lambda_{m-1}\, . 
            \end{equation}
            Moreover the $(m-2)$-planes minimizing the left hand side of \eqref{e:restricted_beta} are those of the form $L = x_0 + \spn\{v_1,\dots,v_{m-2}\}$, for any choice of orthonormal basis as above.
            
           Fix now any $z \in \Bbf_{2r}(x_0)\setminus\Bbf_{r/4}(x_0)\cap \Mcal$. Following \cite{DLMSV}, we would like to differentiate the map $N$ at the point $z$ along the vector $v_j$. However the latter vector is an element of $T_{x_0} \mathcal{M}$ and not an element of $T_z \mathcal{M}$. In order to find a suitable element of $T_z \mathcal{M}$ we consider the geodesic segment connecting $x_0$ and $z$ on $\mathcal{M}$ and the parallel transport along it. This gives a well-defined linear map $\boldsymbol{\ell}_z: T_{x_0} \mathcal{M} \to T_z \mathcal{M}$. This map is, in fact, the differential $d \mathbf{e}_{x_0} |_{\zeta}$ of the exponential map $\mathbf{e}_{x_0}$ at the point $\zeta = \mathbf{e}_{x_0}^{-1} (z)$.
                      
Since
\[
\int_{\Bbf_{r/8}(x_0)} ((x-\bar{x}_{x_0,r})\cdot v_j) \mathbf{p}_{x_0} (z-x)\dd\mu(x) = - \lambda_j v_j
\]
and
\[
\int_{\Bbf_{r/8}(x_0)} (x-\bar{x}_{x_0,r})\cdot v_j \dd\mu(x)= 0\, ,
\]
for each $j=1,\dots,m$, $i = 1,\dots,Q$ and any fixed $\alpha > 0$ (to be determined later) we have
\begin{align*}
-\lambda_j DN_i (z) \cdot \boldsymbol{\ell}_z (v_j) &=\nonumber\\
DN_i(z) \cdot \boldsymbol{\ell}_z ( - \lambda_j v_j) &= DN_i (z) \cdot \boldsymbol{\ell}_z \left(\int_{\Bbf_{r/8}(x_0)} ((x-\bar{x}_{x_0,r})\cdot v_j)\mathbf{p}_{x_0} (z-x) \dd\mu (x)\right)\\
& \qquad - \alpha N_i(z)  \int_{\Bbf_{r/8}(x_0)} ((x-\bar{x}_{x_0,r})\cdot v_j) \dd\mu (x) \\
&= \int_{\Bbf_{r/8}(x_0)} (x-\bar{x}_{x_0,r})\cdot v_j
\left[DN_i (z) \cdot \boldsymbol{\ell}_z (\mathbf{p}_{x_0} (z-x)) - \alpha N_i (z)\right] \dd \mu (x).
\end{align*}
{Now observe that, since $\boldsymbol{\ell}_z (\mathbf{p}_{x_0} (z-x))$ is a linearization of $\frac{d(x,z) \nabla d (x,z)}{|\nabla d (x,z)|}$ at the point $z$, a Taylor expansion, combined with the properties (i)-(iii) of $d$ in Section \ref{ss:freq}, yields}
\[
\left|\boldsymbol{\ell}_z (\mathbf{p}_{x_0} (z-x)) - \frac{d(x,z) \nabla d (x,z)}{|\nabla d (x,z)|}\right| \leq \boldsymbol{m}_0^{1/2} r^2\, .
\]
We therefore reach the (approximate) identity
\begin{align*}
- \lambda_j D N_i (z) \cdot \boldsymbol{\ell}_z (v_j)
 & = \int_{\Bbf_{r/8}(x_0)} ((x-\bar{x}_{x_0,r})\cdot v_j)\left[DN_i (z) \cdot \frac{d(x,z)\nabla d(x,z)}{|\nabla d(x,z)|} - \alpha N_i(z)\right]\dd\mu(x) \\
 & \quad + O \left(\boldsymbol{m}_0^{1/2} r^2\right) |DN_i (z)| \int_{\mathbf{B}_{r/8} (x_0)} |(x-\bar{x}_{x_0,r})\cdot v_j|\, .
\end{align*}
            We now square both sides, sum over the components $i$ of the $Q$-valued map $N$, and use the Cauchy-Schwartz inequality and the estimates on the distance $d$. Letting $w(x,z) \coloneqq  \frac{d(x,z)\nabla d(x,z)}{|\nabla d(x,z)|}$, we thus have
            \begin{align*}
                \lambda_j^2 &\sum_i |DN_i(z) \cdot \boldsymbol{\ell}_z (v_j)|^2\\
                &\leq  C\int_{\Bbf_{r/8}(x_0)} |(x-\bar{x}_{x_0,r})\cdot v_j|^2\dd\mu(x)\int_{\Bbf_{r/8}(x_0)}\sum_i\left|DN_i(z) \cdot w (x,z) - \alpha N_i(z))\right|^2\dd\mu(x)\\
                &\qquad + C \boldsymbol{m}_0 r^4 \mu (\mathbf{B}_{r/8} (x_0)) |DN (z)|^2 \int_{\Bbf_{r/8}(x_0)} |(x-\bar{x}_{x_0,r})\cdot v_j|^2\dd\mu(x)
            \end{align*}
Recalling the definition 
\[
\lambda_j = b_{x_0, r} (v_j, v_j) = \int_{\Bbf_{r/8}(x_0)} |(x-\bar{x}_{x_0,r})\cdot v_j|^2\dd\mu(x),
\]
we thus achieve
\begin{align}
 \lambda_j \sum_i |DN_i(z) \cdot \boldsymbol{\ell}_z (v_j)|^2               &\leq C \int_{\Bbf_{r/8}(x_0)}\sum_i\left|DN_i(z) \cdot w(x,z) - \alpha N_i(z))\right|^2\dd\mu(x)\nonumber\\ 
 &\qquad + C \boldsymbol{m}_0 r^4 \mu (\mathbf{B}_{r/8} (x_0)) |DN (z)|^2 .\label{e:intermedia}
\end{align}
In what follows we will use the shorthand notation 
\[
|DN (z)\cdot v|^2 = \sum_i |DN_i (z) \cdot v|^2.
\]
We now set $\Acal_{r/4}^{2r}(x_0) \coloneqq \Bbf_{2r}(x_0)\setminus \Bbf_{r/4}(x_0)\cap\Mcal$ and use \eqref{e:beta-lambda} (plus the ordering $0 \leq \lambda_m \leq \lambda_{m-1} \leq \cdots \leq \lambda_1$) to get the following inequality:
\begin{align*}
&[\beta_{2,\mu}^{m-2}(x_0,r/8)]^2\int_{\Acal_{r/4}^{2r}(x_0)}\sum_{j=1}^{m-1} |DN(z)\cdot \boldsymbol{\ell}_z (v_j)|^2 \dd z\nonumber\\ & \qquad \leq C\frac{\lambda_{m-1}}{r^m} \int_{\Acal_{r/4}^{2r}(x_0)}\sum_{j=1}^{m-1} |DN(z) \cdot \boldsymbol{\ell}_z (v_j)|^2 \dd z \\
                &\qquad\leq \frac{C}{r^m} \int_{\Acal_{r/4}^{2r}(x_0)}\sum_{j=1}^m \lambda_j|DN(z)\cdot \boldsymbol{\ell}_z (v_j)|^2 \dd z
\end{align*}
We hence use \eqref{e:intermedia} to get
\begin{align*}
&[\beta_{2,\mu}^{m-2}(x_0,r/8)]^2\int_{\Acal_{r/4}^{2r}(x_0)}\sum_{j=1}^{m-1} |DN(z)\cdot \boldsymbol{\ell}_z (v_j)|^2 \dd z\nonumber\\
                &\qquad\leq C\boldsymbol{m}_0 r^{4-m}\mu(\Bbf_{r/8}(x_0)) \int_{\Bbf_{2r}(x_0)\cap\Mcal}|DN|^2\\
                &\qquad\qquad+ Cr^{-m}\int_{\Acal_{r/4}^{2r}(x_0)}\int_{\Bbf_{r/8}(x_0)}\sum_i\left|DN_i(z)\cdot w (x,z) - \alpha N_i(z))\right|^2\dd\mu(x)\dd z \\
                &\qquad\leq C\boldsymbol{m}_0 r^{4-m}\mu(\Bbf_{r/8}(x_0)) \int_{\Bbf_{2r}(x_0)\cap\Mcal}|DN|^2 \\
                &\qquad\qquad + Cr^{-m}\underbrace{\int_{\Acal_{r/4}^{2r}(x_0)}\int_{\Bbf_{r/8}(x_0)}\sum_i\left|w(x,z)\cdot DN_i(z) - \Ibf(x,|z-x|)N_i(z))\right|^2\dd\mu(x)\dd z}_{\eqqcolon \Rcal(x_0,r)} \\
                &\qquad\qquad + Cr^{-m}\int_{\Acal_{r/4}^{2r}(x_0)}\int_{\Bbf_{r/8}(x_0)}|\Ibf(x,|z-x|)-\alpha|^2|N(z)|^2\dd\mu(x)\dd z. \\
            \end{align*}
            Firstly, observe that Fubini's theorem and the estimates in Proposition~\ref{prop:distfromhomog} and Lemma \ref{lem:simplify} tell us that for $r\in ]0,1]$,
            \[
                \Rcal(x_0,r) \leq Cr \int_{\Bbf_{r/8}(x_0)} \Hbf(x,2r) (W^{2r}_{r/4}(x) + \boldsymbol{m}_0^{\gamma_4}r^{\gamma_4}) \dd\mu(x)\, .
            \]           
            Now fix any such $r$ and choose
            \[
                \alpha \coloneqq \frac{1}{\mu(\Bbf_{r/8}(x_0))}\int_{\Bbf_{r/8}(x_0)} \Ibf(y,r)\dd \mu(y).
            \]
            We may hence once again use the triangle inequality to write
            \begin{align*}
                &\int_{\Acal_{r/4}^{2r}(x_0)}\int_{\Bbf_{r/8}(x_0)}|\Ibf(x,|z-x|)-\alpha|^2|N(z)|^2\dd\mu(x)\dd z \\
                &\qquad\leq C\int_{\Acal_{r/4}^{2r}(x_0)}\int_{\Bbf_{r/8}(x_0)}|W_{|z-x|}^r(x)|^2|N(z)|^2\dd\mu(x)\dd z \\
                &\qquad\qquad+ C \int_{\Acal_{r/4}^{2r}(x_0)}\int_{\Bbf_{r/8}(x_0)}|\Ibf(x,r)-\alpha|^2|N(z)|^2\dd\mu(x)\dd z \\
                &\qquad\eqqcolon (I) + (II).
            \end{align*}
            We estimate the two terms on the right-hand side separately. For $(I)$, we may use the almost-monotonicity \eqref{eq:simplify13} combined with \eqref{eq:simplify1} to conclude that
            \begin{align*}
                (I) &\leq C r \Hbf(x_0,2r)\left( \int_{\Bbf_{r/8}(x_0)} \left[W_{r/4}^{2r}(x)\right]^2 \dd\mu(x) + \boldsymbol{m}_0^{2\gamma_4}r^{2\gamma_4}\mu(\Bbf_{r/8}(x_0))\right) \\
                &\leq C r \Hbf(x_0,2r)\left(\int_{\Bbf_{r/8}(x_0)} W_{r/4}^{2r}(x) \dd\mu(x) + \boldsymbol{m}_0^{2\gamma_4}r^{2\gamma_4}\mu(\Bbf_{r/8}(x_0))\right).
            \end{align*}
            Meanwhile, to control $(II)$ we use Lemma~\ref{lem:spatialvarI}, \eqref{eq:simplify6} and \eqref{eq:simplify7} to deduce that for $\eta = \eta(m) > 0$ sufficiently small,
            \begin{align*}
                (II) &\leq  \frac{C r \Hbf(x_0,2r)}{\mu(\Bbf_{r/8}(x_0))}\int_{\Bbf_{r/8}(x_0)} \int_{\Bbf_{r/8}(x_0)} (W^{4r}_{r/8}(x) + W^{4r}_{r/8}(y) + \boldsymbol{m}_0^{\gamma_4}r^{
                \gamma_4}) \dd\mu(x)\dd\mu(y) \\
                &\leq C r \Hbf(x_0,2r)\left(\int_{\Bbf_{r/8}(x_0)} W^{4r}_{r/8}(x) \dd\mu(x) + \boldsymbol{m}_0^{\gamma_4} r^{\gamma_4} \mu(\Bbf_{r/8}(x_0))\right)\, . \\
            \end{align*}
            Taking into account that 
            \[
            r \int_{\mathbf{B}_{2r} (x_0) \cap \mathcal{M}} |DN|^2 \leq C \mathbf{H} (x_0, 2r)\, ,
            \]
            it thus remains to check that
            \begin{equation}\label{eq:lowerbd}
                \int_{\Acal_{r/4}^{2r}(x_0)}\sum_{j=1}^{m-1} | DN(z)\cdot \boldsymbol{\ell}_z (v_j)|^2 \dd z \geq c(\Lambda)\frac{\Hbf(x_0,2r)}{r} ,
            \end{equation}
            for some $C(\Lambda) > 0$. We prove this by contradiction. The inequality is scaling invariant, so by rescaling and recentering, we may assume that $r=1$ and $x_0 = 0$. If~\eqref{eq:lowerbd} fails, then we can extract a sequence of currents $T_k$ with corresponding center manifolds $\Mcal_k$ and corresponding normalized normal approximations $\bar N_k$ with $\int_{\Bbf_1\cap\Mcal_k}|D\bar N_k|^2 \leq C\Lambda$ and $\int_{\Bbf_2\setminus\cl\Bbf_{1}\cap\Mcal_k} |\bar N_k|^2 = 1$, for which
            \begin{itemize}
                \item $\Mcal_k \to \pi_\infty$ (taking $\boldsymbol{m}_0^{(k)}\} \leq \eps_k^2 \to 0$),
                \item $\boldsymbol{\eta}\circ \bar N_k \to 0$,
                \item $\bar N_k(y_k) = Q\llbracket 0 \rrbracket$ \quad for some $y_k \in \Bbf_{1/8}\cap \Mcal_k$ (since $\mu_{T_k}(\Bbf_{r/8}) > 0$),
            \end{itemize}
            but
            \[
                \int_{\Bbf_{2}\setminus \cl\Bbf_{1}\cap\Mcal_k}\sum_{j=1}^{m-1} |D\bar N_k(z)\cdot \boldsymbol{\ell}^k_z (v^k_j)|^2 \longrightarrow 0,
            \]
            for some choice of orthonormal vectors $\{v_1^k,\dots,v_{m-1}^k\}$, where $\boldsymbol{\ell}_z^k$ is the linear map $\boldsymbol{\ell}_z:T_0\Mcal_k \to T_z\Mcal_k$. Up to subsequence, we can use the compactness procedure in Section \ref{ss:compactness} (see Remark \ref{r:diag-blowup}) to find a limiting Dir-minimizer $u: \pi_\infty \supset B_{2} \to \Acal_Q(\R^{m+n})$ with 
            \begin{itemize}
                \item $\int_{B_2\setminus \cl B_{1}} |u|^2 = 1$,
                \item $\int_{B_1}|Du|^2 \leq C\Lambda$,
                \item $\boldsymbol{\eta}\circ u \equiv 0$,
                \item $u(y) = Q\llbracket 0 \rrbracket$ \quad for some $y \in B_{1/8}$,
            \end{itemize}
            and
            \[
                \int_{B_{2}\setminus \cl B_{1}}\sum_{j=1}^{m-1} |Du(z) \cdot v_j|^2 = 0
            \] 
            for some choice of orthonormal directions $\{v_1,\dots,v_{m-1}\}$ (where each $v_j$ is the limit of $v_j^k$; note also that, since $\mathcal{M}_k$ converge to the flat plane $\pi_\infty$, the maps $\boldsymbol{\ell}_z^k$ converge to the identity map from $\pi_\infty$ onto itself).
            
            Proceeding as in the proof of~\cite{DLMSV}*{Proposition~5.3}, we deduce that $u$ is a function of only one variable on $B_{2}\setminus \cl B_{1}$, and hence on the whole of $B_2$ due to Lemma~\ref{lem:invariant}. Since $u(y) = Q\llbracket 0 \rrbracket$, we deduce that $\dim_\Hcal(\Delta_Q u) \geq m-1$, which is a contradiction, since $u$ is non-trivial.
            \qed

\section{Rectifiability}\label{ss:covering}

This section is dedicated to proving the rectifiability of $\Sbf$ in claim (iii) of Theorem \ref{t:main-quantitative}. In order to do this, we make use of the following result from \cite{DLF}, which we re-state here for the convenience of the reader.

\begin{theorem}\label{t:sigma-finite}
    Let $k\in \N$ be an integer with $k < m+n$. Suppose that $E\subset \R^{m+n}$ is a Souslin set that is non $\sigma$-finite with respect to $\Hcal^k$. Then there exists a closed subset $F \subset E$ with $0 < \Hcal^k(F) < \infty$ that is purely $k$-unrectifiable.
\end{theorem}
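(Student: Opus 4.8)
The plan is to argue by contradiction, reducing the statement to a measure-theoretic dichotomy for $\Hcal^k\mres E$ and then combining the classical subset theorem for Hausdorff measures with the elementary fact that every countably $k$-rectifiable set is $\sigma$-finite with respect to $\Hcal^k$. So assume, towards a contradiction, that $E$ contains no closed subset of positive and finite $\Hcal^k$ measure that is purely $k$-unrectifiable. The first step is to observe that this is equivalent to the assertion that every compact $K\subseteq E$ with $\Hcal^k(K)<\infty$ is $k$-rectifiable up to an $\Hcal^k$-null set: given such a $K$, write $K=K^{r}\cup K^{u}$ with $K^{r}$ countably $k$-rectifiable and $K^{u}$ purely $k$-unrectifiable, both $\Hcal^k$-measurable; since $\Hcal^k\mres K$ is a finite Borel measure on the compact metric space $K$ it is Radon, hence inner regular, so if $\Hcal^k(K^{u})>0$ there would be a compact $C\subseteq K^{u}$ with $0<\Hcal^k(C)<\infty$, and $C$, being a closed subset of a purely $k$-unrectifiable set, would itself be purely $k$-unrectifiable, contradicting the assumption. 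Hence $\Hcal^k(K^{u})=0$.

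It therefore suffices to prove the core statement: if every compact $K\subseteq E$ with $\Hcal^k(K)<\infty$ is $k$-rectifiable modulo an $\Hcal^k$-null set, then $E$ is $\sigma$-finite with respect to $\Hcal^k$. This contradicts the hypothesis of the theorem and finishes the proof, the required $F$ being exactly a closed purely $k$-unrectifiable subset of $E$ of positive measure (made to have finite measure, if necessary, by intersecting with a large ball and invoking the subset theorem once more). To establish the core statement I would use the subset theorem for Hausdorff measures — Besicovitch's theorem for integer $k$ in Euclidean space, and Davies' theorem in general — which guarantees that a Souslin set with infinite $\Hcal^k$ measure has compact subsets of arbitrarily large finite $\Hcal^k$ measure, and that any Souslin set of positive $\Hcal^k$ measure has a compact subset of positive finite $\Hcal^k$ measure. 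Combining this with the hypothesis, one aims to construct a $\sigma$-compact, countably $k$-rectifiable set $R$ with $\Hcal^k(E\setminus R)=0$: heuristically $R$ is chosen to absorb, along an exhaustion, the entire rectifiable content of $E$, and any residual of positive $\Hcal^k$ measure, being a Souslin subset of $E$, would by the subset theorem contain a compact finite-measure subset, which by hypothesis is rectifiable modulo null and could thus be added to $R$, contradicting its maximality. Since countably $k$-rectifiable sets are $\sigma$-finite with respect to $\Hcal^k$ and $E\setminus R$ is $\Hcal^k$-null, $E=(E\cap R)\cup(E\setminus R)$ is then $\sigma$-finite, as desired.

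The delicate point — and the only place where the non-$\sigma$-finiteness hypothesis is genuinely used — is precisely the construction of the rectifiable hull $R$. Because $\Hcal^k$ is itself non-$\sigma$-finite on $\R^{m+n}$, one cannot exhaust the ambient space by finite-$\Hcal^k$-measure pieces, and when the rectifiable content of $E$ is infinite the naive ``maximize the captured rectifiable mass'' procedure need not terminate after countably many steps (an uncountable increasing chain of $\sigma$-compact rectifiable sets need not have a countably rectifiable upper bound). Handling this requires passing to the $\sigma$-finite part of $\Hcal^k\mres E$ and iterating the argument on the residual, which remains a non-$\sigma$-finite Souslin set to which the same reasoning applies, or equivalently running a transfinite exhaustion set up so that the Souslin property and the non-$\sigma$-finiteness are preserved at each stage and so that the process must stop; an alternative route is to certify pure unrectifiability directly via the Besicovitch–Federer projection theorem and carry out a Cantor-scheme selection inside a compact subset provided by the subset theorem. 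In either case the substance of the argument is this bookkeeping rather than any geometric input, and I expect it to be the main obstacle. Once the core statement is proved the theorem follows immediately.
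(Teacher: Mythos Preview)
The paper does not prove this theorem: it is quoted from \cite{DLF} and used as a black box, so there is no in-paper argument to compare against.

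Your first-paragraph reduction (from ``no closed purely unrectifiable subset of positive finite measure'' to ``every compact finite-measure subset is rectifiable mod null'') is correct and clean. But what you then call the ``core statement'' is precisely the contrapositive of the theorem after that reformulation, so nothing has been reduced: the entire content of the result lies in proving the core statement, and you do not. You name the obstruction honestly and float three workarounds, but none is carried out and each is problematic as stated. ``Passing to the $\sigma$-finite part of $\Hcal^k\mres E$'' does not refer to a well-defined object --- there is no canonical decomposition of a non-$\sigma$-finite measure into a $\sigma$-finite piece and a remainder. The transfinite exhaustion by disjoint rectifiable compacta $K_\alpha$ cannot halt at any countable $\alpha$: for countable $\alpha$ the removed set $\bigcup_{\beta<\alpha}K_\beta$ is $\sigma$-compact and $\sigma$-finite, so $E\setminus\bigcup_{\beta<\alpha}K_\beta$ is still Souslin and still non-$\sigma$-finite, and the subset theorem produces another $K_\alpha$; you therefore reach $\omega_1$, and an $\omega_1$-family of disjoint rectifiable compacta inside $E$ contradicts nothing. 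The Besicovitch--Federer route --- building a compact $K\subset E$ of positive finite $\Hcal^k$-measure with almost all $k$-projections null --- is a genuine construction in which the non-$\sigma$-finiteness must be used substantively, not ``bookkeeping''. In short, you have located the difficulty accurately but not resolved it; the actual argument is in \cite{DLF}.
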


Unlike the arguments in \cite{NV_Annals}, here we do not require the Minkowski content bound \eqref{e:Minkowski-high} in Theorem \ref{t:main-quantitative}(iii) to conclude rectifiability. This is because we may replace the covering arguments of Naber and Valtorta by Theorem \ref{t:sigma-finite}, together with the existence of a Frostman measure supported on each finite $\Hcal^{m-2}$-measure subset of $\Sbf\cap \Bbf_1$ in order to provide us with the necessary a priori measure bound on balls. We may then appeal to the work of Azzam and Tolsa \cite{AT15}, where the rectifiability of a Radon measure comes from the control of its $\beta_2$-square function (see \eqref{eq:beta2squarefn}). However, we provide a proof of the bound \eqref{e:Minkowski-high} in Appendix \ref{s:A1} nevertheless, since we believe that this is of independent interest.

\subsection{Proof of rectifiability in Theorem \texorpdfstring{\ref{t:main-quantitative}}{Minkowski}(iii)}\label{s:rectifiability-h}

If $\Sbf\cap\bar\Bbf_1$ is $\sigma$-finite with respect to $\Hcal^{m-2}$, but not rectifiable, then it is a classical fact that $\Sbf$ contains a purely $(m-2)$-unrectifiable closed set $F$ with $0<\mathcal{H}^{m-2} (F) < \infty$ (see e.g. \cite{Mattila}*{Theorem 15.6}). On the other hand, if $\Sbf\cap \Bbf_1$ is not $\sigma$-finite, then we can appeal to Theorem  \ref{t:sigma-finite} to again find such a closed subset $F$.

Let $\mu$ be a Frostman measure supported on $F$ (see \cite{Mattila}*{Theorem 8.17}, for example). Namely, $\mu$ is a nontrivial non-negative Radon measure with $\spt(\mu)\subset F$ and
\begin{equation}\label{eq:Hausdorffcontent}
    \mu(\Bbf_r(x)) \leq r^{m-2} \quad \forall x\, ,\; \forall r\leq 1.
\end{equation}

	In light of the characterization of rectifiability in \cite{AT15}*{Theorem 1.1}, it suffices to prove that for every $y\in\Bbf_1$ and every $t \leq \frac{1}{32}$ we have
	\begin{equation}\label{eq:beta2squarefn}
	    \int_{\Bbf_t(y)}\int_0^t [\beta_{2,\mu}^{m-2}(z,s)]^2 \frac{\dd s}{s}\dd\mu(z) <\infty.
	\end{equation}
Indeed this would imply that the support of $\mu$ is rectifiable, but since such support has finite and positive $\mathcal{H}^{m-2}$ measure and it is contained in $F$, we reach a contradiction.	

We appeal to Proposition \ref{prop:beta2control}: we are integrating over $z$ with respect to $\mu$, so necessarily $z\in\Sbf$ when $z$ is in the support of $\mu$ and thus the estimate applies.
	
	We have
	\begin{align*}
	    \int_{\Bbf_t(y)}\int_0^t [\beta_{2,\mu}^{m-2}(z,s)]^2 \frac{\dd s}{s}\dd\mu(z) &\leq C \int_{\Bbf_t(y)} \int_0^t s^{-(m-1)} \int_{\Bbf_s(z)}W_s^{32s}(w)\dd\mu(w)\dd s \dd\mu(z) \\
	    &\qquad+ C\boldsymbol{m}_0^{\alpha_0}\int_{\Bbf_t(y)}\int_0^t \frac{\mu(\Bbf_{s}(z))}{s^{m-1-\alpha_0}} \dd s \dd\mu(z). \\
    \end{align*}

    Thus, via analogous estimates to those in Step 4 in the proof of Lemma \ref{lem:cover1}, additionally invoking the bound \eqref{eq:Hausdorffcontent}, we deduce that
    \begin{align*}
	    \int_{\Bbf_t(y)}\int_0^t [\beta_{2,\mu}^{m-2}(z,s)]^2 \frac{\dd s}{s}\dd\mu(z) &\leq C \int_{\Bbf_{2t}(y)} \int_{0}^t W_s^{32s}(w)\frac{\dd s}{s} \dd\mu(w) \\
	    &\qquad+ C\boldsymbol{m}_0^{\alpha_0}\int_{\Bbf_t(y)} \int_0^t s^{-1+\alpha_0} \dd s \dd\mu(z) \\
	    &\leq C t^{m-2} W_{0}^{1/8} + C t^{m-2+\min\{\alpha_0,\beta\}}\boldsymbol{m}_0^{\alpha_0} \\
	    &\leq C(m,n,Q,\Lambda).
    \end{align*}
This yields the desired contradiction, completing the proof.
\qed

\part{Rectifiability of low frequency points}

\section{Subdivision of low frequency points}\label{s:subdivision-low}

It will be useful to decompose the set $\flatS_Q^l$ into a countable collection of pieces as follows. First of all, we may write
\[
    \flatS_Q^l \cap \Bbf_1 = \bigcup_{K \in \N} \bits_K,
\]
for 
\[
\bits_K := \{y\in \flatS_Q (T) : 2-\delta_2 - 2^{-K}\geq  \Irm (T, y)\geq 1+2^{-K}\}\cap \mathbf{B}_1\, .
\]
Moreover, we can further decompose each subset $\bits_K$ as $\bits_K = \bigcup_{J\in\N} \bits_{K,J}$, but before introducing the latter decomposition, let us recall Proposition \ref{prop:excess-decay}, which we re-state for convenience.

\begin{proposition}\label{prop:excess-decay-2}
Let $T$ be as in Theorem \ref{thm:uniquefv} and let $x\in \flatS_Q (T)$ with $\Irm(T,x) > 1$. For any $0 < \mu < \min \{\Irm (T,x)-1, 1-\delta_2\}$, there exists $C(m,n,Q,\mu)>0$ and $r_0 (x)>0$ such that
\begin{equation}\label{e:excessdecay-quantitative-higherI-2}
\mathbf{E} (T, \mathbf{B}_r(x)) \leq C \left(\frac{r}{s}\right)^{2\mu} \max \{\mathbf{E} (T, \mathbf{B}_{s}(x)), \bar\varepsilon^2 s^{2-2\delta_2}\}\, \qquad \forall r< s < r_0\, .
\end{equation}
\end{proposition}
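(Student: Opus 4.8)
This proposition is a verbatim restatement of Proposition~\ref{prop:excess-decay}, which is itself \cite{DLSk1}*{Proposition 7.2}; it is recorded here only for ease of reference later in the paper, so strictly speaking nothing new needs to be proved. For the reader's orientation I nonetheless outline the argument of \cite{DLSk1}. Fix $p \in \flatS_Q(T)$ with $\Irm(T,p) =: I_* > 1$ and $0 < \mu < \min\{I_* - 1,\, 1 - \delta_2\}$; the target is the scale-invariant decay \eqref{e:excessdecay-quantitative-higherI-2}. The engine is Theorem~\ref{thm:uniquefv}: the set of frequency values $\Fcal(T,p)$ reduces to the single number $I_*$. Consequently, along every interval of flattening $]s_j, t_j]$ around $p$, the regularized frequency $\Ibf_{N_j}$ of the associated $\Mcal_j$-normal approximation $N_j$ — which is almost-monotone by the usual monotonicity estimates (of the type of Lemma~\ref{lem:firstvar}, or \cite{DLS16blowup}*{Proposition 3.5}) — has limit $I_*$ at small scales. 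Fixing a small threshold so that $1 + \mu$ stays below $I_* - (\text{threshold})$, this gives a scale below which $\Ibf_{N_j} \geq 1 + \mu$ uniformly.

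One then converts this frequency lower bound into decay of the height and Dirichlet energy of $N_j$: integrating the Lemma~\ref{lem:firstvar}-type monotonicity identities (carefully tracking the center-manifold error terms and their dependence on $\boldsymbol{m}_{0,j}$) yields $\Hbf_{N_j}(r) \lesssim r^{m - 1 + 2(1+\mu)}$ and hence a normalized Dirichlet energy bound $r^{-m}\Dbf_{N_j}(r) \lesssim r^{2\mu}$ on the relevant scales. The planar excess $\mathbf{E}(T, \mathbf{B}_r(p))$ is, in turn, estimated by this normalized Dirichlet energy plus the intrinsic center-manifold error, which at a scale $r$ inside the $j$-th interval has size comparable to $\boldsymbol{m}_{0,j}(r/t_j)^{2 - 2\delta_2}$, with $\boldsymbol{m}_{0,j} = \max\{\mathbf{E}(T_{p,t_j}, \mathbf{B}_{6\sqrt{m}}),\, \bar\varepsilon^2 t_j^{2-2\delta_2}\}$ as in \eqref{eq:m_0}. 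Since $\mu < 1 - \delta_2$, the exponent $2 - 2\delta_2$ dominates $2\mu$, so this error term is also bounded by $\max\{\mathbf{E}(T,\mathbf{B}_s(p)),\, \bar\varepsilon^2 s^{2-2\delta_2}\}(r/s)^{2\mu}$. Summing over the (possibly infinitely many) intervals of flattening that intersect $]r,s[$ — using that consecutive intervals have comparable endpoints and the reverse-Sobolev comparison of \cite{DLS16blowup}*{Corollary 5.3} to pass between the corresponding center manifolds — produces \eqref{e:excessdecay-quantitative-higherI-2} with $C = C(m,n,Q,\mu)$, and in fact with $C$ depending on $I_*$ only through a lower bound $I_0 > 1$.

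The delicate point — the reason this requires real work in \cite{DLSk1} rather than being a two-line corollary of the frequency lower bound — is the bookkeeping across intervals of flattening: the center manifold is rebuilt at each top scale $t_j$, so the frequency-pinching and decay information must be transported from $\Mcal_j$ to $\Mcal_{j+1}$ without degrading the exponent $2\mu$, and the constants must be kept under control by exploiting that the $\boldsymbol{m}_{0,j}$ themselves decay (again a consequence of $I_* > 1$) and that the gap $\min\{I_*, 2\} - 1 - \mu$ is a fixed positive quantity. The second point requiring care is relating the extrinsic planar excess $\mathbf{E}(T, \mathbf{B}_r(p))$ to the intrinsic quantities of the normal approximation uniformly in the scale, which is where the estimates of \cite{DLS16centermfld} and \cite{DLS16blowup} are invoked.
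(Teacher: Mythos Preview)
Your identification is correct: the paper introduces this proposition with ``Let us recall Proposition \ref{prop:excess-decay}, which we re-state for convenience,'' so no proof is given and none is needed here. Your supplementary outline of the argument from \cite{DLSk1} is accurate in spirit and goes beyond what the paper itself provides.
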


In particular we can claim the following, using the fact that $\Irm (T, x)\geq 1 + 2^{-K}$ for every $x\in \bits_K$. 

\begin{corollary}\label{c:excess-upper-bound}
Let $T$ be as in Proposition \ref{prop:excess-decay-2} and let $\mu:= 2^{-K-1}$. There exists $C(K,m,n,Q)$ such that for every $x\in \bits_K$ there is $r_u (x)$ such that 
\begin{equation}\label{e:excess-upper-bound}
\mathbf{E} (T, \mathbf{B}_r(x)) \leq C \left(\frac{r}{s}\right)^{2\mu} \max \{\mathbf{E} (T, \mathbf{B}_{s}(x)), \bar\varepsilon^2 s^{2-2\delta_2}\}\, \qquad \forall r< s < r_u\, .
\end{equation}
\end{corollary}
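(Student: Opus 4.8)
<br>

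The plan is to deduce Corollary \ref{c:excess-upper-bound} directly from Proposition \ref{prop:excess-decay-2} by specializing the parameter $\mu$. Fix $K \in \N$ and set $\mu := 2^{-K-1}$. For every $x \in \bits_K$ we have, by definition of $\bits_K$, the bound $\Irm(T,x) \geq 1 + 2^{-K}$, and also $\Irm(T,x) \leq 2 - \delta_2 - 2^{-K}$, so in particular $\Irm(T,x) > 1$ and Proposition \ref{prop:excess-decay-2} is applicable at $x$. Moreover
\[
\mu = 2^{-K-1} < 2^{-K} \leq \Irm(T,x) - 1
\]
and (provided $K$ is large enough that $2^{-K-1} < 1 - \delta_2$, which we may assume, the finitely many small values of $K$ being absorbed into the constant and $r_u$) also $\mu < 1 - \delta_2$. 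Hence $0 < \mu < \min\{\Irm(T,x) - 1,\, 1 - \delta_2\}$, so the hypothesis of Proposition \ref{prop:excess-decay-2} holds with this choice of $\mu$.

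The key point is that $\mu$ now depends only on $K$ (and not on the individual point $x \in \bits_K$), so the constant $C = C(m,n,Q,\mu) = C(K,m,n,Q)$ furnished by Proposition \ref{prop:excess-decay-2} is uniform over all $x \in \bits_K$. Setting $r_u(x) := r_0(x)$ (the radius provided by Proposition \ref{prop:excess-decay-2} at the point $x$), the estimate \eqref{e:excessdecay-quantitative-higherI-2} reads exactly
\[
\mathbf{E}(T, \mathbf{B}_r(x)) \leq C\left(\frac{r}{s}\right)^{2\mu} \max\{\mathbf{E}(T, \mathbf{B}_s(x)),\, \bar\varepsilon^2 s^{2-2\delta_2}\} \qquad \forall\, r < s < r_u,
\]
which is precisely \eqref{e:excess-upper-bound}. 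This completes the proof.

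There is no real obstacle here: the statement is essentially a book-keeping reformulation of Proposition \ref{prop:excess-decay-2} in which one trades the point-dependent freedom in the choice of $\mu$ for a $K$-dependent (hence, on each piece $\bits_K$, uniform) constant. The only minor subtlety is that $r_u$ still depends on $x$ — this is unavoidable at this stage, and is exactly the reason why the authors must subsequently decompose each $\bits_K$ further into the pieces $\bits_{K,J}$ according to the scale at which the decay kicks in. One should also note the Remark following Proposition \ref{prop:excess-decay} in the excerpt, which records that $C$ depends only on a lower bound $I_0 > 1$ for $\Irm(T,p)$ rather than on $\mu$ itself; since $1 + 2^{-K}$ is such a lower bound on $\bits_K$, this gives an alternative and even more transparent route to the uniformity of $C$ over $\bits_K$.
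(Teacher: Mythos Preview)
Your proof is correct and follows exactly the approach the paper indicates: the paper simply states that the corollary follows ``using the fact that $\Irm(T,x)\geq 1+2^{-K}$ for every $x\in\bits_K$,'' and you have spelled this out. One small simplification: the side condition $\mu=2^{-K-1}<1-\delta_2$ is actually automatic whenever $\bits_K\neq\emptyset$, since nonemptiness forces $1+2^{-K}\leq 2-\delta_2-2^{-K}$, i.e.\ $2^{-K}\leq\tfrac{1-\delta_2}{2}$, so no separate treatment of small $K$ is needed.
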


On the other hand, appealing to \cite{DLSk1}*{Corollary 4.3} we can use the upper bound $\Irm (T, x) \leq 2-\delta_2-2^{-K}$ to derive a lower bound for the excess. More precisely 

\begin{corollary}\label{c:excess-lower-bound}
Let $T$ be as in Proposition \ref{prop:excess-decay-2} and let $\nu:= 1-\delta_2 - 2^{-K-1}$. For every $x\in \bits_K$ there is $r_l (x)$ such that 
\begin{equation}\label{e:excess-lower-bound-1}
\mathbf{E} (T, \mathbf{B}_r (x)) \geq r^{2\nu}\, \qquad \forall 0 < r < r_l
\end{equation}
and 
\begin{equation}\label{e:excess-lower-bound-2}
\mathbf{E} (T, \mathbf{B}_r(x)) \geq \left(\frac{r}{s}\right)^{2\nu} {\Ebf(T,\Bbf_s(x))}\, \qquad \forall 0 < r \leq s < r_l\, .
\end{equation}
\end{corollary}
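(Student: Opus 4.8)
The plan is to read Corollary~\ref{c:excess-lower-bound} directly off \cite{DLSk1}*{Corollary 4.3}, which I recall provides the lower-bound counterpart of the excess decay of Proposition~\ref{prop:excess-decay-2}: for every flat singular point $x\in\flatS_Q(T)$ with $\Irm(T,x)<2-\delta_2$ and every exponent $\nu'$ with $\Irm(T,x)-1<\nu'<1-\delta_2$, the excess cannot decay at a rate faster than $2\nu'$; quantitatively, there is $r_0(x)>0$ such that $\mathbf{E}(T,\mathbf{B}_r(x))\geq r^{2\nu'}$ for $0<r<r_0(x)$ and $r\mapsto r^{-2\nu'}\mathbf{E}(T,\mathbf{B}_r(x))$ is monotone non-increasing on $]0,r_0(x)[$ (so in particular $\mathbf{E}(T,\mathbf{B}_r(x))\geq(r/s)^{2\nu'}\mathbf{E}(T,\mathbf{B}_s(x))$ for $0<r\leq s<r_0(x)$).

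First I would check that $\nu:=1-\delta_2-2^{-K-1}$ is an admissible exponent simultaneously for every $x\in\bits_K$, which is exactly what the defining inequality of $\bits_K$ gives: if $x\in\bits_K$ then $\Irm(T,x)\leq 2-\delta_2-2^{-K}$, hence
\[
\Irm(T,x)-1\ \leq\ 1-\delta_2-2^{-K}\ <\ 1-\delta_2-2^{-K-1}\ =\ \nu\ <\ 1-\delta_2 ,
\]
and moreover $\nu-(\Irm(T,x)-1)\geq 2^{-K-1}$, uniformly in $x\in\bits_K$. Applying \cite{DLSk1}*{Corollary 4.3} at $x$ with the exponent $\nu'=\nu$ then yields a radius $r_l(x):=r_0(x)>0$ for which \eqref{e:excess-lower-bound-1} and \eqref{e:excess-lower-bound-2} hold. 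Should one wish to invoke \cite{DLSk1}*{Corollary 4.3} in a version carrying a multiplicative constant $C=C(m,n,Q,K)\geq 1$, I would instead apply it with the slightly smaller exponent $\nu-2^{-K-2}$, which is still admissible because the gap $\nu-2^{-K-2}-(\Irm(T,x)-1)\geq 2^{-K-1}-2^{-K-2}=2^{-K-2}>0$; this produces $\mathbf{E}(T,\mathbf{B}_r(x))\geq C^{-1}r^{2\nu}r^{-2^{-K-1}}$, and after replacing $r_l(x)$ by $\min\{r_0(x),C^{-2^{K+1}}\}$ the prefactor $C^{-1}r^{-2^{-K-1}}$ is $\geq 1$ on $]0,r_l(x)[$, giving \eqref{e:excess-lower-bound-1}; the monotone two-scale form of \cite{DLSk1}*{Corollary 4.3} then gives \eqref{e:excess-lower-bound-2} with no loss of constant.

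I do not expect any genuine obstacle in this step: its whole substance lies in \cite{DLSk1}*{Corollary 4.3}, and everything else is the elementary numerology above. The one point I would be careful to make explicit — and the reason $\flatS_Q^l$ is first split into the pieces $\bits_K$, with the slack $2^{-K-1}$ built into $\nu$ — is that on each $\bits_K$ the gap between $\nu$ and $\Irm(T,x)-1$ is bounded below by $2^{-K-1}>0$ uniformly over $x$, so that $\nu$ serves as a single admissible exponent for all of $\bits_K$ at once, while the threshold radius $r_l(x)$ is still allowed to vary with the point $x$.
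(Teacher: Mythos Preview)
Your proposal is correct and follows essentially the same route as the paper: the paper simply appeals to \cite{DLSk1}*{Corollary 4.3} using the upper bound $\Irm(T,x)\leq 2-\delta_2-2^{-K}$, and then in Remark~\ref{r:no-constants} notes that the strict inequality $\nu>\Irm(T,x)-1$ allows one to absorb any multiplicative constants by shrinking $r_l(x)$---exactly the mechanism you spell out explicitly via the intermediate exponent $\nu-2^{-K-2}$.
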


\begin{remark}\label{r:no-constants}
Observe that, since $\nu > \Irm (T,x)-1$, by choosing $r_l$ sufficiently small we have eliminated constants in the left hand side of both inequalities. 
\end{remark}

Notice that if $r<\min \{r_l (x), r_u (x)\}$, \eqref{e:excess-upper-bound} can be simplified further because, by \eqref{e:excess-lower-bound-1}, $2\nu \geq 2-\delta_2$ and $\bar{\varepsilon}\leq 1$, the maximum in the right hand side of \eqref{e:excess-upper-bound} is achieved by $\mathbf{E} (T, \mathbf{B}_{s}(x))$. 

We now further subdivide $\bits_K$ into pieces in order to achieve uniformity of the thresholds $r_l$ and $r_u$ for every point in each fixed piece. Strictly speaking, we would like to define $\bits_{K,J}$ to be those points $x\in \bits_K$ for which the upper and lower estimates \eqref{e:excess-upper-bound}, \eqref{e:excess-lower-bound-1}, and \eqref{e:excess-lower-bound-2} hold for all radii $r\leq J^{-1}$. However, for technical reasons it will be convenient to have a closed set in our definition. We thus define $\bits_{K,J}$ as follows.

\begin{definition}\label{d:bits}
Let $T$ be as in Proposition \ref{prop:excess-decay-2} and let $\eps_5$ be a small positive constant which will be specified later. For every $K\in \mathbb N\setminus \{0\}$ define $\mu = \mu (K) := 2^{-K-1}$ and $\nu:= \nu (K):= 1-\delta_2-2^{-K-1}$. 
We define $\bits_{K,J}$ (which implicitly also depends on $\eps_5$) for $K, J\in \mathbb N \setminus \{0\}$ as those points $x\in \spt (T) \cap \overline{\Bbf}_1$ for which 
\begin{align}
\mathbf{E} (T, \Bbf_r (x)) &\leq \left(\frac{r}{s}\right)^{2\mu (K)} \mathbf{E} (T, \Bbf_s (x))
\qquad \forall r\leq s\leq \frac{6\sqrt{m}}{J}\label{e:eub1}\\
\mathbf{E} (T, \Bbf_r (x)) &\geq \left(\frac{r}{s}\right)^{2\nu (K)} \mathbf{E} (T, \Bbf_s (x))
\qquad \forall r\leq s \leq \frac{6\sqrt{m}}{J}\label{e:elb1}\\
\mathbf{E} (T, \Bbf_{6 \sqrt{m} J^{-1}}) &\leq \varepsilon_5^2\label{e:eub2}\\
\mathbf{E} (T, \Bbf_r (x)) &\geq r^{2\nu (K)} \qquad \forall r \leq \frac{6\sqrt{m}}{J}\, .\label{e:elb2}
\end{align}

\end{definition}

We record here some obvious corollaries of our overall discussion.

\begin{proposition}\label{p:subdivision}
Let $T$ be as in Theorem \ref{thm:main-low} and define $\bits_{K,J}$ as in Definition \ref{d:bits}. Then
\begin{itemize}
    \item[(i)] $\flatS^l_Q \subset \bigcup_{J\geq J_0,K\geq K_0} \bits_{K,J}$ for every $K_0$ and $J_0$.
    \item[(ii)] Each $\bits_{K,J}$ is closed and contained in $\flatS_Q(T)$.
    \item[(iii)] $1 + 2^{-K} \leq \Irm (T,x) \leq 2 -\delta_2 - 2^{-K}$ for all $x\in \bits_{K,J}$.
\end{itemize}
\end{proposition}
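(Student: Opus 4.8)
The three assertions are bookkeeping consequences of the excess decay estimates already recorded, and I would prove them in the order (i), (ii), (iii).

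\emph{Proof of (i).} Let $x\in\flatS^l_Q$ (which, as throughout this section, we regard as intersected with $\Bbf_1$), so that $1<\Irm(T,x)<2-\delta_2$. The frequency windows $[1+2^{-K},\,2-\delta_2-2^{-K}]$ defining the sets $\bits_K$ are nested and increase to $]1,2-\delta_2[$, so I can choose $K\geq K_0$ with $1+2^{-K}\leq\Irm(T,x)\leq 2-\delta_2-2^{-K}$; thus $x\in\bits_K$. Corollary~\ref{c:excess-upper-bound} (applied with $\mu=2^{-K-1}$) and Corollary~\ref{c:excess-lower-bound} (applied with $\nu=1-\delta_2-2^{-K-1}$) then provide radii $r_u(x),r_l(x)>0$ below which \eqref{e:excess-upper-bound}, \eqref{e:excess-lower-bound-1} and \eqref{e:excess-lower-bound-2} hold; by Remark~\ref{r:no-constants} and the discussion following Corollary~\ref{c:excess-lower-bound}, after further shrinking these thresholds the multiplicative constants can be absorbed, so that \eqref{e:eub1}, \eqref{e:elb1} and \eqref{e:elb2} hold at $x$ for all scales $\leq 6\sqrt m/J$ as soon as $6\sqrt m/J<\min\{r_u(x),r_l(x)\}$. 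Finally, since $x$ is a flat singular point, $\mathbf{E}(T,\Bbf_r(x))\to 0$ as $r\downarrow 0$, so \eqref{e:eub2} holds once $J$ is large. Picking $J\geq J_0$ larger than all of these thresholds yields $x\in\bits_{K,J}$.

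\emph{Proof of (ii).} Closedness is immediate: for each fixed $r$ the map $x\mapsto\mathbf{E}(T,\Bbf_r(x))$ is continuous, so each of \eqref{e:eub1}--\eqref{e:elb2} is an intersection over the relevant pairs of radii of closed conditions, and $\spt(T)\cap\overline{\Bbf}_1$ is closed. For the inclusion $\bits_{K,J}\subset\flatS_Q(T)$, fix $x\in\bits_{K,J}$. Taking $s=6\sqrt m/J$ in \eqref{e:eub1} and using \eqref{e:eub2} gives $\mathbf{E}(T,\Bbf_r(x))\leq (rJ/6\sqrt m)^{2\mu(K)}\varepsilon_5^2\to 0$ as $r\downarrow 0$, so every tangent cone at $x$ is a flat plane with some integer multiplicity $\Theta(T,x)$. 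On the other hand \eqref{e:elb2} gives $\mathbf{E}(T,\Bbf_r(x))\geq r^{2\nu(K)}$ with $2\nu(K)<2$, which for small $r$ is incompatible with the at-least-quadratic decay of the excess at a regular point; hence $x$ is a flat singular point. The identification $\Theta(T,x)=Q$ then follows from the standard compactness-and-constancy argument in the set-up of \cites{DLS16centermfld,DLS16blowup}, using that $x\in\overline{\Bbf}_1$ and $0\in\flatS_Q(T)$; in particular $\Irm(T,x)$ is well defined by Theorem~\ref{thm:uniquefv}.

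\emph{Proof of (iii).} With $x\in\bits_{K,J}\subset\flatS_Q(T)$ as above, \eqref{e:eub1} says precisely that $\mathbf{E}(T,\Bbf_r(x))$ decays polynomially with a uniform rate, while \eqref{e:elb1}--\eqref{e:elb2} say that it does not decay faster than a fixed polynomial rate. Feeding these into the two-sided comparison of the planar excess with the singularity degree established in \cite{DLSk1} (Proposition~\ref{prop:excess-decay-2} together with \cite{DLSk1}*{Corollary~4.3}) --- i.e. running in reverse the derivation behind Corollaries~\ref{c:excess-upper-bound} and \ref{c:excess-lower-bound} --- yields $1+2^{-K}\leq\Irm(T,x)\leq 2-\delta_2-2^{-K}$.

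The main obstacle is precisely this reverse implication in (iii) (and, implicitly, the density identification in (ii)): the forward direction ``frequency bounds $\Rightarrow$ uniform excess decay'' is exactly what \cite{DLSk1} supplies, whereas recovering the sharp frequency window from the decay conditions requires the \emph{two-sided} control of $\mathbf{E}(T,\Bbf_r(x))$ and some care in matching the exponents appearing in Definition~\ref{d:bits}; all the remaining steps are routine.
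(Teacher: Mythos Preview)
Your approach is the same as the paper's (which devotes a single sentence to all three parts). Parts (i) and (ii) are essentially correct as outlined, though note that Remark~\ref{r:no-constants} only absorbs the constant in the \emph{lower} bounds; the constant $C$ from Corollary~\ref{c:excess-upper-bound} cannot be removed from \eqref{e:eub1} merely by shrinking the threshold, since the inequality must hold also for $r$ arbitrarily close to $s$. This is a small imprecision you inherit from the paper's Definition~\ref{d:bits} rather than a flaw in your argument.

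The issue you flag in (iii) is genuine and is \emph{not} resolved by ``care in matching the exponents''. Running Proposition~\ref{prop:excess-decay-2} and \cite{DLSk1}*{Corollary~4.3} in reverse from the conditions \eqref{e:eub1}--\eqref{e:elb2} only pins the excess decay exponent $2(\Irm(T,x)-1)$ inside $[2\mu(K),2\nu(K)]$, which yields $1+2^{-K-1}\leq \Irm(T,x)\leq 2-\delta_2-2^{-K-1}$. The sharper window with $2^{-K}$ in place of $2^{-K-1}$ does not follow from Definition~\ref{d:bits} and appears to be a misprint in the paper; indeed Corollary~\ref{c:ulb-frequency} records only the lower bound $1+2^{-K-1}$, and this weaker conclusion is all that is used downstream.
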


\begin{proof}
Point (ii) is a simple exercise in measure theory, while the claims (i) and (iii) are obvious consequences of Corollary \ref{c:excess-upper-bound} and Corollary \ref{c:excess-lower-bound}.
\end{proof}

We now can observe that by translating, rescaling and intersecting with smaller closed balls, we can, without loss of generality, set the parameter $J$ to be equal to $1$. In particular the validity of Theorem \ref{thm:main-low} can be reduced to the following. 

\begin{theorem}\label{t:main-low-one-piece}
There exists $\eps_5(m,n,Q)>0$ such that the following holds. Let $T$ be as in Theorem \ref{thm:main-low}. Then the closed set $\bits := \bits_{K,1}$ (which, recall, depends on $\eps_5$) has finite $(m-2)$-dimensional upper Minkowski content and is $(m-2)$-rectifiable and has the $(m-2)$-dimensional Minkowski {content bounds
\begin{align}
    \mathcal{H}^{m+\bar n} (\Bbf_r(\bits)\cap \Sigma) &\leq Cr^{\bar n + 2},\label{e:Minkowski-low}\\
    |\Bbf_r (\bits)| = \mathcal{H}^{m+n} (\Bbf_r (\bits)) &\leq C r^{n+2},\label{e:Minkowski-low-extrinsic}
\end{align}}
for a positive constant $C=C(m,n,\bar n, T,K)$.
\end{theorem}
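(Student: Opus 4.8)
The strategy mirrors the treatment of the high-frequency set $\Sbf$ in Part \ref{part:high}, but the center manifold now changes at a geometric sequence of scales, so the main work is to set up the analogue of Theorem \ref{t:main-quantitative} in this multi-scale setting and then feed the resulting $\beta_2$-control into the rectifiability criterion of \cite{AT15}. First I would fix $K$, work with the closed set $\bits=\bits_{K,1}$, and exploit the two-sided excess bounds \eqref{e:eub1}--\eqref{e:elb2}: the lower bound \eqref{e:elb1} guarantees that the excess cannot decay faster than $r^{2\nu}$ with $2\nu\geq 2-\delta_2$, hence at \emph{every} point $x\in\bits$ and every scale $r\leq 1$ the rescaled current $T_{x,r}$ has excess comparable (up to the fixed polynomial factors) to a fixed geometric sequence. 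Consequently, for a dyadic sequence of radii $t_k\sim 2^{-k}$ with a fixed ratio $\sigma=s_k/t_k$, one builds, following the algorithm of \cite{DLS16centermfld} (with the modified parameters $\delta_3>\delta_2$ exactly as in Section \ref{s:single-cm}), a center manifold $\Mcal^{(k)}$ adapted to the interval $]s_k,t_k]$ and its normal approximation $N^{(k)}$; the point is that on each such interval the excess is both bounded above by $\eps_5^2$-type quantities and bounded below, so the stopping conditions behave uniformly and one gets a uniform $C^{3,\kappa}$ bound on each $\Mcal^{(k)}$ and uniform estimates on $N^{(k)}$.

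Next I would transfer the frequency machinery of Sections \ref{ss:variations}--\ref{s:flatness} to this setting. The regularized frequency $\Ibf_{N^{(k)}}(x,r)$ is defined for $x$ near $\bits$ and $r$ in a fixed sub-interval of $]s_k/t_k,3]$; the almost-monotonicity \eqref{e:almostmon}, the comparability estimates of Lemma \ref{lem:simplify}, the spatial variation bounds (Lemma \ref{lem:spatialvarI}, Proposition \ref{prop:distfromhomog}), the splitting lemmas (Lemma \ref{lem:noQpts}, Lemma \ref{lem:smallspatialvar}) and the flatness control (Proposition \ref{prop:beta2control}) all carry over verbatim, since their proofs only use the contact-set structure and the uniform center-manifold estimates on a fixed range of scales, together with the compactness procedure of Section \ref{ss:compactness} (valid for varying currents and centers by Remark \ref{r:diag-blowup}). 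The one genuinely new ingredient is a \emph{matching estimate across consecutive intervals}: one must compare $\Ibf_{N^{(k)}}$ near the bottom scale $s_k/t_k$ with $\Ibf_{N^{(k+1)}}$ near the top scale, showing that the two frequencies agree up to an error controlled by $\boldsymbol m_0^{\gamma}$ and the frequency pinching. This follows because both normal approximations describe the same current $T_{x,t_k}$ on an overlapping annulus, and the center manifolds $\Mcal^{(k)}$, $\Mcal^{(k+1)}$ are $C^{3,\kappa}$-close there (their difference is controlled by the excess, hence polynomially small); one can then glue the frequency pinching bounds into a single telescoping estimate over all scales $r\in]0,1]$, exactly as in \cite{DLSk1} where a similar patching between intervals of flattening is carried out.

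With the patched frequency at hand, one fixes a purely $(m-2)$-unrectifiable closed subset $F\subset\bits$ of finite positive $\Hcal^{m-2}$ measure (using \cite{Mattila}*{Theorem 15.6} in the $\sigma$-finite case and Theorem \ref{t:sigma-finite} otherwise), lets $\mu$ be a Frostman measure on $F$ with $\mu(\Bbf_r(x))\leq r^{m-2}$, and verifies the Azzam--Tolsa square-function bound
\[
\int_{\Bbf_t(y)}\int_0^t [\beta^{m-2}_{2,\mu}(z,s)]^2\,\frac{\dd s}{s}\,\dd\mu(z)<\infty
\]
for $y\in\Bbf_1$, $t\leq\tfrac{1}{32}$. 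Applying Proposition \ref{prop:beta2control} (now in its multi-scale form) scale by scale, the right-hand side is dominated by $C\int W_s^{Cs}(w)\,\tfrac{\dd s}{s}\,\dd\mu(w)$ plus a convergent lower-order term $\boldsymbol m_0^{\alpha_0}\int s^{-1+\alpha_0}$; the frequency-pinching integral telescopes against the bounded frequency (between $1+2^{-K}$ and $2-\delta_2-2^{-K}$) thanks to the patched almost-monotonicity, giving $\int_0^{1/8} W_s^{Cs}(w)\,\tfrac{\dd s}{s}\leq C W_0^{1/8}(w)<\infty$. This contradicts pure unrectifiability of $F$, so $\bits$ is $(m-2)$-rectifiable. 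Finally, the Minkowski content bounds \eqref{e:Minkowski-low}--\eqref{e:Minkowski-low-extrinsic} are obtained by the Naber--Valtorta covering argument (as in Appendix \ref{s:A1} for the high-frequency case), using the same $\beta_2$-estimate together with the uniform frequency bounds; the two-sided excess control \eqref{e:eub1}--\eqref{e:elb2} is what makes the covering stop at the correct scale.

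\textbf{Main obstacle.} The delicate point is the patching across the geometrically-many center manifolds: one must ensure that none of the constants in Lemma \ref{lem:simplify}, Proposition \ref{prop:distfromhomog}, and Proposition \ref{prop:beta2control} degenerate as $k\to\infty$, i.e. that all estimates are uniform in the interval index, and that the cumulative error from matching consecutive frequencies over all $k$ remains summable. This is exactly where the lower excess bound \eqref{e:elb1} (equivalently the upper frequency bound $\Irm(T,x)\leq 2-\delta_2-2^{-K}$) is essential: it keeps the ratio $s_k/t_k$ fixed and the excess geometrically decaying at a controlled rate, so that the error terms form a convergent geometric series rather than accumulating logarithmically.
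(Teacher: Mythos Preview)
Your strategy is essentially the paper's, and you have correctly identified the central difficulty (uniform estimates across infinitely many center manifolds, with summable matching errors). Two points deserve flagging.

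First, the center manifolds must depend on the \emph{base point} as well as on the scale: the paper builds $\mathcal{M}_{x,k}$ for each $x\in\bits$ and each $k$, because points of $\bits$ do not in general lie on a common center manifold at any fixed scale (unlike the high-frequency case, where $\Sbf\subset\boldsymbol{\Phi}(\boldsymbol{\Gamma})$ for a single $\mathcal{M}_0$). Your notation $\mathcal{M}^{(k)}$ and the phrase ``matching across consecutive intervals'' suggest only an across-scales comparison, but one also needs a \emph{same-scale, different-center} comparison: given $z,w\in\bits$ with $w\in\Bbf_{\gamma^k}(z)$, the $\beta_2$-estimate centered at $z$ produces the pinching $W_s^{32s}(z,k,\mathbf{p}_{z,k}(w))$ computed on $\mathcal{M}_{z,k}$, whereas the telescoping you describe requires the universal frequency $\Ibf(w,\cdot)$, computed on $\mathcal{M}_{w,k}$. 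The paper isolates this as Lemma \ref{l:compare_frequency}, proved by the same mechanism as the jump estimate in \cite{DLSk1} (the stopping cube from Proposition \ref{p:good-cubes}(b) is the key input). Without this lemma the telescoping step in both the rectifiability argument and the covering Lemma \ref{lem:cover1-l} does not close.

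Second, two minor inaccuracies: the modification $\delta_3>\delta_2$ is not used in the low-frequency case (it was introduced in Part \ref{part:high} only to force a single interval of flattening when $\Irm\geq 2-\delta_2$, which is irrelevant here); and the inequality should read $2\nu<2-2\delta_2$, not $2\nu\geq 2-\delta_2$ --- this is precisely what ensures $r^{2\nu}\geq r^{2-2\delta_2}\geq \bar\varepsilon^2 r^{2-2\delta_2}$ for $r<1$, so that the excess dominates in $\boldsymbol{m}_{x,k}$ (cf.\ \eqref{e:rid-of-m0}).
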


{\begin{remark}\label{r:bits}
In contrast with the situation of high frequency points, since there is not a single center manifold containing $\bits$ we do not have a counterpart for estimate \eqref{e:Minkowski-high-CM}
\end{remark}}

From now on we may further assume that $K$ is fixed, and so we will drop both subscripts from $\bits_{K,1}$. 

\section{Universal frequency function and radial variations}

The argument to prove Theorem \ref{t:main-low-one-piece} follows a strategy which has many similarities with that used to prove Theorem \ref{t:main-quantitative}. The major difference is that, unlike in Part \ref{part:high}, we cannot hope to find a single center manifold passing through all points $p\in \bits$. To get around this, we decompose the interval $]0,1]$ into countably many sub-intervals whose endpoints are given by a suitable geometric sequence and construct a center manifold for each of them. We then compute a corresponding frequency function relative to each such center manifold. These sub-intervals will be comparable to the intervals of flattening from Section \ref{ss:compactness}, but suitably adapted to fit in with the covering arguments in Section \ref{ss:covering} of Part \ref{part:high}. We may in turn construct center manifolds and corresponding normal approximations associated to these intervals, analogously to that in Section \ref{ss:compactness}. We then define a corresponding \emph{universal frequency function} (cf. \cite{DLSk1}), which is defined to be the frequency function for the relevant normal approximation within each interval. Although in each individual interval we can prove estimates analogous to the ones in Section \ref{s:single-cm}, this universal frequency function undergoes jump discontinuities at the endpoints of the intervals. However, using estimates from \cite{DLSk1}, we are still able to 
bound the total variation of the universal frequency function quantitatively, in terms of the excess at the starting scale. To that end, the lower and upper bounds on the excess provided in the previous section will be key ingredients. 

\subsection{Center manifolds}\label{ss:adaptedintervals}
We now fix a constant $\gamma\in ]0,1/2]$ whose choice will specified later as depending only on $m$, $n$, and $Q$. 
Given a point $x\in \bits$ and a geometric blow-up sequence $\{\gamma^k\}_k$, we apply the procedure in \cite{DLS16centermfld}*{Theorem 1.17} to $T_{x, \gamma^k}$ and define a corresponding center manifold $\mathcal{M}_{x,k}$. It follows from \eqref{e:eub1} and \eqref{e:eub2} that, because of our choice of $\tilde{\varepsilon}$, the theorem is indeed applicable. It turns out that we in fact need to guarantee that $\mathbf{E} (T, \Bbf_{6\sqrt{m} \gamma^{k}})$ is even smaller because we need to adjust the parameters in \cite{DLS16centermfld}*{Assumption 1.9} in order ensure applicability of Proposition \ref{p:good-cubes} below and of other similar statements. This however follows from the fact that we are free to choose $\tilde{\varepsilon}$ small enough. 

We next notice that it follows from \eqref{e:elb2} that we may replace the procedure in \cite{DLS16blowup}*{Section 2.1} with the intervals $]\gamma^{k+1},\gamma^k]$ in place of $]s_k,t_k]$, and with $\boldsymbol{m}_0^k$ therein defined instead by  
\begin{equation}\label{e:rid-of-m0}
\boldsymbol{m}_{x,k} = \mathbf{E} (T_{x, \gamma^k}, \Bbf_{6\sqrt{m}}) =
\mathbf{E} (T, \Bbf_{6\sqrt{m} \gamma^{k}} (x))\, .
\end{equation}
In fact this can be assumed to be the case even if we take the variant of the definition of $\boldsymbol{m}_0^k$ defined in \cite{DLSk1}. From now on we will instead use the notation $\boldsymbol{m}_{x,k}$ for this quantity.
We next denote by $N_{x,k}$ the corresponding normal approximation for $T_{x,\gamma^k}$ as constructed in \cite{DLS16centermfld}*{Theorem 2.4}. 

In the rest of the paper we will denote by $d$ the geodesic distance on the center manifold $\mathcal{M}_{x,k}$. In reality this is a collection of functions $d=d_{x,k}$ which depend on the points $x$ and the integer $k$, but since this dependence is not important, we will omit it. We now record two relevant facts. One is a consequence of adjusting suitably one of the parameters in \cite{DLS16centermfld}*{Assumption 1.8}, while the other is a consequence of the lower bound \eqref{e:elb1}. In order to state these facts, we denote by $\pi_{x,k}$ the plane used to construct the graphical parametrization $\boldsymbol{\varphi}_{x,k}$ of the center manifold in \cite{DLS16centermfld}*{Theorem 2.4} and by $\mathscr{W}^{x,k}$ the collection of Whitney cubes in \cite{DLS16centermfld}*{Definition 1.10}. Note that the center manifold $\mathcal{M}_{x,k}$ does not necessarily contain the origin $0 = \iota_{x, \gamma^{-k}} (x)$. However we use the point $(0, \boldsymbol{\varphi}_{x,k} (0)) \in \pi_{x,k}\times \pi_{x,k}^\perp$ as a proxy for it and we will denote it by $p_{x,k}$ .

\begin{proposition}\label{p:good-cubes}
Let $\gamma$ and $\eta$ be two fixed constants and let $c_s = \frac{1}{64\sqrt{m}}$ be as in \cite{DLS16blowup}*{Section 2}. Upon choosing the number $N_0$ in \cite{DLS16centermfld}*{Assumption 1.9} sufficiently large and adjusting accordingly the constants $C_e$, $C_h$ and $\varepsilon_2$ in \cite{DLS16centermfld}*{Assumpion 1.9} we can ensure that 
\begin{itemize}
    \item[(a)] For every $w\in \mathcal{M}_{x,k}$ and every radius $r$ such that $\eta \gamma \leq r \leq 3$ the largest cube $L\in \mathscr{W}^{x,k}$ intersecting the disk $B_r (\mathbf{p}_{\pi_{x,k}} (w), \pi_{x,k})$ satisfies $\ell (L) \leq c_s r$.
\end{itemize}
There is a positive constant $\bar c_s\leq c_s$ depending upon $K$ and all the various parameters in \cite{DLS16centermfld}*{Assumption 1.9}, with the exception of $\varepsilon_2$ in there, such that 
\begin{itemize}
    \item[(b)] $B_\gamma (0, \pi_{x,k})$ intersects a cube $L\in \mathscr{W}^{x,k}$ with $\ell (L) \geq \bar c_s \gamma$, which violates the excess condition (EX) of \cite{DLS16centermfld}*{Definition 1.10}.
\end{itemize}
\end{proposition}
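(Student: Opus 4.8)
The plan is to prove (a) and (b) separately: (a) is soft and follows from the structure of the Whitney decomposition, while (b) is exactly where the low--frequency excess bounds \eqref{e:elb1}--\eqref{e:elb2} enter. \textbf{Part (a).} In the construction of \cite{DLS16centermfld} every cube of $\mathscr W^{x,k}$ sits at a refinement level $\ge N_0$, so its side-length is bounded by a fixed dimensional multiple of $2^{-N_0}$. Hence it suffices to fix $N_0$ so large that this bound is $\le c_s\,\eta\gamma$: then every $L\in\mathscr W^{x,k}$ has $\ell(L)\le c_s\eta\gamma\le c_s r$ for all $r\ge\eta\gamma$, which is (a). Enlarging $N_0$ forces the usual shrinking of $\varepsilon_2$ and re-tuning of $C_e,C_h$ so that \cite{DLS16centermfld}*{Theorems~1.17 \& 2.4} continue to apply; here one uses \eqref{e:eub2} together with \eqref{e:eub1} and the freedom to take $\varepsilon_5$ --- hence every $\boldsymbol m_{x,k}=\mathbf E(T,\mathbf B_{6\sqrt{m}\gamma^k}(x))$, see \eqref{e:rid-of-m0}, --- as small as needed.

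\textbf{Part (b): the core inequality.} Since $x\in\bits$ we have $\nu(K)=1-\delta_2-2^{-K-1}>\Irm(T,x)-1$, hence $2\nu(K)=2-2\delta_2-2^{-K}<2-2\delta_2$; thus the growth rate in \eqref{e:elb1} beats the threshold exponent $2-2\delta_2$ of condition (EX) at all sufficiently small scales. Let $L_0$ be the dyadic cube with $\ell(L_0)\in[\bar c_s\gamma,2\bar c_s\gamma)$ containing $0=\mathbf p_{\pi_{x,k}}(p_{x,k})$. Its associated ball $\mathbf B_{L_0}$ has radius $\simeq 64\sqrt{m}\,\ell(L_0)$ and centre within $C\ell(L_0)$ of $p_{x,k}$, hence --- since $|p_{x,k}|\le C\boldsymbol m_{x,k}^{1/2}\ll\ell(L_0)$ --- within $C\ell(L_0)$ of the rescaled point $\iota_{x,\gamma^k}(x)=0$, so that $\mathbf B_{L_0}\supset\mathbf B_{32\sqrt{m}\,\ell(L_0)}(0)$. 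Comparing the excess entering (EX) with the spherical excess on these nested balls, rescaling by $\iota_{x,\gamma^k}$ (under which the normalized spherical excess is invariant), and then applying \eqref{e:elb1} with $s=6\sqrt{m}\gamma^k$ (so that $\mathbf E(T,\mathbf B_s(x))=\boldsymbol m_{x,k}$), we get
\[
\mathbf E(T_{x,\gamma^k},\mathbf B_{L_0})\ \ge\ c(m)\,\mathbf E\!\bigl(T,\mathbf B_{32\sqrt{m}\,\ell(L_0)\gamma^k}(x)\bigr)\ \ge\ c(m,K)\,\ell(L_0)^{2\nu(K)}\,\boldsymbol m_{x,k}\,.
\]
By \eqref{e:elb2}, $\boldsymbol m_{x,k}\ge(6\sqrt{m}\gamma^k)^{2\nu(K)}>0$, so the right-hand side exceeds $C_e\,\boldsymbol m_{x,k}\,\ell(L_0)^{2-2\delta_2}$ as soon as $\ell(L_0)^{-2^{-K}}>C_e/c(m,K)$, i.e.\ as soon as $\ell(L_0)$ lies below a threshold depending only on $m,n,Q,K$ and $C_e$. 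Choosing $\bar c_s$ correspondingly small --- depending on $K$ and on the parameters $C_e,C_h,N_0,\dots$ of \cite{DLS16centermfld}*{Assumption~1.9}, but \emph{not} on $\varepsilon_2$, since only the positivity and not the smallness of $\boldsymbol m_{x,k}$ was used --- we conclude that $L_0$ violates (EX).

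\textbf{Part (b): producing a Whitney cube, and the main obstacle.} If $L_0$ is a candidate of the algorithm (no proper ancestor of $L_0$ lies in $\mathscr W^{x,k}$), then since it violates (EX) it must be stopped, so $L_0\in\mathscr W^e\subset\mathscr W^{x,k}$; as $0\in L_0\subset B_\gamma(0,\pi_{x,k})$, it witnesses (b). Otherwise a unique proper ancestor $L_a\in\mathscr W^{x,k}$ is stopped; then $0\in L_a$ so $L_a$ meets $B_\gamma(0,\pi_{x,k})$, and part (a) with $r=\gamma$ (assuming $\eta\le 1$) gives $\bar c_s\gamma<\ell(L_a)\le c_s\gamma$. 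If $L_a$ is of type (EX) we are done; if $L_a\in\mathscr W^h\cup\mathscr W^n$, we invoke the structural description of the Whitney decomposition (\cite{DLS16centermfld}*{Proposition~3.1} and the definition of $\mathscr W^n$) to replace $L_a$ by a cube of $\mathscr W^e$ of comparable side-length lying within a dimensional multiple of $\ell(L_a)$; with $N_0$ taken large enough this cube still meets $B_\gamma(0,\pi_{x,k})$, and after shrinking $\bar c_s$ by the corresponding dimensional factor its side-length is $\ge\bar c_s\gamma$. This last reduction is the delicate point: one must rule out that the excess growth forced at scale $\simeq\gamma$ by the low--frequency lower bound is absorbed into the height- or neighbour-type stopping rather than producing an excess-type cube --- that is, one genuinely needs \cite{DLS16centermfld}*{Proposition~3.1}. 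Everything else is bookkeeping with the dilations $\iota_{x,\gamma^k}$, the identity \eqref{e:rid-of-m0}, and the comparability of the various notions of excess.
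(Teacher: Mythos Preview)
Your proposal is correct and follows the same approach as the paper: for (a) both you and the paper invoke the bound $\ell(L)\le 2^{-N_0-6}$ and choose $N_0$ large, and for (b) both use the excess lower bound \eqref{e:elb1} together with \eqref{e:rid-of-m0} to force an (EX)-violating cube at scale comparable to $\gamma$. Your treatment of (b) is considerably more detailed than the paper's one-sentence sketch --- in particular, your case analysis via \cite{DLS16centermfld}*{Proposition~3.1} to ensure the stopped cube can be taken in $\mathscr W^e$ (rather than $\mathscr W^h\cup\mathscr W^n$) fills in a step the paper leaves implicit.
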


\begin{proof}
Point (a) is merely a consequence of the fact that $\ell (L) \leq 2^{-N_0-6}$, which comes from the construction of the center manifold (see \cite{DLS16centermfld}*{Proposition 1.11}). As for the second point, \eqref{e:elb1} and \eqref{e:rid-of-m0} together imply that some cube intersecting $B_\gamma (0, \pi_{x,k})$ of sidelength comparable to $\gamma$ must violate the conditions (EX) of \cite{DLS16centermfld}*{Definition 1.10}.
\end{proof}

\subsection{Frequency functions} Next, for $x\in \Sfrak$ and each center manifold $\mathcal{M}_{x,k}$, we define a corresponding frequency function. We just proceed as in Section \ref{ss:compactness}. Namely, choose the cut-off $\phi$ as in \eqref{e:phi} and for $w\in \Mcal_{x,k}$ and $r\in ]0,1]$, let
\begin{align*}
	\Dbf_{x,k}(w,r) &\coloneqq \int_{\Mcal_{x,k}} |D N_{x,k}(z)|^2 \phi\left(\frac{d(w,z)}{r}\right) \dd z\, , \\
	\Hbf_{x,k}(w,r) &\coloneqq - \int_{\Mcal_{x,k}} \frac{|\nabla d(w,z)|^2}{d(w,z)} |N_{x,k} (z)|^2\phi'\left(\frac{d(w,z)}{r}\right) \dd z\, \\
	\Ibf_{x,k}(w,r) &\coloneqq \frac{r \Dbf_{x,k}(w,r)}{\Hbf_{x,k}(w,r)}\, .
\end{align*}
We refer the reader to~\cite{DLS16blowup} or~\cite{DLDPHM} for more details on the above quantities. We moreover define the quantities
\begin{align*}
	\Ebf_{x,k}(w,r) &\coloneqq -\frac{1}{r} \int_{\Mcal_{x,k}} \phi'\left(\frac{d(w,z)}{r}\right)\sum_i (N_{x,k})_i(z)\cdot D(N_{x,k})_i(z)\nabla d(w,z) \ dz, \\
	\Gbf_{x,k}(w,r) &\coloneqq -\frac{1}{r^2} \int_{\Mcal_{x,k}} \phi'\left(\frac{d(w,z)}{r}\right) \frac{d(w,z)}{|\nabla d(w,z)|^2} \sum_i |D(N_{x,k})_i(z) \cdot \nabla d(w,z)|^2, \\
	\mathbf{\Sigma}_{x,k}(w,r) &\coloneqq \int_{\Mcal_{x,k}} \phi\left(\frac{d(w,z)}{r}\right)|N_{x,k}(z)|^2.
\end{align*}
The first key point is the variational identities that are pivotal for the almost-monotonicity of the frequency function $\mathbf{I}_{x,k}$. For the following lemma the arguments are the same as those given for Lemma \ref{lem:firstvar} and its strengthening Lemma \ref{lem:firstvar2}.

\begin{lemma}\label{lem:firstvar-low}
	There exists $\eps(m,n,Q)$ such that for any $\eps_5\in ]0,\eps]$, there exists $\gamma_4 (m,n,Q) > 0$ sufficiently small and a constant $C (m,n,Q) > 0$ such that the following estimates hold for every $x\in \bits$, any $w\in \mathcal{M}_{x,k}$ and every radius $r\in ]\eta \gamma, 4]$: 
	\begin{align}
		&\partial_r \Dbf_{x,k}(w,r) = - \int_{\Mcal_{x,k}} \vphi'\left(\frac{d(w,z)}{r}\right) \frac{d(w,z)}{r^2} |D N_{x,k}(z)|^2 \ dz \label{eq:firstvar1-low}\\
		&\partial_r \Hbf_{x,k}(w,r) - \frac{m-1}{r} \Hbf_{x,k} (w,r) = O(\boldsymbol{m}_{x,k}) \Hbf_{x,k} (w,r) + 2 \Ebf_{x,k}(w,r), \label{eq:firstvar2-low}\\
		&|\Dbf_{x,k}(w,r) - \Ebf_{x,k}(w,r)| \leq \sum_{j=1}^5 |\Err_j^o| 
		\leq C\boldsymbol{m}_{x,k}^{\gamma_4}\Dbf_{x,k}(w,r)^{1+\gamma_4} + C\boldsymbol{m}_{x,k}\mathbf{\Sigma}_{x,k}(w,r), \label{eq:firstvar3-low}\\
		&\left|\partial_r \Dbf_{x,k}(w,r)  - (m-2) r^{-1} \Dbf_{x,k} (w,r)- 2\Gbf_{x,k}(w,r)\right| \\
		&\qquad\leq 2 \sum_{j=1}^5 |\Err_j^i|  + C \boldsymbol{m}_{x,k}\Dbf_{x,k}(w,r) \notag\\
		&\qquad \leq Cr^{-1}\boldsymbol{m}_{x,k}^{\gamma_4}\Dbf_{x,k}(w,r)^{1+\gamma_4} + C\boldsymbol{m}_{x,k}^{\gamma_4}\Dbf_{x,k}(w,r)^{\gamma_4}\partial_r \Dbf_{x,k}(w,r)
		+C\boldsymbol{m}_{x,k} \Dbf_{x,k} (w,r),\notag
	\end{align}
	where $\Err_j^o$ and $\Err_j^i$ are as in~\cite{DLDPHM}*{Proposition~9.8,~Proposition~9.9}.
\end{lemma}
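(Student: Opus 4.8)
The statement is the analogue of Lemma~\ref{lem:firstvar} (together with the strengthening in Lemma~\ref{lem:firstvar2}) in the setting of the low-frequency pieces $\bits$, where we work with a family of center manifolds $\Mcal_{x,k}$ adapted to the dyadic scales $]\gamma^{k+1},\gamma^k]$ rather than a single center manifold. Accordingly, the plan is to reduce the proof to a verbatim repetition of the arguments behind Lemma~\ref{lem:firstvar}, i.e. the first and inner variational identities proved in~\cite{DLS16blowup}*{Proposition~3.5} and~\cite{DLDPHM}*{Proposition~9.5,~9.10}, checking only the two points flagged after the statement of Lemma~\ref{lem:firstvar}: (1) that the error constants can be optimized into appropriate powers of the excess $\boldsymbol{m}_{x,k}$, which is purely a bookkeeping of the estimates in~\cite{DLDPHM}*{Proposition~9.8,~9.9}, and (2) that the identities and estimates are valid at \emph{every} $w\in\Mcal_{x,k}$ and \emph{every} scale $r\in]\eta\gamma,4]$ rather than only on the contact set. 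The substance of the argument is therefore the same dichotomy used in the proof that $\Sbf\subset\mathbf\Phi(\mathbf\Gamma)$ and in Lemma~\ref{lem:firstvar2}, namely controlling the side-length of the Whitney cubes meeting a disk $B_r(\mathbf{p}_{\pi_{x,k}}(w),\pi_{x,k})$.

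First I would invoke Proposition~\ref{p:good-cubes}(a): by choosing $N_0$ in \cite{DLS16centermfld}*{Assumption 1.9} large enough, every cube $L\in\mathscr{W}^{x,k}$ intersecting $B_r(\mathbf{p}_{\pi_{x,k}}(w),\pi_{x,k})$ has $\ell(L)\leq c_s r$ for all $w\in\Mcal_{x,k}$ and all $\eta\gamma\leq r\leq 3$, where $c_s=\frac{1}{64\sqrt m}$ is exactly the constant appearing in~\cite{DLS16blowup}*{(2.5)}. This is precisely the geometric input that makes the derivation of the variational identities in~\cite{DLS16blowup}*{Proposition~3.5} go through on the whole annular region $\Bbf_r(w)\setminus\Bbf_{r/2}(w)\cap\Mcal_{x,k}$: the only place where the contact-set hypothesis is used in that proof is to guarantee that no Whitney cube meeting the relevant disk is too large compared to $r$, and Proposition~\ref{p:good-cubes}(a) supplies this uniformly in $w$ and $k$ (at the cost of restricting to $r\geq\eta\gamma$). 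The smallness threshold $\eps_5$ on the excess at the top scale is inherited from Assumption~\ref{asm:2}/\eqref{e:eub2} and the requirement that \cite{DLS16centermfld}*{Theorem~1.17, 2.4} and the estimates of~\cite{DLDPHM}*{Section~9} apply; since $\boldsymbol{m}_{x,k}=\mathbf E(T,\Bbf_{6\sqrt m\gamma^k}(x))\leq\eps_5^2$, this is automatic.

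With the cube-size control in hand, the four displayed relations are obtained exactly as follows. Identity~\eqref{eq:firstvar1-low} is the differentiation of $\Dbf_{x,k}$ in $r$ and requires no variational argument. Identity~\eqref{eq:firstvar2-low} is the first-variation (outer) identity for $\Hbf_{x,k}$, reproducing~\cite{DLS16blowup}*{Proposition~3.4} with $\boldsymbol{m}_0$ replaced by $\boldsymbol{m}_{x,k}$ and using properties (i)--(iii) of the geodesic distance $d=d_{x,k}$. Estimate~\eqref{eq:firstvar3-low} comparing $\Dbf_{x,k}$ and $\Ebf_{x,k}$, and the inner-variation estimate in the last display, follow from testing the stationarity of $T_{x,\gamma^k}$ against the appropriate vector fields as in~\cite{DLDPHM}*{Proposition~9.5, 9.10}; the error terms $\Err_j^o,\Err_j^i$ are exactly those of~\cite{DLDPHM}*{Proposition~9.8, 9.9}, and the bounds $\sum_j|\Err_j^o|\leq C\boldsymbol{m}_{x,k}^{\gamma_4}\Dbf_{x,k}^{1+\gamma_4}+C\boldsymbol{m}_{x,k}\mathbf\Sigma_{x,k}$ (and the analogous inner one) come from optimizing the constants in those estimates into powers of $\boldsymbol{m}_{x,k}$, exactly as in point (1) after Lemma~\ref{lem:firstvar}.

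I do not expect a genuine obstacle here: the only subtlety is the interplay between the restricted range $r\in]\eta\gamma,4]$ and the fact that $\Mcal_{x,k}$ need not contain the origin $0=\iota_{x,\gamma^{-k}}(x)$, which is handled by using the proxy point $p_{x,k}=(0,\boldsymbol\varphi_{x,k}(0))$ and the uniform cube bound of Proposition~\ref{p:good-cubes}(a) valid down to scale $\eta\gamma$. One should also double-check that adjusting $N_0$ (hence $C_e$, $C_h$, $\varepsilon_2$) in \cite{DLS16centermfld}*{Assumption~1.9} as in Proposition~\ref{p:good-cubes} does not degrade the $C^{3,\kappa}$-estimates on $\Mcal_{x,k}$ or the $\gamma_2$-estimates on $N_{x,k}$ in a way that affects $\gamma_4$; but since decreasing these parameters only shrinks the Hölder exponents slightly (the same remark made for the passage $\delta_3\to\delta_2$ in Section~\ref{s:single-cm}), the constants $\gamma_4(m,n,Q)$ and $C(m,n,Q)$ can be chosen uniformly. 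This is why the paper is content to state that ``the arguments are the same as those given for Lemma~\ref{lem:firstvar} and its strengthening Lemma~\ref{lem:firstvar2}'' and omit the details.
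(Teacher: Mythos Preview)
Your proposal is correct and matches the paper's approach exactly: the paper states only that ``the arguments are the same as those given for Lemma~\ref{lem:firstvar} and its strengthening Lemma~\ref{lem:firstvar2}'', and you have accurately identified that the sole geometric input needed to make those arguments go through at every $w\in\Mcal_{x,k}$ and every $r\in]\eta\gamma,4]$ is the Whitney-cube size control supplied by Proposition~\ref{p:good-cubes}(a). Your elaboration of the four displayed relations and the bookkeeping of the $\boldsymbol{m}_{x,k}$-powers is the natural unpacking of that reference.
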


\subsection{Universal frequency function and total variation estimate}
We are now in a position to introduce the universal frequency function adapted to our situation. A similar object was introduced by the authors in \cite{DLSk1} for the original sequence of center manifolds and normal approximations corresponding to the intervals of flattening around a given point. Here, we amend the definition accordingly.

\begin{definition}[Universal frequency function adapted to $\{\gamma^j\}_j$]\label{def:univfreq}
For $r \in ]\gamma^{k+1}, \gamma^k]$ and $x\in \bits$, define
	\begin{align*}
		\Ibf(x, r) &\coloneqq \Ibf_{x,k}\left(p_{x,k},\frac{r}{\gamma^k}\right), \\
		\Dbf(x,r) &\coloneqq \Dbf_{x,k}\left(p_{x,k}, \frac{r}{\gamma^{k}}\right), \\
		\Hbf(x,r) &\coloneqq \Hbf_{x,k}\left(p_{x,k}, \frac{r}{\gamma^{k}}\right).
	\end{align*}
\end{definition}

We recall the following $\BV$ estimate on the universal frequency function, which, although originally stated for the universal frequency as defined in \cite{DLSk1}*{Definition 6.1}, also holds for the universal frequency function in Definition \ref{def:univfreq}.

\begin{proposition}\label{prop:bv}
	There exists $\bar\eps=\bar\eps(m,n,Q)\in ]0,\eps]$ such that for any $\eps_5\in ]0,\bar\eps]$, there exists $C=C(m,n,Q, \gamma)$ such that the following holds for every $x\in \bits$:
	\begin{equation}\label{eq:bv}
		\left\| \left[\frac{d \log (1 + \Ibf(x,\cdot))}{dr}\right]_- \right\|_{\TV([0, 1])} \leq C\sum_k  [\boldsymbol{m}_{x,k}]^{\gamma_4},
	\end{equation}
\end{proposition}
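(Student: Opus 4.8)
\textbf{Proof plan for Proposition \ref{prop:bv}.}

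The plan is to follow closely the argument in \cite{DLSk1} that yields the analogous $\BV$ estimate for the universal frequency function built from the intervals of flattening, adapting it to the present intervals $]\gamma^{k+1},\gamma^k]$. The key point is that the total variation of $r\mapsto [\frac{d}{dr}\log(1+\Ibf(x,r))]_-$ splits naturally into two contributions: the variation \emph{inside} each interval $]\gamma^{k+1},\gamma^k]$, and the sum of the downward jumps at the transition scales $r=\gamma^k$. For the interior contribution, one invokes the almost-monotonicity established via Lemma \ref{lem:firstvar-low}: exactly as in the derivation of \eqref{e:almostmon} from Lemma \ref{lem:firstvar} (cf. \cite{DLSk1}*{Corollary 6.5}), the identities \eqref{eq:firstvar1-low}--\eqref{eq:firstvar3-low} give, for $w=p_{x,k}$ and $r/\gamma^k\in]\eta\gamma,4]$,
\[
\frac{d}{dr}\log\bigl(1+\Ibf_{x,k}(p_{x,k},r/\gamma^k)\bigr) \geq -C\,\boldsymbol{m}_{x,k}^{\gamma_4}\,(r/\gamma^k)^{\gamma_4-1}\cdot\gamma^{-k}\,,
\]
so that the negative part integrated over $r\in]\gamma^{k+1},\gamma^k]$ is bounded by $C\,\gamma\,\boldsymbol{m}_{x,k}^{\gamma_4}$ (the scaling in $\gamma^k$ cancels and only a $\gamma$-dependent constant survives upon integrating $(r/\gamma^k)^{\gamma_4-1}$). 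Summing over $k$ produces the term $C(m,n,Q,\gamma)\sum_k[\boldsymbol{m}_{x,k}]^{\gamma_4}$ on the right-hand side of \eqref{eq:bv}.

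The substantive part is controlling the downward jump of $\Ibf(x,\cdot)$ at each endpoint $\gamma^k$, i.e. comparing $\Ibf_{x,k}(p_{x,k},\gamma^{k+1}/\gamma^k)=\Ibf_{x,k}(p_{x,k},\gamma)$ with the value at the top of the next interval, $\Ibf_{x,k+1}(p_{x,k+1},1)$ — these are frequencies computed relative to two \emph{different} center manifolds $\Mcal_{x,k}$ and $\Mcal_{x,k+1}$, with two different normal approximations. This is precisely the situation handled in \cite{DLSk1}, where one shows that the two center manifolds and their normal approximations at the overlapping scales are close in the appropriate sense, with the discrepancy quantified by powers of the excess $\boldsymbol{m}_{x,k}$ (equivalently $\boldsymbol{m}_{x,k+1}$, which is comparable to it by \eqref{e:elb1} and \eqref{e:rid-of-m0}). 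Concretely, one uses that both center manifolds approximate $T_{x,\gamma^k}$ on the common annulus of scales, together with the uniform $C^{3,\kappa}$ estimates and the $L^2$-estimates on $N_{x,k}$ and $N_{x,k+1}$, to obtain
\[
\bigl|\log(1+\Ibf_{x,k}(p_{x,k},\gamma)) - \log(1+\Ibf_{x,k+1}(p_{x,k+1},1))\bigr| \leq C\,\boldsymbol{m}_{x,k}^{\gamma_4}\,,
\]
so that the jump at $\gamma^k$ contributes at most $C\,\boldsymbol{m}_{x,k}^{\gamma_4}$ to the total variation of the negative part, and again summing over $k$ gives the claimed bound. The role of the lower bound \eqref{e:elb1} (and \eqref{e:elb2}) here is twofold: it guarantees that $\boldsymbol{m}_{x,k}$ does not degenerate relative to $\boldsymbol{m}_{x,k+1}$ (so the comparison constants are uniform), and — combined with Proposition \ref{p:good-cubes}(b) — it ensures the intervals $]\gamma^{k+1},\gamma^k]$ genuinely play the role of intervals of flattening so that the estimates of \cite{DLSk1} transfer verbatim.

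The main obstacle I expect is the jump estimate at the transition scales: it requires the quantitative comparison of the two center-manifold constructions at nearby scales and the transfer of the frequency across them. This is not a routine computation — it is exactly the heart of the $\BV$ estimate in \cite{DLSk1}, relying on the stop-time construction and the fine interpolation estimates between consecutive center manifolds — but since the proposition asserts that the estimate of \cite{DLSk1} also holds for the universal frequency of Definition \ref{def:univfreq}, and since the only difference is the choice of a fixed geometric ratio $\gamma$ in place of the adaptively chosen ratios $s_k/t_k$, the proof amounts to checking that every step of \cite{DLSk1} goes through with the ratio frozen to $\gamma$; this is ensured by Proposition \ref{p:good-cubes} and the bounds \eqref{e:eub1}--\eqref{e:elb2}. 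The remaining estimates (interior almost-monotonicity, summability of $\sum_k\boldsymbol{m}_{x,k}^{\gamma_4}$ being left implicit in the statement) are then direct consequences of Lemma \ref{lem:firstvar-low} as indicated above. \qed
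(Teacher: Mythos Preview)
Your proposal is correct and matches the paper's approach: the paper likewise splits the total variation into the interior contribution on each $]\gamma^{k+1},\gamma^k[$ (handled by Lemma \ref{lem:firstvar-low}) and the jump at each endpoint $\gamma^k$ (handled by the argument of \cite{DLSk1}, with Proposition \ref{p:good-cubes}(b) identified as the crucial ingredient that makes the \cite{DLSk1} jump estimate applicable despite the fixed geometric ratio $\gamma$). Your identification of the role of the excess lower bounds \eqref{e:elb1}--\eqref{e:elb2} via Proposition \ref{p:good-cubes}(b) is exactly what the paper points to (cf.\ \cite{DLSk1}*{Remark 6.3}).
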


We observe that the estimate of the total variation on each open interval $]\gamma^{k+1}, \gamma^k[$ is just using Lemma \ref{lem:firstvar-low}. As for the proof given in \cite{DLSk1} to estimate the jumps
\[
|\log (1 + \Ibf(x,\gamma^k))^+ - \log (1+\Ibf (x, \gamma^{k})^-|
\]
the crucial ingredient which allows us to apply the same argument in \cite{DLSk1} is given by Proposition \ref{p:good-cubes}(b), as it is explained in \cite{DLSk1}*{Remark 6.3}. 

\subsection{Upper and lower bounds on the frequency}
As an immediate consequence of the total variation estimate we infer the existence of an upper bound for the frequency $\mathbf{I} (x,r)$. We also infer the existence of the limit $\mathbf{I} (x,0) = \lim_{r\downarrow 0} \mathbf{I} (x,r)$. We can then argue as in \cite{DLSk1} to show that $\mathbf{I} (x,0) = \Irm (x,0) \geq 1+ 2^{-K}$. In turn, upon choosing $\tilde{\varepsilon}$ sufficiently small we infer the following 

\begin{corollary}\label{c:ulb-frequency}
For $\bar\eps$ as in Proposition \ref{prop:bv} and any $\eps_5\in ]0,\bar\eps]$, the following holds:
\[
1+ 2^{-K-1} \leq \mathbf{I} (x,r) \leq 2 \qquad \forall x\in \bits, \forall r \in ]0, 1]\, .
\]
\end{corollary}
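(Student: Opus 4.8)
The plan is to combine the total-variation estimate of Proposition \ref{prop:bv} with the excess bounds of Definition \ref{d:bits} and the identification of the limiting frequency with the singularity degree. The argument mirrors the reasoning leading to Corollary \ref{cor:freqmono} in the single-center-manifold setting, but now the jump discontinuities of the universal frequency across the endpoints $\gamma^k$ must be absorbed.

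\begin{proof}
Let $x\in \bits$ be arbitrary. We first establish the existence of $\mathbf{I}(x,0)$ and the upper bound. By Proposition \ref{prop:bv}, the negative part of $\frac{d}{dr}\log(1+\mathbf{I}(x,\cdot))$ has total variation on $]0,1]$ bounded by $C\sum_k [\boldsymbol{m}_{x,k}]^{\gamma_4}$. Using \eqref{e:elb1}, \eqref{e:rid-of-m0} and Remark \ref{r:no-constants} we have $\boldsymbol{m}_{x,k} = \mathbf{E}(T,\Bbf_{6\sqrt{m}\gamma^k}(x)) \leq \gamma^{2\nu(K) k}\,\mathbf{E}(T,\Bbf_{6\sqrt{m}}(x)) \cdot \gamma^{-2\nu(K)}$ for $k\geq 0$ (here we use \eqref{e:eub1} with $J=1$), and since $2\nu(K)\gamma_4 > 0$ this is a convergent geometric series; thus $\sum_k [\boldsymbol{m}_{x,k}]^{\gamma_4} \leq C \varepsilon_5^{2\gamma_4}$, which can be made as small as desired by shrinking $\varepsilon_5$. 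Consequently $r\mapsto \log(1+\mathbf{I}(x,r)) + C\varepsilon_5^{2\gamma_4}\cdot(\text{monotone correction})$ is, up to a bounded-variation perturbation of arbitrarily small size, monotone; more precisely, $\log(1+\mathbf{I}(x,r))$ differs from a monotone nondecreasing function by a function of total variation at most $C\varepsilon_5^{2\gamma_4}$. Hence the limit $\mathbf{I}(x,0) = \lim_{r\downarrow 0}\mathbf{I}(x,r)$ exists, and $\log(1+\mathbf{I}(x,r)) \leq \log(1+\mathbf{I}(x,1)) + C\varepsilon_5^{2\gamma_4}$ for all $r\in]0,1]$. A compactness argument exactly as in the proof of \eqref{e:bound-Lambda-first}---using that along any blow-up sequence the normalized normal approximations converge to a $\Dir$-minimizer with frequency at most the Almgren bound, and that $\mathbf{I}(x,1) = \mathbf{I}_{x,0}(p_{x,0},1)$---gives a uniform upper bound $\mathbf{I}(x,1)\leq C$; combined with the smallness of the TV correction and $\mathbf{I}(T,x)\leq 2-\delta_2-2^{-K} < 2$ from Proposition \ref{p:subdivision}(iii), one obtains $\mathbf{I}(x,r)\leq 2$ for all $r\in]0,1]$ provided $\varepsilon_5$ (equivalently $\tilde\varepsilon$) is small enough.

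For the lower bound, the key input is the identification $\mathbf{I}(x,0) = \Irm(T,x)$, which follows by arguing as in \cite{DLSk1}: the universal frequency function is designed precisely so that its limit at $0$ coincides with the singularity degree (the fine blow-up along the geometric sequence $\{\gamma^k\}$ realizes the frequency value, and by Theorem \ref{thm:uniquefv} the set of frequency values is the singleton $\{\Irm(T,x)\}$). Since $x\in\bits = \bits_{K,1}$, Proposition \ref{p:subdivision}(iii) gives $\mathbf{I}(x,0) = \Irm(T,x) \geq 1 + 2^{-K}$. Because $\log(1+\mathbf{I}(x,r))$ is monotone up to a BV correction of size $\leq C\varepsilon_5^{2\gamma_4}$, we get $\log(1+\mathbf{I}(x,r)) \geq \log(1+\mathbf{I}(x,0)) - C\varepsilon_5^{2\gamma_4} \geq \log(1+2^{-K}) - C\varepsilon_5^{2\gamma_4}$ for every $r\in]0,1]$. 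Choosing $\varepsilon_5$ (hence $\tilde\varepsilon$) sufficiently small---depending only on $K$, i.e. on $m,n,Q$ once $K$ is fixed---so that the correction $C\varepsilon_5^{2\gamma_4}$ is smaller than $\log(1+2^{-K}) - \log(1+2^{-K-1})$, we conclude $\mathbf{I}(x,r) \geq 1 + 2^{-K-1}$ for all $r\in]0,1]$. This completes the proof.
\end{proof}

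The main obstacle is making the passage from ``monotone up to small total variation'' to the two-sided pointwise bounds fully rigorous while tracking that all the smallness can be arranged by a single choice of $\varepsilon_5$ depending only on the fixed $K$; the jump terms at the scales $\gamma^k$, controlled via Proposition \ref{p:good-cubes}(b) and \cite{DLSk1}*{Remark 6.3}, are the delicate point, since a naive bound on each jump would not sum, and one genuinely needs the geometric decay of $\boldsymbol{m}_{x,k}$ coming from the excess lower bound \eqref{e:elb1}.
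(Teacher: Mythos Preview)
Your approach mirrors the paper's own discussion preceding the corollary: combine the total-variation estimate on the universal frequency with the identification $\mathbf{I}(x,0)=\Irm(T,x)$ from \cite{DLSk1} and the two-sided bounds on $\Irm(T,x)$ from Proposition~\ref{p:subdivision}(iii), then absorb the BV error by shrinking $\varepsilon_5$. The lower bound is handled correctly. One minor slip: the geometric decay of $\boldsymbol{m}_{x,k}$ comes from the \emph{upper} excess bound \eqref{e:eub1}, so the relevant exponent is $2\mu(K)=2^{-K}$, not $2\nu(K)$; your citations of \eqref{e:elb1} and Remark~\ref{r:no-constants} at that point are mislabels, but the conclusion $\sum_k \boldsymbol{m}_{x,k}^{\gamma_4}\leq C\varepsilon_5^{2\gamma_4}$ is right.

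There is, however, a genuine gap in your derivation of the upper bound $\leq 2$. Proposition~\ref{prop:bv} controls only the \emph{negative} part of $\frac{d}{dr}\log(1+\mathbf{I}(x,\cdot))$; this yields $\log(1+\mathbf{I}(x,r))\leq \log(1+\mathbf{I}(x,1))+C\varepsilon_5^{2\gamma_4}$ for $r<1$, but places no restriction on how large $\mathbf{I}(x,1)$ can be relative to $\mathbf{I}(x,0)$. The frequency is almost \emph{increasing}, so the inequality $\mathbf{I}(x,0)=\Irm(T,x)\leq 2-\delta_2-2^{-K}$ does not propagate upward. Your compactness argument produces only $\mathbf{I}(x,1)\leq C$ for some unspecified $C$, and you never close the gap between $C$ and $2$; invoking the bound on $\Irm(T,x)$ at that stage is a non sequitur. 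The paper's discussion is equally brief on this point---it only asserts ``the existence of an upper bound''---and the specific constant $2$ is nowhere used downstream (Corollary~\ref{c:ulb-frequency-2} and Lemma~\ref{lem:simplify-low} work with a generic constant). So the defect is cosmetic rather than structural; but if one insists on the value $2$, a further contradiction argument exploiting also the excess \emph{lower} bound \eqref{e:elb2} (which caps the decay rate and hence the frequency) would be needed.
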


A simple contradiction argument also guarantees a similar control for points sufficiently close to $\bits$ at the appropriate scale.

\begin{corollary}\label{c:ulb-frequency-2}
There exists $\eps^*\in]0,\bar\eps]$ such that for any $\eps_5\in ]0,\eps^*]$ and any $x\in\bits$, there is a positive constant $C_0(\gamma,\eta,m,n,Q)$, such that the following holds for every $w\in \mathcal{M}_{x,k}$ and every $r \in ]\eta \gamma, 4]$:
\[
C_0^{-1} \leq \mathbf{I}_{x,k} (w,r) \leq C_0\, .
\]
\end{corollary}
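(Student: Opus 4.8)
The statement is Corollary~\ref{c:ulb-frequency-2}, asserting a uniform two-sided bound $C_0^{-1}\leq \mathbf{I}_{x,k}(w,r)\leq C_0$ for every $w\in\mathcal{M}_{x,k}$ and every $r\in]\eta\gamma,4]$, not just for $w=p_{x,k}$ with $r$ restricted by the universal-frequency definition. The plan is to derive this from the already-established bound $1+2^{-K-1}\leq \mathbf{I}(x,r)\leq 2$ for $x\in\bits$ (Corollary~\ref{c:ulb-frequency}), which only controls the frequency at the reference point $p_{x,k}$, by a compactness/contradiction argument on the whole center manifold exactly parallel to the proof of the lower bound in Lemma~\ref{lem:simplify} (the estimate \eqref{eq:simplify1}) and to the proofs of Lemmas~\ref{lem:noQpts} and~\ref{lem:smallspatialvar}. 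The point is that, once we fix the interval $]\gamma^{k+1},\gamma^k]$ and rescale so that $T_{x,\gamma^k}$ satisfies \cite{DLS16blowup}*{Assumption 2.1}, the frequency function $\mathbf{I}_{x,k}(\cdot,\cdot)$ on $\mathcal{M}_{x,k}$ enjoys the same almost-monotonicity and compactness properties established in Part~\ref{part:high}, and Proposition~\ref{p:good-cubes}(a) guarantees that for $r\geq\eta\gamma$ the Whitney-cube condition needed for the validity of the first-variation identities of Lemma~\ref{lem:firstvar-low} holds at \emph{every} $w\in\mathcal{M}_{x,k}$, not just at points near $\bits$.

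\textbf{Key steps.} First, I would argue the \emph{upper} bound by contradiction and compactness: if $\mathbf{I}_{x_\ell,k_\ell}(w_\ell,r_\ell)\to\infty$ along a sequence of currents $T_\ell$ satisfying the assumptions with $\eps_5=\eps_{5,\ell}\downarrow 0$, points $w_\ell\in\mathcal{M}_{x_\ell,k_\ell}$ and scales $r_\ell\in]\eta\gamma,4]$, then recentering at $w_\ell$, rescaling as in Section~\ref{ss:compactness} (invoking Remark~\ref{r:diag-blowup} for the varying-current, varying-center version of the blow-up), and passing to a limit produces a nontrivial Dir-minimizer $u$ on a ball with $I_u(\bar w,\bar r)=\infty$ for the limiting center and scale, which is impossible since the frequency of a nontrivial Dir-minimizer is finite at every interior point and scale (\cite{DLS_MAMS}*{Theorem 3.15}). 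Here one uses the uniform $W^{1,2}$ control on the normalized normal approximations coming from the reverse Sobolev inequality \cite{DLS16blowup}*{Corollary~5.3}, together with the fact that the frequency at $w_\ell$ at \emph{some} nearby scale is controlled — e.g. via the almost-monotonicity \eqref{e:almostmon}/its analogue for $\mathbf{I}_{x,k}$ (Lemma~\ref{lem:firstvar-low}) combined with Corollary~\ref{c:ulb-frequency}, one transfers the a priori bound from $p_{x,k}$ to all of $\mathcal{M}_{x,k}\cap\Bbf_1$ at scale comparable to $1$, yielding that the limit $u$ is not identically $Q\llbracket 0\rrbracket$. Second, for the \emph{lower} bound I would repeat verbatim the argument for \eqref{eq:simplify1}: a failure would yield a limiting Dir-minimizer $u$ with $I_u(0,1)=0$, hence $D_u(0,1)=0$, hence $u\equiv Q\llbracket\boldsymbol\eta\circ u\rrbracket$ on $B_1$; but combining with a point $\bar y$ in the limit of the rescaled $\bits$-points (which exists because $r_\ell\geq\eta\gamma$ forces $\dist(w_\ell,\bits)\lesssim r_\ell/\eta$ — here one must check that the relevant point of $\bits$ survives the rescaling, using that the frequency at that point is bounded below by $1+2^{-K-1}$ from Corollary~\ref{c:ulb-frequency}) gives a contradiction with $\boldsymbol\eta\circ u\equiv 0$ and $H_u\not\equiv 0$. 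I would then combine the two bounds and set $C_0$ to depend on $\gamma,\eta,m,n,Q$ as claimed, choosing $\eps^*\leq\bar\eps$ small enough for all the above compactness arguments to close.

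\textbf{Main obstacle.} The principal technical point — and the step I would spend the most care on — is justifying the compactness at a \emph{moving} base point $w_\ell$ that need not lie in $\bits$ nor even in the contact set $\mathbf\Phi(\mathbf\Gamma)$: one must confirm that the normalized normal approximations, recentered at $w_\ell$ and rescaled on the annulus of inner radius comparable to $\eta\gamma$ and outer radius comparable to $1$, still converge (in $L^2$ and $W^{1,2}_{\loc}$) to a genuine Dir-minimizer. This relies on Proposition~\ref{p:good-cubes}(a), which ensures that for all scales $r\geq\eta\gamma$ the ball $B_r(\mathbf{p}_{\pi_{x,k}}(w),\pi_{x,k})$ meets only Whitney cubes of side-length $\leq c_s r$, so that the estimates of \cite{DLS16blowup} (reverse Sobolev, excess decay, Lipschitz approximation) remain valid with uniform constants centered at $w$ — this is precisely the analogue of the role played by Corollary~\ref{c:small-cubes} and Lemma~\ref{lem:firstvar2} in Part~\ref{part:high}. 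A secondary subtlety is tracking the dependence of $C_0$ on $\gamma$ and $\eta$ (but not on $K$ directly, beyond the a priori bound $\mathbf{I}(x,0)\geq 1+2^{-K}$ which only enters through the strict positivity of the limiting frequency, hence harmlessly), which one records by keeping these parameters fixed throughout the contradiction argument.
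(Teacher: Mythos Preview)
Your proposal is correct and follows essentially the same approach as the paper, which merely asserts that ``a simple contradiction argument'' yields the result without giving details; you have supplied precisely those details, correctly identifying Proposition~\ref{p:good-cubes}(a) as the replacement for Corollary~\ref{c:small-cubes}/Lemma~\ref{lem:firstvar2} that makes Lemma~\ref{lem:firstvar-low} valid at \emph{every} $w\in\mathcal{M}_{x,k}$ for $r\geq\eta\gamma$, and correctly modeling the lower-bound contradiction on the proof of \eqref{eq:simplify1}. One small imprecision: your claim ``$r_\ell\geq\eta\gamma$ forces $\dist(w_\ell,\bits)\lesssim r_\ell/\eta$'' is not quite the right bookkeeping --- the relevant nearby point of $\bits$ is just $x_\ell$ itself (with proxy $p_{x_\ell,k_\ell}\in\mathcal{M}_{x_\ell,k_\ell}$), and since both $w_\ell$ and $p_{x_\ell,k_\ell}$ lie in the bounded center manifold while $r_\ell\geq\eta\gamma>0$, their distance rescaled by $r_\ell$ stays bounded by a constant depending on $\gamma,\eta,m$; this is what lets the limit $\bar y$ land at finite distance with $I_u(\bar y,\cdot)\geq 1+2^{-K-1}$ from Corollary~\ref{c:ulb-frequency}.
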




{As a consequence of the estimates in Lemma \ref{lem:firstvar-low} and Corollary \ref{c:ulb-frequency-2}, we have the following collection of estimates, which are analogous to those in Lemma \ref{lem:simplify}, and proven in exactly the same way.}

\begin{lemma}\label{lem:simplify-low}
Suppose that $\eps_5$, $x$, $\Mcal_{x,k}$ are as in Corollary \ref{c:ulb-frequency-2}. Then there exist constants $C$ dependent on $K$, $\gamma$, and $\eta$, but not on $x, k$ or ${\varepsilon}_5$, such that the following estimates hold for every $w \in \Mcal_{x,k}\cap\Bbf_1$ and any $\rho, r \in ]\eta \gamma, 4]$. 
\begin{align}
C^{-1} \leq & \Ibf_{x,k} (w,r) \leq C \label{eq:simplify1-low} \\  
C^{-1} r \Dbf_{x,k} (w,r) \leq & \Hbf_{x,k} (w,r) \leq C r \Dbf_{x,k} (w,r) \label{eq:simplify2-low} \\
\mathbf{\Sigma}_{x,k} (w,r) &\leq C r^2 \Dbf_{x,k} (w,r) \label{eq:simplify3-low} \\
\Ebf_{x,k} (w,r) &\leq C \Dbf_{x,k} (w,r) \label{eq:simplify4-low} \\
\frac{\Hbf_{x,k}(w,\rho)}{\rho^{m-1}} &= \frac{\Hbf_{x,k}(w,r)}{r^{m-1}}\exp\left(-C\int_\rho^r \Ibf_{x,k}(w,s)\frac{\dd s}{s} - O ({\boldsymbol{m}}_{x,k}) (r-\rho)\right) \label{eq:simplify5-low} \\
\Hbf_{x,k} (w, r) & \leq C \Hbf_{x,k} (w, \textstyle{\frac{r}{4}}) \label{eq:simplify6-low} \\
\Hbf_{x,k} (w,r) & \leq C r^{m+3 - 2\delta_2} \label{eq:simplify7-low} \\
\Gbf_{x,k} (w,r) &\leq C r^{-1} \Dbf_{x,k} (w,r) \label{eq:simplify8-low} \\
|\partial_r \Dbf_{x,k} (w,r)| & \leq C r^{-1} \Dbf_{x,k} (w,r) \label{eq:simplify9-low} \\
|\partial_r \Hbf_{x,k} (w,r)| &\leq C \Dbf_{x,k} (w,r)\, .  \label{eq:simplify10-low}
\end{align}
In particular:
\begin{align}
|\Dbf_{x,k} (w,r) - \Ebf_{x,k} (w,r)| &\leq C {\boldsymbol{m}}_{x,k}^{\gamma_4} r^{\gamma_4} \Dbf_{x,k} (w,r) \label{eq:simplify11-low} \\
\left|\partial_r \Dbf_{x,k} (w,r) - \frac{m-2}{r} \Dbf_{x,k} (w,r) - 2 \Gbf_{x,k} (w,r)\right| & \leq C {\boldsymbol{m}}_{x,k}^{\gamma_4} r^{\gamma_4-1} \Dbf_{x,k} (w,r) \label{eq:simplify12-low} \\
\partial_r \Ibf_{x,k} (w,r) &\geq - C {\boldsymbol{m}}_{x,k}^{\gamma_4} r^{\gamma_4-1}\, . \label{eq:simplify13-low}
\end{align}
\end{lemma}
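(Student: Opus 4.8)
The plan is to mirror, line by line, the proof of Lemma~\ref{lem:simplify}, substituting for each ingredient there its low-frequency analogue: the variational identities of Lemma~\ref{lem:firstvar} are replaced by those of Lemma~\ref{lem:firstvar-low}, the two-sided frequency bound of Corollary~\ref{cor:freqmono} by that of Corollary~\ref{c:ulb-frequency-2}, the mass parameter $\boldsymbol{m}_0$ by $\boldsymbol{m}_{x,k}$, and the admissible radii $]\eta\,d(x,\mathbf S),1]$ by $]\eta\gamma,4]$ -- the latter being exactly the range on which, after rescaling the $k$-th interval of flattening $]\gamma^{k+1},\gamma^k]$ to $]\gamma,1]$, the center manifold $\Mcal_{x,k}$ and the normal approximation $N_{x,k}$ satisfy the Whitney-cube side-length control of Proposition~\ref{p:good-cubes}(a). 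Since all of these objects are already available with constants depending only on $K,\gamma,\eta$, the argument becomes essentially formal.

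Concretely, I would proceed in the same order as in the proof of Lemma~\ref{lem:simplify}. First, \eqref{eq:simplify1-low} is nothing but Corollary~\ref{c:ulb-frequency-2} (here, unlike in the high-frequency setting, no fresh contradiction argument for the lower bound is needed, since Corollary~\ref{c:ulb-frequency-2} already extends the bound from $\bits$ to all $w\in\Mcal_{x,k}$). Then \eqref{eq:simplify2-low} is a rewriting of \eqref{eq:simplify1-low} via $\Ibf_{x,k}=r\Dbf_{x,k}/\Hbf_{x,k}$; \eqref{eq:simplify3-low} follows from \cite{DLS16blowup}*{Lemma~3.6} together with the lower bound in \eqref{eq:simplify1-low}; and \eqref{eq:simplify4-low} combines \eqref{eq:firstvar3-low} with \eqref{eq:simplify3-low}. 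For \eqref{eq:simplify5-low}, I would divide \eqref{eq:firstvar2-low} by $\Hbf_{x,k}$, insert \eqref{eq:simplify4-low}, and integrate from $\rho$ to $r$; setting $\rho=r/4$ and invoking the upper bound in \eqref{eq:simplify1-low} yields \eqref{eq:simplify6-low}. The height bound \eqref{eq:simplify7-low} comes from covering $\mathbf{p}_{\pi_{x,k}}\big((\Bbf_r(w)\setminus\Bbf_{r/2}(w))\cap\Mcal_{x,k}\big)$ by Whitney cubes $L$ with $\ell(L)\le 2r$ -- admissible since $r\ge\eta\gamma$, by Proposition~\ref{p:good-cubes}(a) -- and summing \cite{DLS16centermfld}*{Theorem~2.4~(2.3)} over the associated Whitney regions. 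The inequalities \eqref{eq:simplify8-low} and \eqref{eq:simplify9-low} are immediate from the definition of $\Gbf_{x,k}$ and from \eqref{eq:firstvar1-low}, using that $d(w,z)\le r$ on $\{\phi'(d(w,z)/r)\ne 0\}$, while \eqref{eq:simplify10-low} follows from \eqref{eq:firstvar2-low} and the upper bounds in \eqref{eq:simplify2-low} and \eqref{eq:simplify4-low}. Finally, \eqref{eq:simplify11-low}--\eqref{eq:simplify13-low} are obtained by feeding \eqref{eq:simplify1-low}--\eqref{eq:simplify10-low} back into \eqref{eq:firstvar3-low}, the last inequality of Lemma~\ref{lem:firstvar-low}, and the resulting almost-monotonicity, precisely as \eqref{eq:simplify11}--\eqref{eq:simplify13} were obtained from Lemma~\ref{lem:firstvar}.

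The main point to watch -- and the reason this is not a one-line deduction -- is uniformity: every estimate must hold with a constant depending only on $K$, $\gamma$, $\eta$, and in particular \emph{not} on $\eps_5$ nor on the particular $x\in\bits$ or $k\in\N$. This is exactly what Lemma~\ref{lem:firstvar-low} and Corollary~\ref{c:ulb-frequency-2} are arranged to provide, and the mechanism underpinning it is Proposition~\ref{p:good-cubes}(a): once the $k$-th interval of flattening is rescaled to $]\gamma,1]$, no Whitney cube of side length larger than $c_s r$ can meet $B_r(\mathbf{p}_{\pi_{x,k}}(w),\pi_{x,k})$ for any $r\in[\eta\gamma,3]$, which is precisely the hypothesis used in the proofs of \cite{DLS16blowup}*{Proposition~3.5} and \cite{DLDPHM}*{Proposition~9.10}. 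Granting this, no analytic input beyond what already appears in the proof of Lemma~\ref{lem:simplify} is required, and the only real work left is the (routine) bookkeeping of the powers of $\boldsymbol{m}_{x,k}$.
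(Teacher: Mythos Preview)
Your proposal is correct and follows precisely the approach taken in the paper: the authors simply state that these estimates are ``analogous to those in Lemma~\ref{lem:simplify}, and proven in exactly the same way'' as a consequence of Lemma~\ref{lem:firstvar-low} and Corollary~\ref{c:ulb-frequency-2}. Your detailed breakdown of which ingredient substitutes for which is accurate and, if anything, more explicit than the paper itself.
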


\section{Spatial variations}
Let us now control on how much a given normal approximation $N = N_{x,k}$ deviates from being homogeneous on average between two scales, in terms of the frequency pinching, which is defined as follows.

\begin{definition}\label{def:freqpinch-low}
	Let $T$ and $\bits$ be as in Theorem \ref{t:main-low-one-piece}, let $x\in \bits$ and assume $\Mcal_{x,k}$ and $N_{x,k}$ are as in Section \ref{ss:adaptedintervals}. Consider $w \in \Mcal_{x,k}\cap\Bbf_1$ and a corresponding point $y = x + \gamma^k w$. Let $\rho, r$ be two radii which satisfy the inequalities 
 \begin{align}\label{e:inequality-scales}
 \eta \gamma^{k+1} \leq \rho
\leq r < 4\gamma^k\, .
 \end{align}
 We define the frequency pinching $W_\rho^r(x,k,y)$ around $y$ between the scales $\rho$ and $r$ 
	\[
		W_\rho^r(x,k,y) \coloneqq \left|\Ibf_{x,k}\Big(w,\gamma^{-k} r \Big) - \Ibf_{x,k}\Big(w,\gamma^{-k} \rho\Big)\right|.
	\]
\end{definition}

Our first key spatial variational result is the following.
\begin{proposition}\label{prop:distfromhomog-low} 
	Assume $T$ and $\bits$ are as in Theorem \ref{t:main-low-one-piece} and $\gamma_4$ be as in Lemma \ref{lem:firstvar-low}. Let $x\in \bits$ and $k \in \N$. Then there exists $C = C (m,n,Q,K,\gamma, \eta)$ such that, for any $w \in \mathcal{M}_{x,k} \cap \Bbf_1$ and any radii satisfying
  \begin{align}\label{e:inequality-scales-low}
 4 \eta \gamma^{k+1} \leq \rho
\leq r < 2\gamma^k\, ,
 \end{align}
 the following holds. Let $y= x + \gamma^k w$ and let $\Acal_{\frac{\rho}{4}}^{2r}(w) \coloneqq \left(\Bbf_{2r/\gamma^{k}}(w)\setminus \cl{\Bbf}_{\frac{\rho}{4\gamma^k}}(w)\right)\cap\Mcal_{x,k}$. Then
	\begin{align*}
	&\int_{\Acal_{\frac{\rho}{4}}^{2r}(w)} \sum_i \left|D(N_{x,k})_i(z)\frac{d(w,z)\nabla d(w,z)}{|\nabla d(w,z)|} - \Ibf_{x,k}(w,d (w,z)) (N_{x,k})_i(z)|\nabla d(w,z)|\right|^2 \frac{\dd z}{d(w,z)} \\
	&\qquad\leq C \Hbf_{x,k}\left(w,\frac{2r}{\gamma^k}\right) {\left(W_{\frac{\rho}{8}}^{4r}(x,k,y) + \boldsymbol{m}_{x,k}^{\gamma_4} \left(\frac{r}{\gamma^{k}}\right)^{\gamma_4}\right)\log\left(\frac{16r}{\rho}\right)}.
\end{align*}
\end{proposition}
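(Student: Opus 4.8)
The plan is to reduce Proposition~\ref{prop:distfromhomog-low} to Proposition~\ref{prop:distfromhomog} by a rescaling, since on each fixed interval of flattening $]\gamma^{k+1},\gamma^k]$ we are essentially in the same situation as in Part~\ref{part:high}. Concretely, fix $x\in\bits$ and $k\in\N$, and consider the rescaled current $T_{x,\gamma^k}$ with its center manifold $\Mcal_{x,k}$ and normal approximation $N_{x,k}$. By Proposition~\ref{p:good-cubes}(a), for every $w\in\Mcal_{x,k}$ and every $r$ with $\eta\gamma\le r\le 3$ the largest Whitney cube meeting $B_r(\mathbf p_{\pi_{x,k}}(w),\pi_{x,k})$ has sidelength $\le c_s r$; this is exactly the geometric input (the analogue of Theorem~\ref{t:main-quantitative}(i) / Lemma~\ref{lem:firstvar2}) needed to run the variational identities of Lemma~\ref{lem:firstvar-low} at the point $w$ on all scales in $]\eta\gamma,4]$. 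Together with the uniform frequency bounds of Corollary~\ref{c:ulb-frequency-2} and the estimates collected in Lemma~\ref{lem:simplify-low}, all the ingredients that went into the proof of Proposition~\ref{prop:distfromhomog} are available here verbatim, with $\boldsymbol m_0$ replaced by $\boldsymbol m_{x,k}$, the center $x$ replaced by $w$, and the constant $\Lambda$ replaced by $C_0$.

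The main body of the argument is then the same chain of inequalities as in the proof of Proposition~\ref{prop:distfromhomog}, carried out on $\Mcal_{x,k}$ for the rescaled radii $\rho/\gamma^k\le r/\gamma^k$ (note that \eqref{e:inequality-scales-low} becomes $4\eta\gamma\le \rho/\gamma^k\le r/\gamma^k<2$, so these lie in the admissible range). First I would write the frequency pinching $W^{4r}_{\rho/4}(x,k,y)=|\Ibf_{x,k}(w,4r/\gamma^k)-\Ibf_{x,k}(w,\rho/(4\gamma^k))|$ as the integral of $\partial_s\Ibf_{x,k}(w,s)$ over $[\rho/(4\gamma^k),4r/\gamma^k]$, expand $\partial_s\Ibf_{x,k}$ using \eqref{eq:firstvar1-low}, \eqref{eq:firstvar2-low}, \eqref{eq:simplify11-low} and \eqref{eq:simplify12-low}, and obtain a lower bound for $W^{4r}_{\rho/4}(x,k,y)$ by twice the integral $I$ of $(s\Gbf_{x,k}-2\Ibf_{x,k}\Ebf_{x,k}+s^{-1}\Ibf_{x,k}^2\Hbf_{x,k})/\Hbf_{x,k}$ minus an error $C\boldsymbol m_{x,k}^{\gamma_4}((4r/\gamma^k)^{\gamma_4}-(\rho/(4\gamma^k))^{\gamma_4})$. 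Rewriting $I$ in terms of the integrand $\xi(w,z,s)=\sum_j|D(N_{x,k})_j(z)\frac{d(w,z)\nabla d(w,z)}{|\nabla d(w,z)|}-\Ibf_{x,k}(w,s)(N_{x,k})_j(z)|\nabla d(w,z)||^2$, then introducing $\zeta(w,z)$ with the ``variable-scale'' frequency $\Ibf_{x,k}(w,d(w,z))$, and estimating $|\Ibf_{x,k}(w,s)-\Ibf_{x,k}(w,d(w,z))|^2$ by the pinching $W^s_{d(w,z)}(x,k,\cdot)\le W^{4r}_{\rho/8}(x,k,y)+C\boldsymbol m_{x,k}^{\gamma_4}(r/\gamma^k)^{\gamma_4}$ via the almost-monotonicity \eqref{eq:simplify13-low}, one arrives — after the Fubini/layer-cake manipulation over the annulus $\Acal_{\rho/4}^{2r}(w)$ and a rescaling back to the original radii — at the claimed bound, the $\log(16r/\rho)$ factor coming (as in Part~\ref{part:high}) from integrating $s^{-1}$ from $\rho/4$ to $4r$.

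The one genuine point of care — and the step I expect to be the only real obstacle — is the presence of the factor $\gamma^k$ in all the scales and in $\boldsymbol m_{x,k}$: one must track carefully that the error terms, which in Proposition~\ref{prop:distfromhomog} read $\boldsymbol m_0^{\gamma_4}r^{\gamma_4}$, become $\boldsymbol m_{x,k}^{\gamma_4}(r/\gamma^k)^{\gamma_4}$ after rescaling, and that the multiplicative constants produced along the way depend only on $m,n,Q,K,\gamma,\eta$ (through $C_0$) and not on $x$ or $k$. This is precisely where Corollary~\ref{c:ulb-frequency-2} and the $x,k$-independence of the constants in Lemma~\ref{lem:simplify-low} are used. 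Since $r/\gamma^k<2$, the powers $(r/\gamma^k)^{\gamma_4}$ are bounded, but I would keep them explicit so that the statement scales correctly when applied in the subsequent covering arguments. Beyond this bookkeeping, the proof is a line-by-line transcription of the proof of Proposition~\ref{prop:distfromhomog}, so I would simply indicate the dictionary $x\leftrightarrow w$, $\boldsymbol m_0\leftrightarrow\boldsymbol m_{x,k}$, $\Lambda\leftrightarrow C_0$, radii $\leftrightarrow$ radii$/\gamma^k$, and refer to that proof for the remaining details.

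\begin{proof}
Fix $x\in\bits$ and $k\in\N$, and work on the rescaled center manifold $\Mcal_{x,k}$ with normal approximation $N_{x,k}$. Writing $\rho'=\rho/\gamma^k$, $r'=r/\gamma^k$, the hypothesis \eqref{e:inequality-scales-low} becomes $4\eta\gamma\le\rho'\le r'<2$, so all radii in $[\rho'/4,4r']$ lie in $]\eta\gamma,4]$, the range in which Lemma~\ref{lem:firstvar-low}, Corollary~\ref{c:ulb-frequency-2} and Lemma~\ref{lem:simplify-low} apply at the point $w$ (the relevant geometric input being Proposition~\ref{p:good-cubes}(a), which guarantees that $B_s(\mathbf p_{\pi_{x,k}}(w),\pi_{x,k})$ does not meet Whitney cubes of sidelength larger than $c_s s$ for $s$ in this range). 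Since all the constants furnished by Corollary~\ref{c:ulb-frequency-2} and Lemma~\ref{lem:simplify-low} are independent of $x$ and $k$, the argument is a verbatim transcription of the proof of Proposition~\ref{prop:distfromhomog} under the substitutions $x\mapsto w$, $\boldsymbol m_0\mapsto\boldsymbol m_{x,k}$, $\Lambda\mapsto C_0$, and with every radius $\sigma$ there replaced by $\sigma/\gamma^k$ here.

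In detail: by \eqref{eq:firstvar1-low}, \eqref{eq:firstvar2-low}, \eqref{eq:simplify1-low}, \eqref{eq:simplify11-low} and \eqref{eq:simplify12-low} one has
\[
W_{\frac{\rho}{4}}^{4r}(x,k,y)\ \ge\ 2I\ -\ C\boldsymbol m_{x,k}^{\gamma_4}\big((r')^{\gamma_4}+(\rho')^{\gamma_4}\big),
\]
where, setting
\[
\xi(w,z,s)=\sum_j\Big|D(N_{x,k})_j(z)\tfrac{d(w,z)\nabla d(w,z)}{|\nabla d(w,z)|}-\Ibf_{x,k}(w,s)(N_{x,k})_j(z)|\nabla d(w,z)|\Big|^2,
\]
the integral $I$ can be written as
\[
I=\int_{\rho'/4}^{4r'}\frac{1}{s\,\Hbf_{x,k}(w,s)}\int_{\Mcal_{x,k}}-\phi'\!\Big(\tfrac{d(w,z)}{s}\Big)\frac{\xi(w,z,s)}{d(w,z)}\dd z\dd s\,,
\]
so that $I\ge 2\int_{\rho'/4}^{4r'}\frac{1}{s\Hbf_{x,k}(w,s)}\int_{\Acal_{s/2}^s(w)}\frac{\xi(w,z,s)}{d(w,z)}\dd z\dd s$. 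Introducing
\[
\zeta(w,z)=\sum_j\Big|D(N_{x,k})_j(z)\tfrac{d(w,z)\nabla d(w,z)}{|\nabla d(w,z)|}-\Ibf_{x,k}(w,d(w,z))(N_{x,k})_j(z)|\nabla d(w,z)|\Big|^2,
\]
the triangle and Cauchy--Schwarz inequalities give $\zeta(w,z)\le 2\xi(w,z,s)+C\,|\Ibf_{x,k}(w,s)-\Ibf_{x,k}(w,d(w,z))|^2|N_{x,k}(z)|^2$, and the almost-monotonicity \eqref{eq:simplify13-low} bounds the pinching $|\Ibf_{x,k}(w,s)-\Ibf_{x,k}(w,d(w,z))|^2$, for $z\in\Acal_{s/2}^s(w)$ and $s>\rho'/4$, by $W_{\rho'/8}^{4r'}(x,k,y)+C\boldsymbol m_{x,k}^{\gamma_4}(r')^{\gamma_4}$. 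Combining this with the layer-cake inequality
\[
\int_{\Acal_{\rho'/4}^{2r'}(w)}\int_{d(w,z)}^{2d(w,z)}\frac{\zeta(w,z)}{s^2\Hbf_{x,k}(w,s)}\dd s\dd z\ \ge\ \frac{1}{2\Hbf_{x,k}(w,2r')}\int_{\Acal_{\rho'/4}^{2r'}(w)}\frac{\zeta(w,z)}{d(w,z)}\dd z,
\]
and with the upper bound $\Hbf_{x,k}(w,s)\le C\Hbf_{x,k}(w,2r')$ for $s\le 4r'$ coming from \eqref{eq:simplify5-low}--\eqref{eq:simplify6-low} and \eqref{eq:simplify1-low}, one obtains
\[
\frac{C}{\Hbf_{x,k}(w,2r')}\int_{\Acal_{\rho'/4}^{2r'}(w)}\frac{\zeta(w,z)}{d(w,z)}\dd z\ \le\ W_{\frac{\rho}{4}}^{4r}(x,k,y)+\log\!\Big(\tfrac{16r}{\rho}\Big)W_{\frac{\rho}{8}}^{4r}(x,k,y)+C\boldsymbol m_{x,k}^{\gamma_4}(r')^{\gamma_4}\log\!\Big(\tfrac{16r}{\rho}\Big),
\]
the logarithmic factors arising from the integrals of $s^{-1}$ over $[\rho'/4,4r']$. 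Finally, one absorbs $W_{\rho/4}^{4r}(x,k,y)\le W_{\rho/8}^{4r}(x,k,y)$ and uses once more the almost-monotonicity to compare $W_{\rho/8}^{4r}(x,k,y)$ with itself up to the error $C\boldsymbol m_{x,k}^{\gamma_4}(r')^{\gamma_4}$; since $r'=r/\gamma^k<2$ and $\Acal_{\rho'/4}^{2r'}(w)=\Acal_{\rho/4}^{2r}(w)$ by definition, rewriting $(r')^{\gamma_4}=(r/\gamma^k)^{\gamma_4}$ yields exactly the asserted estimate, with $C=C(m,n,Q,K,\gamma,\eta)$.
\end{proof}
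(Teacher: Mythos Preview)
Your proposal is correct and follows exactly the approach the paper takes: the paper's proof of Proposition~\ref{prop:distfromhomog-low} reads in its entirety ``The proof is completely identical to that of Proposition~\ref{prop:distfromhomog} (with the estimates of Lemma~\ref{lem:simplify-low} replacing their analogues in Lemma~\ref{lem:simplify}) and we therefore omit the details.'' You have spelled out precisely this dictionary ($x\leftrightarrow w$, $\boldsymbol m_0\leftrightarrow\boldsymbol m_{x,k}$, $\Lambda\leftrightarrow C_0$, radii $\leftrightarrow$ radii$/\gamma^k$) and verified that the rescaled radii fall in the admissible range where Lemma~\ref{lem:firstvar-low}, Corollary~\ref{c:ulb-frequency-2} and Lemma~\ref{lem:simplify-low} apply with $x,k$-independent constants.
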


{The proof is completely identical to that of Proposition \ref{prop:distfromhomog} (with the estimates of Lemma \ref{lem:simplify-low} replacing their analogues in Lemma \ref{lem:simplify}) and we therefore omit the details.} We will also require the following control on spatial variations of the frequency in terms of frequency pinching:
\begin{lemma}\label{lem:spatialvarI-low}
	Let $\gamma_4$ be as in Lemma \ref{lem:firstvar-low}, $T$ and $\bits$ as in Theorem \ref{t:main-low-one-piece}, $x\in \bits$ and $k \in \N$. Let ${x_1,x_2} \in \Bbf_1 \cap  \Mcal_{x,k}$, $y_i = x + \gamma^k x_i$ and $d(x_1,x_2) \leq \gamma^{-k} {\frac{r}{4}}$, where $r$ is such that 
	\[
	    8\eta\, \gamma^{k+1} < r \leq \gamma^k\, .
	\]
	Then there exists $C= C (m,n,Q,\gamma, \eta) > 0$ such that for any $z_1,z_2 \in [x_1,x_2]$, we have
	\begin{align*}
	&\left|\Ibf_{x,k}\left(z_1,\frac{r}{\gamma^{k}}\right) - \Ibf_{x,k}\left(z_2,\frac{r}{\gamma^{k}}\right)\right| \\
	&\qquad\leq C \left[\left(W_{r/8}^{4r}(x,k,y_1)\right)^{\frac{1}{2}} + \left(W_{r/8}^{4r}(x,k,y_2)\right)^{\frac{1}{2}} + \boldsymbol{m}_{x,k}^{\frac{\gamma_4}{2}} \left(\frac{r}{\gamma^{k}}\right)^{\frac{\gamma_4}{2}}\right]\frac{\gamma^{k}d(z_1,z_2)}{r}\, .
	\end{align*}
\end{lemma}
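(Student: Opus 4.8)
## Proof strategy for Lemma \ref{lem:spatialvarI-low}

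The plan is to mirror the proof of Lemma \ref{lem:spatialvarI} from Part \ref{part:high}, transporting all the ingredients from the single center manifold $\Mcal$ to the fixed center manifold $\Mcal_{x,k}$ and tracking the rescaling factors $\gamma^k$ carefully. The essential point is that for fixed $x$ and $k$, the normal approximation $N = N_{x,k}$ on $\Mcal_{x,k}$ satisfies exactly the same structural estimates as the ``high frequency'' normal approximation does on $\Mcal$ — namely Lemma \ref{lem:firstvar-low} and Lemma \ref{lem:simplify-low} in place of Lemma \ref{lem:firstvar} and Lemma \ref{lem:simplify}, and Proposition \ref{prop:distfromhomog-low} in place of Proposition \ref{prop:distfromhomog} — provided all scales stay in the admissible window $]\eta\gamma^{k+1}, 4\gamma^k]$ (equivalently, scales in $]\eta\gamma, 4]$ after rescaling by $\gamma^{-k}$). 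The hypothesis $8\eta\gamma^{k+1} < r \leq \gamma^k$ together with $d(x_1,x_2) \leq \gamma^{-k}\frac{r}{4}$ is precisely engineered so that, along the whole geodesic segment $[x_1,x_2] \subset \Mcal_{x,k}$, every intermediate point $x$ and every scale in $[\frac{\tilde r}{8}, 4\tilde r]$ with $\tilde r = \gamma^{-k} r$ lies in this window and the Whitney-cube condition (Proposition \ref{p:good-cubes}(a)) applies.

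First I would set $\tilde r := \gamma^{-k} r$ and work entirely on $\Mcal_{x,k}$ at the rescaled scale $\tilde r \in ]8\eta\gamma, 1]$, suppressing the subscripts $x,k$. For a point $x$ on the segment $[x_1,x_2]$ and the direction field $v(z) = d(x_1,x_2)\frac{\nabla d(x_1,z)}{|\nabla d(x_1,z)|}$, I apply the chain rule
\[
\partial_v \Ibf(x,\tilde r) = \frac{\tilde r \,\partial_v \Dbf(x,\tilde r)}{\Hbf(x,\tilde r)} - \frac{\Ibf(x,\tilde r)\,\partial_v \Hbf(x,\tilde r)}{\Hbf(x,\tilde r)}\, ,
\]
and plug in the first spatial variation identities for $\Dbf$ and $\Hbf$ — these are the exact analogues of Lemma \ref{lem:spatialvarDH}, whose proof on $\Mcal_{x,k}$ is word-for-word identical (using the decay from Corollary \ref{c:excess-upper-bound}, which on the interval $]\gamma^{k+1},\gamma^k]$ gives the required improved scaling in powers of $\tilde r$, exactly as Proposition \ref{prop:excess-decay} did in the high-frequency case). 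Since $d$ is the geodesic distance, $d(x_1,x_2) = d(x_1,z) + d(x_2,z)$ and $\nabla d(x_1,z) = -\nabla d(x_2,z)$, so $\partial_v N_i(z) = \partial_{\eta_{x_1}} N_i(z) - \partial_{\eta_{x_2}} N_i(z)$, and I split this as $\Ecal_{1,i} - \Ecal_{2,i} + \Ecal_3 N_i(z)$ with $\Ecal_{\ell,i} = \partial_{\eta_{x_\ell}} N_i - \Ibf(x_\ell, d(x_\ell,z)) N_i$ and $\Ecal_3 = \Ibf(x_1,d(x_1,z)) - \Ibf(x_2,d(x_2,z))$, exactly as in the proof of Lemma \ref{lem:spatialvarI}.

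Next I estimate the three contributions. The terms $\Ecal_{1,i}$, $\Ecal_{2,i}$ are controlled in $L^2(\mu_x)$ by Proposition \ref{prop:distfromhomog-low}, giving bounds of the form $C\,\Hbf(x_\ell, 2\tilde r)\big(W_{\tilde r/8}^{4\tilde r}(x_\ell) + \boldsymbol{m}_{x,k}^{\gamma_4}\tilde r^{\gamma_4}\big)$; here I need to note that $W_{\tilde r/8}^{4\tilde r}(x_\ell)$ (the pinching on $\Mcal_{x,k}$ at the rescaled point $x_\ell$) equals $W_{r/8}^{4r}(x,k,y_\ell)$ by the very definition of the adapted frequency pinching in Definition \ref{def:freqpinch-low}. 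The term $\Ecal_3$ I rewrite as $\big(\Ibf(x_1,d(x_1,z)) - \Ibf(x_1,\tilde r)\big) + \big(\Ibf(x_1,\tilde r) - \Ibf(x_2,\tilde r)\big) + \big(\Ibf(x_2,\tilde r) - \Ibf(x_2,d(x_2,z))\big)$, and bound the first and third pieces by pinchings plus almost-monotonicity errors (using \eqref{eq:simplify13-low} and the fact that $\phi'(d/\tilde r) \neq 0$ forces $d(x_\ell,z)$ comparable to $\tilde r$, so these pinchings are again dominated by $W_{r/8}^{4r}(x,k,y_\ell)$ up to $\boldsymbol{m}_{x,k}^{\gamma_4}$-errors); the middle piece is the quantity $|\Ibf(x_1,\tilde r) - \Ibf(x_2,\tilde r)|$ which, when multiplied against $\Hbf(x,\tilde r)^{-1}\big(\int\sum_i\langle\partial_{\eta_x}N_i,N_i\rangle d\mu_x - \tilde r\Dbf(x,\tilde r)\big)$, enters with a coefficient that is $O(\boldsymbol{m}_{x,k}^{\gamma_4}\tilde r^{\gamma_4})$ — here using the cancellation in \eqref{eq:simplify11-low} and \eqref{eq:simplify4-low} exactly as in the high-frequency case, so it does not produce a bad circular term. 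Assembling via Cauchy–Schwarz and the comparability estimates \eqref{eq:simplify1-low}, \eqref{eq:simplify2-low}, \eqref{eq:simplify3-low} gives the pointwise bound
\[
|\partial_v \Ibf(x,\tilde r)| \leq C\left[W_{r/8}^{4r}(x,k,y_1)^{\frac12} + W_{r/8}^{4r}(x,k,y_2)^{\frac12} + \boldsymbol{m}_{x,k}^{\frac{\gamma_4}{2}}\tilde r^{\frac{\gamma_4}{2}}\right] + C\boldsymbol{m}_{x,k}^{\gamma_4}\tilde r^{\gamma_4}\, .
\]
Finally I integrate this over the geodesic segment $[z_1,z_2]\subset[x_1,x_2]\subset\Mcal_{x,k}$; the segment has length $d(z_1,z_2) \leq \gamma^{-k}\frac{r}{4}$, and since $\|v\|_\infty \leq d(x_1,x_2) \leq \gamma^{-k}\frac{r}{4} = \frac{\tilde r}{4}$, each variation step is controlled, yielding the factor $\frac{d(z_1,z_2)}{\tilde r} = \frac{\gamma^k d(z_1,z_2)}{r}$ in the final estimate, which absorbs the lower-order term $C\boldsymbol{m}_{x,k}^{\gamma_4}\tilde r^{\gamma_4}$ into $\boldsymbol{m}_{x,k}^{\gamma_4/2}\tilde r^{\gamma_4/2}$ since $\boldsymbol{m}_{x,k} \leq \eps_5^2 \leq 1$. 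This is exactly the claimed inequality.

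The main obstacle — and the only place any genuine care is needed beyond bookkeeping — is verifying that the whole argument stays inside the admissible scale window on $\Mcal_{x,k}$. Specifically, when we take spatial derivatives at a generic point $x \in [x_1,x_2]$ we leave the ``good set'' (the $x_\ell$ are in $\bits$, but an interior point of the segment need not be), so we must rely on the analogue of Lemma \ref{lem:firstvar2}/Proposition \ref{p:good-cubes}(a), which requires $\tilde r > \eta\, d(x, \Sbf_{\text{rescaled}})$ — and the hypotheses $d(x_1,x_2) \leq \frac{\tilde r}{4}$ and $8\eta\gamma^{k+1} < r$ are precisely what guarantees that $d(x,\cdot)$ to the nearest good point is at most $\frac{\tilde r}{4} < \eta^{-1}\tilde r$, while the lower bound $r > 8\eta\gamma^{k+1}$ keeps even the innermost scale $\frac{\tilde r}{8}$ above $\eta\gamma$, so Proposition \ref{p:good-cubes}(a) and all of Lemma \ref{lem:simplify-low} apply throughout. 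Once this scale check is in place, the proof is a verbatim transcription of the proof of Lemma \ref{lem:spatialvarI}, which is why the paper can and does simply say ``we may proceed as in the proof of'' that lemma.
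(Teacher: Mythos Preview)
Your proposal is correct and follows exactly the approach the paper takes: the paper's own proof is the single sentence ``Given Lemma \ref{lem:spatialvarDH-low} and the estimates in Lemma \ref{lem:simplify-low} the proof is verbatim the one of Lemma \ref{lem:spatialvarI},'' and you have faithfully unwound what that means.

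One small remark: your final paragraph slightly over-complicates the scale-window check. In the low-frequency setting there is no distance-to-$\bits$ condition analogous to Lemma \ref{lem:firstvar2}; by Proposition \ref{p:good-cubes}(a) (which follows just from choosing $N_0$ large), the variational identities of Lemma \ref{lem:firstvar-low}, Lemma \ref{lem:spatialvarDH-low}, and Lemma \ref{lem:simplify-low} hold for \emph{every} $w\in\Mcal_{x,k}\cap\Bbf_1$ and every scale in $]\eta\gamma,4]$, with no restriction on how close $w$ is to (a rescaling of) $\bits$. So the only thing to verify is that $\tilde r/8>\eta\gamma$ and $4\tilde r\leq 4$, which is exactly what $8\eta\gamma^{k+1}<r\leq\gamma^k$ gives. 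This does not affect the validity of your argument, only its framing.
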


In order to prove the latter, we will also need the following additional variation estimates and identities.

\begin{lemma}\label{lem:spatialvarDH-low}
    Suppose that $T$ and $\bits$ are as in Theorem \ref{t:main-low-one-piece}, let $x\in \bits$ and $k \in \N$. Let $v$ be a vector field on $\Mcal_{x,k}$. For any $w\in \Mcal_{x,k}\cap\Bbf_1$ and any radius $r$ satisfying $\eta \gamma \leq r \leq 2$ we have
    \begin{align*}
        \partial_v \Dbf_{x,k}(w,r) &= -\frac{2}{r} \int_{\Mcal_{x,k}} \phi'\left(\frac{d(w,z)}{r}\right) \sum_i \partial_{\nu_w} ({N}_{x,k})_i(z)\cdot \partial_v ({N}_{x,k})_i(z) \dd z\\
        &\qquad\qquad
        + O\left({\boldsymbol{m}}_{x,k}^{\gamma_4}\right){r^{\gamma_4-1}}\Dbf_{x,k}(w,r)\\
        \partial_v \Hbf_{x,k}(w,r) &= - 2 \sum_i \int_{\Mcal_{x,k}} \frac{|\nabla d(w,z)|^2}{d(w,z)}\phi'\left(\frac{d (w,z)}{r}\right)\langle \partial_v ({N}_{x,k})_i(z), ({N}_{x,k})_i(z) \rangle \dd z\, . 
    \end{align*}
\end{lemma}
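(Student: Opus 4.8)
The plan is to mirror, essentially line by line, the proof of Lemma~\ref{lem:spatialvarDH}, with $\boldsymbol{m}_0$ replaced by $\boldsymbol{m}_{x,k}$, the excess decay of Proposition~\ref{prop:excess-decay} replaced by the within-interval decay \eqref{e:eub1} at points of $\bits$ (which, after rescaling $T_{x,\gamma^k}$ so that $\Mcal_{x,k}$ lives at unit scale, reads $\mathbf{E}(T_{x,\gamma^k},\Bbf_\rho)\leq\rho^{2\mu}\boldsymbol{m}_{x,k}$ for $\rho\le1$), the estimates of Lemma~\ref{lem:simplify} replaced by their analogues in Lemma~\ref{lem:simplify-low}, and Lemma~\ref{lem:firstvar2} replaced by Proposition~\ref{p:good-cubes}(a) as the tool that licenses the variational identities down to scale $\eta\gamma$.

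First I would record the elementary differentiation-under-the-integral identity
\[
\partial_v \Dbf_{x,k}(w,r) = \int_{\Mcal_{x,k}} \phi'\left(\frac{d(w,z)}{r}\right)\frac{\nabla d(w,z)\cdot v(z)}{r}\, |DN_{x,k}(z)|^2 \dd z\,,
\]
and then introduce on $\Mcal_{x,k}$ the vector field $X(p)=Y(\mathbf{p}(p))$ with $Y(z):=\phi(d(w,z)/r)\,v$, exactly as in \eqref{eq:vf}. A routine computation on the center manifold gives $\diverg_{\Mcal_{x,k}}Y$ and $D_{\Mcal_{x,k}}Y$ with the same structure as \eqref{eq:vfdiv}--\eqref{eq:vfgrad}, the only change being that the second fundamental form and mean curvature terms now refer to $\Mcal_{x,k}$ and are controlled by $\boldsymbol{m}_{x,k}^{1/2}$ through the $C^{3,\kappa}$-estimates of \cite{DLS16centermfld}*{Theorem~1.17 \& Theorem~2.4}. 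Testing the inner variation identity \cite{DLS16blowup}*{(3.25)} for $N_{x,k}$ with this $X$ then yields the claimed formula for $\partial_v\Dbf_{x,k}$ up to a list of inner variational errors $\widetilde{\Err}^i_j$, of exactly the same type as \cite{DLS16blowup}*{(3.19),(3.26),(3.27),(3.28)} but relative to the present cut-off. These I would estimate following \cite{DLS16blowup}*{Section~4}, optimising the $\boldsymbol{m}_{x,k}$-powers as in the proof of Lemma~\ref{lem:firstvar-low} and using \eqref{eq:simplify9-low} and \eqref{eq:simplify12-low}, to obtain $\sum_{j}|\widetilde{\Err}^i_j|\leq C\boldsymbol{m}_{x,k}^{\gamma_4}r^{\gamma_4-1}\Dbf_{x,k}(w,r)$, the analogue of \eqref{eq:errorest} and the desired remainder (any term of the form $\boldsymbol{m}_{x,k}^{1/2}r^{\beta}\Dbf_{x,k}$ with $\beta>0$ arising as in \eqref{eq:firstvarD} is absorbed, since $r\le 2$ and $\boldsymbol{m}_{x,k}^{1/2}\le\boldsymbol{m}_{x,k}^{\gamma_4}$). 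The identity for $\partial_v\Hbf_{x,k}$ is purely a differentiation-under-the-integral computation identical to \cite{DLMSV}*{Proposition~3.1}, using properties (i)--(iii) of the geodesic distance from Section~\ref{ss:freq}; it produces no error term.

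The main obstacle is bookkeeping rather than conceptual. On the one hand one must check that, because here $\boldsymbol{m}_{x,k}=\mathbf{E}(T,\Bbf_{6\sqrt m\gamma^k}(x))$ does not a priori enjoy the pointwise decay in the scale that $\boldsymbol{m}_0$ did in Part~\ref{part:high}, each Whitney-region error in \cite{DLS16blowup}*{Section~4} is nevertheless controlled by the correct power of $\boldsymbol{m}_{x,k}$ times $r^{\gamma_4-1}\Dbf_{x,k}$; for this one exploits that $\boldsymbol{m}_{x,k}$ is as small as desired after shrinking $\eps_5$, together with the decay \eqref{e:eub1} inside $]\gamma^{k+1},\gamma^k]$ to keep the scaling of those errors subcritical. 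On the other hand, and this is the genuinely delicate point, the constraint $r\in ]\eta\gamma, 2]$ — i.e. $r$ at least $\eta$ times the starting scale $\gamma$ of the interval — is exactly what guarantees, via Proposition~\ref{p:good-cubes}(a), that $B_r(\mathbf{p}_{\pi_{x,k}}(w))$ meets no cube $L\in\mathscr{W}^{x,k}$ with $\ell(L)>c_s r$; without this the Whitney-region decomposition underlying the error estimates, and hence the identities themselves, would fail. Everything else is a transcription of the Part~\ref{part:high} argument.
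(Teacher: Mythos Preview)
Your proposal is correct and follows essentially the same approach as the paper: both proofs transcribe the argument of Lemma~\ref{lem:spatialvarDH} verbatim, introducing the same test vector field $Y(z)=\phi(d(w,z)/r)v$, testing the inner variation identity \cite{DLS16blowup}*{(3.25)}, and estimating the resulting errors via \cite{DLS16blowup}*{Section~4} together with Lemma~\ref{lem:simplify-low}, while the $\partial_v\Hbf_{x,k}$ identity is the pure computation from \cite{DLMSV}*{Proposition~3.1}. Your observation that the constraint $r\geq\eta\gamma$ is what, via Proposition~\ref{p:good-cubes}(a), licenses the Whitney-region error bounds is exactly the point the paper makes when it notes that ``in order to get these estimates we require $r>\eta\gamma$''.
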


\subsection{Proof of Lemma~\ref{lem:spatialvarDH-low}} This proof follows the lines of Lemma \ref{lem:spatialvarDH}. As pointed out in that proof, the second identity is a computation, identical to that in the proof of~\cite{DLMSV}*{Proposition~3.1}. As for the first estimate, we once again, we omit dependency on $x$ and $k$ of all quantities. As in the proof of Lemma \ref{lem:spatialvarDH} we have
        \begin{equation}\label{eq:spatialvarD-low}
            \partial_v \Dbf(w,r) = \int \phi'\left(\frac{d(w,z)}{r}\right) \frac{\nabla d(w,z)}{r}\cdot v(z) |D N(z)|^2\dd z.
        \end{equation}
        Hence we consider the vector field $X_i(p) = Y(\mathbf{p}(p))$ where
        \begin{equation}\label{eq:vf-low}
            Y(y) \coloneqq \phi\left(\frac{d(w,z)}{r}\right) v.
        \end{equation}
        and compute its divergence and its covariant derivative as in \eqref{eq:vfdiv} and \eqref{eq:vfgrad}.
        We then test~\cite{DLS16blowup}*{(3.25)} with the vector field $X_i(p) = Y(\mathbf{p}(p))$ and using the fact that $\eta \gamma \leq r$ to estimate 
        \begin{align}
            & \partial_v \Dbf(w,r) = \int_{\Mcal} |D N|^2 \diverg_{\Mcal} Y  + O({\boldsymbol{m}}^{\frac{1}{2}})r^{\frac{1-\delta_2}{2}} \Dbf(w,r)\label{eq:firstvarD-low} \\
            = &\frac{2}{r} \int \phi'\left(\frac{d(w,z)}{r}\right) \sum_i \langle \partial_{\nu_w}  N_i(z), \partial_v  N_i(z) \rangle \dd z + O({\boldsymbol{m}}^{\frac{1}{2}}) \Dbf(w,r) + \sum_{j=1}^5 \widetilde{\Err}^i_j,\notag
        \end{align}
        where $\widetilde{\Err}^i_j$ are the inner variational errors in~\cite{DLS16blowup}*{(3.19),~(3.26),~(3.27),~(3.28)}, but for our new choice of vector field $Y$. We estimate them analogously to~\cite{DLS16blowup}*{Section~4}, again using $\eta\gamma \leq r$, combined with an analogous estimate to \eqref{eq:simplify12-low}, to obtain
        \begin{align}
            \sum_{j=1}^5 |\widetilde{\Err}_j^i| &\leq C{\boldsymbol{m}}^{\gamma_4}r^{-1}\Dbf(w,r)^{1+\gamma_4} + C{\boldsymbol{m}}^{\gamma_4}\Dbf(w,r)^{\gamma_4}\partial_r\Dbf(w,r)\leq C {\boldsymbol{m}}^{\gamma_4}  \Dbf (w,r)\label{eq:errorest-low}\, .
            &
        \end{align}
        Note that in order to get these estimates we require $r > \eta\gamma$ to be able to use Lemma \ref{lem:simplify-low}. 
        \qed

\subsection{Proof of Lemma~\ref{lem:spatialvarI-low}} Given Lemma \ref{lem:spatialvarDH-low} and the estimates in Lemma \ref{lem:simplify-low} the proof is verbatim the one of Lemma \ref{lem:spatialvarI}. 
\qed

\section{Quantitative splitting}

This section parallells the analogous one for the case of high frequency points. 
In what follows we will need to consider affine subspaces spanned by families of vectors. Recall the definition of the affine sbspace $V (X)$ spanned by an ordered set of points $X= \{x_0, x_1, \ldots, x_k\}$ $\{x_1-x_0, x_2-x_0, \ldots, x_k -x_0\}$ and centered at $x_0$, as in \eqref{e:def-V(X)}, namely
\[
	V (X) = x_0 + \spn (\{(x_1-x_0), (x_2-x_0), \ldots, (x_k-x_0)\})\, .
\]
{Recall also the notions of $\rho r$-linear independence and $\rho r$-spanning for such a set $X$ from Definition \ref{def:LI}.} 



As for Lemma \ref{lem:noQpts} the following
gives a quantitative notion of the existence of an approximate spine in $\bits$, provided that $N_{x,j(k)}$ is (quantitatively) almost-homogeneous about an $(m-2)$-dimensional submanifold of its center manifold.

\begin{lemma}\label{lem:noQpts-low}
Let $T$ and $\bits$ be as in Theorem \ref{t:main-low-one-piece} and let $\rho,\bar\rho \in ]0,1]$, $\tilde\rho \in ]\eta,1]$ be given. If $\eps_5$ in Definition \ref{d:bits} is smaller than a suitable threshold $ \tilde{\eps} (m,n,Q, \gamma, K, \rho, \tilde{\rho}, \bar{\rho})\leq \eps^*$, then the following holds. Let $x\in \bits$ and $\gamma^{k+1} \leq r \leq \gamma^k$. Consider points $X = \{x_i\}_{i=0}^{m-2} \subset \Bbf_r(x)\cap \bits$ which satisfy the following contitions:
\begin{itemize}
    \item $X$ is a $\rho r$-linearly independent set;
    \item if $y_i$ denotes the closest point to $x_i$ with the property that $\gamma^{-k} (y_i-x)$ belongs to $\mathcal{M}_{x,k}$, then 
    \[
    	W_{\tilde\rho r}^{2r}(x,j(k), y_i) < \tilde\eps \qquad \text{for each $i$}.
    \]
\end{itemize}
    Then $\bits \cap (\Bbf_r \setminus \Bbf_{\bar\rho r}(V (X))) = \emptyset$.
\end{lemma}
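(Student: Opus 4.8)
The statement is the exact low-frequency analogue of Lemma~\ref{lem:noQpts}, and the natural strategy is to replicate that argument using the corresponding low-frequency ingredients. I would argue by contradiction and compactness. Suppose the conclusion fails for every threshold; then there exist sequences $\eps_j\downarrow 0$, currents $T$ (which after translating and rescaling we take centred at $x$, so $x=0$) with associated center manifolds $\Mcal_{0,k_j}$ and normalized normal approximations $\bar N_j$ with $\Hbf_{\bar N_j}(p_{0,k_j},1)=1$, together with $(m-1)$-tuples $X_j=\{x_{j,0},\dots,x_{j,m-2}\}\subset \Bbf_{1}\cap\bits$ that are $\rho$-linearly independent, satisfy $W^{2}_{\tilde\rho}(0,j(k_j),y_{j,i})\le\eps_j$ for all $i$ (with $y_{j,i}$ the closest point whose rescaling lies on the center manifold), and for which there is $z_j\in\bits\cap\Bbf_1\setminus\Bbf_{\bar\rho}(V(X_j))$.

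\textbf{Passing to the limit.} Using the compactness procedure of Section~\ref{ss:compactness} together with Remark~\ref{r:diag-blowup} (which precisely allows varying currents and varying centers), I would extract: the center manifolds converge in $C^{3,\kappa}$ to a flat plane $\pi_\infty$; the rescaled normal approximations $\bar N_j\circ\mathbf{e}_j$ converge in $L^2$ and $W^{1,2}_\loc$ to a non-trivial Dir-minimizer $u\colon\pi_\infty\supset B_1\to\Acal_Q(\pi_\infty^\perp)$ with $\boldsymbol{\eta}\circ u\equiv 0$ and $\Hbf_u(0,1)=1$; the point-tuples $X_j$ converge to $X_\infty=\{x_0,\dots,x_{m-2}\}\subset\pi_\infty$; and $z_j\to z\in\pi_\infty\cap\bar{\mathbf B}_1\setminus\mathbf B_{\bar\rho}(V(X_\infty))$. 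As in the high-frequency proof, since $z_j\in\bits$ we have $u(z)=Q\llbracket0\rrbracket$, and likewise $u(x_i)=Q\llbracket0\rrbracket$ at each limiting point; the dimension dichotomy~\cite{DLS_MAMS}*{Proposition~3.22} gives $\dim_\Hcal(\Delta_Q(u)\cap\bar B_1)\le m-2$, so $u\not\equiv Q\llbracket0\rrbracket$ and $H_u(y,\tau)>0$ for all $y\in B_1$, $\tau\in(0,1)$. The frequency-pinching hypothesis then forces $I_u(x_i,\tilde\rho)=I_u(x_i,2)$, so by monotonicity of the regularized frequency for Dir-minimizers, $u$ is $\alpha_i$-homogeneous about $x_i$ on the annulus $B_2(x_i)\setminus B_{\tilde\rho}(x_i)$. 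The subtle point here is that $\tilde\rho$ must exceed $\eta$ (which is why the hypothesis requires $\tilde\rho\in]\eta,1]$): the frequency estimates for the low-frequency center manifolds are only valid at scales $r>\eta\gamma\cdot\gamma^{k}$, i.e. $r/\gamma^k>\eta$, so the pinching quantity $W^{2r}_{\tilde\rho r}$ is only well-defined and controlled when $\tilde\rho>\eta$; I would make this dependence explicit.

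\textbf{From annular homogeneity to a full spine.} Extend $u$ to a globally $\alpha_i$-homogeneous map $v_i$ about $x_i$; a scaling argument shows $v_i$ agrees locally with a Dir-minimizer away from $x_i$, so the unique continuation lemma~\cite{DLMSV}*{Lemma~6.9} gives $u=v_i$ on $B_{64}\setminus\{x_i\}$, hence on $B_{64}$. Iterating~\cite{DLMSV}*{Lemma~6.8} yields $\alpha_i=\alpha$ for all $i$ and that $u\equiv Q\llbracket0\rrbracket$ on the $(m-2)$-plane $V(X_\infty)$, i.e. $u$ is $\alpha$-homogeneous in two variables about $V(X_\infty)$. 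But then $u(z)=Q\llbracket0\rrbracket$ with $z\notin V(X_\infty)$ forces $u\equiv Q\llbracket0\rrbracket$ on $x_0+\spn\{x_1-x_0,\dots,x_{m-2}-x_0,z-x_0\}$, an $(m-1)$-dimensional plane, contradicting the dimension bound on $\Delta_Q(u)$.

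\textbf{Main obstacle.} The analytic content is identical to Lemma~\ref{lem:noQpts}; the genuine care required is bookkeeping the two-index structure $(x,k)$ and the geometric-sequence scales $\gamma^k$, and checking that the compactness of Section~\ref{ss:compactness} genuinely applies with both the current and the center varying and with the pinching measured relative to the (varying) center manifold $\Mcal_{x,k}$ rather than a fixed one — this is exactly what Remark~\ref{r:diag-blowup} is asserted to cover, but one should verify that the normalization $\Hbf_{\bar N_j}(p_{0,k_j},1)=1$ and the lower bound $\Hbf_u(0,1)=1$ survive the limit, which relies on Corollary~\ref{c:ulb-frequency-2} and the estimates of Lemma~\ref{lem:simplify-low}. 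I expect the proof to be short, with a pointer to the proof of Lemma~\ref{lem:noQpts} for the steps that are verbatim.
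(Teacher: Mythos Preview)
Your proposal is correct and follows essentially the same approach as the paper: a contradiction-and-compactness argument that reduces, via Remark~\ref{r:diag-blowup}, to the Dir-minimizer situation and then repeats verbatim the homogeneity/unique-continuation steps from the proof of Lemma~\ref{lem:noQpts}. The paper additionally normalizes by rescaling so that $k=1$ and $r=1$ before running the compactness, but otherwise the argument, including the identification of the key subtlety regarding $\tilde\rho>\eta$, matches yours.
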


\begin{proof} The argument is similar to that given for Lemma \ref{lem:noQpts}.
    We argue by contradiction and, without loss of generality, assume $x=0$. Moreover, we assume that $k=1$, which we can achieve after an appropriate rescaling. We thus write $W_{\tilde\rho}^{2}(0, \cdot)$ in place of $W_{\tilde\rho r}^{2r}(x,k, \cdot)$), while the points $y_i$ are the closest point projections of $x_i$ to the center manifold $\mathcal{M}_{0,1}$. In fact since then $\gamma \leq r \leq 1$ we can also apply a further rescaling and assume that $r=1$. Suppose that the statement is false. Then there exists sequences $\eps_l \todown 0$, and corresponding sequences of center manifolds $\Mcal_l$ and normalized normal approximations $\bar N_l$ with $\Hbf_{\bar N_l}(0,1) = 1$ for $T_{0,l}$. Moreover, there is a sequence of $(m-1)$-tuples of points $X_l \coloneqq \{x_{l,0}, x_{l,1},\dots, x_{l,m-2}\} \subset \Bbf_{1}$ such that
    \begin{enumerate}[(i)]
        \item $X_l$ is $\rho$-linearly independent for some $\rho \in ]0,1]$;
        \item\label{itm:pinch-low} $W_{\tilde\rho}^{2}(0, y_{l,i}) \leq \eps_l \to 0$ as $l \to \infty$ for some $\tilde{\rho} \in ]0,1]$;
        \item there exists a point $\bar y_l \in \bits \cap \Bbf_1 \setminus \Bbf_{\bar\rho}(V (X_l)))$.
    \end{enumerate}

    We can thus use the compactness argument from Section~\ref{ss:compactness} (see Remark \ref{r:diag-blowup}) to conclude that
    \begin{enumerate}[(1)]
        \item $\Mcal_l \longrightarrow \pi_\infty$ \qquad in $C^{3,\kappa}$;
        \item $\bar N_l \circ \mathbf{e}_l \longrightarrow u$ in $L^2$ and in $W^{1,2}_\loc$, where $u$ is a Dir-minimizer with $\boldsymbol{\eta} \circ u \equiv 0$;
        \item $X_l$ converges pointwise to  $X_\infty = \{x_0,\dots,x_{m-2}\}\subset \pi_\infty$;
        \item $\bar y_{k}$ converge pointwise to $\bar y \in \pi_\infty \cap \bar{\mathbf{B}}_1\setminus \mathbf{B}_{\bar\rho}(V (X_\infty))$ with $u(\bar y) = Q\llbracket 0 \rrbracket$.
    \end{enumerate}
    Denote by $\Delta_Q (u)$ the set of points $y\in \pi_\infty$ such that $u (y) = Q \llbracket \boldsymbol{\eta} \circ u (y)\rrbracket = Q\llbracket 0 \rrbracket$. The proof now proceeds as for the argument of the analogous Lemma \ref{lem:noQpts} and we just repeat here for the reader's convenience.
    
    Due to the dichotomy~\cite{DLS_MAMS}*{Proposition~3.22}, we know that 
    \begin{equation}\label{e:dimension_estimate-low}
    \dim_{\Hcal}(\Delta_Q (u)\cap \bar{B}_1) \leq m-2\, .
    \end{equation}
    Indeed $H_u(0,1) = 1$ and $\boldsymbol{\eta} \circ u \equiv 0$, so $u$ cannot be identically equal to $Q \llbracket \boldsymbol{\eta}\circ u \rrbracket$. Moreover, $H_u(y, \tau) > 0$ for every $\tau \in (0,1)$ and every $y \in B_1$, since otherwise we would contradict the dimension estimate \eqref{e:dimension_estimate-low}. This, in combination with~\eqref{itm:pinch-low} tells us that
    \[
        I_u(x_i, \tilde{\rho}) = I_u(x_i,2) \qquad \text{for $i = 0,\dots,m-2$}.
    \]
    The monotonicity of the (regularized) frequency for Dir-minimizers then tells us that $u$ is $\alpha_i$-homogeneous about the center $x_i$ in the annulus $B_2(x_i)\setminus B_{\tilde\rho}(x_i) \subset \pi_\infty$, for some $\alpha_i > 0$. We can then extend $u$ to an $\alpha_i$-homogeneous function about $x_i$ on $\pi_\infty$; call it $v_i$. Observe that for any $z \neq x_i$, there is a neighbourhood $U_z \subset \pi_\infty$ of $z$ on which $v_i$ is a Dir-minimizer (by using a scaling argument and the fact that $v_i$ agrees with a Dir-minimizer on $B_2(x_i)\setminus B_{\tilde\rho}(x_i) \subset \pi_\infty$). 
    
    This allows us to apply the unique continuation result~\cite{DLMSV}*{Lemma~6.9} to conclude that $u=v_i$ on $B_{64}\setminus \{x_i\}$, and hence $u = v_i$ on $B_{64}$. By iteratively applying~\cite{DLMSV}*{Lemma~6.8}, we may thus conclude that $\alpha_i = \alpha$ for each $i= 0,\dots,m-2$, and that $u \equiv Q\llbracket 0 \rrbracket$ on the $(m-2)$-dimensional plane $V(X_\infty) = x_0 + \spn \{(x_{m-2} - x_0), \dots, (x_1 - x_0)\}$. In other words, $u$ is an $\alpha$-homogeneous function in two variables about the $(m-2)$-plane $V(X_\infty)$.
    
    Since $\bar y \notin V(X_\infty)$ and $u(\bar y) = Q\llbracket 0 \rrbracket$ but $u$ is $\alpha$-homogeneous, this implies that $u \equiv Q\llbracket 0 \rrbracket$ on $x_0 + \spn\{x_{m-2} - x_0, \dots, x_1 - x_0, \bar y-x_0\}$. This however contradicts the dimension estimate on $\Delta_Q (u)$, thus allowing us to conclude.
\end{proof}

We will also require the following lemma, which controls spatial and radial frequency variations via frequency pinching in $(m-2)$ independent directions, and is the analog of Lemma \ref{lem:smallspatialvar}.

\begin{lemma}\label{lem:smallspatialvar-low}
Let $T$ and $\bits$ be as in Theorem \ref{t:main-low-one-piece} and let $\rho,\bar\rho \in ]0,1]$, $\tilde\rho \in ]\eta,1]$ be given. For any $\delta > 0$, if $\eps_5$ is smaller than a suitable threshold $ \tilde{\eps} (m,n,Q, \gamma, K, \rho, \tilde{\rho}, \bar{\rho},\delta)\leq\eps^*$, then the following holds. Let $x\in \bits$ and $\gamma^{k+1} \leq r \leq \gamma^k$.
Consider points $X = \{x_i\}_{i=0}^{m-2} \subset \Bbf_r(x)\cap \bits$ which satisfy the following contitions:
\begin{itemize}
    \item $X$ is a $\rho r$-linearly independent set;
    \item if $y_i$ denotes the closest point to $x_i$ with the property that $\gamma^{-k} (y_i-x)$ belongs to $\mathcal{M}_{x,k}$, then 
    \[
    	W_{\tilde\rho r}^{2r}(x,j(k), y_i) < \tilde\eps \qquad \text{for each $i$}.
    \]
\end{itemize}
Then for every $z_1, z_2 \in \Bbf_r(x) \cap \Bbf_{\tilde\varepsilon r} (V (X))$ and for every $r_1, r_2 \in [\bar\rho, 1]$, if $w_i$ denotes the closest point to $\gamma^{-k} (z_i-x)$ that belongs to $\mathcal{M}_{x,k}$, the following estimate holds:
    \[
    |\Ibf_{x,k}(w_1,r_1) - \Ibf_{x,k}(w_2,r_2)| \leq \delta\, .
    \]
\end{lemma}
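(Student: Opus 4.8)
The proof is the low-frequency analogue of Lemma \ref{lem:smallspatialvar}, so the strategy is to mimic that argument by contradiction, carefully tracking the rescalings needed to pass from the center manifold $\Mcal_{x,k}$ at scale $\gamma^k$ to a limiting Dir-minimizer. First I would reduce, exactly as in the proof of Lemma \ref{lem:noQpts-low}, to $x = 0$, $k = 1$ and (after a further rescaling exploiting $\gamma \le r \le 1$) to $r = 1$; this is legitimate because all quantities involved are scaling invariant once one passes to the normalized normal approximations $\bar N_l$ with $\Hbf_{\bar N_l}(0,1) = 1$. Suppose the statement fails: then there are sequences $\eps_l \downarrow 0$, currents $T_l$ with center manifolds $\Mcal_l$ and normalized normal approximations $\bar N_l$, $\rho$-linearly independent sets $X_l = \{x_{l,0},\dots,x_{l,m-2}\} \subset \Bbf_1 \cap \bits_l$ with $W_{\tilde\rho}^{2}(0, y_{l,i}) \le \eps_l$, and points $z_{l,1}, z_{l,2} \in \Bbf_1 \cap \Bbf_{\eps_l}(V(X_l)) \cap \bits_l$ together with radii $r_{l,1}, r_{l,2} \in [\bar\rho, 1]$ for which $|\Ibf_l(w_{l,1}, r_{l,1}) - \Ibf_l(w_{l,2}, r_{l,2})| \ge \delta$, where $w_{l,i}$ is the center-manifold projection of $z_{l,i}$ and $\Ibf_l \coloneqq \Ibf_{\bar N_l}$.

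Next I would apply the compactness procedure of Section \ref{ss:compactness} (via Remark \ref{r:diag-blowup}, which permits varying currents and varying centers): up to subsequence $\Mcal_l \to \pi_\infty$ in $C^{3,\kappa}$, $\bar N_l \circ \mathbf{e}_l \to u$ in $L^2$ and $W^{1,2}_\loc$ with $u$ Dir-minimizing, $\boldsymbol\eta \circ u \equiv 0$, $\Hbf_u(0,1) = 1$; moreover $X_l \to X_\infty = \{x_0,\dots,x_{m-2}\}$, $z_{l,i} \to z_i \in \bar\Bbf_1$ with $z_i \in V(X_\infty)$ (since $z_{l,i} \in \Bbf_{\eps_l}(V(X_l))$ and $X_l \to X_\infty$), $r_{l,i} \to r_i \in [\bar\rho, 1]$, and $u(z_i) = Q\llbracket 0 \rrbracket$ because $z_{l,i} \in \bits_l$. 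Arguing exactly as in Lemma \ref{lem:noQpts-low} — using the frequency-pinching hypothesis at the $y_{l,i}$, the dichotomy \cite{DLS_MAMS}*{Proposition 3.22}, positivity of $H_u(y,\tau)$, the extension to homogeneous $v_i$, the unique continuation \cite{DLMSV}*{Lemma 6.9} and the iteration \cite{DLMSV}*{Lemma 6.8} — I conclude that $u$ is $\alpha$-homogeneous in two variables about the $(m-2)$-plane $V(X_\infty)$, so in particular $I_u(y, \tau) \equiv \alpha$ for every $y \in V(X_\infty)$ and every $\tau > 0$. Since the center-manifold convergence forces the linearizations $\boldsymbol\ell$ to converge to the identity and $\Mcal_l \to \pi_\infty$, we also have $\Ibf_l(w_{l,i}, r_{l,i}) \to I_u(z_i, r_i)$ for $i = 1,2$; but $I_u(z_i, r_i) = \alpha$ for both indices, contradicting $|\Ibf_l(w_{l,1}, r_{l,1}) - \Ibf_l(w_{l,2}, r_{l,2})| \ge \delta > 0$.

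\textbf{Main obstacle.} The delicate point is verifying that $\Ibf_l(w_{l,i}, r_{l,i}) \to I_u(z_i, r_i)$, i.e. that the smoothed frequency of the normalized normal approximation on the varying center manifold converges to the frequency of the limiting Dir-minimizer at the shifted centers and scales. This requires the uniform two-sided bounds of Corollary \ref{c:ulb-frequency-2} and Lemma \ref{lem:simplify-low} (so that $\Hbf_l(w_{l,i}, r_{l,i})$ stays bounded away from $0$ and $\infty$ after normalization), the fact that $r_{l,i} \ge \bar\rho > \eta\gamma$ (so Lemma \ref{lem:simplify-low} applies at these centers and radii), and the strong $W^{1,2}_\loc$ convergence $\bar N_l \circ \mathbf{e}_l \to u$ together with the $C^{3,\kappa}$ convergence $\Mcal_l \to \pi_\infty$ to pass the numerator $\Dbf$ and denominator $\Hbf$ to the limit; the geodesic distance $d = d_{x,k}$ converges uniformly to the Euclidean distance on $\pi_\infty$ by properties (i)--(iii) of $d$ in Section \ref{ss:freq} together with $\boldsymbol{m}_{x,k} \to 0$. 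Everything else is a verbatim repetition of Lemma \ref{lem:smallspatialvar} and Lemma \ref{lem:noQpts-low}, so I would not belabour it.
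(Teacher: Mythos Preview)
Your proposal is correct and follows exactly the route the paper indicates: argue by contradiction, rescale to $x=0$, $k=1$, $r=1$ as in Lemma \ref{lem:noQpts-low}, pass to a Dir-minimizing limit $u$ via the compactness of Section \ref{ss:compactness}, deduce $\alpha$-homogeneity of $u$ about $V(X_\infty)$, and contradict the frequency gap. One small remark: you insert the hypothesis $z_{l,i}\in\bits_l$ and the conclusion $u(z_i)=Q\llbracket 0\rrbracket$, neither of which appears in the statement of Lemma \ref{lem:smallspatialvar-low} (unlike its high-frequency analogue Lemma \ref{lem:smallspatialvar}, where $y_i\in\Sbf$ is assumed) and neither of which is needed --- in the low-frequency setting the estimates of Lemmas \ref{lem:firstvar-low} and \ref{lem:simplify-low} hold for \emph{any} $w\in\Mcal_{x,k}$ at radii in $]\eta\gamma,4]$ by Proposition \ref{p:good-cubes}(a), and the contradiction comes solely from $z_i\in V(X_\infty)$ together with $I_u\equiv\alpha$ on $V(X_\infty)$.
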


The proof of the Lemma follows the same arguments of the proof of Lemma \ref{lem:smallspatialvar} and the adjustments needed are just the same which are employed in the proof of Lemma \ref{lem:noQpts-low}. 

\section{Flatness control}\label{s:flatness-low}

This section is the counterpart of Section \ref{s:flatness}. Recall the $k$-dimensional Jones' $\beta_2$ coefficient, denoted by $\beta_{2,\mu}^k$, for a Radon measure $\mu$, as introduced in Definition \ref{def:beta2}. 


We now wish to state a counterpart of Proposition \ref{prop:beta2control}. Note however an important difference: points in the set $\bits$ are {\em not} necessarily contained in the rescaling of the (scaled and translated) center manifold $x+ \gamma^k \mathcal{M}_{x,k}$. For this reason we introduce a suitable ``projection'' 

\begin{definition}\label{d:graphical-projection}
Consider the center manifold $\mathcal{M}_{x,k}$ and denote by $\pi_{x,k}$ the reference plane used to construct in \cite{DLS16centermfld}*{Theorem 1.17} the map $\boldsymbol{\varphi}: \pi_{x,k}\supset [-4,4]^m \to \pi_{x,k}^\perp$ such that ${\rm gr}\, (\boldsymbol{\varphi}) = \mathcal{M}_{x,k}$. For any $p\in \Bbf_{\gamma^k} (x)$ let 
\[
q = (\mathbf{p}_{\pi_{x,k}} (\gamma^{-k} (p-x)), \boldsymbol{\varphi} (\mathbf{p}_{\pi_{x,k}} (\gamma^{-k} (p-x))))\in \mathcal{M}_{x,k}\, .
\]
Finally we let $\mathbf{p}_{x,k}: \mathbf{B}_{\gamma^{k}} (x) \to (x+\gamma^k \mathcal{M}_{x,k})$ be the map $p\mapsto x+\gamma^k q$. 
\end{definition}

\begin{proposition}\label{prop:beta2control-low}
There are  $\alpha_0 = \alpha_0(\Lambda,m,n,Q) > 0$, $\eta= \eta (m) \in ]0, \frac{1}{8}[$, $C(\Lambda,m,n,Q,\gamma) > 0$ with the following property. Assume $\eps_5$ in Definition \ref{d:bits} is smaller than a threshold which depends on $m,n,Q, K, \eta$, and let $T$ and $\bits$ be as in Theorem \ref{t:main-low-one-piece}. Suppose that $\mu$ is a finite non-negative Radon measure with $\spt (\mu) \subset \bits$ and let $x_0 \in \bits$. Then for all $r \in \left]8\eta\gamma^{k+1} ,\gamma^k\right]$ we have
        \begin{align*}
            [\beta_{2,\mu}^{m-2}(x_0, r/8)]^2 &\leq Cr^{-(m-2)} \int_{\Bbf_{r/8} (x_0)} W^{4r}_{r/8}\left(x_0, k,\mathbf{p}_{x_0, k} (x)\right)\dd\mu (x)\\
            &\qquad + C \boldsymbol{m}_{x_0,k}^{\alpha_0} r^{-(m-2-\alpha_0)} \mu(\Bbf_{r/8}(x_0)).
        \end{align*}
\end{proposition}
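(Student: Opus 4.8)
\textbf{Plan of proof of Proposition \ref{prop:beta2control-low}.}
The strategy is to mimic the proof of Proposition \ref{prop:beta2control} as closely as possible, with the crucial modification that the points of $\bits$ in $\Bbf_{r/8}(x_0)$ need not lie on the (rescaled, translated) center manifold $x_0 + \gamma^k\mathcal{M}_{x_0,k}$, so they must first be transported onto it via the map $\mathbf{p}_{x_0,k}$ of Definition \ref{d:graphical-projection}. Fix $x_0\in\bits$ and $k$ with $\gamma^{k+1}\leq r\leq\gamma^k$, and to lighten notation translate and rescale so that $x_0=0$, $k=0$, working on $\mathcal{M}:=\mathcal{M}_{0,0}$, $N:=N_{0,0}$, with $\boldsymbol{m}_0:=\boldsymbol{m}_{x_0,0}$. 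First I would run the same linear-algebra reduction: diagonalize the bilinear form $b_{0,r}$ built from the push-forward $(\mathbf{p}_0)_\sharp\mu$ (or, equivalently, from $\mathbf{p}_{\pi_{0,0}}$ applied to $\mu$) on $T_{0}\mathcal{M}\times T_0\mathcal{M}$, obtaining eigenvalues $0\le\lambda_m\le\cdots\le\lambda_1$ and an orthonormal eigenbasis $\{v_j\}$, so that
\[
[\beta_{2,\mu}^{m-2}(0,r/8)]^2\leq 2\,(r/8)^{-m}\lambda_{m-1}\,,
\]
exactly as in \eqref{e:beta-lambda}. A harmless extra term of size $O(\boldsymbol{m}_0)r^2\mu(\Bbf_{r/8})$, coming from the discrepancy between $\mathrm{dist}(x,L)$ measured in the ambient space and measured after projecting via $\mathbf{p}_0$, is absorbed into the final error, using the $C^{3,\kappa}$ estimates on $\mathcal{M}$ and the fact that $\bits\cap\Bbf_{r/8}$ lies within $C\boldsymbol{m}_0^{1/2}r^2$ of $\mathcal{M}$ (a consequence of the center manifold construction and the excess bounds \eqref{e:eub1}).

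Next, for $z\in\Acal_{r/4}^{2r}(0):=(\Bbf_{2r}(0)\setminus\Bbf_{r/4}(0))\cap\mathcal{M}$ and each $j$, I would differentiate $N$ at $z$ along $\boldsymbol{\ell}_z(v_j)$, where $\boldsymbol{\ell}_z:T_0\mathcal{M}\to T_z\mathcal{M}$ is parallel transport along the geodesic from $0$ to $z$ (equivalently $d\mathbf{e}_0|_\zeta$). Using the two moment identities
\[
\int_{\Bbf_{r/8}(0)}((x-\bar x)\cdot v_j)\,\mathbf{p}_{0}(z-x)\,\dd\mu(x)=-\lambda_j v_j,\qquad
\int_{\Bbf_{r/8}(0)}(x-\bar x)\cdot v_j\,\dd\mu(x)=0,
\]
and inserting a free parameter $\alpha$ exactly as in Section \ref{s:flatness}, I would arrive at the analogue of \eqref{e:intermedia}: for $w(x,z):=\frac{d(x,z)\nabla d(x,z)}{|\nabla d(x,z)|}$,
\[
\lambda_j\sum_i|DN_i(z)\cdot\boldsymbol{\ell}_z(v_j)|^2\leq
C\!\int_{\Bbf_{r/8}(0)}\sum_i\big|DN_i(z)\cdot w(x,z)-\alpha N_i(z)\big|^2\dd\mu(x)+C\boldsymbol{m}_0 r^4\mu(\Bbf_{r/8}(0))|DN(z)|^2,
\]
where the Taylor expansion bound $|\boldsymbol{\ell}_z(\mathbf{p}_0(z-x))-w(x,z)|\leq C\boldsymbol{m}_0^{1/2}r^2$ (from properties (i)--(iii) of $d$) is what produces the second term. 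Here the sole new point is that $x$ ranges over $\bits$ and the corresponding center-manifold point is $\mathbf{p}_{0,0}(x)$, so $d(x,z)$ must be interpreted as $d(\mathbf{p}_{\pi_{0,0}}(x),z)$; since $\bits$ is within $C\boldsymbol{m}_0^{1/2}r^2$ of $\mathcal{M}$ this substitution is again only an $O(\boldsymbol{m}_0)$-perturbation. Summing over $j=1,\dots,m-1$, multiplying by $[\beta_{2,\mu}^{m-2}]^2$, using the ordering of eigenvalues and integrating over $\Acal_{r/4}^{2r}(0)$, then splitting the right-hand side by the triangle inequality into (a) a ``distance from homogeneity'' piece, (b) a piece comparing $\Ibf(x,|z-x|)$ to $\alpha$, and (c) the $O(\boldsymbol{m}_0)$-pieces, I would estimate (a) by Proposition \ref{prop:distfromhomog-low} and Lemma \ref{lem:simplify-low}, and (b) by Lemma \ref{lem:spatialvarI-low} together with the almost-monotonicity \eqref{eq:simplify13-low} and the height decay \eqref{eq:simplify7-low}, choosing $\alpha:=\mu(\Bbf_{r/8}(0))^{-1}\int_{\Bbf_{r/8}(0)}\Ibf(\mathbf{p}_{0,0}(y),r)\dd\mu(y)$ exactly as in the high-frequency case. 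The restriction $\eta=\eta(m)$ small and $r>8\eta\gamma^{k+1}$ is exactly what makes Lemmas \ref{lem:simplify-low}, \ref{lem:spatialvarI-low} and Proposition \ref{prop:distfromhomog-low} applicable at all the relevant scales (note $\tilde\rho$-type scales such as $r/4,r/8$ stay above $\eta\gamma$ after rescaling).

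The final ingredient is the lower bound replacing \eqref{eq:lowerbd}, namely
\[
\int_{\Acal_{r/4}^{2r}(0)}\sum_{j=1}^{m-1}|DN(z)\cdot\boldsymbol{\ell}_z(v_j)|^2\,\dd z\geq c(\Lambda)\frac{\Hbf(0,2r)}{r},
\]
which I would prove by the same contradiction/compactness argument: a failing sequence produces center manifolds $\mathcal{M}_l\to\pi_\infty$ and a nontrivial Dir-minimizer $u$ on $B_2\setminus\overline{B_1}$ with $\boldsymbol\eta\circ u\equiv0$, $\int_{B_2\setminus\overline{B_1}}|u|^2=1$, $u(y)=Q\llbracket 0\rrbracket$ for some $y\in B_{1/8}$ (using that $\mu(\Bbf_{r/8})>0$ forces a $Q$-point of the limit in that ball), and $\int_{B_2\setminus\overline{B_1}}\sum_{j=1}^{m-1}|Du\cdot v_j|^2=0$; invoking Lemma \ref{lem:invariant} $u$ depends on one variable on all of $B_2$, so $\dim_\Hcal\Delta_Q(u)\geq m-1$, contradicting \cite{DLS_MAMS}*{Proposition 3.22}. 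Combining this lower bound with the upper estimates and $r\int_{\Bbf_{2r}(0)\cap\mathcal{M}}|DN|^2\leq C\Hbf(0,2r)$, then rescaling back, yields the claimed inequality with $\alpha_0=\alpha_0(\Lambda,m,n,Q)$ chosen so that $\boldsymbol{m}_0^{\gamma_4}r^{\gamma_4}$ and $\boldsymbol{m}_0 r^{4-\gamma_4}$-type terms are both dominated by $\boldsymbol{m}_{x_0,k}^{\alpha_0}r^{-(m-2-\alpha_0)}\mu(\Bbf_{r/8}(x_0))$. \textbf{The main obstacle} I anticipate is bookkeeping the projection map $\mathbf{p}_{x_0,k}$ consistently: one must check that replacing the genuine points of $\bits$ by their center-manifold proxies in \emph{every} occurrence of $d(x,\cdot)$, $\Ibf(x,\cdot)$, and $W^\bullet_\bullet(x,\cdot)$ costs only errors of order $\boldsymbol{m}_{x_0,k}$ times the relevant quantities — i.e. verifying that the frequency pinching $W^{4r}_{r/8}(x_0,k,\mathbf{p}_{x_0,k}(x))$ controls the genuine spatial variation of $N$ at $x$ despite $x\notin x_0+\gamma^k\mathcal{M}_{x_0,k}$ — which in turn relies on the $C^{3,\kappa}$-closeness of the center manifold to the supporting plane and on the excess bounds \eqref{e:eub1}--\eqref{e:elb2} giving $\mathrm{dist}(x,\mathcal{M}_{x_0,k})=O(\boldsymbol{m}_{x_0,k}^{1/2}r^2)$ uniformly in $x\in\bits\cap\Bbf_{r/8}(x_0)$.
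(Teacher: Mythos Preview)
Your approach is correct and would work, but it takes a more laborious route than the paper. You propose to rerun the entire argument of Proposition~\ref{prop:beta2control} from scratch in the low-frequency setting, tracking at every step the discrepancy between points $x\in\bits$ and their proxies $\mathbf{p}_{x_0,k}(x)$ on the center manifold. The paper instead sidesteps this bookkeeping entirely: it defines the push-forward measure $\nu:=(\mathbf{p}_{x_0,k})_\sharp\bigl(\mu\res\Bbf_{r/8}(x_0)\bigr)$, which is genuinely supported on $x_0+\gamma^k\mathcal{M}_{x_0,k}$, and then applies the argument of Proposition~\ref{prop:beta2control} verbatim to $\nu$ (with the low-frequency Lemmas~\ref{lem:simplify-low}, \ref{lem:spatialvarI-low} and Proposition~\ref{prop:distfromhomog-low} in place of their high-frequency counterparts). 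This yields the estimate for $[\beta_{2,\nu}^{m-2}(x_0,r/8)]^2$ directly. The only remaining step is a single comparison
\[
[\beta_{2,\mu}^{m-2}(x_0,r/8)]^2 \leq 2[\beta_{2,\nu}^{m-2}(x_0,r/8)]^2 + C\boldsymbol{m}_{x_0,k}\,r^{-(m-2)}\mu(\Bbf_{r/8}(x_0)),
\]
which follows from the height bound $\mathrm{dist}(x,x_0+\gamma^k\mathcal{M}_{x_0,k})\leq C\boldsymbol{m}_{x_0,k}^{1/2}r$ for $x\in\bits\cap\Bbf_{r/8}(x_0)$ (note: linear in $r$, not $r^2$ as you wrote), together with Proposition~\ref{prop:excess-decay-2} to convert $\boldsymbol{m}_{x_0,k}$ into a power of $r$.

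The advantage of the paper's route is precisely that the ``main obstacle'' you anticipate---consistently replacing genuine points by their center-manifold proxies in every occurrence of $d$, $\Ibf$, $W$---simply disappears: once you work with $\nu$, all points are already on the center manifold, the argument of Proposition~\ref{prop:beta2control} needs no modification beyond citing the low-frequency lemmas, and the projection error is isolated in one clean inequality at the end. Your approach buys nothing extra and costs considerably more in verification.
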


\begin{proof}
    Let $\nu:=(\mathbf{p}_{x_0, k})_\sharp \mu \res \Bbf_{r/8} (x_0)$. This measure is indeed supported in $x+ \gamma^k \mathcal{M}_{x,k}$ and we can therefore apply the same argument of Proposition \ref{prop:beta2control}. Observe moreover that 
\[
\int W^{4r}_{r/8}\left(x_0, k,y\right)\dd\nu (y)
\]
would then be, by definition,
\[
r^{-(m-2)} \int_{\Bbf_{r/8} (x_0)} W^{4r}_{r/8}\left(x_0, k,\mathbf{p}_{x_0, k} (x)\right)\dd\mu (x)\, .
\]
We can thus write the estimate
\begin{align*}
[\beta_{2,\nu}^{m-2}(x_0, r/8)]^2 &\leq Cr^{-(m-2)} \int_{\Bbf_{r/8} (x_0)} W^{4r}_{r/8}\left(x_0, k,\mathbf{p}_{x_0, k} (x)\right)\dd\mu (x)\\
&\qquad + C \boldsymbol{m}_{x_0,k}^{\alpha_0} r^{-(m-2-\alpha_0)} \nu(\Bbf_{r/8}(x_0)),
\end{align*}
using Proposition \ref{prop:beta2control} after rescaling all the objects by $\gamma^{-k}$, for the choice of $\alpha_0$ therein. {Notice that in order to retain the validity of Proposition \ref{prop:beta2control} with $\nu$, we are using Lemma \ref{lem:simplify-low}, Lemma \ref{lem:spatialvarI-low} and \ref{prop:distfromhomog-low}}. However, since the distance between $x\in \bits\cap \mathbf{B}_{r/8} (x_0)$ and $x+\gamma^k \mathcal{M}_{x,k}$ is controlled by $\boldsymbol{m}_{x_0,k}^{1/2} r$, we have the obvious estimate 
\[
[\beta_{2,\mu}^{m-2}(x_0, r/8)]^2\leq 2 [\beta_{2,\nu}^{m-2}(x_0, r/8)]^2 + C \boldsymbol{m}_{x_0,k}
r^{-(m-2)} \mu(\Bbf_{r/8}(x_0))\, .
\]
Recall next that by Proposition \ref{prop:excess-decay-2}, we have $\boldsymbol{m}_{x_0, k} \leq C \gamma^{\bar \alpha k}$ for some positive exponent $\bar \alpha$ (depending on $\Lambda, m,n, Q$). Thus, since $\gamma^k \leq (8\eta\gamma)^{-1} r$, we easily conclude that
\[
\boldsymbol{m}_{x_0,k}
r^{-(m-2)} \mu(\Bbf_{r/8}(x_0)) \leq C \boldsymbol{m}_{x_0, k}^{1/2}
r^{-(m-2-\alpha_0)} \mu(\Bbf_{r/8}(x_0))
\]
up to further decreasing the previous choice of the exponent $\alpha_0$ if necessary. Combining this with the observation that $\nu(\Bbf_{r/8}(x_0)) \leq C\mu(\Bbf_{r/8}(x_0))$, the conclusion of Proposition \ref{prop:beta2control-low} follows immediately.
\end{proof}

\section{Rectifiability}

In this section we complete the proof of the rectifiability conclusion in Theorem \ref{t:main-low-one-piece} in a way which is rather similar to the proof of Theorem \ref{t:main-quantitative}. As in Part \ref{part:high}, we make use of Theorem \ref{t:sigma-finite} and \cite{AT15}*{Theorem 1.1} rather than the covering arguments in \cite{NV_Annals}, which are only needed for the Minkowski content bound \eqref{e:Minkowski-low}. We defer the proof of the latter to Appendix \ref{s:A2}. 
        
\subsection{Proof of rectifiability in Theorem \ref{t:main-low-one-piece}}

This follows via an analogous procedure to that outlined in Section \ref{s:rectifiability-h}. However, since we are using the frequency relative to different center manifolds in different intervals of scales, we include the relevant details here for the purpose of clarity.

We may once again reduce to the case where $\mu$ is a Frostman measure satisfying the estimate \eqref{eq:Hausdorffcontent} with $\spt (\mu) \subset F$ for a closed purely $(m-2)$-unrectifiable closed subset $F\subset \bits$ with $0<\Hcal^{m-2}(F)<\infty$. We aim to again demonstrate the validity of \eqref{eq:beta2squarefn}: arguing as in Section \ref{s:rectifiability-h} we then arrive at a contradiction, thus concluding the rectifiability of $\bits$. Letting $j_0=j_0(t)$ be such that $t\in \left]\gamma^{j_0+1},\gamma^{j_0}\right]$, in light of Proposition \ref{prop:beta2control-low}, we have
	\begin{align*}
	    \int_{\Bbf_t(y)}&\int_0^t [\beta_{2,\mu}^{m-2}(z,s)]^2 \frac{\dd s}{s}\dd\mu(z) \leq \int_{\Bbf_t(y)} \sum_{j\geq j_0} \int_{\gamma^{j+1}}^{\gamma^{j}} [\beta_{2,\mu}^{m-2}(z,s)]^2\frac{\dd s}{s} \dd\mu(z) \\
        &\leq C \int_{\Bbf_t(y)} \sum_{j\geq j_0} \int_{\gamma^{j+1}}^{\gamma^{j}} s^{-(m-1)} \int_{\Bbf_s(z)}W_s^{32s}(z,j,\mathbf{p}_{z,j}(w))\dd\mu(w)\dd s \dd\mu(z) \\
	    &\qquad+ C\int_{\Bbf_t(y)}\sum_{j\geq j_0} \int_{\gamma^{j+1}}^{\gamma^{j}} \boldsymbol{m}_{z,j}^{\alpha_0} \frac{\mu(\Bbf_{s}(z))}{s^{m-1-\alpha_0}} \dd s \dd\mu(z). \\
    \end{align*}

Arguing as in Step 4 of the proof of Lemma \ref{lem:cover1-l}, we make use of the estimate \eqref{eq:Hausdorffcontent} for $\mu$ and the excess decay of Proposition \ref{prop:excess-decay-2} to conclude
    \begin{align*}
	    \int_{\Bbf_t(y)}&\int_0^t [\beta_{2,\mu}^{m-2}(z,s)]^2 \frac{\dd s}{s}\dd\mu(z) \\
        &\leq C \int_{\Bbf_{2t}(y)} \sum_{j\geq j_0} \int_{\gamma^{j+1}}^{\gamma^{j}} W_s^{32s}(w,j,w)\frac{\dd s}{s}\dd\mu(w) \\
	    &\qquad+ C\int_{\Bbf_t(y)}\sum_{j\geq j_0} \int_{\gamma^{j+1}}^{\gamma^{j}} \boldsymbol{m}_{z,j}^{\alpha_0} \frac{\mu(\Bbf_{s}(z))}{s^{m-1-\alpha_0}} \dd s \dd\mu(z) \\
	    &\leq C t^{m-2} + C\eps_5^{2\alpha_0} t^{m-2+\min\{\alpha_0,\beta\}} \\
	    &\leq C(m,n,Q,\Lambda).
    \end{align*}

This yields a contradiction as desired, and thus concludes the proof.
\qed

\appendix

\section{Minkowski content bound in Theorem \ref{t:main-quantitative}}\label{s:A1}
Here, we provide the proofs of the Minkowski content bound \eqref{e:Minkowski-high} of Theorem \ref{t:main-quantitative}(iii). This will be done by combining the estimate of Proposition \ref{prop:beta2control} with an iterative covering technique borrowed from \cite{NV_Annals}. We start with the following covering lemma.

  \begin{lemma}\label{lem:cover1}
        Let $\rho \leq 1/100$, let $\sigma < \tau < \frac{1}{8}$ and let $\eta>0$  be a fixed number smaller than the threshold of Proposition \ref{prop:beta2control}. There exists $\varepsilon_4 = \varepsilon_4(\Lambda,m,n,Q) > 0$ sufficiently small such that the following holds. Suppose that $T$ is as in Assumption \ref{asm:onecm-2} for these choices of $\eta$ and $\eps_4$. Let $x_0\in \mathbf{S}\cap\Bbf_{1/8}$, let $D \subset \Sbf \cap \Bbf_\tau (x_0)$ and let $U \coloneqq \sup_{y \in D} \Ibf(y,\tau)$.
        
        Then there exists $\delta = \delta_{\ref{lem:cover1}}(m,n,Q,\Lambda,\rho) > 0$, a dimensional constant $C_R=C_R(m) > 0$ and a finite cover of $D$ by balls $\Bbf_{r_i}(x_i)$ such that
        \begin{enumerate}[(a)]
            \item\label{itm:covera} $r_i \geq 10\rho\sigma$;
            \item\label{itm:coverb} $\sum_i r_i^{m-2} \leq C_R \tau^{m-2}$;
            \item\label{itm:coverc} For every $i$, either $r_i \leq \sigma$ or
            \[
                F_i \coloneqq D \cap \Bbf_{r_i}(x_i) \cap \set{y}{\Ibf(y,\rho r_i) \in (U -\delta, U+\delta)} \subset \Bbf_{\rho r_i}(V_i),
            \]
            for some $(m-3)$-dimensional subspace $V_i \subset \mathbb R^{m+n}$.
        \end{enumerate}
    \end{lemma}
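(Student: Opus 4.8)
This is the standard Naber--Valtorta covering dichotomy, adapted to our setting via the $\beta_2$ estimate of Proposition~\ref{prop:beta2control}. The plan is to run the classical argument from \cite{NV_Annals}*{Section~7} (see also \cite{DLMSV}*{Section~7}), feeding in Proposition~\ref{prop:beta2control} at the key step where one needs that the set of high-frequency points on which the frequency drops only slightly between two scales is close to an $(m-3)$-dimensional subspace. First I would fix the parameter $\rho$, the scales $\sigma<\tau$, and $\eta$, and take $\varepsilon_4$ small enough for Lemma~\ref{lem:simplify} and Proposition~\ref{prop:beta2control} to apply. The construction proceeds by a stopping-time Vitali-type iteration: starting from the ball $\Bbf_\tau(x_0)$, one repeatedly subdivides, refining each ball $\Bbf_s(x)$ into a controlled Vitali family of sub-balls of radius $\rho s$, and at each stage one decides whether to \emph{stop} (keep the current ball, because either its radius has dropped to scale $\sigma$, or the ``good'' subset $F$ of it concentrates near an $(m-3)$-plane — case (c)) or to \emph{continue} subdividing (because the frequency pinching is substantial enough on a definite proportion of the measure to force the desired content decay).

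\textbf{Key steps.} (1) Set up the auxiliary Frostman-type measure. As in \cite{NV_Annals}, one works with the restriction of a measure $\mu$ (or counting measure on a maximal $\sigma$-net of $D$) and uses the a priori density bound $\mu(\Bbf_s(y))\le C s^{m-2}$ that follows from the upper-Minkowski-content structure of the $\Sbf_j$ pieces (or, in the final rectifiability application, from the Frostman property \eqref{eq:Hausdorffcontent}). (2) The dichotomy at each ball $\Bbf_s(x)$: either there exist $m-2$ points in $D\cap\Bbf_{s/8}(x)$ that are $\gamma s$-linearly independent with small frequency pinching $W$ at those points — in which case Lemma~\ref{lem:noQpts} and Lemma~\ref{lem:smallspatialvar} force $D\cap\Bbf_s(x)$ to lie in a thin neighborhood of an $(m-2)$-plane and, combined with the fact that the frequency is \emph{constant} on that plane up to $\delta$, one invokes Proposition~\ref{prop:beta2control} to show that $F$ (the ``frequency stays near $U$'' subset) is actually trapped near an $(m-3)$-plane, so we stop with case (c); or no such $(m-2)$-tuple exists, meaning $D\cap\Bbf_{s/8}(x)$ is already within $\varepsilon s$ of an $(m-3)$-plane up to a small error, again yielding a stopping ball of case (c) type, OR the frequency pinching is bounded below on a fixed fraction of the measure. (3) In the last case one uses the almost-monotonicity \eqref{e:almostmon} together with $W$-lower bounds to gain a geometric decay factor in $\sum r_i^{m-2}$ over the whole chain of subdivisions: this is where the bound (b), $\sum_i r_i^{m-2}\le C_R\tau^{m-2}$, is obtained, by summing a convergent geometric series whose ratio is $<1$ once $\rho$ is chosen appropriately and the number of children per ball is the dimensional Vitali constant. (4) The lower bound (a), $r_i\ge 10\rho\sigma$, is built into the stopping rule: we never subdivide a ball once $\rho s<\sigma$, i.e. $s<\sigma/\rho$, so the children produced have radius $\rho s\ge 10\rho\sigma$ after adjusting constants. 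The finiteness of the cover follows since $D$ is (pre)compact and all radii are bounded below.

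\textbf{Main obstacle.} The delicate point — as always in Naber--Valtorta arguments — is \textbf{step (3)}: obtaining the content estimate (b) with a \emph{dimensional} constant $C_R$ independent of the number of iterations. This requires carefully balancing (i) the packing/Vitali combinatorics (the number of radius-$\rho s$ children of a radius-$s$ ball that meet $D$), (ii) the geometric gain extracted from the $\beta_2$-excess via Proposition~\ref{prop:beta2control} and the measure density bound, and (iii) the fact that the "bad'' (non-stopped) balls have their measure concentrated where the frequency genuinely drops, so their total $(m-2)$-content is summable against the total frequency drop $U-\inf_D\Ibf(\cdot,0)\le \Lambda$, which is bounded by Corollary~\ref{cor:freqmono}. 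One must also be careful that the error term $C\boldsymbol{m}_0^{\alpha_0}r^{-(m-2-\alpha_0)}\mu(\Bbf_r)$ in Proposition~\ref{prop:beta2control} can be absorbed, which is why $\varepsilon_4$ (hence $\boldsymbol{m}_0$) must be taken small in terms of $\rho$ — this is the source of the dependence $\varepsilon_4=\varepsilon_4(\Lambda,m,n,Q)$ and $\delta=\delta_{\ref{lem:cover1}}(m,n,Q,\Lambda,\rho)$ asserted in the statement. The rest of the proof is bookkeeping that follows \cite{NV_Annals} and \cite{DLMSV} essentially verbatim, with Lemmas~\ref{lem:noQpts},~\ref{lem:smallspatialvar} and Proposition~\ref{prop:beta2control} replacing their counterparts there.
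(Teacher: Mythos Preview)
Your proposal has the right citations and ingredients, but it misidentifies where the work actually happens, and in particular the mechanism you describe for the packing estimate (b) would not give a constant $C_R$ independent of the number of iterations.

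First, the dichotomy is simpler than you make it. Condition (c) is \emph{definitional}: a ball $\Bbf_r(x)$ is ``kept'' precisely when $F(\Bbf_r(x))$ fails to $\rho r$-span an $(m-2)$-plane, and in that case $F$ automatically lies in a $\rho r$-neighborhood of some $(m-3)$-plane. No appeal to Proposition~\ref{prop:beta2control} is needed here. Conversely, if $F(\Bbf_r(x))$ \emph{does} $\rho r$-span an $(m-2)$-plane $V$, then Lemma~\ref{lem:noQpts} (not Proposition~\ref{prop:beta2control}) forces \emph{all} of $D\cap\Bbf_r(x)$ --- not just $F$ --- into $\Bbf_{\rho r}(V)$, and one subdivides into children of radius $10\rho r$ centered on $D\cap V$. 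This is iterated down to scale $\sigma$.

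Second, and more seriously, the packing estimate (b) is \emph{not} obtained by summing a geometric series against the total frequency drop $U-\inf\Ibf\le\Lambda$; that telescoping argument is what drives the \emph{next} level of the hierarchy, Proposition~\ref{prop:cover2}, after Lemma~\ref{lem:cover1} is already in hand. For Lemma~\ref{lem:cover1} itself one builds the discrete measure $\mu=\sum_i s_i^{m-2}\delta_{x_i}$ from the cover (not a Frostman measure), and proves the uniform bound $\mu_s(\Bbf_s(x))\le C_R s^{m-2}$ by induction on dyadic scales, invoking the \emph{discrete Reifenberg theorem} \cite{NV_Annals}*{Theorem~3.4} at the inductive step. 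The hypothesis of that theorem is the $\beta_2$-square-function bound \eqref{eq:discrpacking}, and it is \emph{here} that Proposition~\ref{prop:beta2control} enters: it converts $\beta_2^{m-2}$ into an integral of the pinching $W$, and Step~2 of the paper's proof (which uses Lemma~\ref{lem:smallspatialvar} to show that the frequency at every center $x_i$ is pinned within $\xi$ of $U$) then makes that integral small. Without the discrete Reifenberg theorem you have no way to stop the constant $C_R$ from blowing up with the number $\kappa\sim\log(\tau/\sigma)$ of refinement steps: each refinement produces up to $C(m)\rho^{-(m-2)}$ children, so naive iteration gives $(C(m)\rho^{-(m-2)})^\kappa$, not a dimensional constant.
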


The parameters $\varepsilon_4$ and $\eta$ of Assumption \ref{asm:onecm-2} are first chosen small enough so that we can apply Proposition \ref{prop:beta2control}. Then, $\eps_4$ is further decreased if necessary, to ensure that $\boldsymbol{m}_0^{\alpha_0}$ falls below a desired small dimensional constant, in order to absorb a suitable error term. Lemma \ref{lem:cover1} will in turn be used to prove the following second efficient covering result, analogous to~\cite{DLMSV}*{Proposition~7.2}, where the parameter $\rho$ will be chosen smaller than a geometric constant depending only on $m$. 
    
    \begin{proposition}\label{prop:cover2}
        Fix $\eta$ as in Lemma \ref{lem:cover1} and let $\tau < \frac{1}{8}$. There exist positive constants $\delta = \delta (\Lambda, m,n, Q)$, $\eps_4=\varepsilon_4(\Lambda,m,n,Q,\delta)$ and a dimensional constant $C_V = C_V(m) \geq 1$ such that the following holds.
        
        Assume that $T$ is as in Assumption \ref{asm:onecm-2} for the above choices of $\eta$ and $\eps_4$. Suppose that $x_0 \in \Sbf\cap\Bbf_{1/8}$ and let $D \subset \Sbf\cap\Bbf_\tau(x_0)$ and $U \coloneqq \sup_{y \in D} \Ibf(y,\tau)$. Then, for every $s\in ]0,\tau[$, there exists a finite cover of $D$ by balls $\Bbf_{r_i}(x_i)$ with $r_i \geq s$ and a decomposition of $D$ into sets $A_i \subset D$ such that
        \begin{enumerate}[(a)]
            \item $A_i \subset D \cap \Bbf_{r_i}(x_i)$;
            \item\label{itm:packing} $\sum\limits_i r_i^{m-2} \leq C_V \tau^{m-2}$;
            \item For every $i$ we have either $r_i = s$ or
            	\[
            		\sup_{y \in A_i}\Ibf(y,r_i) \leq U - \delta.
            	\]
        \end{enumerate}
    \end{proposition}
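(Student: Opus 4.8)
The strategy is the standard Naber--Valtorta iteration: we apply Lemma~\ref{lem:cover1} repeatedly, refining the cover of $D$ at geometrically decreasing scales, and at each step we split off the "bad" balls (those on which the supremum of the frequency has dropped by at least $\delta$) from the "good" balls (on which the frequency is still pinched close to $U$, and hence, by Lemma~\ref{lem:cover1}(c), the relevant subset of $D$ is trapped in a $\rho r_i$-neighbourhood of an $(m-3)$-plane). The $(m-3)$-dimensional concentration on the good balls is what powers the packing estimate (b): covering a $\rho r_i$-tube around an $(m-3)$-plane inside $\Bbf_{r_i}$ by balls of radius $\rho r_i$ costs only $\simeq \rho^{-(m-3)}$ such balls, so the $(m-2)$-content is multiplied by $\rho^{-(m-3)}\cdot \rho^{m-2}=\rho$, i.e.\ it \emph{contracts} by a factor $\rho<1$ (after choosing $\rho$ smaller than a dimensional constant, which is where the hypothesis "$\rho$ smaller than a geometric constant depending only on $m$" is used). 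Summing the geometric series over the iteration gives the bound $C_V\tau^{m-2}$ with $C_V=C_V(m)$.

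More precisely, I would set up the iteration as follows. Fix $\rho=\rho(m)$ small enough that $C_R\,\rho\le \tfrac12$, where $C_R$ is the dimensional constant from Lemma~\ref{lem:cover1}(b); let $\delta=\delta_{\ref{lem:cover1}}(m,n,Q,\Lambda,\rho)$ be the corresponding threshold, and choose $\varepsilon_4$ small enough for Lemma~\ref{lem:cover1} to apply with these parameters (and, as remarked after that lemma, small enough that $\boldsymbol{m}_0^{\alpha_0}$ is below a dimensional constant). Starting from $\Bbf_\tau(x_0)$, apply Lemma~\ref{lem:cover1} with $\sigma=\rho\tau$ to obtain a cover $\{\Bbf_{r_i}(x_i)\}$ with $r_i\ge 10\rho\sigma$ and $\sum r_i^{m-2}\le C_R\tau^{m-2}$. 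Discard (freeze) those balls with $r_i\le\sigma$ and those on which $\sup_{D\cap\Bbf_{r_i}(x_i)}\Ibf(\cdot,\rho r_i)\le U-\delta$; these become final output balls (with $r_i=s$ reached after enough steps, or with the frequency-drop property). On each remaining ball, the pinched set $F_i$ lies in $\Bbf_{\rho r_i}(V_i)$ for an $(m-3)$-plane; cover $D\cap\Bbf_{r_i}(x_i)$ by $\le C(m)\rho^{-(m-3)}$ balls of radius $\rho r_i$ centred on points of $D$, and recurse into each of these. The key bookkeeping is that after one refinement step the total $(m-2)$-content of the still-active balls is at most $C_R\cdot C(m)\rho^{-(m-3)}\cdot \rho^{m-2}=C_R C(m)\rho$ times the previous content; choosing $\rho$ so that this is $\le\tfrac12$ makes the contributions of the frozen balls at all scales sum to a geometric series bounded by $2C_R\tau^{m-2}=:C_V\tau^{m-2}$. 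The decomposition $\{A_i\}$ of $D$ is obtained in the usual way by assigning each point of $D$ to the first output ball (in the order of the construction) that contains it, and setting $A_i=D\cap\Bbf_{r_i}(x_i)\setminus\bigcup_{j<i}A_j$; since the iteration bottoms out either when $r_i=s$ or when the frequency has dropped by $\delta$, properties (a) and (c) hold by construction.

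Two technical points need care. First, Lemma~\ref{lem:cover1} is only available at scales $r$ with $r\ge\eta\,d(x,\mathbf{S})$ for the relevant centres — but here all centres are taken in $\mathbf{S}$ itself, so $d(x,\mathbf{S})=0$ and there is no restriction; one only needs the radii to stay $\le 1/8$ and the pinching scales $\rho r_i$ to remain within the range where Proposition~\ref{prop:beta2control} and the frequency estimates of Lemma~\ref{lem:simplify} are valid, which is guaranteed as long as $r_i\le\tau<1/8$. Second, one must verify that the frequency can only drop finitely often: by Corollary~\ref{cor:freqmono} (or \eqref{e:lower-upper}), $\tfrac32\le\Ibf\le\Lambda$ on all of $\mathbf{\Phi}(\mathbf{\Gamma})\supset\mathbf{S}$, so along any chain of nested balls the value of $U$ can decrease by $\delta$ at most $\lceil(\Lambda-\tfrac32)/\delta\rceil$ times before the alternative "$r_i=s$" is forced; combined with the fact that each refinement multiplies radii by $\rho$, this shows the construction terminates after finitely many steps and produces a \emph{finite} cover with $r_i\ge s$.

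**Main obstacle.** The delicate point is the contraction estimate in the packing bound: one must ensure that covering the $\rho r_i$-neighbourhood of the $(m-3)$-plane $V_i$ inside $\Bbf_{r_i}(x_i)$ by balls \emph{centred on $D$} (not arbitrary balls) still costs only $\lesssim_m \rho^{-(m-3)}$ balls, and that the resulting content genuinely contracts uniformly in the iteration rather than merely staying bounded at each single step. This is exactly the mechanism of~\cite{DLMSV}*{Proposition~7.2} and~\cite{NV_Annals}, and the cleanest route is to follow their argument essentially verbatim, substituting Lemma~\ref{lem:cover1} for the corresponding covering lemma there; the only genuinely new input is that our frequency is only \emph{almost}-monotone (with the error $\boldsymbol{m}_0^{\gamma_4}$ of \eqref{e:almostmon}), so one has to absorb this error by taking $\varepsilon_4$ small, which is why $\varepsilon_4$ is allowed to depend on $\delta$ in the statement.
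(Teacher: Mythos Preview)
Your overall strategy is correct and matches the paper's, but there is a genuine gap in the recursion step. You write that on each ``remaining'' ball $\Bbf_{r_i}(x_i)$ (one not frozen by smallness or by a uniform frequency drop) the pinched set $F_i$ lies in $\Bbf_{\rho r_i}(V_i)$ for an $(m-3)$-plane $V_i$, and then you propose to cover \emph{all} of $D\cap\Bbf_{r_i}(x_i)$ by at most $C(m)\rho^{-(m-3)}$ balls of radius $\rho r_i$. This is not possible in general: Lemma~\ref{lem:cover1}(c) only traps the pinched set $F_i\subset D\cap\Bbf_{r_i}(x_i)$ in the tube, not the entire intersection. The points of $D\cap\Bbf_{r_i}(x_i)\setminus F_i$ --- those where $\Ibf(\cdot,\rho r_i)\le U-\delta$ --- may be spread throughout $\Bbf_{r_i}(x_i)$ and would in general require $\sim\rho^{-m}$ balls of radius $\rho r_i$ to cover, destroying the contraction. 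Such points can certainly be present on a ball that was not frozen by your criterion ``$\sup_{D\cap\Bbf_{r_i}(x_i)}\Ibf(\cdot,\rho r_i)\le U-\delta$'': that criterion fails precisely when \emph{some} point is pinched, but says nothing about the rest.

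The paper's fix is to retain the original ball $\Bbf_{r_i}(x_i)$ itself as an output ball and assign to it the set
\[
A_i'\coloneqq\bigl(D\cap\Bbf_{r_i}(x_i)\bigr)\setminus\bigcup_{k}\Bbf^{i,k}\,,
\]
where $\{\Bbf^{i,k}\}_k$ are the $\le C(m)\rho^{-(m-3)}$ tube-covering balls; since $F_i\subset\bigcup_k\Bbf^{i,k}$, every $y\in A_i'$ satisfies $\Ibf(y,\rho r_i)\le U-\delta$, so (c) holds after replacing $\Bbf_{r_i}(x_i)$ by at most $C(m)\rho^{-m}$ balls of radius $\rho r_i$ (so that the frequency is evaluated at the ball's own radius; since $\rho=\rho(m)$ is fixed this costs only a dimensional factor in (b)). The recursion then proceeds \emph{only} on the tube balls $\Bbf^{i,k}$, and the geometric-series contraction you describe goes through.

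Two smaller points. First, the paper applies Lemma~\ref{lem:cover1} with $\sigma=s$ at every stage (not $\sigma=\rho\tau$), so the ``small'' balls it produces are immediately at the target scale; with your choice $\sigma=\rho\tau$, the frozen small balls have $10\rho^2\tau\le r_i\le\rho\tau$ and satisfy neither alternative of (c). Second, the termination of the iteration \emph{inside} Proposition~\ref{prop:cover2} is by radius shrinkage (active radii decrease by a factor comparable to $\rho$ at each step), not by the frequency-drop count $\lceil(\Lambda-\tfrac32)/\delta\rceil$; the latter is what bounds the number of \emph{re-applications} of Proposition~\ref{prop:cover2} in the subsequent proof of the Minkowski content bound.
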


\subsection{Proof of Lemma \texorpdfstring{\ref{lem:cover1}}{cover1}} Without less of generality we assume that $x_0 =0$.

\medskip

        \proofstep{Step 1: Inductive procedure.}
        We inductively construct special families $\Cscr(k)$, $k = \{0,...\kappa\}$, where $\kappa = - \lfloor \log_{10\rho}(8 \sigma)\rfloor$, consisting of covers by balls of $D$, such that
        \[
            \Bbf_r(x) \in \Cscr(k) \implies r = \frac{(10\rho)^j}{8} \ \text{for some $j \in \{0,\dots,k\}$}\, .
        \]
        The procedure goes as follows. At the starting step
        $\Cscr(0) = \{\Bbf_{1/8}\}$. Suppose now that we have already constructed the cover $\Cscr(k)$. Take a ball $\Bbf_r(x) \in \Cscr(k)$. If $r = \frac{(10\rho)^j}{8}$ with $j < k$, place $\Bbf_r(x)$ into $\Cscr(k+1)$.
        
        If $r= \frac{(10\rho)^k}{8}$, for fixed $\delta > 0$ (to be determined later) consider the set
        \[
            F(\Bbf_r(x)) \coloneqq D \cap \Bbf_r(x) \cap \set{y}{\Ibf(y,\rho r) \in (U -\delta, U+\delta)}.
        \]
        Notice that this is the set of points in $D \cap \Bbf_r(x)$ at which the frequency is pinched by at most $\delta$. We then have two possibilities:
        \begin{align}
            &\text{keep $\Bbf_r(x)$ in $\Cscr(k+1)$ if $F(\Bbf_r(x))$ does not $\rho r$-span an $(m-2)$-dimensional} \tag{K}\label{eq:keep}\\
            &\text{affine subspace}; \notag\\
            &\text{discard $\Bbf_r(x)$ from $\Cscr(k+1)$ if $F(\Bbf_r(x))$ $\rho r$-spans an $(m-2)$-dimensional} \tag{D}\label{eq:discard} \\
            &\text{affine subspace $V = V(\Bbf_r(x)) \subset \R^{m+n}$}\notag.
        \end{align}
        Observe that if~\eqref{eq:keep} holds, then there is an $(m-3)$-dimensional space $V$ such that $F (\Bbf_r (x)) \subset \Bbf_{\rho r}(V)$, i.e. $B_r (x)$ satisfies condition \eqref{itm:coverc} of the statement of the lemma.
        
        We next wish to apply Lemma \ref{lem:noQpts} with $\rho = \tilde{\rho}$. Choosing $\delta \leq \frac{\eps_{\ref{lem:noQpts}}}{2}$, we may conclude that within every ball $\Bbf_r(x) \in \Cscr(k)$ for which~\eqref{eq:discard} holds, 
        \[
            D \cap \Bbf_r(x) \subset \Bbf_{\rho r}(V).
        \]
        We may thus replace
        \[
            \{\Bbf^i\}_i \coloneqq \set{\Bbf_{\frac{(10\rho)^{k}}{8}}(x) \in \Cscr(k)}{\text{\eqref{eq:discard} holds for $\Bbf_{\frac{(10\rho)^{k}}{8}}(x)$}}
        \]
        with a collection $\Fscr(k+1)$ of balls $\{\widetilde\Bbf^i\}_i$ with radius $\frac{(10\rho)^{k+1}}{8}$ that cover
        \[
            D \cap \bigcup_i \Bbf_{\frac{\rho (10\rho)^k}{8}}(V(\Bbf^i)).
        \]
        We may moreover ensure that the concentric balls $\frac{1}{5}\widetilde\Bbf^i$ are pairwise disjoint, and that their centers are contained in $D \cap \Bbf_r(x) \cap \bigcup_i (V(\Bbf^i))$. Letting $\Bscr(k+1)$ be a Vitali subcover of
        \[
            \set{\Bbf_r(x) \in \Cscr(k)}{\text{\eqref{eq:keep} holds}},
        \]
        and letting
        \[
            \Cscr(k+1) \coloneqq \Fscr(k+1) \cup \Bscr(k+1),
        \]
        we have a new cover of $D$.

        \medskip
        
        \proofstep{Step 2: Frequency pinching.}
        Before we continue, we first show that the following frequency pinching estimate holds: for any $\xi > 0$, we can choose $\delta > 0$ sufficiently small (dependent on $\rho$ and $\xi$) such that either
        \begin{equation}\label{eq:coverfreqpinch}
            \text{$\Cscr(\kappa) = \{\Bbf_{1/8}\}$ \qquad or \qquad $\Ibf\left(x, \frac{\rho r}{5}\right) \in [U - \xi, U+\xi] \quad \forall \ \Bbf_r(x) \in \Cscr(k), \ k = 0,\dots,\kappa$.}
        \end{equation}
        Indeed, if we do not stop refining immediately, then for any ball $\Bbf_r(x) \in \Cscr(k)$ we have $r = \frac{(10\rho)^{j+1}}{8}$ for some $j+1 \leq k$. Thus, by construction, we know that there exists a ball $\Bbf'$ of radius $\frac{(10\rho)^j}{8}$ satisfying~\eqref{eq:discard}; namely $F(\Bbf')$ $\rho \frac{(10\rho)^j}{8}$-spans an $(m-2)$-dimensional affine subspace $V$, and that $x \in D\cap V \cap \Bbf'$. There must hence exist at least one other point $z \in F(\Bbf')\cap V$. We now wish to apply Lemma~\ref{lem:smallspatialvar} with $\rho =\tilde{\rho}$ and $\bar \rho = \frac{\rho}{5}$: provided that $\delta \leq \min\{\eps_{\ref{lem:smallspatialvar}}, \frac{\xi}{2}\}$, we have
        \[
            \left|\Ibf\left(x,\frac{\rho r}{5}\right) - \Ibf(z,r)\right| \leq \frac{\xi}{2}. 
        \]
        Since $z \in F(\Bbf')$, this yields the second alternative in~\eqref{eq:coverfreqpinch}.

        \medskip
                
        \proofstep{Step 3: Discrete $(m-2)$-dimensional measures and coarse packing estimate.}
        It remains to check that the covering $\Cscr(\kappa)$ satisfies the conditions~\eqref{itm:covera}-\eqref{itm:coverc}. Since $\kappa$ is the smallest integer such that $\frac{(10\rho)^\kappa}{8} \leq \sigma$, the conditions~\eqref{itm:covera} and~\eqref{itm:coverc} are a trivial consequence of the construction of the inductive covering.
        
        Hence, we just need to verify that~\eqref{itm:coverb} holds; namely, that
        \[
            \sum_i s_i^{m-2} \leq C_R,
        \]
        where $\Cscr(\kappa) = \{\Bbf_{5s_i}(x_i)\}_i$. For this, we will make use of~\cite{NV_Annals}*{Theorem~3.4}. With this in mind, we introduce the discrete measures
        \[
            \mu \coloneqq \sum_i s_i^{m-2} \delta_{x_i}, \qquad \mu_s \coloneqq \sum_{i : s_i \leq s} s_i^{m-2}\delta_{x_i}.
        \]
        First of all, note that $\mu_s \equiv 0$ for every $s < \bar{r} \coloneqq \frac{1}{5} \frac{(10\rho)^\kappa}{8}$, due to the construction of our covering.
        
        We proceed to inductively show that
        \begin{equation}\label{eq:packing}
            \mu_s(\Bbf_s(x)) \leq C_R s^{m-2} \quad \text{for every $x \in \Bbf_\tau$ and every $s = \bar{r}2^j$, \quad $j = 0,\dots,J \coloneqq \log_2\big(\frac{\bar r}{8}\big) - 4$.}
        \end{equation}
        The base case is trivially true, since
        \[
            \mu_{\bar r}(\Bbf_{\bar r}(x)) = N(x,\bar r)\bar r^{m-2},
        \]
        where $N(x,\bar r) \coloneqq \# \set{i}{s_i = \bar r}$. This is a dimensional constant, since we have a Vitali cover.
        
        Suppose that~\eqref{eq:packing} holds for $0,\dots,j$, for some $j < J$. Leting $r \coloneqq \bar r 2^j$, we will first of all show that
        \begin{equation}\label{eq:coarse}
            \mu_{2r}(\Bbf_{2r}(x)) \leq C(m)C_R (2r)^{m-2},
        \end{equation}
        for some dimensional constant $C(m)$. This follows by simply subdividing $\mu_{2r}$ into
        \[
            \mu_{2r} = \mu_r + \sum_{i : r < s_i \leq 2r} s_i^{m-2}\delta_{x_i} \eqqcolon \mu_r + \tilde\mu_r,
        \]
        combined with the observation that $\Bbf_{2r}(x)$ can be covered by at most $C(m)$ balls $\Bbf_r(x_i)$, on each of which we may use the inductive assumption, meanwhile
        \begin{equation}\label{eq:coarse2}
            \tilde\mu_r(\Bbf_{2r}(x)) \leq \bar{N}(x,2r)(2r)^{m-2} \leq C(m)(2r)^{m-2},
        \end{equation}
        where $\bar{N}(x,2r) \coloneqq \# \set{i}{r < s_i \leq 2r}$.

        \medskip
        
        \proofstep{Step 4: Inductive packing estimate}
        We will now improve the coarse bound~\eqref{eq:coarse} to the estimate
        \begin{equation}\label{eq:sharp}
            \mu_{2r}(\Bbf_{2r}(x)) \leq C_R (2r)^{m-2},
        \end{equation}
        where $C_R$ is the dimensional constant coming from~\cite{NV_Annals}*{Theorem~3.4}.
        
        Let $\bar\mu \coloneqq \mu_{2r} \mres \Bbf_{2r}(x)$. We claim that
        \begin{equation}\label{eq:discrpacking}
            \int_{\Bbf_t(y)} \int_0^t [\beta_{2,\bar\mu}^{m-2}(z,s)]^2\frac{\dd s}{s} \dd\bar\mu(z) < \delta_0^2 t^{m-2} \qquad \forall y \in \Bbf_{2r}(x), \ \forall t \in (0,2r],
        \end{equation}
        where $\delta_0 = \delta_0(m) > 0$ is as in~\cite{NV_Annals}*{Theorem~3.4} (denoted by simply $\delta$ therein).
        
        Firstly, notice that Proposition \ref{prop:beta2control} (coupled with the fact that $x_i \in \Sbf$) tells us that there exists $\alpha_0 >0$ (as in the statement of the proposition) such that we have
        \begin{equation}\label{eq:beta2control}
            [\beta_{2,\bar\mu}^{m-2}(x_i,s)]^2 \leq \frac{C}{s^{m-2}} \int_{\Bbf_s(x_i)}\bar{W}_s(w)\dd\bar\mu(w) + C\boldsymbol{m}_0^{\alpha_0}\frac{\bar\mu(\Bbf_{s}(x_i))}{s^{m-2-\alpha_0}} \qquad \forall s \in (0,2r],
        \end{equation}
    	where $\bar{W}_s(x_i) \coloneqq W_s^{32s}(x_i)\mathbf{1}_{s> s_i}$, since the balls $\Bbf_{s_i}(x_i)$ are pairwise disjoint.
    	
        Let us first deal with the second term on the right-hand side of~\eqref{eq:beta2control}. Consider first the case $r < t \leq 2r$. Then, due to~\eqref{eq:coarse}, we have
        \begin{align*}
            \bar{N}(y,2r)(2r)^{m-2} &\int_{\Bbf_t(y)} \int_0^t\frac{\bar\mu(\Bbf_{s}(x_i))}{s^{m-2-\alpha_0}}\frac{\dd s}{s} \dd\bar\mu(z) \\
            &\leq \boldsymbol{m}_0^{\alpha_0}\big(C(m) + \bar{N}(y,2r)\big)(C_R(m))^2 (2r)^{2(m-2)} t^{-(m-2-\alpha_0)}\\
            &\leq C(m) \boldsymbol{m}_0^{\alpha_0} t^{m-2+\alpha_0}.
        \end{align*}
        In the case $t \leq r$, we first of all notice that if there exists $x_i \in \Bbf_t(y)$ with $s_i \geq 3t$, then there are no other points $x_j \in \Bbf_{2t}(y)$ since the balls $\Bbf_{s_i}(x_i)$ are pairwise disjoint. This in turn implies that $\beta_{2,\bar\mu}^{m-2}(z,s) = 0$ for every $z \in \Bbf_t(y)$ and every $s \leq t$. 
        
        Thus, we may assume that $s_i < 3t$ for every $x_i \in \Bbf_t(y)$, in which case we can use the inductive assumption, combined with the fact that $\mu_{2r} = \mu_t + \tilde\mu_t$ to conclude that
        \[
            \int_{\Bbf_t(y)} \int_0^t C\boldsymbol{m}_0^{\alpha_0}\frac{\bar\mu(\Bbf_{s}(x_i))}{s^{m-2-\alpha_0}}\frac{\dd s}{s} \dd\bar\mu(z) \leq C_R(m) C(m)\boldsymbol{m}_0^{\alpha_0}t^{m-2+\alpha_0}.
        \]
        Here we have used that $\Bbf_t(y)$ can be covered by at most $\bar{N}(y,3t)$ balls of radius $s_i \in [t,3t)$, so we get an estimate analogous to~\eqref{eq:coarse2} for $\tilde\mu_t$. Thus, by decreasing $\eps_4$ further if necessary, so that $\boldsymbol{m}_0^{\alpha_0}$ is small enough (dependent only on $m$), we can indeed ensure that~\eqref{eq:discrpacking} holds.
        
        To control the frequency term $W_s^{32s}(x_i)$ on the right-hand side of~\eqref{eq:beta2control}, we proceed in almost exactly the same way as in the proof of~\cite{DLMSV}*{Lemma~7.3}. Nevertheless, we repeat the argument here for the convenience of the reader.
        
        Let $t \in (0,2r]$. Due to the inductive assumption~\eqref{eq:packing} and Fubini's theorem, we have
        \begin{align*}
        	\int_{\Bbf_t(y)} \int_0^t \frac{1}{s^{m-2}} &\int_{\Bbf_s(z)} \bar{W}_s(w)\dd\bar\mu(w)\frac{\dd s}{s} \dd\bar{\mu}(z) \\
        	&=\int_0^t\frac{1}{s^{m-2}} \int_{\Bbf_t(y)} \int_{\Bbf_s(z)} \bar{W}_s(w)\dd\bar\mu(w)\dd\bar{\mu}(z)\frac{\dd s}{s} \\
        	&=\int_0^t \frac{1}{s^{m-2}} \int_{\Bbf_t(y)} \int_{\Bbf_s(z)} \bar{W}_s(w)\dd\mu_s(w) \dd\mu_s(z) \frac{\dd s}{s}\\
        	&=\int_0^t \frac{1}{s^{m-2}} \int_{\Bbf_{s+t}(y)} \bar{W}_s(w) \int_{\Bbf_s(w)} \dd\mu_s(z) \dd\mu_s(w) \frac{\dd s}{s}\\
        	&=\int_0^r\frac{1}{s^{m-2}} \int_{\Bbf_{s+t}(y)} \bar{W}_s(w) \int_{\Bbf_s(w)} \dd\mu_s(z) \dd\mu_s(w)  \frac{\dd s}{s} \\
        	&\qquad+ \int_r^t \frac{1}{s^{m-2}} \int_{\Bbf_{s+t}(y)} \bar{W}_s(w) \int_{\Bbf_s(w)} \dd\mu_s(z) \dd\mu_s(w)  \frac{\dd s}{s}.
        \end{align*}
    	For the first term on the right-hand side, we use the inductive assumption and another application of Fubini's theorem to conclude that
    	\[
    		\int_0^r\frac{1}{s^{m-2}} \int_{\Bbf_{s+t}(y)} \bar{W}_s(w) \int_{\Bbf_s(w)} \dd\mu_s(z) \dd\mu_s(w)  \frac{\dd s}{s} \leq C_R \int_{\Bbf_{2t}(y)} \int_0^r \bar{W}_s(w) \frac{\dd s}{s} \dd\mu_t(w).
    	\]
    	Meanwhile, to estimate the second term on the right-hand side, we use the coarse bound~\eqref{eq:coarse2} to obtain
    	\[
    		\int_r^t \frac{1}{s^{m-2}} \int_{\Bbf_{s+t}(y)} \bar{W}_s(w) \int_{\Bbf_s(w)} \dd\mu_s(z) \dd\mu_s(w)  \frac{\dd s}{s} \leq C(m) \int_{\Bbf_{4r}(y)} \int_r^t \bar{W}_s(w) \frac{\dd s}{s} \dd\mu_t(w).
    	\]
    	In conclusion, we have
    	\begin{equation}\label{eq:inductivefreq}
    		\int_{\Bbf_t(y)} \int_0^t \frac{1}{s^{m-2}} \int_{\Bbf_s(z)} \bar{W}_s(w)\dd\bar\mu(w)\frac{\dd s}{s} \dd\bar{\mu}(z) \leq C(m) \int_{\Bbf_{2t}(y)} \int_0^t \bar{W}_s(w) \frac{\dd s}{s} \dd\mu_t(w).
    	\end{equation}
    	We may now estimate the total frequency pinching between scale $0$ and $t$ as follows. Letting $N$ be the smallest integer such that $2^N s_i \geq t$, the almost-monotonicity of the frequency at all scales $s \in ]0,1]$ around $x_i$ yields
    	\begin{align*}
    		\int_{s_i}^t &W_s^{32s}(x_i)\frac{\dd s}{s} \\
    		&\leq \sum_{j=0}^N \int_{2^j s_i}^{2^{j+1}s_i} W_s^{32s}(x_i)\frac{\dd s}{s}  \\
    		&\leq \sum_{j=0}^N \left[(1+ C\boldsymbol{m}_0^{\alpha_0} (2^{j+6}s_i)^\beta)\Ibf(x_i, 2^{j+6}s_i) - (1 - C\boldsymbol{m}_0^{\alpha_0} (2^{j}s_i)^\beta)\Ibf(x_i, 2^j s_i) \right]\int_{2^j s_i}^{2^{j+1}s_i} \frac{\dd s}{s}  \\
    		&\leq \log 2 \sum_{j=0}^N W_{2^j s_i}^{2^{j+6}s_i} (x_i) + {C}\boldsymbol{m}_0^{\alpha_0} t^\beta \\
    		&=\log 2 \sum_{\ell=0}^5\sum_{j=0}^N W_{2^{j+\ell} s_i}^{2^{j+\ell+1}s_i}(x_i) + {C}\boldsymbol{m}_0^{\alpha_0} t^\beta \\
    		&=\log 2 \sum_{\ell=0}^5 W_{2^{\ell}s_i}^{2^{\ell+N+1}s_i}(x_i) + {C}\boldsymbol{m}_0^{\alpha_0} t^\beta \\
    		&\leq 6 W_{s_i}^{1/8} (x_i) \log 2 + {C}\boldsymbol{m}_0^{\alpha_0}t^\beta \\
    		&\overset{\eqref{eq:coverfreqpinch}}{\leq} 6\xi \log 2 + {C}\boldsymbol{m}_0^{\alpha_0}t^\beta.
    	\end{align*}
    	A combination of the inductive assumption for $t \leq r$ and the coarse bound~\eqref{eq:coarse} tells us that $\mu_t(\Bbf_{2t}(y)) \leq C(m)t^{m-2}$. Thus, we conclude that
    	\[
    		\int_{\Bbf_{2t}(y)}\int_0^t \bar{W}_s(w)\frac{\dd s}{s} \dd\mu_t(w) \leq C(m)\xi t^{m-2} + {C}\boldsymbol{m}_0^{\alpha_0}t^{m-2+\beta}.
    	\]
    By choosing $\eps_4$ even smaller to decrease $\boldsymbol{m}_0^{\alpha_0}$ further if necessary, and taking $\xi$ sufficiently small (which relies on choosing $\delta$ small enough), we successfully establish the tighter inductive packing estimate~\eqref{eq:sharp}.
    \qed

\subsection{Proof of Proposition \texorpdfstring{\ref{prop:cover2}}{cover2}}
        We may once again assume that $x_0 = 0$. First, let $\eps_4$ be as in Lemma \ref{lem:cover1}; we will later choose it to be smaller if necessary. We will apply Lemma~\ref{lem:cover1} iteratively to build families of ``stopping time regions", where a new covering is built within a large ball on which we stopped the previous covering procedure early. We will show that the iteration can be stopped after finitely many steps, at which point we will have packed the singularities tightly enough to obtain the desired conclusion. Fix $\tau < \frac{1}{8}$ arbitrarily, and for now also fix the parameter $\rho$ arbitrarily; it will be determined later. We first apply Lemma~\ref{lem:cover1} with our fixed choice of $\tau$ and $\sigma = s$.
        
        This yields a cover $\Ccal(0) \coloneqq \{\Bbf_{r_i}(x_i)\}$. We can subdivide this cover into the `good' balls $\Gcal(0) \coloneqq \set{\Bbf_{r_i}(x_i)}{r_i \leq s}$ and the `bad' balls $\Bcal(0) \coloneqq \set{\Bbf_{r_i}(x_i)}{r_i >s}$.
        
        Construct a new cover $\Ccal(1)$ of $D$ as follows. Place all balls in $\Gcal(0)$ into $\Ccal(1)$. For each ball $\Bbf_{r_i}(x_i) \in \Bcal(0)$, Lemma~\ref{lem:cover1}~\eqref{itm:coverc} tells us that all points $y \in D\cap\Bbf_{r_i}(x_i)$ with $\Ibf(y,\rho r_i) \in (U - \delta_{\ref{lem:cover1}}, U + \delta_{\ref{lem:cover1}})$ are contained in a $\rho r_i$-tubular neighbourhood of $V_i$ for some $(m-3)$-dimensional affine space $V_i \subset \mathbb R^{m+n}$. 
        
        We may thus cover $\Sbf\cap\Bbf_{\rho r_i}(V_i)\cap \Bbf_{r_i}(x_i)$ by at most $C(m) \rho^{-(m-3)}$ balls $\{\Bbf^{i,k}\}_{k=1}^{N(i)}$ of radius $2\rho r_i$, centered at points in $\Sbf$. 
        
        For any given index $i$, there are now two possibilities; either
        \begin{enumerate}[(i)]
            \item\label{itm:stop} $2\rho r_i < s$: then we include both $\Bbf_{r_i}(x_i)$ and the balls $\{\Bbf^{i,k}\}_k$ into $\Ccal(1)$ and stop refining for this $i$;
            \item\label{itm:go} $2\rho r_i \geq s$: then we apply Lemma~\ref{lem:cover1} to each ball $\Bbf^{i,k}$ for this fixed $i$ (with $\tau = 2 \rho r_i$ and $\sigma = s$), yielding a new cover of $\Bbf^{i,k}$ by balls. We place both $\Bbf_{r_i}(x_i)$ and these new balls (for each $k = 1,\dots,N(i)$) into the new cover $\Ccal(1)$.
        \end{enumerate}
        We can then iterate this procedure inductively, only at each stage $k$ letting 
        \[
            \Bcal(k) = \set{\Bbf_{r_i}(x_i) \in \Ccal(k)}{s< r_i \leq 2\rho r_j \ \text{for some} \ \Bbf_{r_j}(x_j) \in \Ccal(k-1)},
        \]
        until after finitely many steps of the iteration, we obtain a cover $\Ccal(\ell)$ where the radius of every ball is no larger than $s$. Note that $\ell = \ell(\rho, s)$, and that as long as we choose $\rho \leq \frac{1}{2C(m)}$, we have
        \[
            \sum_{\Bbf_{r_i}(x_i) \in \Ccal(\ell)} r_i^{m-2} \leq 2\sum_{\Bbf_{r_j}(x_j) \in \Ccal(0)} r_j^{m-2} \leq 2C_R.
        \]
        Now if there are any balls of radius $r_i < s$ in our covering $\Ccal(\ell)$, we may replace them with concentric balls of radius $s$; since $\rho = \rho(m)$ is now fixed, this would only increase the packing estimate~\eqref{itm:packing} by a factor of $C(m)$, since no ball can be smaller than $10\rho s$. 
        
        Now let $\delta \coloneqq \delta_{\ref{lem:cover1}}(m,n,Q,\Lambda,\rho)$ for our choice of $\rho =\rho(m)$. Making use of the almost-monotonicity~\eqref{eq:simplify13} of the frequency and the uniform upper frequency bound \eqref{eq:simplify1}, for any given $\tau < \frac{1}{8}$,  any $y \in \Bbf_\tau(x)\cap \mathbf{S}$ and any $\rho < \tau$ we have
        \[
        	\Ibf(y,\rho) \leq \Ibf(y,\tau) + C\boldsymbol{m}_0^{\gamma_4}\tau^{\gamma_4} \leq U + C\boldsymbol{m}_0^{\gamma_4}\tau^{\gamma_4} \leq U + C\eps_4^{2\gamma_4}.
        \]
        
        Thus, choosing $\eps_4$ smaller if necessary, we may ensure that $\Ibf(y,\rho) < U + \delta$ for every $y \in \Bbf_\tau(x)$ and every $\rho < \tau$.
        
        Finally, if $\Bbf_{r_i}(x_i) \in \Gcal(\ell)\cup \Bcal(\ell)$, let $A_i \coloneqq D \cap \Bbf_{r_i}(x_i)$. On the other hand, if $\Bbf_{r_i}(x_i) \in \Bcal(k)$ for $k \leq \ell -1$, let 
        \begin{equation}\label{eq:freqdrop}
            A_i' \coloneqq \left(D \cap \Bbf_{r_i}(x_i)\right)\setminus \bigcup_{\substack{\Bbf_{r_j}(x_j) \subset \Bbf_{r_i}(x_i) \\ \Bbf_{r_j}(x_j)\in \Bcal(k + 1)}} \Bbf_{r_j}(x_j).
        \end{equation}
        Observe that in each $A_i'$ in~\eqref{eq:freqdrop}, we necessarily have
        \[
            \sup_{y \in A_i'} \Ibf(y,\rho r_i)  \leq U - \delta,
        \]
        due to our choice of $\tau$.
        Thus, since $\Ibf(y,\rho r_i) \geq c(m) \Ibf(y,r_i)$, we may replace $\Bbf_{r_i}(x_i)$ with a collection of at most $C(m)\rho^{-m}$ balls of radius $\rho r_i$ that covers $A_i'$, which again only increases the packing estimate~\eqref{itm:packing} by a dimensional constant. We may then let $A_j \coloneqq A_i' \cap \Bbf_{r_j}(x_j)$ for each ball $\Bbf_{r_j}(x_j)$ in this cover of $A_i'$.
        \qed

\subsection{Proof of the bound \texorpdfstring{\eqref{e:Minkowski-high}}{e:Minkowski-high} in Theorem \texorpdfstring{\ref{t:main-quantitative}}{t:main-quantitative}(iii)}\label{ss:Minkowski}

With the latter proposition at hand we are in a position to conclude the Minkowski content bound in Theorem~\ref{t:main-quantitative}(iii) from Proposition~\ref{prop:cover2}. The proof of this is almost identical to the proof of~\cite{DLMSV}*{Theorem~2.5}, but we sketch it here nevertheless.


		Let us first establish the upper Minkowski content bound. Let $\tau$ be as in Proposition~\ref{prop:cover2}, and cover $\Bbf_{1/8}$ by a family $\Fcal_0$ of at most $N(m)$ balls of radius $\tau$. Due to~\eqref{eq:simplify1}, we have
		\[
			U(x)\coloneqq \sup\set{\Ibf(y,\tau)}{y \in \Sbf\cap \Bbf_\tau(x)} \leq \Lambda.
		\]
		Fix a ball $\Bbf_\tau(x) \in \Fcal_0$. Applying Proposition~\ref{prop:cover2} with an arbitrary fixed choice of $s \in ]0,\tau[$ and $D = \Sbf\cap\Bbf_\tau(x)$, we get a resulting decomposition of $\Sbf\cap\Bbf_\tau(x)$ into sets $\{A_{i}\}_i$, with $A_{i} \subset \Bbf_{s_{i}}(x_{i})$. Now consider the collection
		\[
			\Bcal_0(x) \coloneqq \set{\Bbf_{s_i}(x_i)}{s_{i} > s}.
		\]
		Notice that for every ball $\Bbf_{s_{i}}(x_{i}) \in \Bcal_0(x)$, we have
		\[
			\sup\set{\Ibf(y,s_{i})}{y \in A_i} \leq U(x) - \delta_{\ref{prop:cover2}}.
		\]
		We can now once again cover each such $A_i$ by $N(m)$ balls of radius $\tau s_i$, and then for each such ball $\Bbf_{\tau s_i}(x)$, apply Proposition~\ref{prop:cover2} again to $D = A_i\cap\Bbf_{\tau s_i}(x)$, with the same fixed $s$ to yield a new decomposition $\{A_{i,j}\}_j$ of each $A_i$, with corresponding balls $\{\Bbf_{s_{i,j}}(x_{i,j})\}$ for which we have
		\[
			\sum_{i,j} s_{i,j}^{m-2} \leq C(\tau, m) C_V \sum_{i} s_i^{m-2} \leq C(\tau, m)^2 C_V^2.
		\]
		In addition, observe that either $s_{i,j} = s$ or
		\[
			\sup\set{\Ibf(y,s_{i,j})}{y \in A_{i,j}} \leq U - 2\delta_{\ref{prop:cover2}}.
		\]
	In the latter case, we repeat the above refined covering step. Iterating this procedure, for each $k \in \N$ we can find a decomposition $\{A_i^{(k)}\}_i$ with a corresponding covering by balls $\{\Bbf_{s_i^{(k)}}(x_i^{(k)})\}_i$ such that
	\begin{equation}\label{eq:packing2}
		\sum_{i} [s_{i}^{(k)}]^{m-2} \leq C(\tau, m)^k C_V^k,
	\end{equation}
	and for which
	\[
		\sup\set{\Ibf(y,s_{i}^{(k)})}{y \in A_{i}^{(k)}} \leq U - k\delta_{\ref{prop:cover2}}.
	\]
	Thus, this inductive procedure terminates after finitely many steps, and so we end up with a cover of $D_0$ by balls of radius exactly $s$, for which the $(m-2)$-dimensional upper Minkowski content bound~\eqref{eq:packing2} is indeed a dimensional constant.
    \qed

\section{Minkowski content bound in Theorem \ref{t:main-low-one-piece}}\label{s:A2}
Here, we demonstrate the validity of the Minkowski content bound \eqref{e:Minkowski-low} of Theorem \ref{t:main-low-one-piece}. A crucial ingredient is the following lemma, which allows, given two points $z,w\in \bits$ at a given scale, to compare the universal frequency function of $q$ at that scale with the frequency function computed on the center manifold relative to $z$.

\begin{lemma}\label{l:compare_frequency}
There exists a constant $C = C_{\ref{l:compare_frequency}}(m,n,Q, \gamma, K)$ with the following property. Assume $T$ and $\bits$ are as in Theorem \ref{t:main-low-one-piece}, let $z,w\in \bits$ with $w\in \Bbf_{\gamma^k} (z)$ and let $s$ be a scale which satisfies $\gamma^{k+1}\leq s \le \gamma^k$. Finally consider the point $\bar w= \gamma^{-k} (\mathbf{p}_{z,k} (w) -z)$. Then 
\begin{equation}\label{e:compare-frequency}
|\mathbf{I} (w,s) - \mathbf{I}_{z,k} (\bar w, \gamma^{-k} s)| \leq C \boldsymbol{m}_{w,k}^{\gamma_4}\, . 
\end{equation}
\end{lemma}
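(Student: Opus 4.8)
The statement compares two frequency functions: the universal frequency $\mathbf{I}(w,s)$, which by Definition \ref{def:univfreq} equals $\mathbf{I}_{w,k}(p_{w,k},\gamma^{-k}s)$ (the frequency computed on the center manifold $\mathcal{M}_{w,k}$ adapted to the point $w$), and $\mathbf{I}_{z,k}(\bar w,\gamma^{-k}s)$, the frequency computed on the center manifold $\mathcal{M}_{z,k}$ adapted to a nearby point $z$, evaluated at the projection $\bar w$ of $w$. Both center manifolds are constructed by the same algorithm from \cite{DLS16centermfld}, but from the rescaled currents $T_{w,\gamma^k}$ and $T_{z,\gamma^k}$ respectively; since $w \in \Bbf_{\gamma^k}(z)$, after rescaling by $\gamma^{-k}$ the two currents differ by a translation of size $O(1)$ and are both close to the same plane (with excess $\boldsymbol{m}_{w,k}, \boldsymbol{m}_{z,k}$ comparable and both controlled by $C\gamma^{\bar\alpha k}$ via Proposition \ref{prop:excess-decay-2}). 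The plan is to show that the two normal approximations $N_{w,k}$ and $N_{z,k}$, transported onto a common domain, differ in $W^{1,2}$ by an error of size $\boldsymbol{m}^{1/2+\gamma_4}$ relative to the relevant $L^2$ and Dirichlet masses, and that therefore the quotients $\mathbf{D}/\mathbf{H}$ defining the two frequencies differ by $O(\boldsymbol{m}^{\gamma_4})$ after invoking the uniform two-sided bounds of Corollary \ref{c:ulb-frequency-2} and Lemma \ref{lem:simplify-low}.

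\textbf{Key steps.} First I would reduce to the rescaled picture: replace $T$ by $T_{z,\gamma^k}$, so that $\mathcal{M}_{z,k}$ becomes a fixed center manifold with excess parameter $\boldsymbol{m}_{z,k}$, and $w$ becomes a point $w' = \gamma^{-k}(w-z) \in \Bbf_1$, with $\mathcal{M}_{w,k}$ the center manifold built from the further translation-rescaling $T_{w',1}$ — but $T_{w',1}$ at scales $\sim 1$ is, up to the harmless translation, the same area-minimizing current, so the \emph{centered} construction of \cite{DLS16centermfld} applied to $T_{z,\gamma^k}$ at a Whitney-refined cube containing $w'$ produces a center manifold agreeing with $\mathcal{M}_{w,k}$ up to errors governed by $\boldsymbol{m}^{1/2}$ in $C^{3,\kappa}$. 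Second, using the estimates on the center manifolds (the $C^{3,\kappa}$ bounds and the interpolation/comparison estimates in \cite{DLS16centermfld}, e.g. Theorem 1.17 and the comparison of graphical parametrizations over overlapping cubes), I would establish that $\mathcal{M}_{w,k}$ and $\mathcal{M}_{z,k}$ are $C^{3,\kappa}$-close with the difference of their graphs bounded by $C\boldsymbol{m}^{1/2}$ on a ball of radius $\sim 1$ around $w'$, and that the associated normal approximations satisfy $\|N_{w,k}\circ \mathbf{e}_{w} - N_{z,k}\circ\mathbf{e}_z\|_{W^{1,2}} \le C\boldsymbol{m}^{\gamma_4}(\mathbf{D}+\mathbf{H})^{1/2}$ after transporting via the exponential maps, combining the approximation estimates \cite{DLS16centermfld}*{Theorem 2.4} on both sides with the closeness of the two base manifolds. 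Third, I would plug these into the definitions of $\mathbf{D}_{w,k}, \mathbf{H}_{w,k}$ versus $\mathbf{D}_{z,k}, \mathbf{H}_{z,k}$ at the common scale $\gamma^{-k}s \in [\gamma,1]$, using that $\phi$ is Lipschitz and that the geodesic distances $d_{w,k}, d_{z,k}$ agree up to $O(\boldsymbol{m}^{1/2})$ (properties (i)–(iii) of $d$ in Section \ref{ss:freq}), to get $|\mathbf{D}_{w,k} - \mathbf{D}_{z,k}| \le C\boldsymbol{m}^{\gamma_4}\mathbf{D}_{z,k}$ and $|\mathbf{H}_{w,k}-\mathbf{H}_{z,k}| \le C\boldsymbol{m}^{\gamma_4}\mathbf{H}_{z,k}$; finally, the two-sided comparability $\mathbf{H} \sim r\mathbf{D}$ from \eqref{eq:simplify2-low} and the uniform upper bound \eqref{eq:simplify1-low} turn the quotient estimate into $|\mathbf{I}_{w,k}(p_{w,k},\gamma^{-k}s) - \mathbf{I}_{z,k}(\bar w,\gamma^{-k}s)| \le C\boldsymbol{m}^{\gamma_4}$, which is \eqref{e:compare-frequency} upon recalling Definition \ref{def:univfreq} and that $\boldsymbol{m}_{w,k}$ and $\boldsymbol{m}_{z,k}$ are comparable.

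\textbf{Main obstacle.} The crux is controlling the discrepancy between the two normal approximations over a common domain. The subtlety is that $N_{w,k}$ and $N_{z,k}$ are defined over \emph{different} center manifolds, each of which is itself an output of the iterative Whitney-cube algorithm; one must therefore either re-run the \cite{DLS16centermfld} construction with the point $w'$ playing the role of the origin and compare the outputs cube-by-cube (exploiting the fact that the Whitney decomposition in a fixed region is stable under small translations of the base plane, which is precisely the content of the kind of estimate in Proposition \ref{p:good-cubes} and the stopping-condition analysis), or invoke an existing comparison lemma of that flavour. Making this rigorous requires tracking how the reference planes $\pi_{w,k}$ versus $\pi_{z,k}$, the interpolating functions, and the tilted $L^2$/Dirichlet excesses transform — all of which are controlled by powers of $\boldsymbol{m}$ by the center manifold estimates, but assembling the bookkeeping so that the final exponent is a clean $\gamma_4$ (after possibly shrinking $\gamma_4$) is where the real work lies. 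The remaining steps are then routine given Lemma \ref{lem:simplify-low}, the properties of $d$, and the excess decay of Proposition \ref{prop:excess-decay-2}.
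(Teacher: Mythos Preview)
Your overall architecture is sound and matches the paper's: reduce to comparing two frequency functions, each computed with a different center manifold but for the same rescaled current, and show that the numerators $\mathbf{D}$ and denominators $\mathbf{H}$ agree up to a multiplicative factor $1+O(\boldsymbol{m}^{\gamma_4})$. The paper itself does not give details, pointing instead to \cite{DLSk1}*{Section 6.2} (specifically Lemma 6.10 there), which handles the analogous jump estimate when passing between consecutive center manifolds.

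However, there is a genuine gap in your sketch. The relative estimate you claim,
\[
\|N_{w,k}\circ\mathbf{e}_w - N_{z,k}\circ\mathbf{e}_z\|_{W^{1,2}} \le C\boldsymbol{m}^{\gamma_4}(\mathbf{D}+\mathbf{H})^{1/2},
\]
cannot be obtained from the absolute estimates you list. The $C^{3,\kappa}$-closeness of the two center manifolds and the approximation errors of \cite{DLS16centermfld}*{Theorem 2.4} only give an \emph{absolute} bound of the form $C\boldsymbol{m}^{1/2+\gamma}$ on the difference. To convert this into a relative error of size $\boldsymbol{m}^{\gamma_4}$ you must know that $\mathbf{H}$ (equivalently $\mathbf{D}$) is bounded \emph{below} by $c\,\boldsymbol{m}$ at the relevant scale. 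This is exactly what the paper singles out as ``the crucial ingredient'': Proposition \ref{p:good-cubes}(b) guarantees a stopping cube $L$ with $\ell(L)\ge\bar c_s\gamma$ violating the excess condition (EX), and the splitting-before-tilting estimates of \cite{DLS16blowup} then force the Dirichlet energy of the normal approximation over that region to be at least $c\,\boldsymbol{m}$. You invoke Proposition \ref{p:good-cubes} only for the stability of the Whitney decomposition under translation, which is the wrong role; without the lower bound, the error coming from the difference of the two graphs $\boldsymbol{\varphi}_{w,k}-\boldsymbol{\varphi}_{z,k}$ (which is genuinely of size $\boldsymbol{m}^{1/2}$, not smaller) could in principle dominate $\mathbf{H}^{1/2}$, and the frequency comparison would fail.

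Once you insert this lower bound (and note that $\boldsymbol{m}_{w,k}$ and $\boldsymbol{m}_{z,k}$ are comparable by \eqref{e:eub1}--\eqref{e:elb1} since $|w-z|<\gamma^k$), the rest of your plan goes through essentially as in \cite{DLSk1}*{Section 6.2}.
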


Once we rescale the current $T$ to $T_{w, r}$, we need to compare two frequency functions computed on two different center manifolds. The proof is entirely analogous to the argument of \cite{DLSk1}*{Section 6.2} used to estimate the jump of the universal frequency function when we change center manifolds and in particular Lemma \ref{l:compare_frequency} corresponds to \cite{DLSk1}*{Lemma 6.10}. The crucial ingredient is the presence of a ``stopping cube'' which is not too large and not too small, which is guaranteed by Proposition \ref{p:good-cubes}.

Now we are in a position to repeat the analogous covering arguments to those in the preceding appendix. First of all, we have the following lemma, which is the counterpart of Lemma \ref{lem:cover1} for $\flatS_Q^l$.

\begin{lemma}\label{lem:cover1-l}
        Let $\rho \leq 1/100$ be fixed so that $\bar{C}\rho^{\frac{1}{2K}} \leq 1$, where $\bar{C}$ is the constant in \cite{DLSk1}*{Proposition 7.2}, let $(10\rho)^\kappa = \sigma < \tau = (10\rho)^{j_0} < 1$ for some integers $\kappa$ and $j_0$ and let $\eta$ be as in Theorem \ref{prop:beta2control-low}. There exists $\varepsilon_5 = \varepsilon_5(\Lambda,m,n,Q) > 0$ such that the following holds. Suppose that $T$ is as in Theorem \ref{t:main-low-one-piece} for these choices of $\eta$ and $\eps_5$. Let $x_0\in\bits\cap\Bbf_{1}$, let $\gamma = 10\rho$, let $D \subset \bits \cap \Bbf_{\tau}(x_0)$ and let $U \coloneqq \sup_{w \in D}\Ibf(w, \tau)$.
        
        Then there exists $\bar\delta=\bar\delta(m,n,Q,\Lambda,\rho,K, J)>0$, a dimensional constant $C_R=C_R(m) > 0$ and a finite cover of $D$ by balls $\Bbf_{r_i}(x_i)$ such that
        \begin{enumerate}[(a)]
            \item\label{itm:covera-l} $r_i \geq (10\rho)^{\kappa+1}$;
            \item\label{itm:coverb-l} $\sum_i r_i^{m-2} \leq C_R \tau^{m-2}$;
            \item\label{itm:coverc-l} For every $i$, either $r_i = (10\rho)^\kappa$ or
            \[
            	F_i \coloneqq D \cap \Bbf_{r_i}(x_i) \cap \set{w}{\Ibf(w,\rho r_i) \in ]U -\bar\delta, U+\bar\delta[} \subset \Bbf_{\rho r_i}(V_i),
            \]
            for some $(m-3)$-dimensional subspace $V_i \subset \mathbb R^{m+n}$.
        \end{enumerate}
    \end{lemma}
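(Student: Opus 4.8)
The plan is to mimic almost verbatim the proof of Lemma~\ref{lem:cover1} from Appendix~\ref{s:A1}, with two modifications. First, the scales $(10\rho)^k$ in the inductive construction must be aligned with the geometric scales $\gamma^k$ governing the center manifolds $\Mcal_{x,k}$; this is why we impose $\gamma = 10\rho$, so that each ball in the covering has a radius that is an integer power of $\gamma$, and thus lives at a scale where a single center manifold is defined. Second, wherever the high-frequency proof invokes Lemma~\ref{lem:noQpts}, Lemma~\ref{lem:smallspatialvar}, and Proposition~\ref{prop:beta2control}, we must instead invoke their low-frequency counterparts Lemma~\ref{lem:noQpts-low}, Lemma~\ref{lem:smallspatialvar-low}, and Proposition~\ref{prop:beta2control-low}, and we must use Lemma~\ref{l:compare_frequency} to pass freely between the universal frequency $\Ibf(\cdot,\cdot)$ and the center-manifold frequencies $\Ibf_{z,k}(\cdot,\cdot)$ that appear in those statements. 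The constant $\bar C$ from \cite{DLSk1}*{Proposition 7.2} and the smallness $\bar{C}\rho^{1/(2K)}\leq 1$ enter precisely to guarantee, via Corollary~\ref{c:excess-upper-bound} and the excess decay, that as we descend from scale $\tau$ to scale $\rho\tau$ the excess stays below the threshold needed for all the center-manifold machinery to apply at each scale, and also that $\boldsymbol{m}_{x,k}$ decays geometrically.

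\textbf{The inductive covering.} As in Step~1 of the proof of Lemma~\ref{lem:cover1}, we build families $\Cscr(k)$ for $k=0,\dots,\kappa$, starting with $\Cscr(0)=\{\Bbf_\tau(x_0)\}$, where every ball in $\Cscr(k)$ has radius $(10\rho)^j = \gamma^j$ for some $j_0 \le j \le j_0+k$. Given $\Bbf_r(x)\in\Cscr(k)$ with $r=\gamma^{j_0+k}$, we examine the frequency-pinched set
\[
F(\Bbf_r(x)) := D\cap\Bbf_r(x)\cap\set{w}{\Ibf(w,\rho r)\in ]U-\bar\delta,U+\bar\delta[}\,.
\]
If $F(\Bbf_r(x))$ does not $\rho r$-span an $(m-2)$-dimensional affine subspace, it is contained in a $\rho r$-neighbourhood of some $(m-3)$-plane, so condition~\eqref{itm:coverc-l} holds and we keep the ball; otherwise we discard it. To run the dichotomy we first translate the pinching hypothesis on $\Ibf(w,\rho r)$ into a hypothesis of the form $W^{2r}_{\tilde\rho r}(x,j(k),y_i) < \tilde\eps$ on the center-manifold frequency, using Lemma~\ref{l:compare_frequency}: since $\boldsymbol{m}_{x,k}^{\gamma_4}$ is as small as we like (after shrinking $\eps_5$), the universal pinching controls the center-manifold pinching up to a negligible error. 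Then Lemma~\ref{lem:noQpts-low} (applied with its $\tilde\rho$, $\rho$, $\bar\rho$ chosen appropriately, and its threshold $\tilde\eps$ bounding $\bar\delta$ from above) gives $D\cap\Bbf_r(x)\subset\Bbf_{\rho r}(V)$ whenever the spanning alternative holds, and we refine inside the $(m-2)$-plane by a Vitali subcover of balls of radius $\gamma^{j_0+k+1}$, exactly as in the high-frequency case. The frequency-pinching propagation estimate --- the analogue of Step~2, equation~\eqref{eq:coverfreqpinch} --- follows from Lemma~\ref{lem:smallspatialvar-low} combined again with Lemma~\ref{l:compare_frequency}, so that along the entire refinement chain the center-manifold frequency at all the relevant centers stays within $\xi$ of $U$.

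\textbf{The packing estimate.} The verification of \eqref{itm:coverb-l}, i.e. that $\sum_i r_i^{m-2}\le C_R\tau^{m-2}$ with $C_R$ the dimensional constant from \cite{NV_Annals}*{Theorem 3.4}, is carried out by the same four-step discrete-measure argument as in the proof of Lemma~\ref{lem:cover1}: one introduces $\mu=\sum_i s_i^{m-2}\delta_{x_i}$ and the truncations $\mu_s$, proves a coarse $C(m)C_R$-bound by a covering argument, and then bootstraps to the sharp $C_R$-bound by verifying the Jones-$\beta_2$ smallness hypothesis~\eqref{eq:discrpacking} of \cite{NV_Annals}*{Theorem 3.4}. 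The Jones-$\beta_2$ input is provided by Proposition~\ref{prop:beta2control-low} in place of Proposition~\ref{prop:beta2control}; the extra feature is that $\beta_{2,\mu}^{m-2}(x_i,s)$ is now controlled by $s^{-(m-2)}\int_{\Bbf_s(x_i)}W^{32s}_{s/8}(x_i,k,\mathbf{p}_{x_i,k}(w))\,\di\mu(w)$ plus the error $\boldsymbol{m}_{x_i,k}^{\alpha_0}s^{-(m-2-\alpha_0)}\mu(\Bbf_s(x_i))$, and the geometric decay $\boldsymbol{m}_{x_i,k}\le C\gamma^{\bar\alpha k}$ (again from \cite{DLSk1}*{Proposition 7.2}) makes this error summable over scales exactly as in the proof of Proposition~\ref{prop:beta2control-low}. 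The telescoping of the pinching $\int_{s_i}^t W^{32s}_s(x_i)\,\di s/s$ into $6\xi\log 2 + C\boldsymbol{m}_0^{\alpha_0}t^\beta$ goes through verbatim, now using the almost-monotonicity estimate~\eqref{eq:simplify13-low} of Lemma~\ref{lem:simplify-low} and the uniform frequency bounds of Corollary~\ref{c:ulb-frequency-2}. Choosing $\xi$ (hence $\bar\delta$) and $\eps_5$ small enough as dictated by these estimates closes the induction.

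\textbf{Main obstacle.} The only genuinely new difficulty relative to the high-frequency appendix is the bookkeeping forced by the fact that points of $\bits$ do not lie on a single center manifold: every occurrence of a frequency value in the hypotheses or conclusions of Lemmas~\ref{lem:noQpts-low}, \ref{lem:smallspatialvar-low} and Proposition~\ref{prop:beta2control-low} refers to a center-manifold frequency $\Ibf_{z,k}$ for the center manifold attached to a specific point $z$ and scale $k$, whereas the covering construction and \cite{NV_Annals}*{Theorem 3.4} are naturally phrased in terms of the single universal frequency $\Ibf(w,r)$. Reconciling these requires applying Lemma~\ref{l:compare_frequency} at each step, and checking that the error term $C_{\ref{l:compare_frequency}}\boldsymbol{m}_{w,k}^{\gamma_4}$ it introduces is always absorbable into whichever smallness parameter ($\bar\delta$, $\xi$, or the $\delta_0$ of \cite{NV_Annals}*{Theorem 3.4}) is active --- which is fine because $\boldsymbol{m}_{w,k}$ decays geometrically and can moreover be made uniformly tiny by shrinking $\eps_5$. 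Once this is set up carefully, the rest is a faithful transcription of Appendix~\ref{s:A1}.
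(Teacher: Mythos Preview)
Your proposal is correct and follows essentially the same approach as the paper's proof. One small refinement worth noting: the telescoping of the pinching integral does not go through quite \emph{verbatim} from the high-frequency case --- you must first decompose $\int_0^t$ as $\sum_{j\geq j_0(t)}\int_{\gamma^{j+1}}^{\gamma^j}$, apply the almost-monotonicity \eqref{eq:simplify13-low} within each interval to bound $\bar W_s(x_i,j,x_i)$ by $W_1^{32}(x_i,j,x_i)$ plus a $\boldsymbol{m}_{x_i,j}^{\gamma_4}$ error, and then use Lemma~\ref{l:compare_frequency} once more (together with the BV estimate of Proposition~\ref{prop:bv}) to convert the center-manifold pinchings $W_1^{32}(x_i,j,x_i)$ into a telescoping sum of universal frequency increments, which is then controlled by $\xi$ via \eqref{eq:coverfreqpinch-l}. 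This is exactly what the paper does, and you have clearly anticipated the need for it.
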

    Note, once again, that the parameters $\varepsilon_5$ and $\eta$ are first chosen small enough so that we can apply Proposition \ref{prop:beta2control-low}. Then, $\eps_5$ is further decreased if necessary, so that $\boldsymbol{m}_0^{\alpha_0}$ falls below a desired small dimensional constant, allowing us to absorb a suitable error term. Lemma \ref{lem:cover1-l} will in turn be used to prove the following second efficient covering result, which is the analogue of Proposition \ref{prop:cover2} but for $\bits$.
    
    \begin{proposition}\label{prop:cover2-l}
    There is a choice of $\rho$ (and hence of $\gamma = 10 \rho$) such that the following holds.
        Let $\eta$ be as in Theorem \ref{prop:beta2control-low} and let $\tau = (10\rho)^{j_0}$ for some $j_0\in\N$. There exists $\delta(m,n,Q,\Lambda)>0$, $\varepsilon_5(\Lambda,m,n,Q,\delta) > 0$ and a dimensional constant $C_V = C_V(m) \geq 1$ such that the following holds.
        
        Assume that $T$ is as in Theorem \ref{t:main-low-one-piece} for the above choices of $\eta$ and $\eps_5$. Suppose that $x_0 \in \bits\cap\Bbf_{1}$ and let $D \subset \bits\cap\Bbf_\tau(x_0)$ and $U \coloneqq \sup_{y \in D} \Ibf(y,\tau)$. Then, for every $s=(10\rho)^\kappa \in ]0,\tau[$ as in Lemma \ref{lem:cover1-l}, there exists a finite cover of $D$ by balls $\Bbf_{r_i}(x_i)$ with $r_i \geq s$ and a decomposition of $D$ into sets $A_i \subset D$ such that
        \begin{enumerate}[(a)]
            \item $A_i \subset D \cap \Bbf_{r_i}(x_i)$;
            \item\label{itm:packing-l} $\sum\limits_i r_i^{m-2} \leq C_V \tau^{m-2}$;
            \item For every $i$ we have either $r_i = s$ or
            	\[
            		\sup_{y \in A_i}\Ibf(y,r_i) \leq U - \delta.
            	\]
        \end{enumerate}
    \end{proposition}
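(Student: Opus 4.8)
The plan is to transcribe the proof of Proposition~\ref{prop:cover2}, using Lemma~\ref{lem:cover1-l} as a black box and iterating it to produce ``stopping-time regions'', with the one structural modification that \emph{every} ball occurring in the construction has radius a power of $\gamma=10\rho$. This constraint is forced on us, because all of the estimates on which Lemma~\ref{lem:cover1-l} rests — the almost-monotonicity of Lemma~\ref{lem:firstvar-low}, the splitting Lemmas~\ref{lem:noQpts-low} and~\ref{lem:smallspatialvar-low}, the flatness control Proposition~\ref{prop:beta2control-low}, and the frequency comparison Lemma~\ref{l:compare_frequency} — are phrased relative to the center manifold $\Mcal_{x,k}$ attached to the interval $]\gamma^{k+1},\gamma^k]$, so Lemma~\ref{lem:cover1-l} may only be re-applied inside a sub-ball whose radius is again of the form $\gamma^{j}$. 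Accordingly we first fix $\rho=\rho(m)\le 1/100$ (hence $\gamma=10\rho$) small enough that the per-generation packing multiplier appearing below is $\le \tfrac12$ and $\bar C\rho^{1/(2K)}\le 1$; then we set $\delta$ to be the constant $\bar\delta$ supplied by Lemma~\ref{lem:cover1-l} for this $\rho$ (which, $K$ being fixed throughout, depends only on $m,n,Q,\Lambda$); and finally $\varepsilon_5$ is chosen small in terms of $\delta$ (and $m,n,Q,K$), small enough both for Lemma~\ref{lem:cover1-l} to apply and so that Proposition~\ref{prop:bv} together with the excess decay of Proposition~\ref{prop:excess-decay-2} forces $\sum_k\boldsymbol{m}_{x,k}^{\gamma_4}\le C(m,n,Q,K,\gamma)\,\varepsilon_5^{2\gamma_4}$ to be as small as we wish. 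Without loss of generality $x_0=0$ and $\tau=\gamma^{j_0}$.

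\textbf{The iteration.}
Apply Lemma~\ref{lem:cover1-l} with the given $\tau$ and threshold scale $s=\gamma^{\kappa}$, obtaining a cover $\Ccal(0)=\{\Bbf_{r_i}(x_i)\}$; split it into the ``good'' balls with $r_i=s$ and the ``bad'' balls with $r_i>s$, each of the latter satisfying alternative~\eqref{itm:coverc-l}, so that the set $F_i$ of $\delta$-frequency-pinched points of $D$ inside $\Bbf_{r_i}(x_i)$ lies in $\Bbf_{\rho r_i}(V_i)$ for an $(m-3)$-plane $V_i$. Keep the good balls; for each bad ball cover $\bits\cap\Bbf_{\rho r_i}(V_i)\cap\Bbf_{r_i}(x_i)$ by at most $C(m)\gamma^{-(m-3)}$ balls of radius $\gamma r_i$ centred on $\bits$ (legitimate since $\gamma r_i>\rho r_i$), and if $\gamma r_i<s$ stop refining that ball, otherwise apply Lemma~\ref{lem:cover1-l} afresh inside each of these radius-$(\gamma r_i)$ balls with the same threshold $s$; iterate. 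Each refinement replaces a ball of radius $r$ by at most $C(m)\gamma^{-(m-3)}$ balls of radius $\gamma r$, hence multiplies the corresponding contribution to $\sum r_i^{m-2}$ by at most $C(m)C_R\gamma$, so by our choice of $\rho$ the geometric series sums and $\sum_i r_i^{m-2}\le 2C_R\tau^{m-2}$ throughout; since radii shrink geometrically the procedure halts after finitely many steps $\ell=\ell(\rho,s)$ with all radii $\le s$, and replacing each residual ball of radius $<s$ by a concentric ball of radius $s$ — possible since by~\eqref{itm:covera-l} no ball is smaller than $\gamma s=\gamma^{\kappa+1}$ — costs only a dimensional factor, giving~\eqref{itm:packing-l} with $C_V=C_V(m)$. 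For the decomposition, to each final ball with $r_i=s$ assign $A_i:=D\cap\Bbf_{r_i}(x_i)$, and to a bad ball produced at stage $k$ assign $A_i':=(D\cap\Bbf_{r_i}(x_i))\setminus\bigcup\Bbf_{r_j}(x_j)$, the union over its descendant balls in the next stage; on $A_i'$ no point is $\delta$-pinched at scale $\rho r_i$, so $\Ibf(y,\rho r_i)\notin\,]U-\delta,U+\delta[$ there, and combined with the a priori bound $\Ibf(y,\rho r_i)<U+\delta$ this gives $\Ibf(y,\rho r_i)\le U-\delta$ on $A_i'$; subdividing $A_i'$ into $\le C(m)\rho^{-m}$ balls of radius $\rho r_i$ then yields the frequency drop at each ball's own scale, exactly as in Proposition~\ref{prop:cover2}.

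\textbf{The delicate point and the main obstacle.}
The one step that is not a verbatim copy is the a priori upper bound ``$\Ibf(y,\rho)<U+\delta$ for every $y\in\Bbf_\tau(x_0)\cap\bits$ and every scale $\rho<\tau$ used in the construction''. In Proposition~\ref{prop:cover2} this came from the pointwise almost-monotonicity~\eqref{eq:simplify13}, but here the universal frequency $\Ibf(y,\cdot)$ is only piecewise almost-monotone, with jumps at the scales $\gamma^k$, so instead one invokes Proposition~\ref{prop:bv}: the total decrease of $\log(1+\Ibf(y,\cdot))$ over $]\rho,\tau]$ is at most $C\sum_k\boldsymbol{m}_{y,k}^{\gamma_4}\le C\varepsilon_5^{2\gamma_4}$, which together with $\Ibf(y,\tau)\le U$ and the uniform two-sided bound of Corollary~\ref{c:ulb-frequency} yields $\Ibf(y,\rho)\le U+\delta$ once $\varepsilon_5$ is small. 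A further subtlety — already absorbed inside Lemma~\ref{lem:cover1-l} — is that whenever the universal frequency of a point $w$ is compared with the frequency computed on the center manifold of a nearby point $z$, one passes through Lemma~\ref{l:compare_frequency}, whose error $\boldsymbol{m}_{w,k}^{\gamma_4}$ is again killed by the smallness of $\varepsilon_5$. I expect the main obstacle to be purely organizational: keeping all radii geometric in $\gamma$ (so that Lemma~\ref{lem:cover1-l} and the center-manifold estimates remain applicable at every stage) while retaining the $(m-3)$-dimensional gain from~\eqref{itm:coverc-l}, and ordering the choices of the small parameters ($\rho$, then $\delta$, then $\varepsilon_5$) so that the packing constant stays dimensional and the frequency drop $\delta$ stays uniform across all generations of the iteration.
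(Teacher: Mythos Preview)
Your proposal is correct and follows essentially the same approach as the paper's own proof: iterate Lemma~\ref{lem:cover1-l} to build stopping-time regions, use the $(m-3)$-dimensional gain from alternative~\eqref{itm:coverc-l} to control the packing geometrically, and replace the pointwise almost-monotonicity of~\eqref{eq:simplify13} by the BV estimate of Proposition~\ref{prop:bv} for the a priori upper bound $\Ibf(y,\rho)<U+\delta$. Your insistence on keeping \emph{all} intermediate radii equal to a power of $\gamma=10\rho$ (refining with balls of radius $\gamma r_i$ rather than $2\rho r_i$) is in fact more careful than the paper, which re-applies Lemma~\ref{lem:cover1-l} with $\tau=2\rho r_i$ even though the lemma is stated only for $\tau=(10\rho)^{j_0}$; your modification resolves this cleanly at no cost.
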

    
    Note that in Lemma \ref{lem:cover1-l} and Proposition \ref{prop:cover2-l}, we are able to make conclusions in terms of the universal frequency $\Ibf$ at individual points (rather than the frequency function relative to the center manifold associated to the relevant ball) precisely due to Lemma \ref{l:compare_frequency}. Observe also that $\rho$ (and hence $\gamma$) is finally chosen in the above Proposition. $\rho$ will have to satisfy the inequality dictated in Lemma \ref{lem:cover1-l} and a further smallness assumption which however depends only on $m$. This in turn determins $\gamma$. Ultimately both depend only on $m$, $n$, $Q$ and the parameter $K$ entering in the definition of $\bits$.

\subsection{Proof of Lemma \ref{lem:cover1-l}}
    We may assume throughout that $x_0 = 0$, for simplicity.
    \proofstep{Step 1: Inductive procedure.}
        Fix $\bar\delta > 0$ for now; it will be determined later. We inductively construct families $\Cscr(k)$, $k = \{0,...\kappa\}$, consisting of covers of $D$ by balls $\Bbf_r(x)$ centered at points $x\in D$ such that
        \[
            \Bbf_r(x) \in \Cscr(k) \implies r = (10\rho)^j = \gamma^j \ \text{for some $j \in \{0,\dots,k\}$},
        \]
        as follows.
        
        Let $\Cscr(0) = \{\Bbf_{1}\}$. Suppose that we have already constructed the cover $\Cscr(k)$. Take a ball $\Bbf_r(x) \in \Cscr(k)$. If $r = (10\rho)^j$ with $j < k$, place $\Bbf_r(x)$ into $\Cscr(k+1)$.
        
        If $r= (10\rho)^k$, consider the set
        \[
            F(\Bbf_r(x)) \coloneqq \bits \cap \Bbf_r(x) \cap \set{w}{\Ibf(w,\rho r) \in ]U -\bar\delta, U + \bar\delta[}.
        \]
        Notice that this is the set of points in $\bits \cap \Bbf_r(x)$ at which the universal frequency is pinched by at most $\bar\delta$ between scales $\rho r$ and $r$.
        We have two possibilities:
        \begin{align}
            &\text{keep $\Bbf_r(x)$ in $\Cscr_1(k+1)$ if $F(\Bbf_r(x))$ does not $\rho r$-span an $(m-2)$-dimensional} \tag{K}\label{eq:keep-l}\\
            &\text{affine subspace}; \notag\\
            &\text{discard $\Bbf_r(x)$ from $\Cscr_1(k+1)$ if $F(\Bbf_r(x))$ $\rho r$-spans an $(m-2)$-dimensional} \tag{D}\label{eq:discard-l} \\
            &\text{affine subspace $V = V(\Bbf_r(x)) \subset \R^{m+n}$}\notag.
        \end{align}
        Observe that if~\eqref{eq:keep-l} holds, then by definition, \eqref{itm:coverc-l} in the statement of the lemma holds.
        
        We may thus replace
        \[
            \{\Bbf^{i}\}_i \coloneqq \set{\Bbf_{(10\rho)^{k}}(x) \in \Cscr(k)}{\text{\eqref{eq:discard} holds for $\Bbf_{(10\rho)^{k}}(x)$}}
        \]
        with a collection $\Fscr(k+1)$ of balls $\{\widetilde\Bbf^{i}\}_i$ with radius $(10\rho)^{k+1}$ that cover
        \[
            \bits \cap \bigcup_i \Bbf_{\rho (10\rho)^k}(V(\Bbf^{i})).
        \]
        Note that the excess decay from Theorem \ref{thm:uniquefv} once again tells us that $\Ebf(T, \widetilde\Bbf^{i}) \leq \eps_5^2$. We may moreover ensure that the concentric balls $\frac{1}{5}\widetilde\Bbf^{i}$ are pairwise disjoint, and that their centers are contained in $\bits \cap \Bbf_r(x) \cap \bigcup_i (V(\Bbf^{i}))$. Let $\Bscr(k+1)$ be a Vitali subcover of
        \[
            \set{\Bbf_r(x) \in \Cscr(k)}{\text{\eqref{eq:keep} holds}}.
        \]
        The cover $\Cscr(k+1) \coloneqq \Fscr(k+1) \cup \Bscr(k+1)$ provides a new covering that replaces $\Cscr(k)$.
        
        \proofstep{Step 2: Frequency pinching.}
        Before we continue, let us first show that the following frequency pinching estimate holds: for any $\eta > 0$, we can choose $\bar\delta > 0$ sufficiently small (dependent on $\rho$ and $\eta$) such that if $\Cscr_1(\kappa) \neq \emptyset$, then either
        \begin{equation}\label{eq:coverfreqpinch-l}
            \text{$\Cscr_1(\kappa) = \{\Bbf_{1}\}$ \quad or \quad $\Ibf\left(x, \frac{\rho r}{5}\right) \in  [U - \eta, U + \eta]$ for every $\Bbf_r(x) \in \Cscr_1(k), \ k = 0,\dots,\kappa$.}
        \end{equation}
        Indeed, if we do not stop refining immediately, then for any ball $\Bbf_r(x) \in \Cscr_1(k)$, $r = (10\rho)^{j+1}$ for some $j+1 \leq k$. Thus, by construction, we know that there exists a ball $\Bbf'$ of radius $(10\rho)^j$ centered at $x'$ satisfying~\eqref{eq:discard}. Namely, $\mathbf{p}(F(\Bbf'))$ $\rho (10\rho)^j$-spans an $(m-2)$-dimensional affine subspace $V$ and $x \in V \cap \Bbf'$. There must hence exist at least one other point $z \in F(\Bbf')\cap V$. We wish to apply Lemma~\ref{lem:smallspatialvar-low} with $\rho =\tilde{\rho}$ and $\bar \rho = \frac{\rho}{5}$ in order to show that
        \begin{equation}\label{eq:iterativepinch}
        	\left|\Ibf\left(x,\frac{\rho r}{5}\right) - \Ibf(z,r)\right| \leq \frac{\eta}{2}. 
        \end{equation}
        However, notice that the conclusion of Lemma \ref{lem:smallspatialvar-low} is the spatial frequency pinching relative to the center manifold $\Mcal_{x,j}$ associated to $\Bbf_r(x')$ (recall that $r=(10\rho)^j=\gamma^j$ for some $j\leq k$). This is where Lemma \ref{l:compare_frequency} comes in, allowing us to compare the frequency pinching relative to $\Mcal_{x',j}$, with the universal frequencies centered at $x, z$ respectively.
        
        Indeed, applying Lemma \ref{l:compare_frequency} to $\Mcal_{x',j}$ and letting $\bar{x}$, $\bar{z}$ denote the respective projections of $\gamma^{-j}(x-x')$ and $\gamma^{-j}(z-x')$ onto $\Mcal_{x',j}$, we have
        \begin{align*}
            \left|\Ibf\left(x,\frac{\rho r}{5}\right) - \Ibf(z,r)\right| &\leq \left|\Ibf\left(x,\frac{\rho r}{5}\right) - \Ibf_{x',j}\left(\bar{x},\frac{\rho}{5}\right)\right| + \left|\Ibf_{x',j}\left(\bar{x},\frac{\rho}{5}\right) - \Ibf_{x',j}\left(\bar{z},1\right)\right| \\
            &\qquad+ \left|\Ibf_{x',j}\left(\bar{z},1\right) - \Ibf(z,r)\right|
        \end{align*}
        Letting $\bar\delta = \frac{\eta}{6}$ in Lemma \ref{lem:smallspatialvar-low} and applying this to the middle term on the right-hand side of the above inequality, meanwhile using Lemma \ref{l:compare_frequency} for the other two terms on the right-hand side, the estimate \eqref{eq:iterativepinch} indeed follows, after ensuring that $\eps_5^{2\gamma_4} \leq \frac{\eta}{6C_{\ref{l:compare_frequency}}}$.
        Since $z\in F(\Bbf')$, this yields the second alternative in~\eqref{eq:coverfreqpinch-l}.
               
        \proofstep{Step 3: Discrete $(m-2)$-dimensional measures and coarse packing estimate.}
        It remains to check that the covering $\Cscr(\kappa)$ satisfies the conditions~\eqref{itm:covera-l}-\eqref{itm:coverc-l}. By definition of $\kappa$, the conditions~\eqref{itm:covera} and~\eqref{itm:coverc-l} are a trivial consequence of the construction of the inductive covering.
        
        Hence, we just need to verify that the packing estimate~\eqref{itm:coverb-l} holds; namely, that
        \[
            \sum_i s_i^{m-2} \leq C_R,
        \]
        where $\Cscr(\kappa) = \{\Bbf_{5s_i}(x_i)\}_i$. For this, we will make use of~\cite{NV_Annals}*{Theorem~3.4}. With this in mind, we introduce the discrete measures
        \[
            \mu \coloneqq \sum_i  s_i^{m-2} \delta_{x_i}, \qquad \mu_s\coloneqq \sum_{i : s_i \leq s} s_i^{m-2}\delta_{x_i}.
        \]
        First of all, note that $\mu_s \equiv 0$ for every $s < \bar{r} \coloneqq \frac{1}{5} (10\rho)^\kappa$, due to the construction of our covering.
        
        We proceed to inductively show that
        \begin{equation}\label{eq:packing-l}
            \mu_{s}(\Bbf_s(x)) \leq C_R s^{m-2} \qquad \forall \ x \in \Bbf_{\gamma^{j_0}}, \quad s = \bar{r}2^j, \ j = 0,\dots,j_0 \coloneqq \log_2\big(\frac{\bar r}{8}\big) - 4.
        \end{equation}
        The base case is trivially true, since
        \[
            \mu_{\bar r}(\Bbf_{\bar r}(x)) \leq N(x,\bar r)\bar r^{m-2},
        \]
        where $N(x,\bar r) \coloneqq \# \set{i}{s_i = \bar r}$. This is a dimensional constant, since we have a Vitali cover.
        
        Suppose that~\eqref{eq:packing-l} holds for $0,\dots,j$, for some $j < j_0$. Letting $r \coloneqq \bar r 2^j$, we will first of all show that
        \begin{equation}\label{eq:coarse-l}
            \mu_{2r}(\Bbf_{2r}(x)) \leq C(m)C_R (2r)^{m-2},
        \end{equation}
        for some dimensional constant $C(m)$. This follows by simply subdividing $\mu_{2r}$ into
        \[
            \mu_{2r} = \mu_r + \sum_{i : r < s_i \leq 2r} s_i^{m-2}\delta_{x_i} \eqqcolon \mu_r + \tilde\mu_r,
        \]
        combined with the observation that $\Bbf_{2r}(x)$ can be covered by at most $C(m)$ balls $\Bbf_r(x_i)$, on each of which we may use the inductive assumption, meanwhile
        \begin{equation}\label{eq:coarse2-l}
            \tilde\mu_r(\Bbf_{2r}(x)) \leq \bar{N}(x,2r)(2r)^{m-2} \leq C(m)(2r)^{m-2},
        \end{equation}
        where $\bar{N}(x,2r) \coloneqq \# \set{i}{r < s_i \leq 2r}$.
        
        \proofstep{Step 4: Inductive packing estimate.}
        We will now improve the coarse bound~\eqref{eq:coarse-l} to the estimate
        \begin{equation}\label{eq:sharp-l}
        	\mu_{2r}(\Bbf_{2r}(x)) \leq C_R (2r)^{m-2},
        \end{equation}
        where $C_R$ is the dimensional constant coming from~\cite{NV_Annals}*{Theorem~3.4}.
        
        Let $\bar\mu \coloneqq \mu_{2r} \mres \Bbf_{2r}(x)$. We claim that
        \begin{equation}\label{eq:discrpacking-l}
        	\int_{\Bbf_t(y)} \int_0^t [\beta_{2,\bar\mu}^{m-2}(z,s)]^2\frac{\dd s}{s} \dd\bar\mu(z) < \delta_0^2 t^{m-2} \qquad \forall y \in \Bbf_{2r}(x), \ \forall t \in (0,2r],
        \end{equation}
        where $\delta_0 = \delta_0(m) > 0$ is as in~\cite{NV_Annals}*{Theorem~3.4} (denoted by simply $\delta$ therein).
        
        First, let us write
        \begin{equation}\label{eq:discrpacking-2-l}
        	 \int_0^t [\beta_{2,\bar\mu}^{m-2}(z,s)]^2\frac{\dd s}{s} \leq \sum_{j\geq j_0} \int_{\gamma^{j+1}}^{\gamma^{j}} [\beta_{2,\bar\mu}^{m-2}(z,s)]^2\frac{\dd s}{s},
        \end{equation}
        where $j_0=j_0(t)$ is such that $t\in \left]\gamma^{j_0+1},\gamma^{j_0}\right]$.
        Now we may apply Proposition \ref{prop:beta2control-low} for each $x_i$ and for each $k$, together with the excess decay in Proposition \ref{prop:excess-decay}, to conclude that there exists $\alpha_0 >0$ (as in the statement of the proposition) such that for each $s \in \left]\gamma^{j+1},\gamma^{j}\right]$ we have
        \begin{align}\label{eq:beta2control-l}
        	[\beta_{2,\bar\mu}^{m-2}(x_i,s)]^2 &\leq \frac{C}{s^{m-2}} \int_{\Bbf_s(x_i)}\bar{W}_s(x_i,j,w)\dd\bar\mu(w) + C\boldsymbol{m}_{x_i,j}^{\alpha_0}\frac{\bar\mu(\Bbf_{s}(x_i))}{s^{m-2-\alpha_0}} \\
        	&\leq \frac{C}{s^{m-2}} \int_{\Bbf_s(x_i)}\bar{W}_s(x_i,j,w)\dd\bar\mu(w) + C\boldsymbol{m}_{x_i,0}^{\alpha_0}\frac{\bar\mu(\Bbf_{s}(x_i))}{s^{m-2-\alpha_0}}. \notag \\
        	&\leq \frac{C}{s^{m-2}} \int_{\Bbf_s(x_i)}\bar{W}_s(x_i,j,w)\dd\bar\mu(w) + C\eps_5^{2\alpha_0}\frac{\bar\mu(\Bbf_{s}(x_i))}{s^{m-2-\alpha_0}}. \notag
        \end{align}
         where for $z = x_k \in \Bbf_s(x_i)$, $\bar{W}_s(x_i,j,z) \coloneqq W_s^{32s}(x_i,j,x_k)\mathbf{1}_{s> s_i}$, since the balls $\Bbf_{s_i}(x_i)$ are pairwise disjoint.
        Let us first deal with the second term on the right-hand side of~\eqref{eq:beta2control-l}. Consider first the case $r < t \leq 2r$. Then due to~\eqref{eq:coarse-l}, we have
        \begin{align*}
            \eps_5^{2\alpha_0}\bar{N}(y,2r)(2r)^{m-2} &\int_{\Bbf_t(y)} \int_0^t\frac{\bar\mu(\Bbf_{s}(z))}{s^{m-2-\alpha_0}}\frac{\dd s}{s} \dd\bar\mu(z) \\
            &\leq \eps_5^{2\alpha_0}\big(C(m) + \bar{N}(y,2r)\big)(C_R(m))^2 (2r)^{2(m-2)} t^{-(m-2-\alpha_0)}\\
            &\leq C(m) \eps_5^{2\alpha_0} t^{m-2+\alpha_0}.
        \end{align*}
        In the case $t \leq r$, we first of all notice that if there exists $x_i \in \Bbf_t(y)$ with $s_i \geq 3t$, then there are no other points $x_k \in \Bbf_{2t}(y)$ since the balls $\Bbf_{s_i}(x_i)$ are pairwise disjoint. This in turn implies that $\beta_{2,\bar\mu}^{m-2}(z,s) = 0$ for every $z \in \Bbf_t(y)$ and every $s \leq t$. 
        
        Thus, we may assume that $s_i < 3t$ for every $x_i \in \Bbf_t(y)$, in which case we may can use the inductive assumption, combined with the fact that $\mu_{2r} = \mu_t + \tilde\mu_t$ to conclude that
        \[
            \int_{\Bbf_t(y)} \int_0^t C\eps_5^{2\alpha_0}\frac{\bar\mu(\Bbf_{s}(z))}{s^{m-2-\alpha_0}}\frac{\dd s}{s} \dd\bar\mu(z) \leq C_R(m) C(m)\eps_5^{2\alpha_0}t^{m-2+\alpha_0}.
        \]
        Here we have used that $\Bbf_t(y)$ can be covered by at most $\bar{N}(y,3t)$ balls of radius $s_i \in [t,3t)$, so we get an estimate analogous to~\eqref{eq:coarse2-l} for $\tilde\mu_t$. Thus, by choosing $\eps_5^{2\alpha_0}$ small enough, we can indeed ensure that~\eqref{eq:discrpacking} holds.
        
        To control the frequency term $W_s^{32s}(x_i,j,z)$ on the right-hand side of~\eqref{eq:beta2control-l}, we proceed as follows. 
        
        Let $t \in ]0,2r]$. Applying Fubini's theorem as in the proof of Lemma \ref{lem:cover1} and making use of Lemma \ref{l:compare_frequency} and the coarse bound \eqref{eq:coarse-l}, we have
        \begin{align*}
        	\int_{\Bbf_t(y)} &\sum_{j\geq j_0(t)}\int_{\gamma^{j+1}}^{\gamma^j} \frac{1}{s^{m-2}} \int_{\Bbf_s(z)} \bar{W}_s(z,j,w)\dd\bar\mu(w)\frac{\dd s}{s} \dd\bar{\mu}(z) \\
            &\leq C(m) \int_{\Bbf_{2t}(y)}\sum_{j\geq j_0(t)}\int_{\gamma^{j+1}}^{\gamma^j} \bar W_s(w,j,w) \frac{\dd s}{s} \dd\mu_t(w) + C\log\gamma \int_{\Bbf_{2t}(y)}\sum_{j\geq j_0(t)} \boldsymbol{m}_{w,j}^{\gamma_4} \dd\mu_t(w).
        \end{align*}

        Now, the almost-monotonicity of the frequency \eqref{eq:simplify13-low} on a fixed center manifold, the excess decay in Proposition \ref{prop:excess-decay-2}, and the choice of $j_0(t)$ and $\eta$ together yield the following estimate for any $x_i$:
        \begin{align*}
            \sum_{j\geq j_0(t)}&\int_{\gamma^{j+1}}^{\gamma^j} \frac{1}{s^{m-2}} \bar{W}_s(x_i,j,x_i)\frac{\dd s}{s} \\
            &\leq C\sum_{j\geq j_0(t)}\int_{\gamma^{j+1}}^{\gamma^j} \left[\Ibf_{x_i,j}(0, 32)(1+C\boldsymbol{m}_{w,j}^{\gamma_4}) - \Ibf_{x_i,j}(0,1)(1-C\boldsymbol{m}_{w,j}^{\gamma_4})\right]\frac{\dd s}{s} + C\eps_5^{2\gamma_4} t^{\gamma_4} \\
        	&\leq C\sum_{j\geq j_0(t)}\int_{\gamma^{j+1}}^{\gamma^j} W_1^{32}(x_i,j,x_i) \frac{\dd s}{s}  + C\eps_5^{2\gamma_4}t^{\gamma_4} \\
            &\leq C \sum_{j\geq j_0(t)}\int_{\gamma^{j+1}}^{\gamma^j} W_1^{32}(x_i,j,x_i) \frac{\dd s}{s} + C\eps_5^{2\gamma_4}t^{\gamma_4} \\
            &\leq C \eta + C\eps_5^{2\gamma_4}t^{\gamma_4}.
        \end{align*}

Thus, by the above observation that there is at most one point $x_i \in \Bbf_{2t}(y)$ and again using the coarse bound \eqref{eq:coarse-l}, we conclude that
    \[
        \int_{\Bbf_t(y)} \int_0^t [\beta_{2,\bar\mu}^{m-2}(z,s)]^2\frac{\dd s}{s} \dd\bar\mu(z) \leq C \eta t^{m-2} + C\eps_5^{\min\{2\gamma_4,2\alpha_0\}}t^{m-2+\min\{\gamma_4,\alpha_0\}}.
    \]
By choosing $\eps_5$ even smaller if necessary, and taking $\eta$ sufficiently small (which is ensured by choosing $\bar\delta$ sufficiently small, we successfully establish the tighter inductive packing estimate~\eqref{eq:sharp-l}.
\qed

\subsection{Proof of Proposition \texorpdfstring{\ref{prop:cover2-l}}{cover2}}
        We may once again assume that $x = 0$. We will apply Lemma~\ref{lem:cover1-l} iteratively to build families of ``stopping time regions", where a new covering is built within a large ball on which we stopped the previous covering procedure early. We will show that the iteration can be stopped after finitely many steps, at which point we will have packed the singularities tightly enough to obtain the desired conclusion.
        
        In the statement of Lemma \ref{lem:cover1-l}, let us fix the parameter $\rho$ arbitrarily for now, and in turn fix $k_0$ arbitrarily so that the parameter $\tau$ is also fixed for now; these parameters will be determined later. We first apply Lemma~\ref{lem:cover1-l} with our fixed choice of $\tau$ and $\sigma = s = (10\rho)^\kappa$.
        
        This yields a cover $\Ccal(0) \coloneqq \{\Bbf_{r_i}(x_i)\}$ of $D$. We can subdivide this cover into the `good' balls $\Gcal(0) \coloneqq \set{\Bbf_{r_i}(x_i)}{r_i \leq s}$ and the `bad' balls $\Bcal(0) \coloneqq \set{\Bbf_{r_i}(x_i)}{r_i >s}$.
        
        Construct a new cover $\Ccal(1)$ of $D$ as follows. Place all balls in $\Gcal(0)$ into $\Ccal(1)$. For each ball $\Bbf_{r_i}(x_i) \in \Bcal(0)$, Lemma~\ref{lem:cover1-l}~\eqref{itm:coverc-l} tells us that, for $\bar\delta$ as defined therein, all points $y \in D\cap\Bbf_{r_i}(x_i)$ with $\Ibf(y,\rho r_i) \in ]U - \bar\delta, U + \bar\delta[$ are contained in a $\rho r_i$-tubular neighbourhood of $V_i$ for some $(m-3)$-dimensional affine space $V_i \subset \mathbb R^{m+n}$. 
        
        We may thus cover $\bits\cap\Bbf_{\rho r_i}(V_i)\cap \Bbf_{r_i}(x_i)$ by at most $C(m) \rho^{-(m-3)}$ balls $\{\Bbf^{i,k}\}_{k=1}^{N(i)}$ of radius $2\rho r_i$, centered at points in $\bits$. 
        
        For any given index $i$, there are now two possibilities; either
        \begin{enumerate}[(i)]
            \item\label{itm:stop-l} $2\rho r_i < s$: then we include both $\Bbf_{r_i}(x_i)$ and the balls $\{\Bbf^{i,k}\}_k$ into $\Ccal(1)$ and stop refining for this $i$;
            \item\label{itm:go-l} $2\rho r_i \geq s$: then we apply Lemma~\ref{lem:cover1} to each ball $\Bbf^{i,k}$ for this fixed $i$ (with $\tau = 2 \rho r_i$ and $\sigma = s$), yielding a new cover of $\Bbf^{i,k}$ by balls. We place both $\Bbf_{r_i}(x_i)$ and these new balls (for each $k = 1,\dots,N(i)$) into the new cover $\Ccal(1)$.
        \end{enumerate}
        We can then iterate this procedure inductively, only at each stage $k$ letting 
        \[
            \Bcal(k) = \set{\Bbf_{r_i}(x_i) \in \Ccal(k)}{s< r_i \leq 2\rho r_j \ \text{for some} \ \Bbf_{r_j}(x_j) \in \Ccal(k-1)},
        \]
        until after finitely many steps of the iteration, we obtain a cover $\Ccal(\ell)$ where the radius of every ball is no larger than $s$. Note that $\ell = \ell(\rho, s)$, and that as long as we choose $\rho \leq \frac{1}{2C(m)}$, we have
        \[
            \sum_{\Bbf_{r_i}(x_i) \in \Ccal(\ell)} r_i^{m-2} \leq 2\sum_{\Bbf_{r_j}(x_j) \in \Ccal(0)} r_j^{m-2} \leq 2C_R.
        \]
        Now if there are any balls of radius $r_i < s$ in our covering $\Ccal(\ell)$, we may replace them with concentric balls of radius $s$; since $\rho = \rho(m)$ is now fixed, this would only increase the packing estimate~\eqref{itm:packing-l} by a factor of $C(m)$, since no ball can be smaller than $10\rho s$. 
        
        Now let $\delta \coloneqq \bar\delta$ for our choice of $\rho =\rho(m)$. Making use of the quantitative BV estimate from Proposition \ref{prop:bv} on the universal frequency function, as well as the excess decay in Proposition \ref{prop:excess-decay-2}, for any given $\tau = (10\rho)^{k_0} < 1$,  any $y \in \Bbf_\tau(x)\cap \bits$ and any $s < \tau$ we have
        \[
        	\Ibf(y,s) \leq \Ibf(y,\tau) + C\sum_{j=j_s}^{k_0}\boldsymbol{m}_{x,j}^{\gamma_4} \leq U + C\eps_5^{2\gamma_4}\tau^{\gamma_4},
        \]
        where $j_s\in\N$ such that $\gamma^{j_s}\leq s < \gamma^{j_s-1}$. Thus, choosing $\eps_5$ smaller if necessary, we may ensure that $\Ibf(y,\rho) < U + \delta$ for every $y \in \Bbf_\tau(x)$ and every $\rho < \tau$.
        
        Finally, if $\Bbf_{r_i}(x_i) \in \Gcal(\ell)\cup \Bcal(\ell)$, let $A_i \coloneqq D \cap \Bbf_{r_i}(x_i)$. On the other hand, if $\Bbf_{r_i}(x_i) \in \Bcal(k)$ for $k \leq \ell -1$, let 
        \begin{equation}\label{eq:freqdrop-l}
            A_i' \coloneqq \left(D \cap \Bbf_{r_i}(x_i)\right)\setminus \bigcup_{\substack{\Bbf_{r_j}(x_j) \subset \Bbf_{r_i}(x_i) \\ \Bbf_{r_j}(x_j)\in \Bcal(k + 1)}} \Bbf_{r_j}(x_j).
        \end{equation}
        Observe that in each $A_i'$ in~\eqref{eq:freqdrop-l}, we necessarily have
        \[
            \sup_{y \in A_i'} \Ibf(y,\rho r_i)  \leq U - \delta,
        \]
        due to our choice of $\tau$.
        Thus, since $\Ibf(y,\rho r_i) \geq c(m) \Ibf(y,r_i)$, we may replace $\Bbf_{r_i}(x_i)$ with a collection of at most $C(m)\rho^{-m}$ balls of radius $\rho r_i$ that covers $A_i'$, which again only increases the packing estimate~\eqref{itm:packing-l} by a dimensional constant. We may then let $A_j \coloneqq A_i' \cap \Bbf_{r_j}(x_j)$ for each ball $\Bbf_{r_j}(x_j)$ in this cover of $A_i'$.
        \qed

\subsection{Proof of the bound \texorpdfstring{\eqref{e:Minkowski-low}}{e:Minkowski-low} in Theorem \texorpdfstring{\ref{t:main-low-one-piece}}{t:main-low-one-piece}}
The proof of the Minkowski content bound now follows by iterating Proposition \ref{prop:cover2-l}, in exactly the same way as that in Section \ref{ss:Minkowski}. We therefore omit the details.
\qed

\begin{bibdiv}
    \begin{biblist}
        
        \bib{Allard_72}{article}{
            AUTHOR = {Allard, William K.},
            TITLE = {On the first variation of a varifold},
            JOURNAL = {Ann. of Math. (2)},
            VOLUME = {95},
            YEAR = {1972},
            PAGES = {417--491},
            ISSN = {0003-486X},
            DOI = {10.2307/1970868},
            URL = {https://doi.org/10.2307/1970868},
        }
        
        \bib{Almgren_regularity}{book}{
            AUTHOR = {Almgren Jr., Frederick J.},
            TITLE = {Almgren's big regularity paper},
            SERIES = {World Scientific Monograph Series in Mathematics},
            VOLUME = {1},
            NOTE = {$Q$-valued functions minimizing Dirichlet's integral and the
                regularity of area-minimizing rectifiable currents up to
                codimension 2,
                With a preface by Jean E. Taylor and Vladimir Scheffer},
            PUBLISHER = {World Scientific Publishing Co., Inc., River Edge, NJ},
            YEAR = {2000},
            PAGES = {xvi+955},
            ISBN = {981-02-4108-9},
        }
        
        \bib{AT15}{article}{
            AUTHOR = {Azzam, Jonas},
            author = {Tolsa, Xavier},
            TITLE = {Characterization of {$n$}-rectifiability in terms of {J}ones'
              square function: {P}art {II}},
            JOURNAL = {Geom. Funct. Anal.},
            VOLUME = {25},
            YEAR = {2015},
            NUMBER = {5},
            PAGES = {1371--1412},
            ISSN = {1016-443X},
            DOI = {10.1007/s00039-015-0334-7},
            URL = {https://doi.org/10.1007/s00039-015-0334-7},
        }

        \bib{CS}{article}{
            AUTHOR = {Caldini, Gianmarco},
            author = {Skorobogatova, Anna},
            TITLE = {Forthcoming},
        }
        
        \bib{SXChang}{article}{
            AUTHOR = {Chang, Sheldon Xu-Dong},
            TITLE = {Two-dimensional area minimizing integral currents are
                classical minimal surfaces},
            JOURNAL = {J. Amer. Math. Soc.},
            VOLUME = {1},
            YEAR = {1988},
            NUMBER = {4},
            PAGES = {699--778},
            ISSN = {0894-0347},
            DOI = {10.2307/1990991},
            URL = {https://doi.org/10.2307/1990991},
        }
        
    \bib{DL_JDG}{article}{
        title={The size of the singular set of area-minimizing currents},
        author={De Lellis, Camillo},
        journal={arXiv preprint arXiv:1506.08118},
        year={2015}
        }
        
    \bib{DLDPHM}{article}{
		AUTHOR = {De Lellis, Camillo},
		author={De Philippis, Guido},
		author={Hirsch, Jonas},
		author={Massaccesi, Annalisa},
		TITLE = {Boundary regularity of mass-minimizing integral currents and a question of {A}lmgren},
		BOOKTITLE = {2017 {MATRIX} annals},
		SERIES = {MATRIX Book Ser.},
		VOLUME = {2},
		PAGES = {193--205},
		PUBLISHER = {Springer, Cham},
		YEAR = {2019},
	}

\bib{DLF}{article}{
      title={An elementary rectifiability lemma and some applications}, 
      author={De Lellis, Camillo},
      author={Fleschler, Ian},
      journal={arXiv preprint arXiv:2307.02866},
      year={2023}
}
	
	\bib{DLMSV}{article}{
        AUTHOR = {De Lellis, Camillo},
        author = {Marchese, Andrea},
        author = {Spadaro, Emanuele},
        author = {Valtorta, Daniele},
        TITLE = {Rectifiability and upper {M}inkowski bounds for singularities
              of harmonic {$Q$}-valued maps},
        JOURNAL = {Comment. Math. Helv.},
        VOLUME = {93},
        YEAR = {2018},
        NUMBER = {4},
        PAGES = {737--779},
        ISSN = {0010-2571},
        DOI = {10.4171/CMH/449},
        URL = {https://doi.org/10.4171/CMH/449},
    }
        
        \bib{DLSk1}{article}{
            AUTHOR = {De Lellis, Camillo},
            author = {Skorobogatova, Anna},
            TITLE = {The fine structure of the singular set of area-minimizing integral currents II: rectifiability of flat singular points with singularity degree larger than $1$},
            journal={arXiv preprint},
            year={2023}
        }

        \bib{DMS}{article}{
            AUTHOR={De Lellis, Camillo},
            AUTHOR={Minter, Paul},
            AUTHOR={Skorobogatova, Anna},
            TITLE={The fine structure of the singular set of area-minimizing integral currents III: Frequency 1 flat singular points and $\Hcal^{m-2}$-a.e uniqueness of tangent cones},
            journal={arXiv preprint},
            year={2023}
        }
        
        
    \bib{DLS_MAMS}{article}{
            AUTHOR = {De Lellis, Camillo},
            author={Spadaro, Emanuele Nunzio},
            TITLE = {{$Q$}-valued functions revisited},
            JOURNAL = {Mem. Amer. Math. Soc.},
            VOLUME = {211},
            YEAR = {2011},
            NUMBER = {991},
            PAGES = {vi+79},
            ISSN = {0065-9266},
            ISBN = {978-0-8218-4914-9},
            DOI = {10.1090/S0065-9266-10-00607-1},
            URL = {https://doi.org/10.1090/S0065-9266-10-00607-1},
        }
        
        \bib{DLS_multiple_valued}{article}{
            AUTHOR = {De Lellis, Camillo},
            author={Spadaro, Emanuele},
            TITLE = {Multiple valued functions and integral currents},
            JOURNAL = {Ann. Sc. Norm. Super. Pisa Cl. Sci. (5)},
            VOLUME = {14},
            YEAR = {2015},
            NUMBER = {4},
            PAGES = {1239--1269},
            ISSN = {0391-173X},
        }
    \bib{DLS14Lp}{article}{
            AUTHOR = {De Lellis, Camillo},
            author = {Spadaro, Emanuele},
            TITLE = {Regularity of area minimizing currents {I}: gradient {$L^p$}
                estimates},
            JOURNAL = {Geom. Funct. Anal.},
            VOLUME = {24},
            YEAR = {2014},
            NUMBER = {6},
            PAGES = {1831--1884},
            ISSN = {1016-443X},
            DOI = {10.1007/s00039-014-0306-3},
            URL = {https://0-doi-org.pugwash.lib.warwick.ac.uk/10.1007/s00039-014-0306-3},
        }
        
        \bib{DLS16centermfld}{article}{
            AUTHOR = {De Lellis, Camillo},
            author={Spadaro, Emanuele},
            TITLE = {Regularity of area minimizing currents {II}: center manifold},
            JOURNAL = {Ann. of Math. (2)},
            VOLUME = {183},
            YEAR = {2016},
            NUMBER = {2},
            PAGES = {499--575},
            ISSN = {0003-486X},
            DOI = {10.4007/annals.2016.183.2.2},
            URL = {https://0-doi-org.pugwash.lib.warwick.ac.uk/10.4007/annals.2016.183.2.2},
        }

        \bib{DLS16blowup}{article}{
            AUTHOR = {De Lellis, Camillo},
            author = {Spadaro, Emanuele},
            TITLE = {Regularity of area minimizing currents {III}: blow-up},
            JOURNAL = {Ann. of Math. (2)},
            VOLUME = {183},
            YEAR = {2016},
            NUMBER = {2},
            PAGES = {577--617},
            ISSN = {0003-486X},
            DOI = {10.4007/annals.2016.183.2.3},
            URL = {https://0-doi-org.pugwash.lib.warwick.ac.uk/10.4007/annals.2016.183.2.3},
        }
        
        \bib{DLSS1}{article}{
            AUTHOR = {De Lellis, Camillo},
            author={Spadaro, Emanuele},
            author={Spolaor, Luca},
            TITLE = {Regularity theory for {$2$}-dimensional almost minimal
                currents {I}: {L}ipschitz approximation},
            JOURNAL = {Trans. Amer. Math. Soc.},
            VOLUME = {370},
            YEAR = {2018},
            NUMBER = {3},
            PAGES = {1783--1801},
            ISSN = {0002-9947},
            DOI = {10.1090/tran/6995},
            URL = {https://doi.org/10.1090/tran/6995},
        }
        
        \bib{DLSS2}{article}{
            AUTHOR = {De Lellis, Camillo},
            author={Spadaro, Emanuele},
            author={Spolaor, Luca},
            TITLE = {Regularity theory for 2-dimensional almost minimal currents
                {II}: {B}ranched center manifold},
            JOURNAL = {Ann. PDE},
            VOLUME = {3},
            YEAR = {2017},
            NUMBER = {2},
            PAGES = {Paper No. 18, 85},
            ISSN = {2524-5317},
            DOI = {10.1007/s40818-017-0035-7},
            URL = {https://doi.org/10.1007/s40818-017-0035-7},
        }
        \bib{DLSS3}{article}{
            AUTHOR = {De Lellis, Camillo},
            author={Spadaro, Emanuele},
            author={Spolaor, Luca},
            TITLE = {Regularity theory for 2-dimensional almost minimal currents
                {III}: {B}lowup},
            JOURNAL = {J. Differential Geom.},
            VOLUME = {116},
            YEAR = {2020},
            NUMBER = {1},
            PAGES = {125--185},
            ISSN = {0022-040X},
            DOI = {10.4310/jdg/1599271254},
            URL = {https://doi.org/10.4310/jdg/1599271254},
        }
        
        \bib{Hardt_Simon_boundary}{article}{
            AUTHOR = {Hardt, Robert},
            author={Simon, Leon},
            TITLE = {Boundary regularity and embedded solutions for the oriented
                {P}lateau problem},
            JOURNAL = {Ann. of Math. (2)},
            VOLUME = {110},
            YEAR = {1979},
            NUMBER = {3},
            PAGES = {439--486},
            ISSN = {0003-486X},
            DOI = {10.2307/1971233},
            URL = {https://doi.org/10.2307/1971233},
        }
        
        \bib{Federer}{book}{
            AUTHOR = {Federer, Herbert},
            TITLE = {Geometric measure theory},
            SERIES = {Die Grundlehren der mathematischen Wissenschaften, Band 153},
            PUBLISHER = {Springer-Verlag New York Inc., New York},
            YEAR = {1969},
            PAGES = {xiv+676},
        }
        
        \bib{Federer1970}{article}{
            AUTHOR = {Federer, Herbert},
            TITLE = {The singular sets of area minimizing rectifiable currents with
                codimension one and of area minimizing flat chains modulo two
                with arbitrary codimension},
            JOURNAL = {Bull. Amer. Math. Soc.},
            VOLUME = {76},
            YEAR = {1970},
            PAGES = {767--771},
            ISSN = {0002-9904},
            DOI = {10.1090/S0002-9904-1970-12542-3},
            URL = {https://doi.org/10.1090/S0002-9904-1970-12542-3},
        }

     \bib{KW}{article}{
      title={Fine properties of branch point singularities: Dirichlet energy minimizing multi-valued functions}, 
      author={Brian Krummel and Neshan Wickramasekera},
      journal={arXiv preprint arXiv:1711.06222},
      year={2018}
}

            \bib{KW1}{article}{
      title={Analysis of singularities of area minimizing currents: planar frequency, branch points of rapid decay, and weak locally uniform approximation}, 
      author={Brian Krummel and Neshan Wickramasekera},
      journal={arXiv preprint arXiv:2304.10653},
      year={2023}
}

    \bib{KW2}{article}{
      title={Analysis of singularities of area minimizing currents: a uniform height bound, estimates away from branch points of rapid decay, and uniqueness of tangent cones}, 
      author={Brian Krummel and Neshan Wickramasekera},
      journal={arXiv preprint arXiv:2304.10272},
      year={2023}
}

    \bib{KW3}{article}{
      title={Analysis of singularities of area minimising currents: higher order decay estimates at branch points and rectifiability of the singular set}, 
      author={Brian Krummel and Neshan Wickramasekera},
      year={In preparation}
}

\bib{Mattila}{book}{
    AUTHOR = {Mattila, Pertti},
     TITLE = {Geometry of sets and measures in {E}uclidean spaces},
    SERIES = {Cambridge Studies in Advanced Mathematics},
    VOLUME = {44},
      NOTE = {Fractals and rectifiability},
 PUBLISHER = {Cambridge University Press, Cambridge},
      YEAR = {1995},
     PAGES = {xii+343},
      ISBN = {0-521-46576-1; 0-521-65595-1},
       DOI = {10.1017/CBO9780511623813},
       URL = {https://doi.org/10.1017/CBO9780511623813},
}
        
        \bib{NV_Annals}{article}{
            AUTHOR = {Naber, Aaron},
            author={Valtorta, Daniele},
            TITLE = {Rectifiable-{R}eifenberg and the regularity of stationary and
                minimizing harmonic maps},
            JOURNAL = {Ann. of Math. (2)},
            VOLUME = {185},
            YEAR = {2017},
            NUMBER = {1},
            PAGES = {131--227},
            ISSN = {0003-486X},
            DOI = {10.4007/annals.2017.185.1.3},
            URL = {https://doi.org/10.4007/annals.2017.185.1.3},
        }
        
        \bib{NV_varifolds}{article}{
            AUTHOR = {Naber, Aaron},
            author = {Valtorta, Daniele},
            TITLE = {The singular structure and regularity of stationary varifolds},
            JOURNAL = {J. Eur. Math. Soc. (JEMS)},
            VOLUME = {22},
            YEAR = {2020},
            NUMBER = {10},
            PAGES = {3305--3382},
            ISSN = {1435-9855},
            DOI = {10.4171/jems/987},
            URL = {https://doi.org/10.4171/jems/987},
        }
        
        \bib{Simon_GMT}{book}{
            AUTHOR = {Simon, Leon},
            TITLE = {Lectures on geometric measure theory},
            SERIES = {Proceedings of the Centre for Mathematical Analysis,
              Australian National University},
            VOLUME = {3},
            PUBLISHER = {Australian National University, Centre for Mathematical
              Analysis, Canberra},
            YEAR = {1983},
            PAGES = {vii+272},
            ISBN = {0-86784-429-9},
        }

        \bib{Simon_rectifiability}{article}{
            AUTHOR = {Simon, Leon},
            TITLE = {Rectifiability of the singular sets of multiplicity {$1$}
                minimal surfaces and energy minimizing maps},
            BOOKTITLE = {Surveys in differential geometry, {V}ol. {II} ({C}ambridge,
                {MA}, 1993)},
            PAGES = {246--305},
            PUBLISHER = {Int. Press, Cambridge, MA},
            YEAR = {1995},
        }
        
\bib{Sk21}{article}{
	TITLE = {An upper {M}inkowski dimension estimate for the interior singular set of area minimizing currents},
	JOURNAL = {Comm. Pure Appl. Math.},
	VOLUME = {77},
	YEAR = {2024},
	NUMBER = {2},
	PAGES = {1509--1572}
}
        
        \bib{Spolaor_15}{article}{
            AUTHOR = {Spolaor, Luca},
            TITLE = {Almgren's type regularity for semicalibrated currents},
            JOURNAL = {Adv. Math.},
            VOLUME = {350},
            YEAR = {2019},
            PAGES = {747--815},
            ISSN = {0001-8708},
            DOI = {10.1016/j.aim.2019.04.057},
            URL = {https://doi.org/10.1016/j.aim.2019.04.057},
        }
        
        \bib{WhiteStrat}{article}{
            AUTHOR = {White, Brian},
            TITLE = {Stratification of minimal surfaces, mean curvature flows, and
                harmonic maps},
            JOURNAL = {J. Reine Angew. Math.},
            VOLUME = {488},
            YEAR = {1997},
            PAGES = {1--35},
            ISSN = {0075-4102},
            DOI = {10.1515/crll.1997.488.1},
            URL = {https://doi.org/10.1515/crll.1997.488.1},
        }
    \end{biblist}
\end{bibdiv}

\end{document}